\begin{document}
\newtheorem{theorem}{Theorem}[section]
\newtheorem{proposition}[theorem]{Proposition}
\newtheorem{lemma}[theorem]{Lemma}
\newtheorem{claim}[theorem]{Claim}
\newtheorem{corollary}[theorem]{Corollary}
\newtheorem{defn}[theorem]{Definition}
\newtheorem{conjecture}[theorem]{Conjecture}
\newtheorem{remark}[theorem]{Remark}
\newtheorem{exe}{Exercise}

\theoremstyle{definition}%
\newtheorem{definition}[theorem]{Definition}

\def\pasdegrille{\let\grille = \pasgrille}
\def\ecriture#1#2{\setbox1=\hbox{#1}
\dimen1= \wd1 \dimen2=\ht1 \dimen3=\dp1 \grille #2 \box1 }
\def\aat#1#2#3{
\divide \dimen1 by 48 \dimen3=\dimen1 \multiply \dimen1 by #1
\advance \dimen1 by -\dimen3 \divide \dimen1 by 101 \multiply
\dimen1 by 100 \divide \dimen2 by \count11 \multiply \dimen2 by #2
\setbox0=\hbox{#3}\ht0=0pt\dp0=0pt
  \rlap{\kern\dimen1 \vbox to0pt{\kern-\dimen2\box0\vss}}\dimen1= \wd1
\dimen2=\ht1}
\def\pasgrille{
\count12= \dimen1 \divide \count12 by 50 \divide \dimen2 by \count12
\count11 =\dimen2 \ \divide \dimen1 by 48
\setlength{\unitlength}{\dimen1} \smash{\rlap{\ }} \dimen1= \wd1
\dimen2=\ht1 }
\def\grille{
\count12= \dimen1 \divide \count12 by 50 \divide \dimen2 by \count12
\count11 =\dimen2 \ \divide \dimen1 by 48
\setlength{\unitlength}{\dimen1}
\smash{\rlap{\graphpaper[1](0,0)(50, \count11)}} \dimen1= \wd1
\dimen2=\ht1 }

\pasdegrille

\numberwithin{equation}{section}

\newcommand{\R}{\mathbb{R}}
\newcommand{\C}{\mathbb{C}}
\newcommand{\TT}{\mathbb T}
\newcommand{\Z}{\mathbb Z}
\newcommand{\N}{\mathbb N}
\newcommand{\Q}{\mathbb Q}
\newcommand{\Sol}{\operatorname{Sol}}
\newcommand{\ND}{\operatorname{ND}}
\newcommand{\ord}{\operatorname{ord}}
\newcommand{\leg}[2]{\left( \frac{#1}{#2} \right)}  
\newcommand{\Sym}{\operatorname{Sym}}
\newcommand{\vE}{\mathcal E} 
\newcommand{\ave}[1]{\left\langle#1\right\rangle} 
\newcommand{\Var}{\operatorname{Var}}
 \newcommand{\tr}{\operatorname{tr}}
\newcommand{\supp}{\operatorname{Supp}}
\newcommand{\intinf}{\int_{-\infty}^\infty}
\newcommand{\beq}{\begin{equation}}
\newcommand{\eeq}{\end{equation}}
\newcommand{\ben}{\begin{eqnarray}}
\newcommand{\een}{\end{eqnarray}}
\newcommand{\beno}{\begin{eqnarray*}}
\newcommand{\eeno}{\end{eqnarray*}}
\newcommand{\dist}{\operatorname{dist}}
\newcommand{\area}{\operatorname{area}}
\newcommand{\vol}{\operatorname{vol}}
\newcommand{\diam}{\operatorname{diam}}
\newcommand{\Sp}{\mathbb S}
\newcommand{\indic}{1\!\!1}
\newcommand{\Bin}{\operatorname{Binom}}
\newcommand{\pois}{\operatorname{Pois}}
\newcommand{\myK}{V}
\newcommand{ \myvar}{\operatorname{Var}}
\newcommand{\ripleyK}{\^K}
\newcommand{\E}{\mathbb{E}}
\newcommand{\T}{\mathbb{T}}
\newcommand{\Prob}{\operatorname{Prob}}
\newcommand{\rta}{\longrightarrow}
\newcommand{\mi}{\mathrm{i}}
\newcommand{\q}{\quad}
\def\d{\delta}
\def\a{\alpha}
\def\eps{\varepsilon}
\def\ld{\lambda}
\def\p{\partial}
\newcommand{\bx}{\mathbf x}
\newcommand{\by}{\mathbf y}
\newcommand{\bz}{\mathbf z}
\newcommand{\bl}{\mathbf \lambda}
\newcommand{\SF}{\mathcal H} 

\newcommand{\la}{\langle}
\newcommand{\ra}{\rangle}

\newcommand{\tu}{\tilde{u}}

\newcommand{\Nsf}{\mathsf{N}}
\newcommand{\Ssf}{\mathsf{S}}

\newcommand{\AAA}{\mathcal{A}}
\newcommand{\BBB}{\mathcal{B}}
\newcommand{\EEE}{\mathcal{E}}
\newcommand{\FFF}{\mathcal{F}}
\newcommand{\HHH}{\mathcal{H}}
\newcommand{\JJJ}{\mathcal{J}}
\newcommand{\LLL}{\mathcal{L}}
\newcommand{\MMM}{\mathcal{M}}
\newcommand{\NNN}{\mathcal{N}}
\newcommand{\OOO}{\mathcal{O}}
\newcommand{\PPP}{\mathcal{P}}
\newcommand{\RRR}{\mathcal{R}}
\newcommand{\SSS}{\mathcal{S}}
\newcommand{\TTT}{\mathcal{T}}
\newcommand{\UUU}{\mathcal{U}}
\newcommand{\VVV}{\mathcal{V}}
\newcommand{\WWW}{\mathcal{W}}
\newcommand{\YYY}{\mathcal{Y}}
\newcommand{\ZZZ}{\mathcal{Z}}

\newcommand{\ubf}{\mathbf{u}}
\newcommand{\vbf}{\mathbf{v}}
\newcommand{\tubf}{\tilde{\mathbf{u}}}
\newcommand{\tTheta}{\widetilde{\Theta}}
\newcommand{\tY}{\widetilde{Y}}
\newcommand{\tU}{\widetilde{U}}
\newcommand{\ty}{\tilde{y}}

\newcommand{\loc}{\mathrm{loc}}

\newcommand{\ltri}{\arrowvert\arrowvert\arrowvert}

\newcommand{\uv}{\underline{v}}

\newcommand{\Id}{\mathrm{Id}\,}

\newcommand{\vvec}{\overrightarrow}

\newcommand{\vertiii}[1]{{\left\vert\kern-0.25ex\left\vert\kern-0.25ex\left\vert #1 
    \right\vert\kern-0.25ex\right\vert\kern-0.25ex\right\vert}}

\newcommand{\rouge}[1]{{\color{red} #1}}

\title[Center stable manifolds for wave outside a ball]{Center stable manifolds for the radial semi-linear wave equation outside a ball}

\author{Thomas Duyckaerts${}^1$}
\address{Thomas Duyckaerts, LAGA, Institut Galil\'ee, Universit\'e Paris 13
99, avenue Jean-Baptiste Cl\'ement,
93430 - Villetaneuse, France and
DMA, \'Ecole Normale Sup\'erieure, Universit\'e PSL, CNRS, 75005 Paris, France.
}
\email{duyckaer@math.univ-paris13.fr}

\author{Jianwei Yang${}^2$}
\address{Jianwei Yang, Department of Mathematics, Beijing Institute of Technology, Beijing 100081, P. R. China}
\email{jw-urbain.yang@bit.edu.cn}

\thanks{$^1$LAGA (UMR 7539), Universit\'e Sorbonne Paris Nord and DMA, \'Ecole Normale Sup\'erieure, Universit\'e PSL. Partially supported by the Labex MME-DII and the \emph{Institut Universitaire de France}}
\thanks{$^2$Department of Mathematics, Beijing Institute of Technology, Beijing 100081, P. R. China. Supported by NSFC grant No. 12371239 and Research fund program for young scholars of Beijing Institute of Technology.}
\keywords{Wave equation. Center-stable Manifold. Exterior Problem. Blow-up}
\subjclass{35L05, 
 	35A01, 
    35B40, 
    35B44, 
    37K40
}

\begin{abstract}
We consider the nonlinear wave equation, with a large exponent, power-like non-linearity,
outside a ball of the Euclidean 3-dimensional space. In a previous article, we have proved that any global solution converges, up to a radiation term, to a stationary solution of the equation. In this work, we construct the center-stable manifold associated to each of the stationary solution, giving a complete description of the dynamics of global solutions. We also study the behaviour of solutions close to each of the center-stable manifold.
\end{abstract}

\maketitle

\section{Introduction}
\subsection{Background and main results}

Let $\mathbb{B}\subset \mathbb R^{3}$ be the unit ball centered at the origin and  set
$\Omega=\mathbb R^{3}\setminus \mathbb{B}$.
Consider the following Cauchy-Dirichlet problem on $\Omega$
\begin{equation}
\label{eq:NLW}
\begin{cases}
(\partial_{t}^{2}-\Delta)u(t,x)=u^{2m+1},\,(t,x)\in\mathbb R\times \Omega\\
(u,\partial_{t}u)|_{t=0}=(u_{0}, u_{1})\in \HHH,\;u_{\restriction\partial \Omega}=0
\end{cases}
\end{equation}
where $\HHH$ is the space of real-valued radial functions in $\dot{H}^1_0(\Omega) \times L^2(\Omega)$ and $m>2$ is an integer. The equation \eqref{eq:NLW} has a conserved energy
\begin{equation}
 \label{defE}
 E(u,\partial_tu)=\frac{1}{2}\int_{\Omega} |\nabla u|^2+\frac{1}{2}\int_{\Omega}(\partial_tu)^2-\frac{1}{2m+2}\int_{\Omega} |u|^{2m+2}.
\end{equation} 
The space $\HHH$ is precisely the space of finite energy states. In our previous work \cite{DuyckaertsYang21}, we have proved that the set of (radial) finite energy stationary solutions of \eqref{eq:NLW} is of the form 
$$\mathcal{Q}=\{0\}\cup \{Q_k\}_{k\geq 0}\cup \{-Q_k\}_{k\geq 0},$$
where $Q_k$ has exactly $k$ zeros on $(1,\infty)$, the sequence $\left\{E(Q_k,0)\right\}_{k\geq 0}$ is increasing and 
$$ \forall k\geq 0,\quad \lim_{r\to\infty} rQ_k(r)=\ell_k>0,$$
where the sequence $(\ell_k)_{k}$ is also increasing (see Subsection \ref{sub:stationary} below for the details). The equation \eqref{eq:NLW} is well-posed in $\HHH$: for any initial data $(u_0,u_1)$, there exists a solution of \eqref{eq:NLW} defined on a maximal time of existence $(T_-(u),T_+(u))$. Furthermore, if $T_{+}(u)$ is finite, then
\begin{equation}
 \label{BupCriterion}
\lim_{t\to T_+(u)} \|u(t)\|_{L^{\infty}}=+\infty.
\end{equation}
In particular, the energy norm must go to infinity. 
An analogous statement holds for negative time.

The equation \eqref{eq:NLW} was introduced to the first author by Piotr Bizo{\'{n}} in 2015 (see also the article \cite{BizonMaliborski20} of Bizo\'n and Maliborski for numerical and theoretical investigations on \eqref{eq:NLW}). We see it as a toy model for nonlinear focusing dispersive equations, where a complete picture of the global dynamics seems achievable, whereas it is out of reach, with current methods, for classical dispersive equations on the whole Euclidean space. Due to the restriction to radial solutions and the presence of the obstacle, the problem is essentially energy-subcritical and the energy is a relevant conservation law (in contrast with the general, nonradial case where the energy lives at lower regularity than the critical Sobolev space, and is thus ineffective). Moreover, the tools used in \cite{DuKeMe13} to prove soliton resolution for the radial energy-critical wave equation on $\R^3$ are still available in the context of equation \eqref{eq:NLW}. In \cite{DuyckaertsYang21}, the authors, using these tools, have described the dynamics of global solutions of \eqref{eq:NLW}:
\begin{theorem}
\label{T:DY19}
Let $(u_0,u_1)\in \HHH$ and $u$ be the corresponding solution of \eqref{eq:NLW}. Assume that $T_+(u)=+\infty$. Then there exists $Q\in \mathcal{Q}$ and a solution $u_L$ of the linear wave equation on $\Omega$:
\begin{equation}
\label{eq:LW}
(\partial_{t}^{2}-\Delta)u_L(t,x)=0,\,(t,x)\in\mathbb R\times \Omega,\quad u_{\restriction\partial \Omega}=0
\end{equation}
with initial data in $\HHH$, such that 
\begin{equation}
\label{asymptotic_u}
\lim_{t\to\infty} \Big\|\bigl(u(t)-u_L(t)-Q,\partial_tu(t)-\partial_t u_L(t)\bigr)\Big\|_{\HHH}=0.
\end{equation}
Furthermore, 
\begin{equation}
\label{eq:energy}
E(u_0,u_1)=E(Q,0)+\frac{1}{2}\|(u_L(0),\partial_tu_L(0))\|^2_{\HHH}. 
\end{equation} 
 \end{theorem}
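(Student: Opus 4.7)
The plan is to follow the Duyckaerts-Kenig-Merle soliton-resolution scheme \cite{DuKeMe13}, adapted to the radial exterior problem on $\Omega$. There are four main steps.

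\textbf{Step 1: A priori boundedness and extraction of a linear part.} First I would show that, since $T_+(u)=+\infty$, the trajectory is automatically bounded in $\HHH$: combining the blow-up criterion \eqref{BupCriterion}, the conservation of \eqref{defE}, and the radial Strauss inequality $|u(r)|\lesssim r^{-1/2}\|\nabla u\|_{L^2}$ on $\Omega$ (which makes the $L^{2m+2}$ term in the energy subcritical precisely when $m>2$), one obtains $\sup_{t\ge 0}\|(u(t),\partial_t u(t))\|_{\HHH}<\infty$. Next, restricting $u$ to exterior light-cones $\{|x|>t+R\}$ with $R$ large and using finite speed of propagation, one constructs free radial data $(v_0,v_1)\in\HHH$ whose linear evolution $u_L$ captures the part of the energy escaping to spatial infinity, so that $u-u_L$ has precompact forward trajectory in $\HHH$.

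\textbf{Step 2: Profile decomposition.} Along an arbitrary sequence $t_n\to+\infty$ one applies a Bahouri-Gérard decomposition of $(u(t_n)-u_L(t_n),\partial_t u(t_n)-\partial_t u_L(t_n))$ into profiles. Because the radial exterior problem is energy-subcritical, concentrating bubbles are ruled out, so every profile is a regular solution of the linear equation \eqref{eq:LW}. The task then becomes showing that exactly one profile is ``trapped'' and corresponds, after nonlinear evolution, to a stationary solution, while all remaining ones scatter and can be absorbed into $u_L$.

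\textbf{Step 3: Rigidity for non-radiative solutions.} The key step is a rigidity theorem: any finite-energy radial solution of \eqref{eq:NLW} whose energy stays confined to bounded regions in both time directions must coincide with an element of $\mathcal{Q}$. The proof is based on channels-of-energy estimates for the free radial wave equation on $\Omega$, which state that any nonzero solution radiates a definite fraction of its energy into every exterior cone $\{|x|>t+R\}$; the estimate should be sharp enough to detect the asymptotic constant $\ell=\lim_{r\to\infty}r u_0(r)$. The nonlinear version is obtained by a fixed-point argument in exterior cones, where the Strauss bound forces the nonlinear term to be a genuine perturbation of the free flow.

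\textbf{Step 4: Conclusion.} The unique non-dispersive profile from Step 2 is identified, via Step 3 applied to its nonlinear evolution, with some $Q\in\mathcal{Q}$; the weak convergence is then upgraded to strong convergence in $\HHH$ by feeding $Q$ back into the profile decomposition and exploiting Pythagorean orthogonality. This same orthogonality forces the remaining profiles to vanish and yields the energy identity \eqref{eq:energy}. The principal obstacle is \textbf{Step 3}: the channels-of-energy estimates of \cite{DuKeMe13} live on $\R^3$ and rely on the explicit structure of the free fundamental solution, whereas on $\Omega$ diffraction off the ball alters this structure, so the estimate must be re-derived and carefully linked to the asymptotic constants $\ell_k$ that parametrize the stationary family $\mathcal{Q}$.
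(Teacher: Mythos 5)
You should first be aware that this paper does not prove Theorem \ref{T:DY19} at all: it is quoted from the earlier work \cite{DuyckaertsYang21}, and the present article only uses it as a black box. Your outline is aimed at the right target, since the cited proof is indeed of Duyckaerts--Kenig--Merle type (extraction of the radiation term, exterior/channel-of-energy estimates, rigidity of non-radiative solutions, identification with the stationary family), so Steps 2--4 are a reasonable reconstruction of that strategy. On Step 3, note that the situation outside the ball is actually more favorable than you suggest: for radial functions the substitution $v=ru$ turns the free equation into the one-dimensional wave equation on the half-line $r>1$ with a Dirichlet condition, so the exterior energy computations and their link with the asymptotic constants $\ell_k$ are essentially explicit; this is precisely what makes the exterior model tractable.

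There is, however, a genuine gap in Step 1. For the focusing energy \eqref{defE}, conservation of $E$ combined with the radial Strauss inequality does \emph{not} give $\sup_{t\ge 0}\|(u(t),\partial_tu(t))\|_{\HHH}<\infty$: the potential term enters with a negative sign, and the Strauss bound only yields $\int_\Omega|u|^{2m+2}\lesssim\|\nabla u\|_{L^2}^{2m+2}$, a strictly higher power of the energy norm, so bounded energy is perfectly compatible with $\|\nabla u(t)\|_{L^2}\to\infty$ along a sequence of times; the blow-up criterion \eqref{BupCriterion} concerns only finite-time blow-up and says nothing here. Boundedness of global trajectories is part of what must be proved (a posteriori it follows from \eqref{asymptotic_u}), and the absence of any boundedness hypothesis in Theorem \ref{T:DY19} is one of the nontrivial features distinguishing it from results such as \cite{DuKeMe12c}, where boundedness is assumed. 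Relatedly, asserting that $u-u_L$ has precompact forward trajectory in $\HHH$ after merely subtracting the exterior radiation is essentially the conclusion in disguise: in the DKM scheme compactness is not obtained from finite speed of propagation alone, but through the rigidity/channel argument itself (applied to limits along sequences of times), and your Step 2 dichotomy ``one trapped profile, all others absorbed into $u_L$'' presupposes this. As written, Steps 1--2 therefore assume the hardest part of the theorem rather than prove it.
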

 Note that if $u$ is a solution of equation \eqref{eq:NLW}, then $t\mapsto u(-t)$ is also a solution, so that the analog of Theorem \ref{T:DY19} for negative times is also valid.
 
 If \eqref{asymptotic_u} holds, we say that $u$ scatters to $Q$ (forward in time). We denote by $\mathcal{S}^+$ the open set of initial data in $\HHH$ such that the corresponding solution scatters to $0$, $\mathcal{M}_k^+$ the set of initial data such that it scatters to $Q_k$, and by $\mathcal{F}^+$ the set of initial data such that it blows up in finite positive time. We will use the same notations, with $-$ superscript, for the set of solutions having the same behaviour for negative times. By Theorem \ref{T:DY19} and the time reversibility of the equation,
 $$ \HHH=\FFF^+\cup \SSS^+ \cup \bigcup_{k\geq 0} \pm \MMM_k^{+},\quad \HHH=\FFF^-\cup \SSS^- \cup \bigcup_{k\geq 0} \pm \MMM_k^{-}$$
 where $-\MMM_k^{\pm}=\{-(u_0,u_1),\; (u_0,u_1)\in \MMM_k^{\pm}\}$.
%
 The purpose of this work is to go further than Theorem \ref{T:DY19} and to describe in more details the global dynamics of \eqref{eq:NLW}, especially in the neighborhood of the sets $\mathcal{M}_k^{\pm}$. We will prove in particular that the sets $\mathcal{M}_k^{\pm}$ are submanifolds of codimension $k+1$ of $\HHH$ that are unstable by blow-up for $k\geq 1$, and by blow-up and scattering to $0$ for $k=0$.
 
 The dynamics close to the stationary waves $Q_k$ and $-Q_k$ are related to the linearized equation
 \begin{equation}
  \label{eq:linearized_intro}
  (\partial_t^2+L_k)h=0,
 \end{equation} 
 and thus to the spectral properties of the linearized operator $L_k=-\Delta -(2m+1)Q^{2m}_k $. Our first result is the description of the spectrum of $L_k$, considered as an operator acting on radial functions with Dirichlet boundary condition:
\begin{theorem}
	\label{T:essspec}
	For each $k\in \mathbb{N}$, the essential spectrum of $L_k$
	is $[0,+\infty)$, $L_k$ has no zero energy state,
	and exactly $k+1$  eigenvalues 
    that are negative and simple. 
\end{theorem}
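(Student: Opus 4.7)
The proof splits into three parts: essential spectrum, triviality of the kernel, and the count of negative eigenvalues. Throughout, I would use the unitary change of variables $u(r) \mapsto v(r) = r u(r)$, which conjugates $L_k$ on radial $L^2(\Omega)$ with the one-dimensional Sturm--Liouville operator $\widetilde{L}_k = -\partial_r^2 - (2m+1) Q_k^{2m}$ on $L^2((1,\infty), dr)$ with Dirichlet condition at $r = 1$. For the essential spectrum, I would apply Weyl's theorem: writing $L_k = -\Delta + W_k$ with $W_k = -(2m+1) Q_k^{2m}$, the decay $|W_k(r)| = O(r^{-2m})$ (from $Q_k(r) = \ell_k/r + O(r^{1-2m})$) makes $W_k$ a relatively compact perturbation of the Dirichlet Laplacian, so $\sigma_{\mathrm{ess}}(L_k) = [0,\infty)$.

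The core of the proof is the analysis of two linearly independent solutions of $L_k \phi = 0$. The \emph{scaling} solution $\Lambda Q_k := \tfrac{1}{m} Q_k + r \partial_r Q_k$ comes from differentiating $\lambda \mapsto \lambda^{1/m} Q_k(\lambda r)$ at $\lambda = 1$; setting $\eta(r) := r \Lambda Q_k(r)$ gives a 1D solution of $\widetilde{L}_k \eta = 0$ with $\eta(1) = Q_k'(1) \neq 0$ (non-vanishing since $Q_k \not\equiv 0$ with $Q_k(1) = 0$) and $\eta(r) \to (1-m)\ell_k/m \neq 0$ at infinity. The \emph{shooting} solution is the unique $v_0$ with $\widetilde{L}_k v_0 = 0$, $v_0(1) = 0$, $v_0'(1) = 1$; equivalently $v_0(r) = r \, \partial_b q_b \big|_{b = b_k}$ where $q_b$ solves the stationary radial ODE with data $q_b(1) = 0, q_b'(1) = b$ and $b_k$ is chosen so that $q_{b_k} = Q_k$. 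The constant Wronskian $W = v_0 \eta' - v_0' \eta \equiv -Q_k'(1)$ confirms linear independence, and matching the asymptotic expansion $v_0 \sim c_1 + c_2 r$ against the constant limit of $\eta$ gives $c_2 = m Q_k'(1)/((1-m)\ell_k) \neq 0$. Thus $v_0$ grows linearly at infinity; in particular $v_0/r \to c_2 \neq 0$, so $v_0/r \notin L^6(\Omega)$, and since any kernel element of $L_k$ in $\dot{H}^1_0$ is a multiple of $v_0/r$, this forces $\ker L_k = \{0\}$.

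For the eigenvalue count, the lower bound comes from the nodal decomposition of $Q_k$: writing $1 = z_0 < z_1 < \cdots < z_k < z_{k+1} = +\infty$ for the boundary together with the $k$ zeros of $Q_k$, define $f_j := Q_k \mathbf{1}_{(z_j, z_{j+1})} \in \dot{H}^1_0(\Omega)$ for $j = 0, \ldots, k$. Using $-\Delta Q_k = Q_k^{2m+1}$ and integration by parts on each nodal interval (the boundary terms at $r=1$, at each $z_j$, and at infinity all vanish),
\[
\langle L_k f_j, f_j \rangle = -2m \int_{z_j}^{z_{j+1}} Q_k^{2m+2} \cdot 4\pi r^2 \, dr < 0,
\]
and the disjoint supports give, via min-max, at least $k+1$ negative eigenvalues. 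For the matching upper bound, the half-line Sturm oscillation theorem identifies the number of negative eigenvalues of $\widetilde L_k$ with the number of zeros of $v_0$ in $(1,\infty)$---finite because $v_0$ grows linearly. By Sturm separation, the zeros of $v_0$ and $\eta$ interlace; the values $\eta(z_j) = z_j^2 Q_k'(z_j)$ have signs alternating with $j$ (since the nodes of $Q_k$ are simple sign-changing zeros), which together with the known signs of $\eta(1)$ and $\eta(+\infty)$ produces exactly $k+1$ sign changes of $\eta$, hence (via the Wronskian constraint) exactly $k+1$ zeros of $v_0$. Simplicity of each eigenvalue is automatic in one dimension. The main obstacle is this sharp upper bound: showing that $\eta$ has exactly one zero per nodal subinterval---equivalently that $r^{1/m} Q_k$ has a unique critical point on each such region---is the principal ODE-analytic step, which I expect to follow from the structural properties of $Q_k$ established in \cite{DuyckaertsYang21}.
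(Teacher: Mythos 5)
Your treatment of the essential spectrum (Weyl), of the absence of a zero energy state (the Wronskian of the shooting solution $v_0$ against $\eta=r\Lambda Q_k$, forcing $v_0/r$ to have a nonzero limit at infinity), and of the lower bound via the nodal decomposition of $Q_k$ all match the paper's arguments and are sound. The problem is the upper bound, which is exactly where you stop. Your sign-alternation computation ($\eta(z_j)=z_j^2Q_k'(z_j)$ alternating, plus the signs of $\eta(1)$ and $\eta(+\infty)$) only shows that $\eta$ has \emph{at least} $k+1$ zeros; for the bound ``at most $k+1$ negative eigenvalues'' you need the reverse inequality, namely that $\Lambda Q_k$ has at most one zero in each nodal region (equivalently, that $r^{1/m}Q_k$ has a unique critical point there). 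You defer this to ``structural properties of $Q_k$ established in \cite{DuyckaertsYang21}'', but that reference only provides existence, uniqueness up to scaling, asymptotics and the zero structure of the stationary solutions; it does not contain the critical-point count, and the present paper's introduction explicitly identifies this count as the main difficulty of the theorem. So the core of the proof is missing, not merely routine.

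What the paper actually does at this point is nontrivial: it proves (Proposition \ref{P:A}) that on each nodal annulus $\Omega_{k,i}$ the operator $L_k$ with Dirichlet conditions has exactly one negative eigenvalue. This is obtained by maximizing the quotient $\|f\|_{L^{2m+2}}^{2m+2}/\|\nabla f\|_{L^2}^{2m+2}$ on the annulus, identifying the maximizer with $\pm Q_k$ restricted to the annulus \emph{via the Ni--Nussbaum uniqueness theorem} for positive solutions of $-\Delta Q=Q^{2m+1}$ on annuli \cite{NiNussbaum85} (and via Lemma 2.19 of \cite{DuyckaertsYang21} for the unbounded region), and deducing nonnegativity of the quadratic form on the $\dot H^1$-orthogonal complement of $Q_k$. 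Then three zeros of $\Lambda Q_k$ inside one annulus would, after perturbing the zero-energy equation by a factor $(1-\eps)$, produce two negative directions on that annulus, contradicting the one-eigenvalue statement; an induction on $k$, using the scaling relation between $\Lambda Q_k$ and $\Lambda Q_{k-1}$, confines any putative extra zeros to the innermost annulus. Without an argument of this type (or some substitute for the Ni--Nussbaum input) your proof does not close. A secondary, fixable point: even granting exactly $k+1$ zeros of $\eta$, passing by interlacing to ``exactly $k+1$ zeros of $v_0$'' needs more care, since Sturm separation alone leaves the count ambiguous by one depending on what happens before the first and after the last zero of $v_0$; the paper avoids this by comparing the zeros of the eigenfunctions directly with those of $\Lambda Q_k$.
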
 
 In the statement of the theorem the fact that $L_k$ has no \emph{zero energy state} means that zero is not an eigenvalue or a resonance for $L_k$, see Definition \ref{dfn:resonance}. The statement about the essential spectrum of $L_k$ is of course completely standard, and the proof of the absence of zero energy state relies on quite elementary estimates on solutions of an ordinary differential equation. The main novelty of Theorem \ref{T:essspec} concerns the counting of the eigenvalues of $L_k$, which is much more difficult, and ultimately relies on a nontrivial uniqueness result for second order elliptic differential equations on annulus, proved in \cite{NiNussbaum85}. Analogous results on $\R^3$ are not known, except for ground state solutions, see e.g. \cite{ChGuNaTs07} for references and a theoretical and numerical study of the spectrum of linearized operators at excited states for Schr\"odinger equations. 
 
 Note that for elliptic problems on $\R^3$, even the related question of uniqueness of excited state is very far to be understood: see the recent preprint \cite{CohenLiSchlag21P} for discussions on the subject and a computer-assisted proof of the uniqueness of the first radial excited states of $-\Delta Q+Q=Q^3$ on $\R^3$.  The uniqueness of radial excited states for the equation $-\Delta Q=Q^{2m+1}$ outside the ball was proved in \cite{DuyckaertsYang21}, however this case is much easier since all solutions are obtained by rescaling a unique object, see Subsection \ref{sub:stationary} below for more details.

 Using Theorem \ref{T:essspec}, we will prove that $\MMM_k^+$ is a codimension $k+1$ manifold of the energy space. We use the standard notation $\oplus$ for direct sums: if $V_1$, \ldots, $V_k$ are subspaces of a vector space $V$ we denote $V=V_1\oplus \ldots \oplus V_k$ when every element of $V$ can be written in a unique way as $\sum_{j=1}^k v_j$, $v_j\in V_j$. 
 \begin{theorem}[Global center stable manifold]
\label{T:cs}
Let $k\geq 0$, and $\pm$ be one of the signs $+$ or $-$. Then $\MMM_k^{\pm}$  is a smooth manifold of codimension $k+1$.
 \end{theorem}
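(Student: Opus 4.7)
The plan is to construct the local center-stable manifold at the equilibrium $(Q_k,0)$ via a Lyapunov--Perron fixed point argument, and then propagate it globally using the nonlinear flow together with the soliton resolution Theorem~\ref{T:DY19}.

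First, I would linearize around $Q_k$. Writing $u=Q_k+h$, the perturbation $h$ satisfies
\begin{equation*}
(\partial_t^2+L_k)h=N(h),\qquad N(h)=(Q_k+h)^{2m+1}-Q_k^{2m+1}-(2m+1)Q_k^{2m}h,
\end{equation*}
with Dirichlet boundary condition. By Theorem~\ref{T:essspec}, $L_k$ has exactly $k+1$ negative simple eigenvalues $-\nu_1^2,\ldots,-\nu_{k+1}^2$ with $L^2$-orthonormal radial eigenfunctions $Y_1,\ldots,Y_{k+1}$, and zero is neither an eigenvalue nor a resonance. Writing the system in phase space as $\partial_t(h,\partial_th)=J(h,\partial_th)+(0,N(h))$ with $J(v,w)=(w,-L_kv)$, the generator $J$ has $2(k+1)$ real nonzero eigenvalues $\pm\nu_j$ with eigenvectors $(Y_j,\pm\nu_j Y_j)$ and purely imaginary continuous spectrum. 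I would thus obtain an invariant splitting $\HHH=\UUU\oplus\HHH^{cs}$, where $\UUU=\mathrm{span}\{(Y_j,\nu_j Y_j)\}_{j=1}^{k+1}$ is the $(k+1)$-dimensional unstable subspace and $\HHH^{cs}$ (the sum of the $(k+1)$-dimensional stable part and the center part) is its natural complement with respect to the bilinear pairing defined by $L_k$.

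Next, I would build a local smooth manifold $W^{cs}_{\loc}\subset \HHH$ of codimension $k+1$ through $(Q_k,0)$, tangent to $\HHH^{cs}$ there. The standard setup is the Duhamel formula
\begin{equation*}
(h,\partial_th)(t)=e^{tJ}(h,\partial_th)(0)+\int_0^t e^{(t-s)J}\bigl(0,N(h(s))\bigr)\,ds,
\end{equation*}
after cutting off the nonlinearity outside a small ball of $\HHH$. The Lyapunov--Perron map prescribes the $\HHH^{cs}$-component of the initial data freely, requires $\|(h,\partial_th)(t)\|_{\HHH}$ to stay small for $t\geq 0$, and solves for the $\UUU$-component by contraction in a suitable weighted space of forward-in-time trajectories. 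The two key analytic ingredients are the exponential contraction $e^{-\nu_j t}$ on the stable subspace and a dispersive/local-energy-decay estimate for $e^{tJ}$ restricted to the center subspace; the latter is available precisely because Theorem~\ref{T:essspec} excludes zero eigenvalues and resonances, combined with classical local energy decay for the Dirichlet wave equation outside the (star-shaped) ball. The fixed point yields a smooth map $\Psi$ from a neighborhood of $0$ in $\HHH^{cs}$ to $\UUU$, and $W^{cs}_\loc=\{(Q_k,0)+v+\Psi(v):v\in \HHH^{cs},\ \|v\|_{\HHH}\text{ small}\}$. Uniqueness in the fixed point furnishes the characterization: an initial datum near $(Q_k,0)$ lies in $W^{cs}_\loc$ iff its forward solution stays in a prescribed small neighborhood of $(Q_k,0)$ for all $t\geq 0$, in which case the center-part dispersive estimates propagate into a nonlinear scattering statement giving convergence to $Q_k$.

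To conclude, I would extend globally using Theorem~\ref{T:DY19}: for any $(u_0,u_1)\in \MMM_k^+$, the solution eventually enters and remains in an arbitrarily small neighborhood of $(Q_k,0)$, so there is $T=T(u_0,u_1)\geq 0$ with $\Phi_T(u_0,u_1)\in W^{cs}_\loc$, where $\Phi_t$ is the nonlinear flow of \eqref{eq:NLW}. Hence
\begin{equation*}
\MMM_k^+=\bigcup_{T\geq 0}\Phi_{-T}\bigl(W^{cs}_\loc\bigr),
\end{equation*}
and since each $\Phi_{-T}$ is a smooth diffeomorphism on its open domain, the right-hand side is a smooth submanifold of $\HHH$ of codimension $k+1$; the statement for $\MMM_k^-$ follows by the time reversibility of \eqref{eq:NLW}. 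The main obstacle I anticipate is obtaining sufficiently strong dispersive/decay estimates on the center part of the linearized flow: turning the spectral information of Theorem~\ref{T:essspec} into genuine nonlinear scattering for $(\partial_t^2+L_k)h=N(h)$ on the exterior domain will require a careful combination of local energy decay for the Dirichlet wave equation outside the ball with the structure of the spectral projection onto the continuous part, together with a bootstrap that is sharp enough for the power $2m+1$, where the radial restriction and exterior geometry must be used decisively.
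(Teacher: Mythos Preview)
Your overall architecture---Lyapunov--Perron construction of a local center-stable manifold, then globalization by pulling back along the nonlinear flow---is indeed the paper's strategy. But two steps in your passage from local to global are genuine gaps, and both are precisely where the paper does its real work.

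First, you assert that by Theorem~\ref{T:DY19} ``the solution eventually enters and remains in an arbitrarily small neighborhood of $(Q_k,0)$''. This is false in the $\HHH$-topology: the asymptotic statement \eqref{asymptotic_u} contains a nonzero radiation term $u_L$, so $\vec u(t)$ converges to $(Q_k,0)$ only \emph{weakly}. A solution with large radiation never enters a small $\HHH$-ball around $(Q_k,0)$, hence never enters your $W^{cs}_{\loc}$, and your union $\bigcup_T \Phi_{-T}(W^{cs}_{\loc})$ misses most of $\MMM_k^+$. The paper fixes this by building the local manifold $\NNN_{\delta}$ in a \emph{dispersive} (Strichartz-type) norm rather than in $\HHH$ (Corollary~\ref{CR:local-manifold}), and proving separately (Proposition~\ref{P:CVdispersive_norm}) that every element of $\MMM_k^+$ becomes small in that norm after some time.

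Second, even granting the correct union, the sentence ``since each $\Phi_{-T}$ is a smooth diffeomorphism on its open domain, the right-hand side is a smooth submanifold'' does not follow: a union of diffeomorphic images of a manifold need not be a manifold. To conclude that $\MMM_k^+$ is a submanifold near a given $(U_0,U_1)$, you must show that \emph{all} nearby points of $\MMM_k^+$ land in $\NNN_{\delta}$ at the \emph{same} time $t_0$. Equivalently, you must rule out sequences $(U_0^n,U_1^n)\to(U_0,U_1)$ in $\MMM_k^+$ with $\Phi_{t_0}(U_0^n,U_1^n)\notin\NNN_\delta$. This requires a \emph{no-return} property (Proposition~\ref{P:P40} in the paper): a solution that starts $\delta^6$-close to $\vec U(t_0)$ but drifts $\delta^2$-far from $\vec U$ cannot belong to $\MMM_k^+$. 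The proof is a channel-of-energy argument showing the unstable component carries a definite amount of energy to spatial infinity, forcing (via \eqref{eq:energy}) scattering to a lower-energy stationary state or blow-up. This is the heart of the globalization step and is entirely absent from your outline.
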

 The definition of a smooth manifold is recalled in Appendix \ref{A:submanifold}. 
 By Theorem \ref{T:essspec}, we have the following spectral decomposition of $\HHH$ related to the linearized equation \eqref{eq:linearized_intro}:
 $$ \HHH= \HHH^s \oplus \HHH^u \oplus \HHH^c,$$
 where $\HHH^s$ is the stable space, $\HHH^u$ is the unstable space, $\dim \HHH^s=\dim \HHH^u=k+1$, and $\HHH^c$ is the projection of $\HHH$ on the continuous spectrum of $L_k$ (see \eqref{defHcus}, \eqref{defHcs} for the details)\footnote{Of course, these spaces depend on $k$, which we do not indicate to lighten notations}. From the proof of Theorem \ref{T:cs}, we obtain:
 \begin{proposition}[Tangent spaces]
 \label{P:intersection}
 The tangent space of $\MMM_k^+$ at $(Q_k,0)$ is $\HHH^{s}\oplus \HHH^c$. The tangent space of $\MMM_k^-$ at $(Q_k,0)$ is $\HHH^{u}\oplus \HHH^c$. In particular, the manifolds $\MMM_k^+$ and $\MMM_k^-$ intersect transversally in a neighborhood of $(Q_k,0)$, and $\MMM_k^+\cap \MMM_k^-$ is a submanifold of codimension $2k+2$ in a neighborhood of $(Q_k,0)$.
 \end{proposition}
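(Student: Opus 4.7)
The plan is to identify the tangent space of $\MMM_k^+$ at $(Q_k,0)$ directly from the graph construction that will underlie Theorem \ref{T:cs}, then to transport the result to $\MMM_k^-$ using the time-reversal symmetry of \eqref{eq:NLW}, and finally to deduce transversality and the dimension of the intersection from the spectral decomposition $\HHH=\HHH^s\oplus \HHH^u\oplus \HHH^c$ provided by Theorem \ref{T:essspec}. The main difficulty thus lies already in Theorem \ref{T:cs}: one must extract from its proof the graph representation of $\MMM_k^+$ with vanishing differential at $(Q_k,0)$. Everything else reduces to linear algebra.

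\textbf{Tangent space of $\MMM_k^+$.} The construction of $\MMM_k^+$ in the proof of Theorem \ref{T:cs} will realize it, in standard center-stable manifold fashion, as the graph of a smooth map $\Psi:U\to \HHH^u$ in a neighborhood of $(Q_k,0)$, where $U$ is a neighborhood of $0$ in $\HHH^s\oplus \HHH^c$ and $\Psi(0)=0$, $D\Psi(0)=0$. The vanishing of $D\Psi(0)$ is the standard outcome of a contraction-mapping argument whose linearization at the origin preserves $\HHH^s\oplus \HHH^c$. Reading off the tangent space from the graph gives
\[
T_{(Q_k,0)}\MMM_k^+ \;=\; \{(v,D\Psi(0)v):v\in \HHH^s\oplus \HHH^c\} \;=\; \HHH^s\oplus \HHH^c.
\]

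\textbf{Tangent space of $\MMM_k^-$.} Let $\tau:\HHH\to \HHH$ be the linear involution $\tau(u_0,u_1)=(u_0,-u_1)$. Since $t\mapsto u(-t)$ solves \eqref{eq:NLW} whenever $u$ does, $\MMM_k^-=\tau(\MMM_k^+)$; moreover $\tau$ fixes $(Q_k,0)$. By Theorem \ref{T:essspec} the negative eigenvalues of $L_k$ are $-\lambda_1^2,\dots,-\lambda_{k+1}^2$ with $\lambda_j>0$, and if $\phi_j$ denote the associated eigenfunctions, then $\HHH^s=\operatorname{span}\{(\phi_j,-\lambda_j\phi_j)\}_{j=1}^{k+1}$ and $\HHH^u=\operatorname{span}\{(\phi_j,\lambda_j\phi_j)\}_{j=1}^{k+1}$. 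Hence $\tau$ exchanges $\HHH^s$ and $\HHH^u$, and leaves $\HHH^c$ invariant (the center space being associated with the purely imaginary spectrum of the linearized first-order system, which is stable under the sign flip of the second coordinate). Applying $D\tau(Q_k,0)=\tau$ to the previous step,
\[
T_{(Q_k,0)}\MMM_k^- \;=\; \tau(\HHH^s\oplus \HHH^c) \;=\; \HHH^u\oplus \HHH^c.
\]

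\textbf{Transversality and the intersection.} The sum of the two tangent spaces is $\HHH^s\oplus \HHH^u\oplus \HHH^c=\HHH$, so the intersection at $(Q_k,0)$ is transverse. The standard local model for a transverse intersection of two submanifolds of codimension $k+1$ then yields that $\MMM_k^+\cap \MMM_k^-$ is, in a neighborhood of $(Q_k,0)$, a smooth submanifold of codimension $2k+2$, with tangent space $(\HHH^s\oplus \HHH^c)\cap (\HHH^u\oplus \HHH^c)=\HHH^c$.
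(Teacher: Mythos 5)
Your proposal is correct and follows essentially the same route as the paper: the paper identifies $\MMM_k^+$ near $(Q_k,0)$ with the local graph $\NNN_{\delta}$ over $\HHH^{cs}$ (whose derivative at the origin vanishes thanks to the quadratic estimate \eqref{scattering_to_Q}), transfers to $\MMM_k^-$ via the involution $\sigma(u_0,u_1)=(u_0,-u_1)$, which swaps $\HHH^s$ and $\HHH^u$ and fixes $\HHH^c$, and then applies the transverse-intersection statement of the appendix to get codimension $2k+2$. The only step you leave implicit is that \emph{all} of $\MMM_k^+$ near $(Q_k,0)$ — not just the invariant graph produced by the contraction argument — coincides with that graph; this is exactly the no-return Proposition \ref{P:P40} applied with $U=Q$, $t_0=0$, $V_L=0$, which is what your appeal to "the proof of Theorem \ref{T:cs}" amounts to.
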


By Theorem \ref{T:cs}, the scattering to a nonzero stationary solution is an unstable behaviour. We next give more information on this instability. From the construction of $\MMM_k^+$ (see e.g. Proposition \ref{P:P40}), we obtain:
\begin{proposition}
 \label{P:unstable}
 Let $k \geq 0$, $(u_0,u_1)\in \MMM_k^+$. Then there exists an open neighborhood $\mathcal{U}$ of $(u_0,u_1)$ in $\HHH$ such that 
 $$\mathcal{U}\cap -\MMM_k^+=\emptyset\quad\text{and}\quad\forall j\geq k+1,\;  \mathcal{U} \cap \MMM_j^+=\emptyset\text{ and }\mathcal{U} \cap -\MMM_j^+=\emptyset.$$
 \end{proposition}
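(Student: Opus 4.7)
\emph{Proof plan.} I argue by contradiction, combining three ingredients: the energy identity \eqref{eq:energy}, continuous dependence of the nonlinear flow on a fixed finite time interval, and the local center-stable description of $\MMM_k^+$ that is constructed in the proof of Proposition \ref{P:P40}.

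Suppose no such $\mathcal{U}$ exists. Then there is a sequence $(v_0^n,v_1^n)\to (u_0,u_1)$ in $\HHH$ and, for each $n$, a scattering target $Q^{(n)}\in\{-Q_k\}\cup\{\pm Q_j:j\geq k+1\}$ for $v^n$. The energy identity gives $E(Q^{(n)},0)\le E(v_0^n,v_1^n)\to E(u_0,u_1)$. Since the sequence $(E(Q_j,0))_j$ is strictly increasing and tends to $+\infty$ (a fact that one reads off from the ODE characterization of the $Q_j$'s), this bounds the set of possible $Q^{(n)}$ to a finite collection; passing to a subsequence, I may assume $Q^{(n)}=Q^\ast$ is constant with $Q^\ast\neq Q_k$.

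Next I would use the local center-stable structure provided by Proposition \ref{P:P40}: there exist $\delta,\rho>0$ such that the local center-stable manifold $\MMM_k^{+,\mathrm{loc}}$ is a codimension-$(k+1)$ graph $\Phi\colon B_\delta(\HHH^s\oplus\HHH^c)\to B_\rho(\HHH^u)$, and any initial datum within $\HHH$-distance $\delta$ of $(Q_k,0)$ belongs to $\MMM_k^+$ if and only if it sits on this graph. Since $u$ scatters to $Q_k$ with radiation $u_L$, I choose $T>0$ so large that $(u(T)-u_L(T),\partial_tu(T)-\partial_tu_L(T))$ lies on $\MMM_k^{+,\mathrm{loc}}$ within $\delta/4$ of $(Q_k,0)$. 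By continuous dependence of the nonlinear flow on $[0,T]$, for $n$ large $v^n$ is defined on $[0,T]$ and
\[
\bigl\|(v^n(T)-u_L(T),\partial_tv^n(T)-\partial_tu_L(T))-(Q_k,0)\bigr\|_{\HHH}<\delta/2.
\]
Since $v^n$ scatters to $Q^\ast\neq Q_k$, the shifted state on the left cannot lie on $\MMM_k^{+,\mathrm{loc}}$ and therefore has a nontrivial projection on $\HHH^u$. The hyperbolic ejection analysis underlying the construction of $\MMM_k^+$ in Proposition \ref{P:P40} then constrains the long-time asymptotics of such ejected orbits to exclude the targets $-Q_k$ and $\pm Q_j$ with $j\geq k+1$, contradicting the choice of $Q^\ast$.

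The hard part of the argument will be exactly this exit classification, and more precisely the exclusion of $Q^\ast=-Q_k$: since $E(-Q_k,0)=E(Q_k,0)$, energy alone cannot rule it out, and one has to rely on a sign/variational analysis of the dominant unstable modes of $L_k$ that is built into the proof of Proposition \ref{P:P40}. The exclusion of the targets $\pm Q_j$ for $j\geq k+1$ is, in contrast, essentially energetic, since they are separated from $(Q_k,0)$ by a definite energy gap while the radiation budget available to ejected orbits remains controlled by $E(u_0,u_1)-E(Q_k,0)+O(\delta)$.
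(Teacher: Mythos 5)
There is a genuine gap: the entire content of the proposition is the exclusion of the targets $-Q_k$ and $\pm Q_j$, $j\geq k+1$, for solutions ejected from a neighborhood of $\MMM_k^+$, and this is exactly the step you defer to an unspecified ``hyperbolic ejection analysis'' and explicitly flag as unresolved (``the hard part of the argument will be exactly this exit classification''). The paper obtains Proposition \ref{P:unstable} by applying the no-return result, Proposition \ref{P:P40}, as stated: choose $\delta$ and $t_0$ adapted to $u$ and its radiation $V_L$ (conditions \eqref{P31}--\eqref{P33}); the neighborhood $\mathcal{U}$ is the preimage under the (continuous) time-$t_0$ flow map of the $\delta^6$-ball around $\vec{u}(t_0)$, and the dichotomy of Proposition \ref{P:P40} then says every datum in $\mathcal{U}$ either lies in $\MMM_k^+$, blows up, scatters to $0$, or scatters to $\pm Q_j$ with $j\leq k-1$ --- which is precisely the statement. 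Had you invoked that dichotomy as a black box your argument would close; instead you attempt to rebuild the exclusion from a local graph near $(Q_k,0)$ and do not carry it out.

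Two concrete defects in the route you sketch. First, the state you test against the local manifold, $\vec{v}^n(T)-\vec{u}_L(T)$, is not the Cauchy datum of any solution of \eqref{eq:NLW}, so the characterization ``$\HHH$-close to $(Q_k,0)$ and on the graph iff in $\MMM_k^+$'' (which is \eqref{equal_manifold}, itself proved \emph{via} the no-return result) does not apply to it; and $\vec{v}^n(T)$ itself is in general not $\HHH$-close to $(Q_k,0)$ because the radiation is not small in energy norm. This is exactly why the paper's local object $\NNN_{\delta}$ is defined through the weak norms $\vertiii{\cdot}$ and $\Ssf$, and why Proposition \ref{P:P40} is formulated relative to the profile $U$ and its radiation $V_L$ rather than relative to $(Q_k,0)$. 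Second, the mechanisms you propose are not the ones that work: the exclusion of $-Q_k$ is not a sign/variational analysis of the unstable modes but the quantitative channel-of-energy bound \eqref{P70'} --- the exponential growth of the $\PPP^+$ component (Claim \ref{claim:growth_unstable}) deposits an extra $\delta^5$ of energy in the exterior region beyond the radiation budget that persists there by Step 2 of the proof of Proposition \ref{P:P40}, and comparison with \eqref{P70} forces $E(Q_j,0)<E(Q_k,0)$, ruling out $-Q_k$ despite $E(-Q_k,0)=E(Q_k,0)$. Likewise, excluding $\pm Q_j$ with $j\geq k+1$ is not ``essentially energetic'' at the level of the conserved energy, since $E(v_0^n,v_1^n)=E(Q_k,0)+\frac12\|u_L(0)\|_{\HHH}^2+o(1)$ may well exceed $E(Q_j,0)$; one needs the persistence of the radiation in the exterior cone (finite speed of propagation plus exterior Strichartz estimates, Step 2 of the proof of Proposition \ref{P:P40}) to show that this energy is unavailable to build a higher-energy soliton.
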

We next prove that the behaviour of the solutions with initial data close to $\MMM_k^+$ is different in the ground-state case $k=0$ and the excited case $k\geq 1$. The behaviour close to the ground state manifold $\MMM_0^{+}$ can be described quite precisely. In this case, the solutions are unstable by blow-up and scattering:
\begin{theorem}
\label{T:M0}
 There exists open sets $\UUU^+$, $\UUU_F^+$, $\UUU_S^+$ of $\HHH$ such that
 $$ \UUU^+=\UUU_F^+\cup \UUU_S^+\cup \MMM_0^+,$$
 and any solution with initial data in $\UUU_F^+$ blows up in finite positive time, whereas any solution with initial data in $\UUU_S^+$ scatters to zero forward in time. Furthermore $\MMM_0^+$ is included in the boundary of $\UUU_F^+$ and in the boundary of $\UUU_S^+$.
\end{theorem}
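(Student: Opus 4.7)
The plan is to implement the Nakanishi--Schlag one-pass/ejection scheme around the ground state $(Q_0,0)$, which is feasible here because the unstable space at $(Q_0,0)$ is one-dimensional. By Theorem \ref{T:essspec} with $k=0$, the linearized operator $L_0$ has a single negative eigenvalue $-\nu^2$; let $Y^+,Y^-\in\HHH$ be the corresponding unstable/stable eigenstates of the linearized flow, so that $\HHH^u=\R Y^+$, $\HHH^s=\R Y^-$. By Proposition \ref{P:intersection}, $\MMM_0^+$ is a codimension-one smooth graph over $\HHH^s\oplus \HHH^c$ in a neighborhood $\mathcal{V}_0$ of $(Q_0,0)$: writing $(u_0,u_1) = (Q_0,0) + v + \alpha\, Y^+$ with $v\in\HHH^s\oplus\HHH^c$ yields a transverse signed coordinate $\mu_0 = \alpha - \Psi(v)$ that vanishes exactly on $\MMM_0^+$.

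The first main step is an ejection lemma. The time-dependent coordinate $\mu(t)$ of $(u(t),\partial_t u(t))-(Q_0,0)$ satisfies $|\mu'(t)-\nu\,\mu(t)|\lesssim \|(u(t),\partial_tu(t))-(Q_0,0)\|_\HHH^2$, so any solution with $\mu_0\neq 0$ and $\|(u_0,u_1)-(Q_0,0)\|_\HHH$ small must exit a fixed ball $B_\HHH((Q_0,0),\delta_*)$ at some finite $T_{\rm ej}<T_+(u)$, with $|\mu(T_{\rm ej})|\simeq \delta_*$ and $\mathrm{sign}\,\mu(T_{\rm ej})=\mathrm{sign}\,\mu_0$. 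The sign then deterministically selects blow-up or scattering via the Payne--Sattinger functional $K(u)=\int_\Omega |\nabla u|^2-\int_\Omega u^{2m+2}$: using $-\Delta Q_0 = Q_0^{2m+1}$ and integration by parts, $K(Q_0+\varepsilon \varphi) = -2m\,\varepsilon \int_\Omega Q_0^{2m+1}\varphi + \OOO(\varepsilon^2)$, where $\varphi$ is the first coordinate of $Y^+$; since $\varphi$, being a ground-state eigenfunction of $L_0$, is of one sign by the Perron--Frobenius principle, and $Q_0>0$ on $\Omega$, the integral $\int_\Omega Q_0^{2m+1}\varphi$ is nonzero. After fixing the orientation of $Y^+$ accordingly, one sign of $\mu(T_{\rm ej})$ gives $K(u(T_{\rm ej}))<-\eta<0$ with $E(u_0,u_1)<E(Q_0,0)+\eta'$, which forces $T_+(u)<+\infty$ by Levine's concavity/virial argument. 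The opposite sign gives $K(u(T_{\rm ej}))>\eta>0$, preserved in time, which excludes blow-up via \eqref{BupCriterion}; combining Theorem \ref{T:DY19} with Proposition \ref{P:unstable} (which forbids convergence to any $\pm Q_k$ in a neighborhood of $\MMM_0^+$) then forces the solution to scatter to $0$ forward in time.

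To extend this local statement to a full neighborhood of $\MMM_0^+$, pick $(u_0,u_1)\in\MMM_0^+$ with solution $u$ and radiation $u_L$ from \eqref{asymptotic_u}: for $T$ large, $(u(T)-u_L(T),\partial_t u(T)-\partial_t u_L(T))\in \mathcal{V}_0$. A small perturbation of the initial data produces, via continuous dependence and the long-time perturbation lemma on $\Omega$, a perturbation of $(u(T),\partial_tu(T))$ whose difference with the same linear radiation still lies in $\mathcal{V}_0$; the local dichotomy then applies. Defining $\UUU^+$ as the resulting neighborhood of $\MMM_0^+$ and $\UUU_F^+, \UUU_S^+$ as the blow-up and scattering subsets, openness of $\UUU_F^+$ follows from \eqref{BupCriterion} and continuous dependence in $\HHH$, and openness of $\UUU_S^+$ from standard perturbation theory for scattering solutions. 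Since both signs of $\mu_0$ are realized arbitrarily close to each point of $\MMM_0^+$, one gets $\MMM_0^+\subset \partial \UUU_F^+\cap\partial\UUU_S^+$.

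The crux of the argument is the sign-selection in the dichotomy: verifying that $K$ indeed changes sign across $\MMM_0^+$ along the unstable direction, and that this sign is preserved long enough after ejection to trigger Levine's concavity (in the blow-up case) or to rule out blow-up (in the scattering case). Both rest on a coercivity estimate for the quadratic form associated to $L_0$ on the orthogonal complement of $Y^+$, combined with the positivity of the ground-state eigenfunction $\varphi$. A secondary technical point is handling the linear radiation $u_L$ in the globalization step, since $u_L$ does not decay in $\HHH$: one must subtract $u_L$ and invoke the nonlinear profile/perturbation lemma on the exterior domain rather than naive continuous dependence.
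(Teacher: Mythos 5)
Your proposal follows the Nakanishi--Schlag variational route (ejection lemma, sign of the Payne--Sattinger functional $K$, Levine concavity for blow-up, one-pass-type persistence for scattering), which is genuinely different from the paper's proof; the paper instead combines the local center-stable manifold $\NNN_{\delta,0}$ with the positivity/comparison principle for the exterior wave flow (Proposition \ref{P:positive}, Corollaries \ref{CR:positive}, \ref{CR:Y0}), the ordering of stationary states (Lemma \ref{L:InegStat}) and soliton resolution (Theorem \ref{T:DY19}). Unfortunately, as written your scheme has genuine gaps at its two load-bearing points. First, Levine's concavity/virial argument is not available here as stated: it rests on $y(t)=\|u(t)\|_{L^2}^2$ (or a localized substitute with controlled errors), while $Q_0(r)\sim \ell_0/r$ is not in $L^2(\Omega)$, so solutions near the ground state are not square integrable and the blow-up half of your dichotomy is unproven; the paper avoids this entirely by trapping the solution above a solution scattering to $Q_0$ and excluding all possible asymptotic states via Lemma \ref{L:InegStat}. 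Second, the assertion that the sign of $K$ at the ejection time is ``preserved in time'' is precisely the content of a one-pass/no-return theorem, which above the ground-state energy is the hard part of the Nakanishi--Schlag program and is not supplied; in this paper the analogous role is played by the channel-of-energy no-return result (Proposition \ref{P:P40}) and, in the proof of Theorem \ref{T:M0}, by the monotone comparison $v(t)\le U(t)\le u(t)$ with both barriers of known asymptotics.

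The globalization step is also more than the ``secondary technical point'' you describe. At a general point of $\MMM_0^+$ the energy equals $E(Q_0,0)+\tfrac12\|\vec u_L(0)\|_{\HHH}^2$, which can be arbitrarily large, so the hypothesis $E<E(Q_0,0)+\eta'$ needed for your sign-selection and concavity arguments fails for $\vec u(T)$; moreover the error terms $\|\vec u(t)-(Q_0,0)\|_{\HHH}^2$ in your ejection ODE for $\mu(t)$ are never small because the radiation does not decay in $\HHH$. This is exactly why the paper replaces strong-norm smallness by smallness in the dispersive norm $\Ssf$ (Proposition \ref{P:CVdispersive_norm}) and builds $\NNN_{\delta,0}$, $\widetilde{\UUU}_S$, $\widetilde{\UUU}_F$ with respect to that topology, and why its blow-up/scattering criteria (comparison with $Q_0$, with $u$ scattering to $Q_0$, and with $v$ scattering to $0$) are insensitive to the energy level. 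Finally, your claim that openness of $\UUU_F^+$ follows from \eqref{BupCriterion} and continuous dependence is incorrect: blow-up is not known to be an open condition for this equation (the paper explicitly states that openness of $\FFF^+$ is conjectural), and the paper obtains openness only by defining $\UUU_F^+$ as the set of data whose trajectory enters the open set $\widetilde{\UUU}_F$, i.e.\ as a preimage of an open set under the flow.
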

Theorem \ref{T:M0} is coherent with the conjecture of Bizo\'n and Maliborski that the ground state sits at the threshold for blow-up (see \cite{BizonMaliborski20}). 
It is in the same spirit as known results on the whole Euclidean space: see the book \cite{NaSch11Bo}, and \cite{NaSch11} for Klein-Gordon, \cite{NaSch12} for the Nonlinear Schr\"odinger equation (in both cases, slightly above the ground state energy), and \cite{KrNaSc15} for critical waves. Let us mention however that our proof uses different arguments than in these works, based on the positivity of the linear flow. These arguments also give indication on the signs of the blow-up:
\begin{proposition}
\label{P:positive_blowup}
The blow-up in the set $\UUU_F^+$ of Theorem \ref{T:M0} is \emph{positive} in the following sense:
 for $(u_0,u_1)\in \UUU_F^+$, 
 $$\lim_{t\to T_+(u)} \sup_{x \in \Omega}u(t,x)=+\infty$$
 and $u(t,x)$ is uniformly bounded from below as $t\to T_+(u)$.
\end{proposition}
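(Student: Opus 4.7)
The strategy is to first show that $u(t,x)\geq 0$ for all $(t,x)\in [0,T_{+}(u))\times\Omega$, giving the required uniform lower bound, and then to use the blow-up criterion~\eqref{BupCriterion}, which forces $\|u(t)\|_{L^{\infty}}\to+\infty$, together with $u\geq 0$ to deduce $\sup_{x\in\Omega} u(t,x)\to+\infty$. By Theorem~\ref{T:essspec}, the linearized operator $L_{0}=-\Delta-(2m+1)Q_{0}^{2m}$ has a unique negative eigenvalue $-\omega^{2}$, whose eigenfunction $\psi_{0}$ can be chosen positive on $\Omega$ by the Perron--Frobenius property of Schr\"odinger ground states. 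The set $\UUU_{F}^{+}$ in Theorem~\ref{T:M0} corresponds, by construction, to data whose projection onto the unstable direction $(\psi_{0},\omega\psi_{0})$ is positive, so that the linearized dynamics give $u(t)-Q_{0}\approx c\,e^{\omega t}\psi_{0}$ with $c>0$; together with the smallness of higher-order corrections, this ensures $u(t,x)\geq Q_{0}(x)/2>0$ on $\Omega$ for a short initial interval.

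The main analytic step is to propagate the pointwise positivity up to $T_{+}(u)$. Under the reduction $v=ru$, equation \eqref{eq:NLW} becomes $\partial_{t}^{2}v-\partial_{r}^{2}v=v^{2m+1}/r^{2m}$ on $r\geq 1$ with $v(t,1)=0$; applying d'Alembert's formula after odd reflection across $r=1$, the velocity and Duhamel contributions to $v(t,r)$ reduce, via the change of variable $y\mapsto 2-y$ on the reflected region, to integrals of the form $\tfrac{1}{2}\int_{2-r+t}^{r+t}(\cdot)\,dy$, which are manifestly nonnegative whenever the initial velocity $v_{1}=ru_{1}$ and the source $g=v^{2m+1}/r^{2m}$ are nonnegative. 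The only subtle contribution is the initial position term $\tfrac{1}{2}(v_{0}(r+t)-v_{0}(2-r+t))$, which is nonnegative as soon as $v_{0}$ is monotone nondecreasing on $[1,\infty)$: for $v_{0}=rQ_{0}$ this follows from $(rQ_{0})''=-(rQ_{0})^{2m+1}/r^{2m}<0$, i.e.\ $rQ_{0}$ is concave, combined with $rQ_{0}(1)=0$ and $rQ_{0}(r)\to\ell_{0}>0$, which together imply $rQ_{0}$ is strictly increasing; this monotonicity is preserved by small perturbations with the sign structure of $\UUU_{F}^{+}$. A continuity argument then closes the bootstrap: as long as $u\geq 0$, the nonlinearity $u^{2m+1}$ is nonnegative, the linear positivity preserves $u\geq 0$, and openness of the set $\{t\in[0,T_{+}(u)):u(s,\cdot)\geq 0\ \forall s\leq t\}$ combined with the initial positivity coming from the unstable mode propagates the lower bound to the entire interval of existence.

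The main obstacle is implementing this positivity propagation rigorously at the $\HHH$-energy regularity: the monotonicity of the initial position $v_{0}$ is not a direct consequence of smallness in $\dot H^{1}_{0}(\Omega)\times L^{2}(\Omega)$, since this space does not control pointwise derivatives, so one must combine the linear positivity principle with a careful nonlinear bootstrap exploiting both the positive source $u^{2m+1}\geq 0$ generated whenever $u\geq 0$ and the pointwise positivity of the exponentially-growing mode $e^{\omega t}\psi_{0}$ that dominates the dynamics on $\UUU_{F}^{+}$. Once this is achieved, the two conclusions of the proposition follow simultaneously from the uniform lower bound $u\geq 0$ and the blow-up criterion~\eqref{BupCriterion}.
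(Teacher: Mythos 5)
Your strategy of deriving positivity from the unstable direction and then invoking the blow-up criterion is in the right spirit, and you correctly identify the role of the positivity of the linear flow (which the paper encodes as Proposition \ref{P:positive} and its corollaries) and the fact that a uniform lower bound plus the $L^\infty$ blow-up criterion \eqref{BupCriterion} gives positive blow-up. However, there is a genuine gap in your argument, which you yourself flag: you are trying to prove $u(t,\cdot)\geq 0$, and this would require pointwise positivity of $u_0$ together with a sign condition on $ru_1+\partial_r(ru_0)$; neither is controlled by smallness in $\HHH$, and neither can be assumed for general data in $\UUU_F^+$, which by construction is only constrained at the level of the decomposition $(u_0,u_1)+\alpha\YYY_0^+$ with $(u_0,u_1)$ \emph{close to $(Q_0,0)$ in the $\vertiii{\cdot}$-topology}, not pointwise. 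Your bootstrap via d'Alembert and odd reflection therefore cannot be initialized.

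The paper's proof avoids this obstacle by comparing with a nontrivial reference solution rather than with $0$. Writing the data in $\widetilde{\UUU}_F$ as $(u_0,u_1)+\alpha\YYY_0^+$ with $\alpha\in(0,\delta)$ and $(u_0,u_1)\in \NNN_{\delta,0}\subset\MMM_0^+$, the comparison principle (Corollary \ref{CR:Y0}, whose proof reduces to Corollary \ref{CR:positive}) is applied to the \emph{difference}, which is exactly $\alpha\YYY_0^+=(\alpha Y_0,\alpha e_0 Y_0)$ and does satisfy the required pointwise sign conditions, namely $\alpha Y_0\geq 0$ and $\alpha(\partial_r+e_0)(rY_0)\geq 0$; see the proof of Corollary \ref{CR:Y0} and in particular inequality \eqref{Sufficient_to_check}. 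This yields $U(t)\geq u(t)$ for all $t$ in the domain of $U$, where $u$ is the solution scattering to $Q_0$ forward in time. Since $u$ scatters, $\sup_t\|u(t)\|_{L^\infty}<\infty$ by the radial Sobolev embedding, which gives the uniform lower bound on $U$ without any claim that $U\geq 0$. Then, since $\|U(t)\|_{L^\infty}\to\infty$ as $t\to T_+(U)$ and $U$ is bounded below, one must have $\sup_x U(t,x)\to+\infty$. Extending from $\widetilde{\UUU}_F$ to all of $\UUU_F^+$ is routine, since by definition any trajectory from $\UUU_F^+$ enters $\widetilde{\UUU}_F$ at some time $t_1\geq 0$, and the assertion only concerns behaviour as $t\to T_+(U)$. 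So the missing idea in your proposal is precisely to compare against a nearby scattering solution rather than against the zero solution, and to observe that boundedness below (not positivity) is all that is needed.
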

Using similar arguments as in the proof of Theorem \ref{T:M0}, we can obtain informations on the dynamics close to the manifold $\MMM_k^{+}$. Indeed we will show, when $k\geq 1$, that the corresponding solutions are unstable by positive blow-up (as the solutions with data in $\MMM_0^+$), but also by negative blow-up\footnote{We say that the blow-up of $u$ is negative when the blow-up of $-u$ is positive}, as illustrated by the two next results.
\begin{theorem}[Blow-up manifold]
\label{T:blow-up}
 Let $k\geq 1$ and $(v_0,v_1)\in \MMM_k^+$. Then there exists a neighborhood $\UUU$ of $(v_0,v_1)$ in $\HHH$ and a submanifold $\FFF_k^+$ of $\UUU$ of codimension $k$ such that $\MMM_k^+\cap \UUU$ is a codimension $1$ submanifold of $\FFF_k^+$, and for all $(u_0,u_1)\in \FFF_k^+$, if $(u_0,u_1)\notin \MMM_k^+$, then the corresponding solution blows up in finite positive time. More precisely, we can divide $\FFF_k^+$ as the union of $3$ connected sets:
 $$ \FFF_k^+= (\UUU \cap \MMM_k^+) \cup  \FFF_{k,p}^+ \cup \FFF_{k,n}^+,$$
 where for $(u_0,u_1)\in \FFF_{k,p}^+$, the blow-up of the corresponding solution $u$ is positive (in the sense of Proposition \ref{P:positive_blowup}), and if $(u_0,u_1)\in \FFF_{k,n}^+$, the blow-up of $u$ is negative.
\end{theorem}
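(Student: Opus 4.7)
The plan is to realize $\FFF_k^+$ as a codimension-$k$ invariant manifold obtained by enlarging $\MMM_k^+$ by exactly one direction, namely the principal unstable eigenmode of the linearization at $Q_k$. By Theorem \ref{T:essspec}, $L_k$ has $k+1$ simple negative eigenvalues $-\mu_1^2<\cdots<-\mu_{k+1}^2<0$; a Perron-Frobenius argument applied to $-\Delta-(2m+1)Q_k^{2m}$ shows that the principal eigenfunction $Y_1$ can be chosen strictly positive on $\Omega$. At the level of the evolution $\partial_t^2+L_k$, this gives a decomposition $\HHH^u=\R\phi_1\oplus\tilde{\HHH}^u$, where $\phi_1=(Y_1,\mu_1 Y_1)$ is the principal unstable mode and $\tilde{\HHH}^u$ is the $k$-dimensional invariant complement spanned by the remaining unstable modes.

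First I would work in a neighborhood of $(Q_k,0)$. Using the same Lyapunov-Perron / graph-transform machinery that underlies the local version of $\MMM_k^+$ in Theorem \ref{T:cs}, I would construct a local invariant submanifold $\mathcal{F}_{\mathrm{loc}}$ near $(Q_k,0)$, of codimension $k$ and tangent to $\HHH^s\oplus\HHH^c\oplus\R\phi_1$, characterized by the exponential decay along the forward flow of the projection onto $\tilde{\HHH}^u$. By construction, $\MMM_k^+\cap\mathcal{F}_{\mathrm{loc}}$ is locally a codimension-$1$ submanifold of $\mathcal{F}_{\mathrm{loc}}$ cut out by the vanishing of the coordinate along $\phi_1$. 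To transfer this picture to an arbitrary $(v_0,v_1)\in\MMM_k^+$, I would invoke Theorem \ref{T:DY19}: the solution $v$ satisfies $v(t)=Q_k+u_L(t)+o_{\HHH}(1)$ for some linear radiation $u_L$, so for $T$ large, $(v(T),\partial_t v(T))$ is as close as we wish to $(Q_k+u_L(T),\partial_t u_L(T))$. After subtracting the linear radiation (a smooth invertible change of coordinates) and pulling back by $\Phi_{-T}$, which is a local diffeomorphism of $\HHH$, the image of $\mathcal{F}_{\mathrm{loc}}$ yields the desired $\FFF_k^+$ near $(v_0,v_1)$.

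For the dynamical content, I would then analyze solutions in $\mathcal{F}_{\mathrm{loc}}\setminus\MMM_k^+$. Writing $u=Q_k+h$, the coordinate $\alpha(t)$ of $(h(t),\partial_t h(t))$ along $\phi_1$ satisfies $\alpha''-\mu_1^2\alpha=$(nonlinear remainder), while the projection on $\tilde{\HHH}^u$ decays and the projection on $\HHH^s\oplus\HHH^c$ stays controlled by dispersive estimates. Testing \eqref{eq:NLW} against $Y_1$ produces the functional $F(t)=\la u(t),Y_1\ra_{L^2}$, which satisfies $F''+\mu_1^2 F=\la u^{2m+1}-(2m+1)Q_k^{2m}u,Y_1\ra$; using $Y_1>0$ and the convexity of $s\mapsto s^{2m+1}$, this can be converted into a Payne-Sattinger / Levine-style differential inequality forcing finite-time blow-up once $|F|$ (equivalently $|\alpha|$) becomes large enough, with the sign of $\alpha(0)$ dictating whether the blow-up is positive (contributing to $\FFF_{k,p}^+$) or negative (contributing to $\FFF_{k,n}^+$). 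Openness of the blow-up set and invariance under the flow then identify $\FFF_{k,p}^+$ and $\FFF_{k,n}^+$ as the two connected components of $\FFF_k^+\setminus\MMM_k^+$.

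The hard part is the blow-up analysis rather than the manifold construction. For the ground state $k=0$, the stationary solution $Q_0$ is itself positive, so $u=Q_0+h$ stays one-signed along the unstable direction and the monotone functional argument is essentially that of Theorem \ref{T:M0}. For $k\geq 1$, $Q_k$ is sign-changing, so $u$ is not globally signed; one has to exploit the decomposition $h=\alpha(t)Y_1+r(t)$ and show that the dominant contribution to $\la u^{2m+1},Y_1\ra$ still comes from the large, signed $\alpha Y_1$ tail. Bootstrapping this dominance up to the blow-up time, while ruling out competing global behaviours such as scattering to some $\pm Q_j$ with $j<k$, is the main technical task.
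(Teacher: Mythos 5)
Your manifold construction is in the right spirit (enlarge the center--stable object by the principal unstable direction $\YYY_0^+$), but both halves of your argument have gaps that the paper's proof is specifically designed to avoid. First, the construction/transfer step. You propose a Lyapunov--Perron construction of an \emph{invariant} codimension-$k$ manifold characterized by forward-in-time exponential decay of the projection onto the remaining unstable modes; but the solutions you want to put on this manifold (those off $\MMM_k^+$) blow up in finite positive time, so a characterization by forward decay of the global flow does not make sense without cutoffs, and with cutoffs you lose the correspondence with \eqref{eq:NLW} exactly when the solution leaves the small neighborhood. The paper does something much softer: it takes $\FFF_\delta=\{(Q_k,0)+\vvec{v_0}+\vartheta(\vvec{v_0})+\eps\YYY_0^+\}$, i.e.\ the translates of the local graph $\NNN_{\delta,k}$ in the $\YYY_0^+$ direction, with no invariance claimed. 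Moreover, your transfer to a general $(v_0,v_1)\in\MMM_k^+$ by ``subtracting the linear radiation'' is not justified: translating initial data by $(u_L(T),\partial_t u_L(T))$ is an affine map of $\HHH$ but does not conjugate the nonlinear flow, so the dynamical properties proved on $\mathcal{F}_{\mathrm{loc}}$ do not transport to the translated set. The paper avoids this by making $\NNN_{\delta,k}$ local only in the weak sense of the norms $\vertiii{\cdot}$ and $\Ssf$, so that $\vec{U}(t_0)$ itself belongs to $\NNN_{\delta,k}$ for $t_0$ large (Proposition \ref{P:CVdispersive_norm}), and then pulls back by the smooth flow.

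Second, and more seriously, the blow-up mechanism. Your Payne--Sattinger/Levine argument for $F(t)=\langle u(t),Y_1\rangle$ requires a superlinear lower bound on $\langle u^{2m+1},Y_1\rangle$ in terms of $F$, which via Jensen/H\"older needs a sign on $u$; since $Q_k$ changes sign for $k\geq 1$, this is precisely the obstruction, and you explicitly leave it as ``the main technical task'' --- so the decisive step is missing. (Incidentally, the $k=0$ case in the paper, Theorem \ref{T:M0}, is also not proved by a monotone-functional argument.) The paper's actual mechanism is entirely different and avoids any differential inequality: writing $(u_0,u_1)=(w_0,w_1)+\eps\YYY_0^+$ with $(w_0,w_1)\in\NNN_{\delta,k}\subset\MMM_k^+$, the positivity of the exterior wave flow (Corollary \ref{CR:positive} together with the pointwise inequalities \eqref{Sufficient_to_check}--\eqref{positivity_RYj} for $Y_0$, Corollary \ref{CR:Y0}) gives $u(t)\geq w(t)$ for all $t\geq 0$ when $\eps>0$. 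If $u$ were global, soliton resolution (Theorem \ref{T:DY19}) plus the no-return property (Proposition \ref{P:P40}) would force $u$ to scatter to $0$ or to $\pm Q_j$ with $j\leq k-1$, hence the pointwise limit would dominate $Q_k$; Lemma \ref{L:InegStat} (there is always a point where $Q_k$ exceeds any candidate $\iota Q_j$ or $0$) rules this out, so $T_+(u)<\infty$, and $u\geq w$ with $w$ bounded yields positive blow-up; the case $\eps<0$ is symmetric and gives $\FFF_{k,n}^+$. To complete your proof along your own lines you would have to supply the sign/dominance bootstrap you defer, whereas the comparison-plus-classification route sidesteps it completely.
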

With the same method, we can also construct an open set of initial data close to $\MMM_k^+$ leading to blow-up in finite time.
\begin{theorem}[Stable blow-up]
\label{T:blow-up'}
Let $k\geq 1$. There exists two disjoint connected open subsets $\UUU_{k,p}^+$ and $\UUU_{k,n}^+$ of $\HHH$ such that $\MMM_k^+$ is in the closure of $\UUU_{k,p}^+$ (respectively $\UUU_{k,n}^+$) in $\HHH$ and every solution with initial data in $\UUU_{k,p}^+$ (respectively $\UUU_{k,n}^+$) blows up in finite positive times with a positive blow-up (respectively a negative blow-up).
\end{theorem}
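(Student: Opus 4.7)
The plan is to open out the finite-codimension blow-up manifold $\FFF_k^+$ of Theorem \ref{T:blow-up} into full $\HHH$-open sets, via a stability statement for finite-time positive and negative blow-up. The whole argument reduces to the following stability lemma: \emph{if a solution $u$ of \eqref{eq:NLW} blows up in finite positive time with positive blow-up, in the sense of Proposition \ref{P:positive_blowup}, then there is an $\HHH$-open neighborhood of $(u_0,u_1)$ consisting entirely of initial data whose solutions also blow up in finite positive time with positive blow-up}, and similarly for negative blow-up.

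Granted this, I would set
\[
\UUU_{k,p}^+=\bigl\{(u_0,u_1)\in\NNN : \text{the solution has positive finite-time blow-up}\bigr\},
\]
for a connected tubular neighborhood $\NNN$ of (a connected component of) $\MMM_k^+$, and similarly $\UUU_{k,n}^+$. Openness is immediate from the stability lemma, and disjointness is immediate since positive and negative blow-up are mutually exclusive. The inclusion $\MMM_k^+\subset \overline{\UUU_{k,p}^+}\cap\overline{\UUU_{k,n}^+}$ follows from Theorem \ref{T:blow-up}: the codimension-$k$ subsets $\FFF_{k,p}^+$ and $\FFF_{k,n}^+$ lie inside $\UUU_{k,p}^+$ and $\UUU_{k,n}^+$ respectively by definition, and both accumulate on $\MMM_k^+$ from opposite sides within $\FFF_k^+$. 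Connectedness is obtained by shrinking $\NNN$ to a connected tubular neighborhood of one connected component of $\MMM_k^+$ and using that (after shrinking) the local $\FFF_{k,p}^+$ of Theorem \ref{T:blow-up} is connected, so that the union of the $\HHH$-neighborhoods produced by the stability lemma, as the base point varies in $\FFF_{k,p}^+$, is itself connected.

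To prove the stability lemma, I would pick $t_0<T_+(u)$ with $u(t_0,x_0)\geq M$ for some large $M$ at a point $x_0\in\Omega$, while $\inf_{s\leq t_0,\,x\in\Omega} u(s,x)\geq -C$. By continuous dependence of the flow on data on the compact interval $[0,t_0]$, any solution $\tilde u$ with initial data sufficiently close to $(u_0,u_1)$ in $\HHH$ satisfies $\|(\tilde u(t_0),\partial_t\tilde u(t_0))-(u(t_0),\partial_t u(t_0))\|_\HHH\ll 1$; via the Sobolev embedding $\dot H^1_0(\Omega)\hookrightarrow L^6(\Omega)$ and an averaging argument, $\tilde u(t_0,\cdot)$ is then still large and positive on a ball around $x_0$ and bounded below on $\Omega$. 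The ODE-type comparison that underlies Proposition \ref{P:positive_blowup}, applied to $\tilde u$ on a well-chosen backward light cone with tip beyond $T_+(u)$, then forces $\tilde u$ to blow up in a time of order $T_+(u)-t_0$ with a positive $L^\infty$ norm.

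The main obstacle is exactly this stability lemma, because finite-time blow-up for semilinear wave equations is notoriously not stable under $\HHH$-perturbations in general. The crucial input here is the \emph{sign} of the blow-up: once $u$ is large and positive on a sub-region while uniformly bounded below, the nonlinearity $u^{2m+1}$ acts as a self-reinforcing positive source and the ODE mechanism survives small-energy perturbations at an intermediate time, because those perturbations are of fixed finite energy while the nonlinear driving pushes $u$ to infinity on an arbitrarily short timescale. The delicate point will be to quantify this in the energy space, presumably by choosing the light cone so that only localized information on $\tilde u(t_0,\cdot)$ near $x_0$ enters, and by translating $\HHH$-smallness of the perturbation into pointwise control through $\dot H^1_0\hookrightarrow L^6$, in the same spirit as the positivity arguments already used in Theorem \ref{T:M0}.
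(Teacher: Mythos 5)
Your proposal rests entirely on a stability lemma for finite-time positive blow-up in the energy norm, and this is precisely the point where the argument breaks down: the paper explicitly flags this as an open problem. In the ``Related works'' discussion the authors write that they ``conjecture that the set $\FFF^+$ is open, however the method developed here are not sufficient to prove this property.'' Your proposed proof of the lemma invokes an ``ODE-type comparison that underlies Proposition \ref{P:positive_blowup},'' but no such local, quantitative blow-up criterion exists in the paper. All the blow-up arguments here are \emph{indirect}: one establishes a \emph{global} pointwise comparison $u(t)\geq w(t)$ (with $w$ a solution scattering to $Q_k$) \emph{from time $0$ on}, via Corollary \ref{CR:Y0}, then concludes $T_+(u)<\infty$ by combining the no-return result (Proposition \ref{P:P40}), the soliton-resolution theorem (Theorem \ref{T:DY19}) and the Liouville-type comparison of stationary solutions (Lemma \ref{L:InegStat}). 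This chain gives no lower bound of the form ``$u(t_0,x_0)\geq M$ large implies blow-up before time $t_0+T(M)$,'' and the global pointwise ordering $u\geq w$ is not preserved when one perturbs the data at an intermediate time $t_0$ by a generic small element of $\HHH$. So your stability lemma cannot be extracted from the tools in the paper, and with it the whole construction of $\UUU_{k,p}^+$ and $\UUU_{k,n}^+$ via ``openness of the positive-blow-up set near $\MMM_k^+$'' collapses.

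The paper instead side-steps stability of blow-up altogether by building the sets $\UUU_{k,p}^+,\UUU_{k,n}^+$ as \emph{explicitly open} cone-over-graph sets. Using the decomposition $\HHH=\HHH^{cs}\oplus\HHH^u$ and the reduction map $\vartheta$ from Corollary \ref{CR:local-manifold}, one takes
\[
\Bigl\{(Q_k,0)+\vvec{v_0}+\vartheta(\vvec{v_0})+\textstyle\sum_{j=0}^{k}\omega_j\,\YYY_j^+ \;:\; \vvec{v_0}\in\mathscr{B}_{\delta},\; \forall j\geq 1,\ 0<C\omega_j<\omega_0<\eps \Bigr\}
\]
(and its mirror image for the negative cone). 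Because the full unstable span $\{\YYY_0^+,\dots,\YYY_k^+\}$ is used and the coordinates $(\vvec{v_0},\omega_0,\dots,\omega_k)$ are continuous in $\HHH$, the open cone condition on $(\omega_j)$ makes the set open in $\HHH$ \emph{by construction}, not by a stability argument. The cone condition is exactly the hypothesis \eqref{assump_positivity} of Corollary \ref{CR:Y0}, which delivers the pointwise comparison $u\geq w$ from $t=0$ on, after which the blow-up and sign follow exactly as in Theorem \ref{T:blow-up}. So the roles are reversed compared with your outline: positivity is imposed at $t=0$ through the explicit parametrization, and openness is then free; one never needs blow-up to be stable under arbitrary $\HHH$-perturbations, which is fortunate since that is not known.
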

As an illustration of the difference of dynamics close to the ground state and the excited states, one can consider the solutions $u^{\alpha}$ with initial data 
$$ (Q_k,0)+\alpha (Y_0,e_0Y_0),$$
where $Y_0$ and $-e_0^2$ (depending on $k$) are the first eigenfunction and eigenvalue of the linearized equation around $Q_k$, i.e.
$$ \Delta Y_0+(2m+1)Q_k^{2m}Y_0=e_0^2Y_0$$
and $Y_0$ is positive, $C^{\infty}$ and exponentially decaying. The proofs of Theorems \ref{T:M0} and \ref{T:blow-up} show that if $k\geq 1$, $u^{\alpha}$ blows up in finite positive time for all $\alpha\neq 0$ (the sign of the blow-up is the sign of $\alpha$), whereas if $k=0$, $u^{\alpha}$ blows up in finite positive time for $\alpha>0$, but scatters to $0$ for positive time for $\alpha<0$, $|\alpha|$ small (see Lemma \ref{L:Y0}).

The proof of Theorem \ref{T:blow-up'} suggests that the instability by blow-up is, in some sense, generic, since the coefficients of the unstable direction $(Y_0,e_0Y_0)$ is the one with the fastest exponential growth for solutions escaping from $\MMM_k^+$, and thus should generically dominate all the other modes after some time, for solutions escaping from $\MMM_k^+$, leading to blow-up. The existence or non-existence of data in $\MMM_j^+$, $j<k$ or in $\SSS^+$ arbitrarily close in the neighborhood of $\MMM_k^+$ is however still open for $k\geq 1$. 

A related question is the behaviour of the solutions with initial data in $\MMM_k^+$ or close to $\MMM_k^+$ in the past. By Proposition \ref{P:intersection}, for any $k\geq 0$, $\MMM_k^+\cap \MMM_k^-$ contains a manifold of codimension $2k+2$.
Thus there exist solutions scattering to $Q_k$ in both time directions. Notice also that a ``nine sets theorem'', extending Theorem \ref{T:M0} and giving a complete description of the dynamics in both time directions of solutions with initial data close to $(Q_0,0)$ could be proved following the methods of \cite{NaSch11Bo}. We will not pursue this issue here. 
Let us mention however that with the techniques that are similar to the one employed in the proofs of Theorems \ref{T:blow-up} and \ref{T:blow-up'}, we can prove:
\begin{theorem}
 \label{T:negative_time}
 For any $k\geq 0$, there exists $(u_0,u_1)\in \MMM_k^+$, arbitrarily close to $(Q_k,0)$ in $\HHH$ such that the corresponding solution $u$ blows up in finite negative times with positive blow-up. If $k\geq 1$, the same conclusion holds with negative blow-up. 
 Moreover, for any $k\geq 0$, there exists $(u_0,u_1)\in \HHH$ arbitrarily close to $(Q_k,0)$ in $\HHH$ such that the corresponding solution $u$ blows up in both time directions. If $k\geq 1$, then for each time direction, one can choose the sign of the blow-up.
\end{theorem}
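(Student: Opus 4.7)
The plan is to combine the parameterization of $\MMM_k^+$ near $(Q_k,0)$ provided by the proof of Theorem \ref{T:cs} with the nonlinear blow-up mechanism of Theorems \ref{T:blow-up} and \ref{T:blow-up'}, applied in reversed time. Let $-e_0^2$ be the most negative eigenvalue of $L_k$ and $Y_0>0$ the corresponding eigenfunction, as in the discussion preceding Theorem \ref{T:blow-up}. The two linear solutions $e^{\pm e_0 t}Y_0$ of $\partial_t^2 h+L_k h=0$ have initial data $(Y_0,\pm e_0 Y_0)$, so $(Y_0,e_0Y_0)\in\HHH^u$ and $(Y_0,-e_0Y_0)\in\HHH^s$, the latter being backward-unstable. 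Since \eqref{eq:NLW} is time-reversal invariant, the arguments behind Theorems \ref{T:blow-up} and \ref{T:blow-up'} apply verbatim in reversed time, with the roles of $\HHH^u$ and $\HHH^s$ exchanged.

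For the first statement I would use the smooth parameterization $\Phi\colon\HHH^s\oplus\HHH^c\to\HHH^u$ of $\MMM_k^+$ near $(Q_k,0)$ (with $\Phi(0)=0$, $D\Phi(0)=0$, and $(Q_k,0)+v+\Phi(v)\in\MMM_k^+$ for small $v$), choosing $v=\alpha(Y_0,-e_0 Y_0)$ with $\alpha\neq 0$ small. The resulting initial data lies on $\MMM_k^+$, at distance $O(|\alpha|)$ from $(Q_k,0)$. Because $(Y_0,-e_0Y_0)$ sits outside the tangent space $\HHH^u\oplus\HHH^c$ of $\MMM_k^-$ at $(Q_k,0)$ (Proposition \ref{P:intersection}), this data does not belong to $\MMM_k^-$, so its backward evolution cannot scatter to $Q_k$. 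In reversed time the mode $\alpha(Y_0,-e_0Y_0)$ dominates the linearized dynamics; running the time-reversed version of the proof of Theorem \ref{T:blow-up'} then forces finite-time backward blow-up whose sign equals that of $\alpha$ when $k\geq 1$. For $k=0$ the ground-state dichotomy of Theorem \ref{T:M0}, applied backward, means that only $\alpha>0$ produces blow-up (necessarily positive), while $\alpha<0$ yields backward scattering to $0$.

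For the second statement I would consider the family
\[(u_0,u_1)=(Q_k,0)+\alpha(Y_0,e_0Y_0)+\beta(Y_0,-e_0Y_0),\qquad \alpha,\beta\neq 0\text{ small}.\]
In forward time the $\HHH^u$-component $\alpha(Y_0,e_0Y_0)$ grows exponentially while the $\HHH^s$-component decays, so the leading-mode argument from the proof of Theorem \ref{T:blow-up'} produces forward blow-up, with sign equal to that of $\alpha$ when $k\geq 1$ and positive when $k=0$ and $\alpha>0$. Symmetrically, in backward time the $\HHH^s$-component dominates and forces backward blow-up with the sign of $\beta$ when $k\geq 1$ (respectively positive when $k=0$ and $\beta>0$). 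For $k\geq 1$, varying the signs of $\alpha$ and $\beta$ independently realizes the four possible combinations of blow-up signs; for $k=0$, the choice $\alpha,\beta>0$ suffices to obtain blow-up in both time directions.

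The principal technical obstacle is to justify that the forward and backward blow-up mechanisms do not interfere: one must verify that the presence of a backward-unstable mode in the initial data does not spoil the forward instability of the $\HHH^u$-component, and conversely. This reduces to the standard hyperbolic-dynamics estimate near $(Q_k,0)$: in forward time the $\HHH^s\oplus\HHH^c$ part of the perturbation remains controlled while the $\HHH^u$ part grows exponentially until the solution exits a fixed linearizable neighborhood of $(Q_k,0)$, after which the sign-tracking nonlinear argument of Theorems \ref{T:blow-up} and \ref{T:blow-up'}—which depends only on the sign of the dominant eigenmode coefficient and the positivity of the linear wave flow—yields blow-up. Both ingredients are insensitive to time reversal, so the same scheme governs the backward dynamics.
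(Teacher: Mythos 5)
Your high-level plan (time-reversal plus eigenmode perturbations of $(Q_k,0)$) is in the right spirit, but the technical core of your argument — the appeal to a ``standard hyperbolic-dynamics estimate'' near $(Q_k,0)$ and a ``sign-tracking nonlinear argument'' that depends ``only on the sign of the dominant eigenmode coefficient'' — does not match how the paper actually forces blow-up, and the specific perturbations you propose fail the positivity conditions that the paper's method genuinely needs. The paper's proof of Theorem \ref{T:negative_time} does not isolate a dominant growing mode and propagate its sign dynamically; instead it verifies, at time $t=0$ and pointwise in $r$, the hypotheses of the positivity lemma (Corollary \ref{CR:positive}) and then uses Theorem \ref{T:DY19} together with Lemma \ref{L:InegStat} to exclude global behaviour. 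That is a maximum-principle argument, not a hyperbolicity argument, and checking it for your data is where the gap appears.

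For the first part, you propose $\vvec{v_0}=\alpha\,\YYY_0^-$. The corresponding $\MMM_k^+$-point is $\vvec{h}(0)=\vvec{v_0}+\PPP^+\vvec{h}(0)$ with $\lVert\PPP^+\vvec{h}(0)\rVert_{\HHH}\lesssim\alpha^2$. To run Corollary \ref{CR:positive} backward you need $h_0(r)\ge0$ for all $r>1$. The leading term $\alpha Y_0(r)$ decays like $e^{-e_0 r}/r$, whereas the $\PPP^+$ correction, though of size $O(\alpha^2)$, can contain a $Y_k$ component decaying only like $e^{-e_k r}/r$ with $e_k<e_0$. For $r\gtrsim |\log\alpha|/(e_0-e_k)$ the correction can dominate $\alpha Y_0$ and have the wrong sign, so the pointwise positivity you need is not guaranteed. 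This is precisely why the paper uses $\vvec{v_0}=\delta\,\YYY_0^-+\delta^{4/3}\YYY_k^-$: the slow mode $\delta^{4/3}Y_k$ then dominates the $O(\delta^2)$ error at large $r$, restoring positivity. For the second part, your choice $\alpha\,\YYY_0^++\beta\,\YYY_0^-$ with $\alpha,\beta>0$ does not satisfy the forward positivity condition $\partial_r(rh_0)+rh_1\ge0$: for large $r$ one computes
\begin{equation*}
(\alpha+\beta)\,\partial_r(rY_0)+(\alpha-\beta)\,e_0\, rY_0 \ \sim\ -2c_0 e_0\,\beta\, e^{-e_0 r}\ <\ 0,
\end{equation*}
so the comparison with $Q_k$ breaks down and one cannot conclude $u(t)\ge Q_k$ forward. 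The paper circumvents this entirely by using $\alpha(Q_0,0)$ (and $\alpha(0,Q_0)$): these perturbations are chosen so that the positivity hypotheses of Corollary \ref{CR:positive} hold simultaneously in both time directions (using $Q_0>0$, $\partial_r(rQ_0)>0$), and the non-membership in $\MMM_k^\pm$ follows from $\int Q_0Y_0\ne0$ rather than from a hyperbolicity statement. Thus the blow-up mechanism is again positivity, not mode-growth, and your proposed data would have to be replaced or the positivity verified by a different route.
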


\subsection{Related works}

For the first articles on construction of center-stable manifold for nonlinear partial differential equations, and infinite dimensional systems, see e.g. \cite{Ball73}, \cite{ChowLu88}, \cite{BatesJones89}, \cite{IoosVDB92},\cite{Gallay93}.

The construction of center stable manifold for nonlinear dispersive equations, was initiated by Nakanishi and Schlag  (see \cite{NaSch11Bo}, \cite{NaSch11}, \cite{NaSch12}). As mentioned above, these constructions are restricted to the ground state soliton, and local: they do not show that the set of all solutions scattering to a stationary wave or a soliton is  a manifold. 

The case of nonlinear critical wave equation was considered by Krieger, Nakanishi and Schlag  \cite{KrNaSc15}. The manifold constructed there is global, in the sense that it contains all the global solutions that are asymptotically the sum of a modulated ground state and a radiation term.

In \cite{JiLiScXu17} (radial case), \cite{JiLiScXu20} (nonradial case) Jia, Liu, Schlag and Xu have constructed \emph{global} center stable manifolds associated to a non-degenerate excited states for the defocusing critical wave equation with a linear potential, in space dimension $3$. These results are in the spirit of Theorem \ref{T:cs}. Of course, in their general context, the nondegeneracy of the excited state has to be assumed, and the number of negative eigenvalues of the linearized operator is not known.

Very few is known on the dynamics of dispersive equations close to excited states, outside the associated center stable manifold. See for example \cite{TsaiYau02a}, \cite{TsaiYau02b} and \cite{SofferWeinstein04} for nonlinear Schr\"odinger equations with potential.

The work \cite{KrNaSc15} implies the existence of solutions scattering to the ground state in the future, and blowing-up in finite time in the past, similar to the solutions of Theorem \ref{T:negative_time} with $k=0$. A particular example of such a solution (for the same energy critical wave equation) had been constructed before in \cite{DuMe08}. To our knowledge, the solutions given by Theorem \ref{T:negative_time} with $k\geq 1$ are the first examples of solutions scattering to an excited state in the future, and blowing-up in finite time in the past.

The equation \eqref{eq:NLW} without symmetry assumption on the initial data is an example of focusing energy-supercritical equation, for which a general description of the dynamics is completely out of reach by current methods. One can see the study of the radial solutions of \eqref{eq:NLW} as a first step toward the understanding of this dynamics. We refer to \cite{DAncona19P}, where this point of view is adopted for the \emph{defocusing} supercritical wave equation.

The precise description of solutions blowing up in finite time close to the blow-up time is not treated here. Away from the obstacle, the equation is very close to the one dimensional semilinear wave equation on $\R$, and the theory developed by Merle and Zaag should apply (see \cite{MeZa07}, \cite{MerleZaag11} and references therein). The description of the blow-up close to $r=1$ is an open question. An interesting related question is the stability of the blow-up: we conjecture that the set $\FFF^+$ is open, however the method developed here are not sufficient to prove this property.

\subsection{Outline of the article}
In Section \ref{sec:linearizedoptr} we study the linearized operator $-\Delta-(2m+1)Q_j^{2m}$ at a stationary solution $Q_j$, proving Theorem \ref{T:essspec}. 

In Section \ref{sec:center-manifold}, we construct the global center stable manifold. Our proof is close to the one in \cite{JiLiScXu17}, \cite{JiLiScXu20}. In particular, to deduce the existence of a global manifold from the construction of the local manifold, we use a no-return property proved by a channel of energy argument inspired by \cite{DuKeMe12c}, as in \cite{JiLiScXu20}. The proof requires Strichartz and local energy decay estimates for the wave equation with a potential in the exterior domain $\Omega$, that we prove in a separate article \cite{DuyckaertsYang23P}.

In Section \ref{sec:instability}, we prove the instability/blow-up results Theorem \ref{T:M0}, Proposition \ref{P:positive_blowup}, and Theorems \ref{T:blow-up}, \ref{T:blow-up'} and \ref{T:negative_time}. The proofs use comparison arguments relying on the positivity of the linear wave equation on $\Omega$ with the Dirichlet boundary condition. This type of argument to prove blow-up for nonlinear wave equations is classical and goes back at least to \cite{John79}. However, to our knowledge, it is the first time that they are used in combination with Soliton resolution type results such as Theorem \ref{T:DY19}, and to prove also scattering, as in Theorem \ref{T:M0}.

\subsection*{Acknowledgment}
This work was initiated during Jianwei Yang's visit in the LAGA (Universit\'e Sorbonne Paris Nord) in 2018-2019, as an international chair of the Labex MME-DII. 

\section{Linearized operators around stationary solutions}\label{sec:linearizedoptr}
This section concerns the linearized operator 
\begin{equation}
\label{eq:Lj}
L_{j}=-\Delta-(2m+1)Q_{j}^{2m}\;,
\end{equation}
around a nonzero static solution $Q_j$. In Subsection \ref{sub:stationary} we recall from our previous work \cite{DuyckaertsYang21} some facts about stationary solutions of \eqref{eq:NLW}. In Subsection \ref{sub:nullspace} we start the proof of Theorem \ref{T:essspec}, by studying the essential spectrum of $L_j$, and by proving that zero is not an eigenvalue or a resonance for $L_j$. In Subsection \ref{sub:atleast}, we prove that $L_j$ has at least $j+1$ negative eigenvalues. The core of this section is Subsection \ref{sub:atmost}, where we prove that $L_j$ has at most $j+1$ negative eigenvalues. We deduce this from the fact that the function $\Lambda Q_j=x\cdot\nabla Q_j+\frac{1}{m}Q_j$ has exactly $j+1$ zeros, a fact that we prove using a uniqueness result for semilinear elliptic equations on annulus due to Ni and Nussbaum \cite{NiNussbaum85}. Finally, in Subsection \ref{sub:Strichartz}, we will recall dispersive estimates for $L_j$ that we proved in \cite{DuyckaertsYang23P}. 

Let us mention that some of the results of this section are contained in \cite{BizonMaliborski20} with proofs combining numerical and theoretical arguments.

\subsection{Existence of stationary solutions}
\label{sub:stationary}
We state several properties on a class of singular stationary solutions  involved in 
\cite{ DuKeMe12c,DuyckaertsRoy17,DuyckaertsYang18,DuyckaertsYang21}.
\begin{proposition}
	\label{P:DKM-stationary}
	Let $m>2$, $m\in\mathbb{N}$. 
	Then there exists a radial, $C^{2}$ solution $Z(x)=Z(|x|)$ of 
	\begin{equation}
	\label{eq:stationary}
	\Delta Z+Z^{2m+1}=0\quad\text{on}\quad
	\mathbb R^{3}\setminus\{0\},
	\end{equation}
	such that 
	\begin{equation}
	\label{eq:decay-infty}
	\forall\;r\geq 1,\quad
	\bigl | r \, Z(r)-1\bigr |\leq \frac{C}{r^{2}}
	\end{equation}
	\begin{equation}
	\label{eq:derivative-infty}
	\lim_{r\rightarrow\infty}
	r^{2}\frac{d Z}{dr}=-1\,.
	\end{equation}
	Moreover, the zeros of $Z(r)$ are given by a sequence $\{r_{j}\}_{j=0}^{\infty}$ such that 
	\[
	r_{0}>r_{1}>\cdots>r_{j}>\cdots\longrightarrow 0,
	\quad j\rightarrow\infty.
	\]
\end{proposition}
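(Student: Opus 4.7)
The plan is to construct $Z$ by integrating the radial equation
\[
Z''(r) + \frac{2}{r}Z'(r) + Z(r)^{2m+1} = 0
\]
from $+\infty$ toward $0$. First I would set $u(r) = rZ(r)$, which converts the equation to $u''(r) = -u(r)^{2m+1}/r^{2m}$. With boundary conditions $u(+\infty) = 1$ and $u'(+\infty) = 0$, this rewrites as the integral equation
\[
u(r) = 1 - \int_r^{+\infty}(s-r)\,\frac{u(s)^{2m+1}}{s^{2m}}\,ds,
\]
which I would solve by a Banach fixed point argument in a small $L^{\infty}$-ball around the constant $1$ on $[R,+\infty)$, for $R$ large. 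Since $m > 2$, the weight $s^{-2m}$ is strongly integrable at infinity and a direct estimate yields $|u(r) - 1| = O(r^{2-2m}) = O(r^{-2})$ together with $|u'(r)| = O(r^{1-2m})$; the first bound gives $|rZ(r) - 1| \leq Cr^{-2}$ on $[1,+\infty)$, and the identity $r^{2}Z'(r) = ru'(r) - u(r)$ then yields $r^{2}Z'(r) \to -1$. Classical ODE theory extends $Z$ to a maximal existence interval $(r_*,+\infty)$.

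To show that $r_* = 0$, I would pass to Emden--Fowler variables $t = \log r$ and $w(t) = e^{t/m}Z(e^t)$; a short computation turns the equation into the autonomous dissipative system
\[
w''(t) + \frac{m-2}{m}\,w'(t) + V'(w(t)) = 0,\qquad V(w) = \frac{w^{2m+2}}{2m+2} - \frac{m-1}{2m^2}\,w^2,
\]
whose mechanical energy $\mathcal{E}(t) = \tfrac{1}{2}(w'(t))^2 + V(w(t))$ satisfies $\mathcal{E}'(t) = -\tfrac{m-2}{m}(w'(t))^2 \leq 0$. By the asymptotics of the first step, $w(t) \to 0$ as $t \to +\infty$, so $\mathcal{E}(+\infty) = 0$ and hence $\mathcal{E}(t) \geq 0$ for every $t$; combined with the coercivity $V(w) \to +\infty$ as $|w| \to \infty$, this excludes finite-time blow-up of $(w,w')$ in either direction, so $w$ is defined on all of $\mathbb{R}$ and therefore $r_* = 0$.

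The most delicate step is to show that $w$ has infinitely many zeros as $t \to -\infty$. Linearising at the origin yields eigenvalues $1/m$ and $(1-m)/m$, so $(0,0)$ is a hyperbolic saddle; our trajectory sits on its one-dimensional stable manifold and, by step one, enters $\{w>0\}$ when followed backward. In the time-reversed, anti-dissipative flow, $\mathcal{E}$ strictly increases, hence eventually $\mathcal{E} > 0$. Once $\mathcal{E} > 0$, any putative minimum of $w$ in $\{w > 0\}$ would require $V(w_{\min}) = \mathcal{E} > 0$ and simultaneously $w_{\min} \le w_*$ (from $w''\geq 0$ at the minimum, and $V'(w)\le 0$ only for $w \in (0, w_*]$), which contradicts $V(w_{\min}) < 0$ on $(0, w_*]$; combined with the exclusion of monotone escape to $\pm\infty$ (for $|w|$ large, the term $w^{2m+1}$ dominates $V'(w)$ and forces $w'$ to change sign) and with the instability of all equilibria in the reversed flow (which by a Poincaré--Bendixson/LaSalle argument precludes convergence), this forces the trajectory to cross $\{w = 0\}$. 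By the evenness $V(-w) = V(w)$ the same analysis reapplies on the negative side, so $w$ admits infinitely many zeros accumulating at $t = -\infty$. Each zero of $w$ is a zero of $Z$; ODE uniqueness at a putative double zero forces $Z \equiv 0$, so all zeros are simple and isolated, and they form the desired strictly decreasing sequence $r_0 > r_1 > \cdots \to 0$. The hardest part is this oscillation analysis, in particular excluding a backward trajectory confined to one well of $V$, which requires a careful combination of the monotonicity of $\mathcal{E}$ with the global geometry of the level sets of $V$.
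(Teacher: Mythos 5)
You should first note that the paper does not prove this proposition at all: it is imported verbatim from the earlier works cited just above it (\cite{DuKeMe12c,DuyckaertsRoy17,DuyckaertsYang18,DuyckaertsYang21}), so there is no in-paper argument to compare with; your construction is the standard one for such singular stationary solutions. Its substance checks out: the reduction $u=rZ$, the integral equation and contraction at $r=\infty$ (the weight $s^{-2m}$ with $m>2$ indeed gives $|u-1|=O(r^{2-2m})$ and $ru'-u\to-1$), the Emden--Fowler variables with damping coefficient $\tfrac{m-2}{m}$ and potential $V(w)=\tfrac{w^{2m+2}}{2m+2}-\tfrac{m-1}{2m^2}w^2$, the saddle eigenvalues $1/m$ and $(1-m)/m$, and the key sign facts ($w'=0$, $w''\ge0$, $w>0$ force $w\le w_*$, while $V<0$ on $(0,w_*]$) are all correct, and the simplicity/isolation of zeros via ODE uniqueness is fine.

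There are, however, three places where the sketch asserts more than it proves. (i) Backward global existence: ``$\mathcal{E}\ge0$ plus coercivity of $V$ excludes blow-up'' is not an argument in the backward direction, where $\mathcal{E}$ increases; what does the job is the quantitative step $w^2+(w')^2\le C(1+\mathcal{E})$ (from $V(w)\ge c\,w^2-C$ together with $V(w)\le\mathcal{E}$ and $\mathcal{E}\ge 0$), which gives $|\mathcal{E}'|\le C(1+\mathcal{E})$ and Gr\"onwall on finite intervals. (ii) Exclusion of backward monotone escape to $+\infty$: the sentence ``$w^{2m+1}$ dominates $V'(w)$ and forces $w'$ to change sign'' is not a proof (and $w^{2m+1}$ \emph{is} the leading term of $V'$). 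A clean repair: on a backward interval where $w$ increases to $+\infty$ one has $w'\ne0$, so $K=\tfrac12(w')^2$ may be viewed as a function of $w$ with $\tfrac{dK}{dw}=\tfrac{m-2}{m}\sqrt{2K}-V'(w)$; the anti-damping term alone yields $K(w)=O(w^2)$, after which the $-V'(w)\sim-w^{2m+1}$ term forces $K$ negative for large $w$, a contradiction. (iii) The LaSalle/Poincar\'e--Bendixson step presupposes that the backward trajectory is bounded; when $w$ is monotone with a finite limit you must still rule out $|w'|\to\infty$ (e.g. $w'\in L^1$ near $-\infty$ together with $|w''|\le C(1+|w'|)$), or better, bypass LaSalle altogether: backward convergence to $(0,0)$ would force $\mathcal{E}\to0$, impossible since $\mathcal{E}$ is strictly decreasing forward with limit $0$, and backward convergence to $(\pm w_*,0)$ would force $\mathcal{E}\to V(\pm w_*)<0$, impossible since $\mathcal{E}\ge0$. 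Finally, a cosmetic point: the bound $|rZ(r)-1|\le C r^{-2}$ on all of $[1,\infty)$ should be stated only after existence down to $r=0$ is established, the range $[1,R]$ being handled by continuity and enlarging $C$. With these repairs your argument is complete and coincides with the classical route behind the cited results.
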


Let $Q_{j}(r)=r_{j}^{1/m}Z(r_{j}r)$.
Then $Q_{j}(|x|)$ is the radial solution of the following elliptic equation outside the unit ball 
$\Omega=\mathbb R^{3}\setminus \mathbb B$
with the Dirichlet boundary condition
\begin{equation}
\label{eq:Q}
-\Delta Q=Q^{2m+1},\quad Q_{\restriction{\partial\Omega}}=0,\quad
x\in\Omega,
\end{equation}
where $\Delta=\Delta_D$ is the Dirichlet-Laplacian, and $Q$ belongs to $ \dot H^{1}_{0}(\Omega)$. 
By Lemma 2.19 in \cite{DuyckaertsYang21}, $\{\pm Q_j\}\cup\{0\}$ are the only radial stationary solutions of \eqref{eq:Q} in the energy space.\\

\subsection{Essential spectrum and the null space of $L_j$}\label{hhhh}
\label{sub:nullspace}
We consider the operator $L_j$, with domain
$$\text{Dom}(L_j)=(H^2\cap H^1_0)_{\rm rad}(\Omega)=\{f\in H^1_0(\Omega): f\text{ radial and }-\Delta f \in L^2(\Omega)\}.$$
Since $Q_j^{2m}\in L^2(\Omega)\cap L^\infty(\Omega)$, we have from Theorem 8.2.2 in \cite{Davies95BO} that $L_j$ is a self-adjoint operator. 

We denote by
$$\mathfrak{H}_{j}(h)=\int_{\{|x|>1\}}|\nabla h|^2dx-(2m+1)\int_{\{|x|>1\}}Q_j^{2m}(x)h^2(x)\,dx,$$
for $h\in \dot H^{1}_{0}(\Omega)$, radial,
the quadratic form associated to $L_{j}$.
\begin{lemma}
The essential spectrum of $L_j$ is $[0,\infty)$. There is no embedded eigenvalue on $(0,\infty)$.  
\end{lemma}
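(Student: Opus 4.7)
The plan for the essential spectrum is to apply Weyl's theorem on stability of the essential spectrum under relatively compact perturbations. First I would recall that the exterior Dirichlet Laplacian $-\Delta$ on $\Omega$, restricted to radial functions, has purely essential spectrum $[0,\infty)$; this is classical and can be seen by the substitution $u(r)\mapsto ru(r)$ which conjugates $-\Delta$ (acting on radial functions) to the free half-line Schr\"odinger operator with Dirichlet boundary at $r=1$. Next I would check that the multiplication operator $V(x)=(2m+1)Q_j^{2m}(|x|)$ is $-\Delta$-compact, i.e.\ that $V(-\Delta+1)^{-1}$ is compact on $L^2(\Omega)$. The only input needed is the decay $|V(x)|\le C/|x|^{2m}$ for $|x|\ge 1$, coming from Proposition \ref{P:DKM-stationary} applied to $Q_j=r_j^{1/m}Z(r_j\cdot)$, together with boundedness of $V$ up to $\partial\Omega$. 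A standard cutoff argument closes the step: for any radius $R$, the operator $V\indic_{|x|\le R}(-\Delta+1)^{-1}$ is compact by Rellich-Kondrachov (the resolvent maps $L^2$ into $H^2$, which embeds compactly into $L^2$ on bounded annuli), while $\|V\indic_{|x|>R}\|_{L^\infty}\to0$ as $R\to\infty$. Weyl's theorem then gives $\sigma_{\mathrm{ess}}(L_j)=[0,\infty)$.

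For the absence of eigenvalues embedded in $(0,\infty)$, the plan is to exploit radial symmetry to reduce to a one-dimensional problem. If $u\in\text{Dom}(L_j)$ is a radial solution of $L_j u=\lambda u$ with $\lambda>0$, set $v(r)=ru(r)$. The identity $\Delta u=v''/r$ for radial $u$, the Dirichlet condition $v(1)=0$, and the norm equivalence $\int_\Omega|u|^2\,dx=4\pi\int_1^\infty|v|^2\,dr$ reduce the eigenvalue equation to
\begin{equation*}
-v''(r)-W(r)\,v(r)=\lambda v(r),\qquad r>1,\quad v(1)=0,\quad v\in L^2(1,\infty),
\end{equation*}
with $W(r)=(2m+1)Q_j^{2m}(r)=O(r^{-2m})$. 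Since $2m>2$, the potential $W$ is integrable at infinity, so a Levinson-type analysis---rewriting the ODE as a first-order system and solving it by variation of constants against the free exponentials $e^{\pm i\sqrt\lambda r}$, the correction being an absolutely convergent integral---yields for every solution an asymptotic expansion
\begin{equation*}
v(r)=a\cos(\sqrt\lambda\,r)+b\sin(\sqrt\lambda\,r)+o(1)\quad\text{as }r\to\infty,
\end{equation*}
with $(a,b)\in\R^2$ uniquely determined by the solution. Membership $v\in L^2(1,\infty)$ forces $a=b=0$, after which a Wronskian argument against a nontrivial reference solution forces $v\equiv 0$, hence $u\equiv 0$.

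The part I expect to be the main obstacle is the Levinson-type step: the essential-spectrum half is a textbook application of Weyl's theorem once the decay of $Q_j$ is available, whereas ruling out $L^2$ solutions at positive energy requires a short but careful ODE computation. A clean alternative that avoids writing out the asymptotics of individual solutions is to invoke Kato's theorem on the absence of positive eigenvalues for one-dimensional Schr\"odinger operators with short-range potentials, applied directly to $-\partial_r^2-W$ on $L^2(1,\infty)$ with Dirichlet boundary at $r=1$. Either route is essentially classical in view of the fast decay $Q_j^{2m}=O(r^{-2m})$ supplied by Proposition \ref{P:DKM-stationary}.
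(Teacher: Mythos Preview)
Your proposal is correct and, for the essential spectrum, takes essentially the same route as the paper: Weyl's theorem plus relative $-\Delta$-compactness of $V=(2m+1)Q_j^{2m}$. The only cosmetic difference is in how compactness is verified---you use a cutoff plus Rellich--Kondrachov, while the paper invokes the radial Sobolev inequality \eqref{rad_Sob}; both are standard.

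For the absence of embedded positive eigenvalues the two approaches diverge slightly. The paper simply declares this standard and cites a general reference (Ionescu--Jerison), whereas you reduce to the half-line problem via $v(r)=ru(r)$ and argue directly with Levinson/Jost asymptotics (or Kato's theorem) using the short-range decay $W=O(r^{-2m})$. Your route is more self-contained and exploits the radial setting explicitly; the paper's citation is quicker but less transparent. Either is perfectly adequate here given the strong decay of the potential.
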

\begin{proof}[Sketch of proof]
Indeed it is well-known that the spectrum of $-\Delta$ with domain $(H^2\cap H^1_0)_{rad}(\Omega)$ is $[0,+\infty)$. To prove that the essential spectrum of $L_j$ is $[0,\infty)$, it suffices to show that $L_j$ and
$-\Delta$ have the same essential spectrum. Let $V=(2m+1)Q_j^{2m}\indic_{\{|x|\geqslant 1\}}(x)$.
By Weyl's theorem (see Theorem 14.6 in \cite{HislopSigal96BO}), it suffices to show that $V $ is relatively $-\Delta$-compact. This follows easily from the radial Sobolev inequality
\begin{equation}
\label{rad_Sob}
\forall R>1,\quad
|f(R)|\lesssim \frac{1}{\sqrt{R}}\|f\|_{\dot{H}^1_{\rm rad}}.
\end{equation}
The fact that there is no embedded eigenvalue in the essential spectrum is standard (see e.g.\cite{IoJe03}).
\end{proof}

\begin{definition}
	\label{dfn:resonance}
	A nonzero function $h\in H^2_{\mathrm{loc}}(\overline{\Omega})$, such that $\langle x\rangle^{-1}h(x)\in L^2(\Omega)$ and
	\begin{equation}
	\label{eq:resonance}
	L_jh=0,\quad h_{\restriction \partial \Omega}=0,
	\end{equation} 
	is called a \emph{zero energy state} of $L_j$. 
\end{definition}
We next prove:
\begin{proposition}
	\label{P:ker}
	The operator $L_j$ has no radial zero energy state.
\end{proposition}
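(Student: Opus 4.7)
The plan is to exploit the scaling invariance of the stationary equation to isolate a distinguished decaying solution of $L_jh=0$, and then show that the Dirichlet condition at $r=1$ forces any competing radial solution to grow like a nonzero constant at infinity, which is incompatible with $\langle x\rangle^{-1}h\in L^2(\Omega)$.

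First, differentiating the one-parameter family of stationary solutions $\lambda^{1/m}Q_j(\lambda\cdot)$ at $\lambda=1$ produces $\Lambda Q_j:=\frac{1}{m}Q_j+r\partial_r Q_j$, which satisfies $L_j(\Lambda Q_j)=0$ on $\Omega$. Since $Q_j$ solves a regular second-order radial ODE on $(0,\infty)$ with $Q_j(1)=0$, ODE uniqueness forces $Q_j'(1)\neq 0$, so $(\Lambda Q_j)(1)=Q_j'(1)\neq 0$: the natural candidate $\Lambda Q_j$ fails the Dirichlet condition. I then perform an asymptotic analysis of the radial equation via the substitution $y(r)=rh(r)$, which turns $L_jh=0$ into
\[
y''+V(r)\,y=0,\qquad V(r):=(2m+1)Q_j(r)^{2m}.
\]
From Proposition~\ref{P:DKM-stationary} and $Q_j(r)=r_j^{1/m}Z(r_jr)$ one has $Q_j(r)\lesssim 1/r$ for $r\geq 1$, so $V(r)=O(r^{-2m})$ with $2m\geq 6$; in particular $\int_1^\infty s|V(s)|\,ds<\infty$. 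A standard Picard iteration on the Volterra equation $y(r)=Ar+B-\int_r^\infty (s-r)V(s)y(s)\,ds$ then produces two linearly independent solutions with
\[
h_+(r)=1+o(1),\qquad h_-(r)=\frac{1}{r}+o\!\left(\frac{1}{r}\right),\qquad r\to\infty,
\]
spanning the 2-dimensional radial solution space of $L_jh=0$ on $(1,\infty)$.

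Using $rQ_j(r)\to \ell_j$ and $r^2Q_j'(r)\to -\ell_j$ (consequences of Proposition~\ref{P:DKM-stationary}), I compute
\[
\Lambda Q_j(r)\sim -\frac{(m-1)\ell_j}{m\,r},\qquad r\to\infty,
\]
so $\Lambda Q_j$ lies in the one-dimensional ``decaying'' subspace spanned by $h_-$; in particular $h_-(1)\neq 0$. Now let $h$ be a radial zero energy state and write $h=\alpha h_-+\beta h_+$. The condition $h(1)=0$ together with $h_-(1)\neq 0$ implies that $\beta=0$ forces $\alpha=0$, i.e.\ $h\equiv 0$; so if $h\not\equiv 0$ then $\beta\neq 0$ and $h(r)\to\beta\neq 0$ as $r\to\infty$. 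Consequently
\[
\int_\Omega \frac{|h(x)|^2}{\langle x\rangle^2}\,dx \;\gtrsim\; \int_R^\infty |h(r)|^2\,\frac{r^2}{1+r^2}\,dr = +\infty
\]
for $R$ sufficiently large, contradicting $\langle x\rangle^{-1}h\in L^2(\Omega)$.

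The main technical obstacle is the asymptotic step producing $h_\pm$ with sharp behaviour at infinity; however, since $V$ decays like $r^{-2m}$ with $m\geq 3$, this is a classical application of Levinson-type asymptotic integration for linear second-order ODEs, and the rest of the argument reduces to dimension counting in a two-dimensional solution space, together with the elementary observation that $\Lambda Q_j$ does not satisfy the boundary condition at $r=1$.
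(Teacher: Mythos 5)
Your argument is correct. It shares the paper's two key ingredients -- the explicit solution $\Lambda Q_j=\frac1m Q_j+rQ_j'$ of $L_j(\Lambda Q_j)=0$ with $\Lambda Q_j(1)=Q_j'(1)\neq 0$ and $r\Lambda Q_j(r)\to\frac{1-m}{m}r_j^{\frac1m-1}\neq 0$, and the final contradiction that a zero energy state would tend to a nonzero constant, violating $\langle x\rangle^{-1}h\in L^2$ -- but the mechanism is genuinely different. The paper works directly with the putative zero energy state $h$: since $h(1)=0$ and $h\not\equiv 0$ force $h'(1)\neq 0$, the Wronskian $r^2h'\Lambda Q_j-r^2(\Lambda Q_j)'h\equiv c=Q_j'(1)h'(1)\neq 0$, and reduction of order ($g=h/\Lambda Q_j$, $r^2(\Lambda Q_j)^2g'=c$, l'H\^opital) shows $h\to$ nonzero constant; no asymptotic fundamental system is needed, so the proof is entirely self-contained. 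You instead invoke Levinson-type asymptotic integration for $y''+Vy=0$ with $V=(2m+1)Q_j^{2m}=O(r^{-2m})$ (indeed $\int_1^\infty s^2|V(s)|\,ds<\infty$, so both branches of your Volterra iteration converge) to produce the basis $h_+\sim 1$, $h_-\sim 1/r$, and then use $\Lambda Q_j$ only to conclude that the decaying branch satisfies $h_-(1)\neq 0$, so any nontrivial solution with $h(1)=0$ must contain $h_+$ and hence fails the weighted $L^2$ condition. What your route buys is modularity and transparency: the dichotomy $\{1,1/r\}$ at infinity is made explicit and the argument would apply verbatim to any potential with $\int r|V|\,dr<\infty$ admitting a decaying solution that does not vanish on the boundary; what the paper's route buys is economy, since the single Wronskian identity replaces the asymptotic-integration lemma and the dimension count. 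One small point worth making explicit in a write-up: a zero energy state is a priori only an $H^2_{\mathrm{loc}}$ distributional solution, so you should note (as the paper does in one line) that elliptic regularity makes $h$ a classical $C^1$ solution of the radial ODE on $[1,\infty)$ before decomposing it in your fundamental system.
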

\begin{proof}
 Let $h\in H^1_{\loc}(\Omega)$ be a radial zero energy state, i.e. $h\neq 0$, 
 $$ L_jh=0,\; h(1)=0,\quad \int_1^{\infty} |h(r)|^2dr<\infty.$$
 We will show that $h(r)$ has a finite nonzero  limit as $r\to\infty$, which is an obvious contradiction.
 
 Note that the assumptions imply that $h$ is a $C^1$ function of $r$ on $[1,\infty)$. For $\lambda>0$,  $Q_{j}^\lambda(r)=\lambda^{\frac{1}{m}}Q_j(\lambda r)$
	satisfies $-Q''-\frac{2}{r}Q'=Q^{2m+1}$. Differentiating, we obtain that
	\begin{equation}
	\label{ZZZZ}
	\Lambda Q_j(r):=\frac{d}{d\lambda}\Bigg|_{\lambda=1}Q^\lambda_j(r)=\frac{1}{m}Q_j(r)+rQ_j'(r)
	\end{equation}
	satisfies the equation $L_j(\Lambda Q_j)=0$. As a consequence,
	$$\frac{d}{dr}\left( r^2h'\Lambda Q_j-r^2 (\Lambda Q_j)'h \right)=0.$$
	Thus $r^2h'\Lambda Q_j-r^2 (\Lambda Q_j)'h$ is equal to a constant $c$, and, since $h(1)=0$, $c=\Lambda Q_j(1)h'(1)=Q_j'(1)h'(1)\neq 0$.
Since $\displaystyle\lim_{r\to\infty}r\Lambda Q_j(r)=\left( \frac{1}{m}-1 \right)r_j^{\frac 1m-1}$, we see that $\Lambda Q_j(r)<0$ for large $r$. Letting $g= h/(\Lambda Q_j)$, we obtain that the equation
$$r^2h'\Lambda Q_j-r^2 (\Lambda Q_j)'h=c$$ 
implies $r^2(\Lambda Q_j)^2g'=c$. As a consequence,
$$ \lim_{r\to \infty} g'(r)=\left(\frac{m}{m-1}\right)^2 \frac{c}{r_j^{\frac{2}{m}-2}}:=c'\neq 0.$$
By l'H\^opital's rule, we obtain
$$\lim_{r\to\infty} \frac{g(r)}{r}=c'.$$
We have proved as announced that $h(r)=g(r)\Lambda Q_j(r)$ has a finite, nonzero limit as $r\to\infty$. 
\end{proof}

\subsection{Existence of at least $j+1$ negative eigenvalues for $L_j$}\label{sub:atleast}
From this subsection, we start to study the solvability of 
\begin{equation}
\label{hgutw}
L_j \phi = \mu \phi
\end{equation} 
with $\mu\in \R$ and $\phi\in \text{Dom}(L_j)$.
Taking account of the radial setting, it is convenient to 
introduce the following notation. Let $\phi\in \text{Dom}(L_j)$ and set $\psi(r)=r\phi(r)$.
Then \eqref{hgutw} becomes
\begin{equation}
\label{vmgd}
-\psi''-(2m+1)Q_j^{2m}\psi(r)=\mu\psi(r),
\end{equation}
with 
\begin{equation}
\label{integrability}
\int_1^{+\infty}|\psi(r)|^2dr<\infty,\quad\psi(1)=0. 
\end{equation} 
By the elliptic regularity theory, if $\psi$ is the solution
of \eqref{vmgd} that satisfies \eqref{integrability}, then $\frac{1}{r}\psi(r)\in \text{Dom}(L_j)$.
Thus, \eqref{hgutw} with the condition $\phi \in \text{Dom}(L_j)$ is equivalent to \eqref{vmgd}, \eqref{integrability}.\\

In view of Subsection \ref{hhhh},
if \eqref{vmgd} has a nontrivial solution, 
then $\mu<0$ and the collection of such $\mu$'s
consists of the discrete negative eigenvalues of the operator $L_j$. Denote by $\mathcal{N}(L_j)$ the number of negative eigenvalues of 
$L_j$. Since we restrict to radial functions, these eigenvalues are all simple by standard uniqueness theory for ordinary differential equations. We show in this subsection that for every $j\in \mathbb{N}$
\begin{equation}
\label{hbaa}
\mathcal{N}(L_j)\geqslant j+1.
\end{equation}

To see that \eqref{hbaa} is true, we employ the Rayleigh-Ritz variational formulae for eigenvalues.
Let $C_{j}=\sup\{(2m+1)\,Q_j^{2m}(x)\,:\,x\in \Omega\} $ and 
$$\mathcal{L}_j:=L_j+C_{j}.$$
Then $\mathcal{L}_j$ is a non-negative self-adjoint operator on $L^2_{\mathrm {rad}}(\Omega)$  with domain ${\rm Dom}(L_j)$.
Let $S$ be any finite-dimensional subspace of ${\rm Dom}(L_j)$, and define 
$$\lambda_j(S)=\sup\{\left\langle \mathcal{L}_j f, f\right\rangle: f\in S,\, \|f\|_{L^2}=1\},$$
where $\langle\cdot,\cdot\rangle$ is the real inner product in $L^2(\Omega)$.
The eigenvalues $\mu_{j,k}$ of $\mathcal{L}_j$ are given by 
\begin{equation}
\label{eq:evformula}
\mu_{j,k}=\inf\{\lambda_j(S):S\subset {\rm Dom}(L_j),\,{\rm dim}\, S=k\}.
\end{equation}
Since $\mathcal{L}_j$ has essential spectrum $[C_j,\infty)$ as a consequence of Theorem \ref{T:essspec}, we have
according to Theorem 4.5.2 in \cite{Davies95BO} that one of the two possibilities occurs:
\begin{enumerate}
	\item[(i)]$\mu_{j,k}<C_j$ for all 
	$k$ and $\mu_{j,k}\rightarrow C_j$ as $k\rightarrow+\infty$. Moreover, the intersection of the spectrum of $\mathcal{L}_j$ with $[0,C_j) $ consists of the eigenvalues
	$\mu_{j,k}$.
	\item[(ii)]
	There exists $K_j<\infty$ such that $\mu_{j,K_j}<C_j$ but $\mu_{j,k}=C_j$ for all
	$k>K_j$. Moreover, the intersection of the spectrum of $\mathcal{L}_j$ with $[0,C_j)$ consists of the eigenvalues
	$\mu_{j,1},\ldots, \mu_{j,K _j}$.
\end{enumerate}
If we are in the first case, then $\mathcal{N}(L_j)=+\infty$ and we are done. If we are in the second case, it suffices to construct a subspace
$S_j\subset {\rm Dom}(L_j)$ with ${\rm dim}\, S_j=j+1$ such that
$\lambda_j(S_j)<C_{j}$.
\medskip

Denote by $\gamma_{j,i}=r_{j-i}/r_{j}$ for $i=0,1,\ldots j$. Then $1=\gamma_{j,0}<\gamma_{j,1}<\ldots<\gamma_{j,j}$ and $Q_{j}(\gamma_{j,i})=0$ for $i= 0,\ldots, j$.
We let $I_{j,i}=[\gamma_{j,i},\gamma_{j,i+1}]$
for $i=0,1,\ldots,j-1$, $I_{j,j}=[\gamma_{j,j},\infty)$ and
\begin{equation}
\label{mnnb}
R_{j,i}(r)=c_{j,i}Q_{j}(r)\indic_{I_{j,i}}(r),\quad c_{j,i}=\frac{1}{\|\indic_{I_{j,i}}Q_{j}\|_{L^2}},\; i=0,1,\ldots, j.
\end{equation}
Then, by a straightforward integration by parts,
\[
0>\mathfrak{H}_{j}(R_{j,i})=-2m\int_{\gamma_{i}}^{\gamma_{i+1}}
|R_{j,i}(r)|^{2m+2}r^{2}dr,\quad
(R_{j,i},R_{j,i'})_{L^2}=\delta^{i}_{i'}\,,
\]
where $\delta^{i}_{i'}$ is the Kronecker symbol and $i,i'\in\{0,1,\ldots, j\}$. 
Thus 
$$ \left\langle \mathcal{L}_j R_{j,i}, R_{j,i}\right\rangle = \mathfrak{H}_{j}(R_{j,i}) +C_j<C_j.$$ 
It is clear that one may take $S_j={\rm span}\{R_{j,0},R_{j,1},\ldots, R_{j,j}\}$ to complete the proof.

\subsection{Existence of at most $j+1$ negative eigenvalues for $L_j$}
\label{sub:atmost}
Given $j\in\mathbb{N}$, we have obtained a sequence of 
negative eigenvalues of $L_j$
\[
\mu_{j,1}<\mu_{j,2} <\ldots <\mu_{j,\ell}< \cdots<0
\]
which could be infinitely many. 
We suppress the subscript $j$ for 
brevity in this section. 

To show that $L_j$ has at most $j+1$ negative eigenvalues is more intricate
and we shall divide the proof
of this subsection into three parts
\subsubsection{Reduction to counting zeros of $\Lambda Q_j$}
Let $Y_\ell$ be an eigenfunction associated to 
$\mu_\ell$ as in \eqref{hgutw} with 
$\ell\in\{1,2,\ldots\}$. Let $e_{\ell}=\sqrt{-\mu_{\ell}}$, so that 
\begin{equation}
	\label{EV}
	Y_{\ell}''+\frac{2}{r}Y_{\ell}'+(2m+1)Q_j^{2m}Y_{\ell}=e_\ell^2Y_{\ell},\quad  r>1
	\end{equation}

Then $\psi_\ell(r)=rY_\ell(r)$ satisfies 
\begin{equation}
\label{vmgd'}
\psi_{\ell}''-e_{\ell}^2\psi_{\ell}=-(2m+1)Q_j^{2m}\psi_{\ell}.
\end{equation}
The asymptotic behaviour of $Y_{\ell}$ is given by Agmon's estimates:
\begin{lemma}
	\label{kdlre}
	The eigenfunction $Y_{\ell}$ is exponentially decreasing. More precisely, replacing $Y_{\ell}$ by $-Y_{\ell}$ if necessary, and writing $\mu_{\ell}=-e_\ell^2$ with $e_\ell>0$, there exists $c_\ell>0$ such that
	$$
	\psi_{\ell}(r)=rY_{\ell}(r)=\left(c_{\ell}+\OOO(r^{1-2m})\right)e^{-e_{\ell}r},\quad r\to\infty$$
	$$rY_{\ell}'(r)=\left(-c_{\ell}e_{\ell}+\OOO(r^{-1})\right)e^{-e_{\ell}r},\quad r\to\infty.$$
\end{lemma}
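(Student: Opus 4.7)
The plan is to treat \eqref{vmgd'} as an ODE perturbation of $\psi''-e_\ell^2\psi=0$ and read off the precise asymptotics, using that the potential $V(r):=(2m+1)Q_j^{2m}(r)$ satisfies $V(r)=O(r^{-2m})$ as $r\to\infty$ by Proposition \ref{P:DKM-stationary}. Since the unperturbed equation has fundamental solutions $e^{\pm e_\ell r}$, standard ODE perturbation theory (a contraction mapping on $[R,\infty)$ in the weighted sup norm $\|f\|=\sup_{r\ge R}|f(r)|e^{e_\ell r}$) produces, for $R$ large enough, two linearly independent solutions $\psi_-$ and $\psi_+$ of the homogeneous equation \eqref{vmgd'} satisfying $\psi_-(r)\sim e^{-e_\ell r}$ and $\psi_+(r)\sim e^{e_\ell r}$ at infinity; these exhaust the asymptotic behaviors of any solution. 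Writing $\psi_\ell=a\psi_-+b\psi_+$, the $L^2$-condition \eqref{integrability} forces $b=0$, and setting $c_\ell:=a$ we have $\psi_\ell=c_\ell\psi_-$ on $[R,\infty)$. The coefficient $c_\ell$ is nonzero since $\psi_\ell\not\equiv 0$.

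To extract the next-order term, I would then use the Duhamel/variation-of-parameters formula adapted to the two asymptotic regimes. Viewing $-V\psi_\ell$ as a source and imposing decay at infinity gives the integral representation
\begin{equation*}
\psi_\ell(r)=c_\ell e^{-e_\ell r}+\frac{e^{-e_\ell r}}{2e_\ell}\int_r^\infty e^{e_\ell s}V(s)\psi_\ell(s)\,ds+\frac{e^{e_\ell r}}{2e_\ell}\int_r^\infty e^{-e_\ell s}V(s)\psi_\ell(s)\,ds.
\end{equation*}
Inserting the already-known bound $|\psi_\ell(s)|\lesssim e^{-e_\ell s}$ and $V(s)=O(s^{-2m})$ makes the first integral comparable to $\int_r^\infty s^{-2m}\,ds=O(r^{1-2m})$ and the second to $\int_r^\infty s^{-2m}e^{-2e_\ell s}\,ds=O(r^{-2m}e^{-2e_\ell r})$. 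After multiplying by the corresponding exponential factors, both corrections are of order $O(r^{1-2m})e^{-e_\ell r}$, proving the first formula of the lemma.

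For the derivative, I would differentiate the integral representation term by term. The boundary terms coming from the differentiation of the integrals cancel the extra contributions produced by differentiating $e^{\pm e_\ell r}$ under the integral, and one is left with $\psi_\ell'(r)=-c_\ell e_\ell e^{-e_\ell r}+O(r^{1-2m})e^{-e_\ell r}$. Since $\psi_\ell(r)=rY_\ell(r)$, the relation $rY_\ell'(r)=\psi_\ell'(r)-\psi_\ell(r)/r$ gives
\begin{equation*}
rY_\ell'(r)=-c_\ell e_\ell e^{-e_\ell r}-\frac{c_\ell}{r}e^{-e_\ell r}+O(r^{1-2m})e^{-e_\ell r}=\bigl(-c_\ell e_\ell+O(r^{-1})\bigr)e^{-e_\ell r},
\end{equation*}
where the $O(r^{-1})$ term absorbs $O(r^{1-2m})$ since $m\ge 3$. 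Replacing $Y_\ell$ by $-Y_\ell$ if necessary, we may assume $c_\ell>0$.

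The only genuinely delicate point is the construction of the asymptotic basis $\{\psi_-,\psi_+\}$ and the justification that any $L^2$ solution is a scalar multiple of $\psi_-$; this is a routine but slightly technical contraction argument in an exponentially weighted space, analogous to Olver-type results for ODEs with integrable potentials at infinity. Once that is in place, the rest of the lemma reduces to bookkeeping on explicit integrals involving $V(s)=O(s^{-2m})$.
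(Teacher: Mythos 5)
Your proposal is correct in substance and arrives at the same integral representation that is the crux of the paper's proof, but the preparatory step differs from what the paper does. To establish the initial decay $|\psi_\ell(r)|\lesssim e^{-e_\ell r}$, you invoke a Levinson-type asymptotic basis $\{\psi_-,\psi_+\}$, built by a contraction in the weighted space $\|f\|=\sup_{r\ge R}|f(r)|e^{e_\ell r}$, and then eliminate the growing branch using the $L^2$-condition. The paper instead uses an Agmon-type energy estimate: it studies $G(R)=\int_R^\infty\bigl(|Y_\ell'|^2+e_\ell^2|Y_\ell|^2\bigr)r^2\,dr$, derives a differential inequality $2e_\ell G+G'\le CR^{-2m}G$ by integration by parts, and concludes $G(R)\lesssim e^{-2e_\ell R}$, which converts to the pointwise bound $|Y_\ell|+|Y_\ell'|\lesssim r^{-1}e^{-e_\ell r}$. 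The energy route is more self-contained and avoids setting up the contraction and the dichotomy between the two branches; your route is arguably more structural and gives the full two-dimensional solution space, which could be useful elsewhere. Once that rough decay is secured, both arguments proceed identically via the variation-of-parameters formula forced by the decay at infinity. One small slip: in your displayed integral representation the two correction terms both carry a $+$ sign, whereas the correct formula (and the one in the paper) has opposite signs, namely
\[
\psi_\ell(r)=c_\ell e^{-e_\ell r}-\frac{e^{-e_\ell r}}{2e_\ell}\int_r^\infty e^{e_\ell s}V(s)\psi_\ell(s)\,ds+\frac{e^{e_\ell r}}{2e_\ell}\int_r^\infty e^{-e_\ell s}V(s)\psi_\ell(s)\,ds.
\]
Since you only use absolute-value estimates on the integrals, this does not affect your conclusion, but it is worth correcting. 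Also, your ``$c_\ell\neq 0$ since $\psi_\ell\not\equiv 0$'' implicitly uses backwards uniqueness for the ODE (if $\psi_\ell\equiv 0$ on $[R,\infty)$ then $\psi_\ell\equiv 0$ everywhere); this is the same reasoning the paper uses, and it is worth making explicit.
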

\begin{proof}
This is well-known (see \cite{Agmon85}, \cite{Meshkov89} for the nonradial case). We sketch a proof, in the spirit of  the proof of Proposition 3.9 in \cite{DuKeMe16a}. 

	Define
	\[G(R)=\int_{R}^{+\infty}
	\left(|Y_{\ell}'(r)|^2+e_\ell^2|Y_{\ell}(r)|^2\right)r^2dr, \;R\gg 1.\]
By direct computations and integrations by parts:
\begin{multline}
\label{eqG}
	2e_\ell G(R)+G'(R)=-R^2\left(Y_{\ell}'(R)+e_\ell Y_{\ell}(R)\right)^2
	+2e_\ell(2m+1)\int_{R}^{+\infty}
	Q_j^{2m}(r)Y_{\ell}^2(r)r^2dr\\
	\leq  \frac{C}{R^{2m}}\,G(R).
	\end{multline}
	As a consequence, we have
	\[
	\frac{d}{d R}\left(\log \left( e^{2e_\ell R}G(R)\right)\right)
	\leq C R^{-2m},
	\]
	which yields $G(R)\leq C e^{-2e_\ell R}$, and also, by \eqref{eqG}, $G'(R)\leq Ce^{-2e_\ell R}$. Thus $|Y_{\ell}|+|Y'_{\ell}|\lesssim \frac{e^{-e_{\ell}r}}{r}$. Using this decay and solving the equation \eqref{vmgd'} satisfied by $\psi_{\ell}$, we obtain that there exists a constant $c\in \R$ such that
	\begin{multline}
	\label{formula}
	\psi_{\ell}(r)=ce^{-e_{\ell}r} +e^{e_{\ell} r}\frac{2m+1}{2e_{\ell}}\int_{r}^{\infty} Q^{2m}(s)\psi_{\ell}(s)e^{-e_{\ell}s}ds\\
	-e^{-e_{\ell} r}\frac{2m+1}{2e_{\ell}}\int_{r}^{\infty} Q^{2m}(s)\psi_{\ell}(s)e^{e_{\ell}s}ds.
	\end{multline}
	It is easy to see that if $c=0$, then $\psi_{\ell}(r)=0$ for large $r$, and thus by uniqueness for all $r$. Thus $c\neq 0$ and one deduces from \eqref{formula} the asymptotic of $Y_{\ell}$ and $Y'_{\ell}$. 
\end{proof}

\begin{lemma}
	\label{hfbchd}
	Let $N(\psi_\ell)$
	be the number of zeros of the function $\psi_\ell(r)$ on $(1,+\infty)$. Then 
	$N(\psi_\ell)= \ell-1$, for all $\ell\in\{1,2,\ldots\}$.
\end{lemma}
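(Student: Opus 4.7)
This is essentially the classical Sturm oscillation theorem for the singular Sturm--Liouville problem $-\psi''-(2m+1)Q_j^{2m}\psi=\mu\psi$ on $(1,+\infty)$ with $\psi(1)=0$ and $\psi\in L^2$. The plan is to establish the two inequalities $N(\psi_\ell)\le\ell-1$ and $N(\psi_\ell)\ge\ell-1$ separately, using the Rayleigh--Ritz min-max principle for the upper bound, and Sturm's comparison theorem combined with a Wronskian argument for the lower bound.

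\textbf{Upper bound.} Suppose $\psi_\ell$ has zeros $1=a_0<a_1<\cdots<a_k$ in $[1,+\infty)$, and set $a_{k+1}=+\infty$. Define the truncations $\phi_i:=\psi_\ell \indic_{[a_{i-1},a_i]}$ for $i=1,\ldots,k+1$. Since $\psi_\ell$ vanishes at each $a_i$ and decays exponentially at infinity (Lemma \ref{kdlre}), each $\phi_i$ belongs to $\mathrm{Dom}(\mathfrak H_j)$; the $\phi_i$'s are pairwise $L^2$-orthogonal; and integration by parts on each interval $[a_{i-1},a_i]$ using the eigenvalue equation $L_j\psi_\ell=\mu_\ell\psi_\ell$ yields $\mathfrak H_j(\phi_i)=\mu_\ell\|\phi_i\|_{L^2}^2$. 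Thus $\mathfrak H_j$ coincides with $\mu_\ell\|\cdot\|_{L^2}^2$ on the $(k+1)$-dimensional subspace $\mathrm{span}\{\phi_1,\ldots,\phi_{k+1}\}$. The min-max formula \eqref{eq:evformula}, shifted back from $\mathcal L_j$ to $L_j$, together with the strict ordering $\mu_1<\mu_2<\cdots$, then forces $k+1\le\ell$.

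\textbf{Lower bound, by induction on $\ell$.} The base case $\ell=1$ is trivial. Assuming $\psi_\ell$ has exactly $\ell-1$ zeros $1<a_1<\cdots<a_{\ell-1}$, and setting $a_0:=1$, I compare
\[ \psi_\ell''+\bigl((2m+1)Q_j^{2m}-e_\ell^2\bigr)\psi_\ell=0, \qquad \psi_{\ell+1}''+\bigl((2m+1)Q_j^{2m}-e_{\ell+1}^2\bigr)\psi_{\ell+1}=0. \]
Since $e_\ell^2>e_{\ell+1}^2$, Sturm's comparison theorem produces at least one zero of $\psi_{\ell+1}$ in each interval $(a_{i-1},a_i)$ for $i=1,\ldots,\ell-1$, giving $\ell-1$ zeros of $\psi_{\ell+1}$ in $(1,a_{\ell-1})$. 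To obtain the $\ell$-th zero in $(a_{\ell-1},+\infty)$, I use the Wronskian $W:=\psi_\ell\psi_{\ell+1}'-\psi_\ell'\psi_{\ell+1}$, which satisfies $W'(r)=(e_{\ell+1}^2-e_\ell^2)\psi_\ell(r)\psi_{\ell+1}(r)$. Suppose for contradiction that $\psi_{\ell+1}$ has no zero in $(a_{\ell-1},+\infty)$; after normalizing signs both $\psi_\ell$ and $\psi_{\ell+1}$ are positive there, so $W'<0$. The exponential decay of $\psi_\ell,\psi_{\ell+1}$ and their derivatives (Lemma \ref{kdlre}) yields $W(r)\to 0$ as $r\to+\infty$, hence $W>0$ on $(a_{\ell-1},+\infty)$. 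But $W(a_{\ell-1})=-\psi_\ell'(a_{\ell-1})\psi_{\ell+1}(a_{\ell-1})\le 0$, since $\psi_\ell'(a_{\ell-1})>0$ by ODE uniqueness and $\psi_{\ell+1}(a_{\ell-1})\ge 0$, contradicting continuity of $W$. Hence $\psi_{\ell+1}$ must vanish somewhere in $(a_{\ell-1},+\infty)$.

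\textbf{Main obstacle.} The only genuinely delicate point in both halves is the treatment of the endpoint at infinity: one needs the Agmon-type exponential decay of Lemma \ref{kdlre} both to guarantee $\phi_{k+1}\in\mathrm{Dom}(\mathfrak H_j)$ (with the correct Rayleigh quotient) in the upper bound, and to secure $W(+\infty)=0$ in the Wronskian argument of the lower bound. With these two ingredients in place, the rest is the standard oscillation-theorem machinery.
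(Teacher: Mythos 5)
Your proposal is correct and follows essentially the same route as the paper's proof: the lower bound by Sturm comparison in the interior nodal intervals plus a Wronskian argument at the last interval $(a_{\ell-1},\infty)$ using the exponential decay from Lemma~\ref{kdlre}, and the upper bound by the Rayleigh--Ritz/min-max argument with truncated eigenfunctions (the same device as in Subsection~\ref{sub:atleast}, which the paper explicitly invokes). The only cosmetic differences are the direction of the index shift in the induction ($\ell\to\ell+1$ rather than $\ell-1\to\ell$) and that you phrase the endpoint argument via monotonicity of $W$ rather than by integrating $W'$ over $(\rho,\infty)$; these are mathematically identical.
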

\begin{proof}
	We prove $N(\psi_{\ell})\geq \ell-1$ using an induction argument going back to \cite{Bargmann52} (see e.g. the proof of \cite[Theorem XIII.8]{ReSi.T4}). It is obvious that 
	$N(\psi_1)\ge0$.  Assume $\ell\ge2$ and 
	$N(\psi_{\ell-1})\ge \ell-2$. By Sturm comparison theorem and $\mu_{\ell-1}<\mu_\ell$, $\psi_\ell(r)$ has a zero between every two consecutive zeros of $\psi_{\ell-1}(r)$.
	Let $0<\rho<+\infty$ be the largest zero
	of $\psi_{\ell-1}(r)$. By Lemma \ref{kdlre}, $\psi_{\ell-1}(r), \psi_\ell(r)$ tend to zero exponentially fast as $r\rightarrow+\infty$. We have by Sturm-Liouville's argument that $\psi_\ell(r)$ has  a zero on $(\rho,+\infty)$.
	Indeed, we may otherwise assume 
	$\psi_{\ell-1}(r)\psi_\ell(r)>0$ on $(\rho,+\infty)$ and consider 
	\begin{align}
	\int_{\rho}^{+\infty}
	\left( \psi_{\ell-1}'(r)\psi_\ell(r)-\psi_{\ell-1}(r)\psi_\ell'(r)\right)'dr
	=-\psi_{\ell-1}'(\rho)\psi_\ell(\rho)\le 0.
	\end{align}
	On the other hand,
	\begin{align*}
	&\int_{\rho}^{+\infty}
	\left( \psi_{\ell-1}'(r)\psi_\ell(r)-\psi_{\ell-1}(r)\psi_\ell'(r)\right)'dr\\
	=&\int_{\rho}^{+\infty}\left(\psi_{\ell-1}''(r)\psi_\ell(r)-\psi_{\ell-1}(r)\psi_\ell''(r)\right)dr\\
	=&(\mu_\ell-\mu_{\ell-1})\int_{\rho}^{+\infty}\psi_{\ell-1}(r)\psi_\ell(r)dr>0.
	\end{align*}
	The case $\psi_{\ell-1}(r)\psi_\ell(r)<0$ on $(\rho,+\infty)$ can be handled similarly.
	Therefore, we conclude $N(\psi_\ell)\ge N(\psi_{\ell-1})+1\ge \ell-1$.
	\medskip
	
	It remains to show $N(\psi_{\ell})\le \ell-1$. If not, then $N(\psi_{\ell})\ge \ell$. Thus $\psi_{\ell}$ changes sign $\ell$ times on $(1,\infty)$. Using the same argument as in Subsection \ref{sub:atleast} we obtain that $L_j$ has at least $\ell$ eigenvalues of $(-\infty,\mu_{\ell})$, a contradiction with the fact that $\mu_{\ell}$ is the $\ell$-th eigenvalue.
\end{proof}
Let $N(\Lambda Q_j)$ be the number
of zeros in $(1,+\infty)$ of the function $\Lambda Q_j$
given by \eqref{ZZZZ}.
Recalling that $L_j(\Lambda Q_j)=0$, by Sturm-Liouville comparison theorem,
we have 
\begin{equation}
 \label{ineg_zeros}
N(\psi_{\ell})\le N(\Lambda Q_j)-1.
 \end{equation} 
Indeed, there exists at least one zero of $\Lambda Q_j$ between every two consecutive zeros of $\psi_{\ell}$ on $[1,+\infty)$. 
Moreover, since $\psi_{\ell}(r)$ tends to zero exponentially at infinity, we have again by Sturm-Liouville argument that
$\Lambda Q_j$ has at least one zero
between the largest zero of $\psi_{\ell}$ and the infinity.

In the next two subsections we will prove:
\begin{equation}
\label{ncbs} N(\Lambda Q_j)=j+1,\,\quad \forall \;j\in\mathbb{N}.
\end{equation}
We claim that \eqref{ncbs} implies that $L_j$ has exactly $j+1$ eigenvalues, concluding the proof of Theorem \ref{T:essspec}.
Indeed, assuming \eqref{ncbs} for the moment, we have by \eqref{ineg_zeros} that $\ell-1=N(\psi_{\ell})\le j$, for every eigenfunction $\psi_{\ell}$, which shows that there are at most $j+1$ negative eigenvalues of $L_j$ and hence completes the proof.
\subsubsection{An auxiliary result via variational argument}
We fix again $j\in \mathbb{N}$.
\begin{proposition}
	\label{P:A} 
	Let, for $i\in\{0,1,\ldots,j\}$, $\Omega_{j,i}=\{x\in\R^3:|x|\in I_{j,i}\}$
	where $I_{j,i}=[\gamma_{j,i},\gamma_{j,i+1}]$ if $0\leqslant i\leqslant j-1$ and $I_{j,j}=[\gamma_{j,j},\infty)$ with $\gamma_{j,i}=r_{j-i}/r_{j}$ for all $i\in\{0,1,\ldots,j\}$. Then for every $i\in\{0,1,\ldots,j\}$, $L_j$,
	regarded as an operator acting on $L^2_{\rm rad}(\Omega_{j,i})$ subject to the Dirichlet boundary conditions, has exactly one negative eigenvalue.
\end{proposition}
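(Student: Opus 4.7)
The plan is to prove both a lower bound (at least one negative eigenvalue) and an upper bound (at most one) for $L_j$ acting on $H^1_{0,\mathrm{rad}}(\Omega_{j,i})$.

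For the \emph{lower bound}, I would use the test function $R_{j,i}=c_{j,i}Q_j\indic_{\Omega_{j,i}}$ from \eqref{mnnb}. It belongs to $H^1_0(\Omega_{j,i})$ since $Q_j$ vanishes on $\partial\Omega_{j,i}$ (with sufficient decay at infinity in the case $i=j$). Using $-\Delta Q_j=Q_j^{2m+1}$ and integration by parts,
$$\mathfrak{H}_j(R_{j,i})=-2m\int_{\Omega_{j,i}} R_{j,i}^{2m+2}\,dx<0,$$
so the lowest radial eigenvalue of $L_j$ on $\Omega_{j,i}$ is strictly negative.

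For the \emph{upper bound}, the key observation is that, replacing $Q_j$ by $-Q_j$ if necessary, $Q_j|_{\Omega_{j,i}}$ is a positive radial solution of $-\Delta u=u^{2m+1}$ on $\Omega_{j,i}$ with Dirichlet boundary conditions. By the uniqueness theorem of Ni and Nussbaum \cite{NiNussbaum85} (with a rescaling argument to reduce the exterior case $i=j$ to a bounded annulus), it is the unique positive radial solution, hence the least-energy positive radial critical point of the functional $J(u)=\tfrac12\int|\nabla u|^2-\tfrac{1}{2m+2}\int|u|^{2m+2}$, i.e.\ a mountain-pass critical point. A standard Nehari-manifold argument then yields $\mathfrak{H}_j\geq 0$ on the codimension-one subspace
$$T_{Q_j}\mathcal N=\bigl\{w\in H^1_{0,\mathrm{rad}}(\Omega_{j,i}):\ \int_{\Omega_{j,i}} Q_j^{2m+1}w\,dx=0\bigr\}$$
tangent to the Nehari manifold $\mathcal N=\{u\neq 0:\int|\nabla u|^2=\int|u|^{2m+2}\}$. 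Combined with the lower bound, this forces Morse index exactly one.

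\textbf{Main obstacle.} The technical core is verifying $\mathfrak{H}_j\geq 0$ on $T_{Q_j}\mathcal N$. The substitution $w=Q_j\phi$ (legitimate since $Q_j$ keeps a constant sign on the interior) rewrites, after integration by parts,
$$\mathfrak{H}_j(w)=\int_{\Omega_{j,i}} Q_j^2|\nabla\phi|^2\,dx-2m\int_{\Omega_{j,i}} Q_j^{2m+2}\phi^2\,dx,$$
while the constraint becomes $\int_{\Omega_{j,i}} Q_j^{2m+2}\phi\,dx=0$. The required estimate then reduces to a weighted Poincaré-type inequality: the first non-constant eigenvalue of the weighted Sturm-Liouville problem $-\nabla\cdot(Q_j^2\nabla\phi)=\lambda Q_j^{2m+2}\phi$ must be at least $2m$. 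A direct computation using $L_j(\Lambda Q_j)=0$ identifies $\phi=\Lambda Q_j/Q_j$ as an eigenfunction with eigenvalue exactly $2m$, so what remains is to check that it is in fact the \emph{first} non-constant eigenfunction, equivalently that $\Lambda Q_j$ has exactly one zero in $I_{j,i}$. This last step is the point at which the Ni-Nussbaum uniqueness intervenes decisively, and is the principal obstruction.
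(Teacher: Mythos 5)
Your Nehari/mountain-pass route is, in substance, the paper's proof, and it closes without the ``Main obstacle'' detour. The paper establishes your tangent-space positivity $\widetilde{\mathfrak{H}}_{j,i}(f)\geq 0$ on $\{f:\int_{\Omega_{j,i}}\nabla Q_j\cdot\nabla f=0\}$ directly: it shows (by a compactness/minimization argument plus the Ni--Nussbaum uniqueness) that $\pm Q_j\indic_{\Omega_{j,i}}$ is the unique maximizer of the radial Sobolev quotient $\|f\|_{L^{2(m+1)}}^{2(m+1)}/\|\nabla f\|_{L^2}^{2(m+1)}$ on $\Omega_{j,i}$, which is precisely your statement that $Q_j$ minimizes on the Nehari manifold, and then Taylor-expands the resulting sharp inequality at $Q_j+\eps f$ to second order. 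No information about the zeros of $\Lambda Q_j$ is used, and none can be: in the overall architecture of the paper, Proposition \ref{P:A} is exactly the tool used afterwards (via a perturbed eigenvalue count) to prove that $\Lambda Q_j$ has one zero per interval $I_{j,i}$, so invoking that conclusion inside the proof of Proposition \ref{P:A} would be circular.

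The proposed detour also has a technical flaw independently of the circularity. The function $\phi_0=\Lambda Q_j/Q_j$ does not lie in the form domain of the weighted problem. Since $Q_j$ vanishes to first order at the endpoints $\gamma$ of $I_{j,i}$ while $\Lambda Q_j(\gamma)=\gamma Q_j'(\gamma)\neq 0$, one has $\phi_0(r)\sim c/(r-\gamma)$ near the boundary, hence $Q_j^2|\partial_r\phi_0|^2\sim c'/(r-\gamma)^2$ and $\int_{I_{j,i}}Q_j^2|\partial_r\phi_0|^2\,r^2dr=+\infty$. So $\phi_0$ is only a formal solution of the Euler--Lagrange equation, not an admissible competitor, and cannot be the first excited state of the weighted Sturm--Liouville problem in the variational sense you need. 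The remedy is to trust and make precise your first argument: prove the sharp Sobolev inequality on $\Omega_{j,i}$ with $Q_j$ as extremizer (Ni--Nussbaum supplies uniqueness, and for $i=j$ the radial Sobolev inequality supplies the compactness), then expand to second order to get $\widetilde{\mathfrak{H}}_{j,i}\geq 0$ on the Nehari tangent space, and conclude as in your Step 3 that two negative directions would yield a nonzero combination in that tangent space with negative quadratic form.
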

\begin{proof} We divide the proof in three steps
	\subsubsection*{Step 1. A variational inequality}
	In this step, we show that for all $i\in\{0,1,\ldots,j\}$,  and all radial function $f\in\dot{H}^1_0(\Omega_{j,i})$, 
	\begin{equation}
	\label{eq:minimizer}
	\|f\|_{L^{2(m+1)}(\Omega_{j,i})}\|\nabla Q_{j}\|_{L^{2}(\Omega_{j,i})}
	\leq \|Q_{j}\|_{L^{2(m+1)}(\Omega_{j,i})}\|\nabla f\|_{L^{2}(\Omega_{j,i})}.
	\end{equation}
	It suffices to show that if we set
	\[
	J_{j,i}(f)=\|f\|_{L^{2(m+1)}(\Omega_{j,i})}^{2(m+1)}\,/\,\|\nabla f\|_{L^{2}(\Omega_{j,i})}^{2(m+1)}, 
	\]
	and
	\[
	a_{j,i}=\sup\{ J_{j,i}(f):f\in \dot H^{1}_{0}(\Omega_{j,i})\setminus \{0\}, f \;\text{radial}\},
	\]
	then $a_{j,i}=J_{j,i}(Q_{j})$. By the radial Sobolev inequality, the above quantities are well-defined and $0< a_{j,i}<\infty$.
	\medskip

	The argument is reminiscent of \cite{Weinstein82}.
	Take a maximizing sequence $f_{\nu}\in \dot H^{1}_{0}(\Omega_{j,i})$, radial, such that $J_{j,i}(f_{\nu})\rightarrow a_{j,i}$ as $\nu\rightarrow+\infty$. We may assume (replacing $f_{\nu}$ by $|f_{\nu}|$ if necessary), that $f_{\nu}$ is nonnegative. 
	Setting $\varphi_{\nu}=f_{\nu}/\|f_{\nu}\|_{\dot H^{1}_{0}(\Omega_{j,i})}$, we have $J_{j,i}(\varphi_{\nu})=J_{j,i}(f_{\nu})$ and
	$\|\nabla \varphi_{\nu}\|_{L^{2}(\Omega_{j,i})}=1$. Hence there
	exists a subsequence $\varphi_{\nu_{k}}$ converges 
	weakly in $\dot H^{1}$ to $\varphi_{*}$ as $k\rightarrow +\infty$
	with $\|\varphi_{*}\|_{\dot H^{1}_{0}(\Omega_{j,i})}\leq 1$.
	By using the radial Sobolev inequality for the case $i=j$ and the Rellich-Kondrachov theorem, 
	one can show $\varphi_{\nu_{k}}$
	converges to $\varphi_{*}$ strongly in $L^{2(m+1)}(\Omega_{j,i})$.
	As a consequence, we have $\|\varphi_{*}\|_{\dot H^{1}_{0}(\Omega)}>0$, since otherwise we would have $J_{j,i}(\varphi_{\nu_{k}})\underset{k\to\infty}{\longrightarrow} 0$ by the strong convergence. It follows from the above discussion that 
	\[
	a_{j,i}\geqslant J_{j,i}(\varphi_{*})\geqslant \|\varphi_{*}\|_{L^{2(m+1)}(\Omega_{j,i})}^{2(m+1)}=\lim_{k\rightarrow+\infty}\|\varphi_{\nu_{k}}\|_{L^{2(m+1)}(\Omega_{j,i})}^{2(m+1)}=a_{j,i}\,.
	\]
	Thus $J_{j,i}(\varphi_{*})=\|\varphi_{*}\|_{L^{2(m+1)}(\Omega_{j,i})}^{2(m+1)}=a_{j,i}$ and $\|\nabla \varphi_{*}\|_{L^{2}(\Omega_{j,i})}=1$, which along with the weak convergence implies
	$\varphi_{\nu_{k}}\underset{k\to\infty}{\longrightarrow} \varphi_{*}$ in $\dot H^{1}_{0}(\Omega_{j,i})$ strong topology.
	\medskip
	
	From the above facts, it follows that $\varphi_{*}$ is the maximizer of the function $J_{j,i}$ and has to satisfy the 
	Euler-Lagrange equation:
	\[
	\frac{d}{d\eps}\Bigg|_{\eps=0}J_{j,i}(\varphi_{*}+\eps \eta)=0\,,\quad 
	\forall\;\eta\in C_{0}^{\infty}(\Omega_{j,i}).
	\]
	Taking $\|\nabla \varphi_{*}\|_{L^{2}(\Omega_{j,i})}=1$  into account, we have
	\[
	-\Delta  \varphi_{*}=\frac{1}{\|\varphi_{*}\|_{2(m+1)}^{2(m+1)}}\varphi_{*}^{2m+1}.
	\]
	Let $Q(x)=\varphi_{*}(x)\|\varphi_{*}\|_{L^{2(m+1)}}^{-(m+1)/m}$.
	Then we have $-\Delta Q=Q^{2m+1}$ on $\Omega_{j,i}$ and $Q|_{\partial \Omega_{j,i}}=0$, $Q(x)\geqslant 0$ for $x\in \Omega_{j,i}$.
	If $i<j$, it is a classical result of Ni and Nussbaum that there is only one nonzero solution for this problem, see \cite{NiNussbaum85}. Uniqueness is still valid if $i=j$ as a consequence of \cite[Lemma 2.19]{DuyckaertsYang21}. Thus $Q=\pm Q_{j}\indic_{\Omega_{j,i}}$ (where the sign $\pm$ is the sign of $Q_j$ on $\Omega_{j,i}$).

	\subsubsection*{Step 2. Positivity}
	In this step, we prove that for all $i\in\{0,1,\ldots,j\}$, if we define
	\[
	\widetilde{\mathfrak{H}}_{j,i}(f)=\int_{\Omega_{j,i}}
	\Bigl(|\nabla f|^2-(2m+1)Q_j^{2m}|f|^2\Bigr)dx,
	\]
	then we have
	\begin{equation}
	\label{eq:coercivity}
	\widetilde{\mathfrak{H}}_{j,i}(f)\geq 0,
	\end{equation}
	for all radial $f\in \dot H^{1}_{0}(\Omega_{j,i})$ satisfying $(f,Q_{j})_{\dot H^{1}(\Omega_{j,i})}:=\int_{\Omega_{j,i}}\nabla f\cdot \nabla Q_j dx=0$.\\
	
	First, we show that the orthogonality condition $(f,Q_{j})_{\dot H^{1}(\Omega_{j,i})}=0$ implies
	$\widetilde{\mathfrak{H}}_{j,i}(f)\geqslant 0$. 
	
	The argument is standard (see \cite{Weinstein85}). We sketch it for the sake of completeness.
	Let $f_{\eps}(x)=\eps f(x)$ with $\eps>0$ small. 
	From Step 1, 
	we have
	\begin{equation}
	\label{eq:sharp}
	\|Q_{j}\|_{L^{2(m+1)}(\Omega_{j,i})}^{2}
	\|\nabla Q_{j}+\nabla f_{\eps}\|_{L^{2}(\Omega_{j,i})}^{2}
	\geq \|\nabla Q_{j}\|_{L^{2}(\Omega_{j,i})}^{2}
	\|Q_{j}+f_{\eps}\|_{L^{2(m+1)}(\Omega_{j,i})}^{2}.
	\end{equation}
	From $(f_{\eps}, Q_{j})_{\dot H^{1}(\Omega_{j,i})}=0$ and \eqref{eq:Q}, 
	we have $\left(Q_{j}^{2m+1},f_{\eps}\right)_{L^{2}(\Omega_{j,i})}=0$.
	By taking $\eps$ sufficiently small and using Taylor's formula, we obtain
	\[
	\|Q_{j}+f_{\eps}\|_{L^{2(m+1)}(\Omega_{j,i})}^{2(m+1)}
	=\|Q_{j}\|_{L^{2(m+1)}(\Omega_{j,i})}^{2(m+1)}
	+(2m+1)(m+1)\left(Q_{j}^{2m},\,f_{\eps}^{2}\right)_{L^{2}(\Omega_{j,i})}
	+\mathcal O(\varepsilon^3).
	\]
	Plugging this to \eqref{eq:sharp}, we have by Taylor's formula again
	\[
	\frac{\|\nabla Q_{j}\|^{2}_{2}+{\eps}^{2}\|\nabla f\|_{2}^{2}}{\|\nabla Q_{j}\|_{2}^{2}}
	\geq 1+{\eps}^{2}(2m+1)\frac{\left(Q_{j}^{2m},f^{2}\right)_{L^{2}}}{\|Q_{j}\|_{2(m+1)}^{2(m+1)}}+\mathcal O(\eps ^{3}\| f\|^{3}_{\dot H^{1}_{0}(\Omega_{j,i})}),
	\]
	where the norms in the above inequality are all defined by integration in $\Omega_{j,i}$.
	Letting $\eps\rightarrow 0+$, we conclude 
	$\widetilde{\mathfrak{H}}_{j,i}(f)\geq 0$.
	
	\subsubsection*{Step 3. End of the proof}
We next prove that $L_j$ (as an operator on $L^2(\Omega_{j,i})$), has at most one eigenvalue, arguing by contradiction. If not, there exists $\lambda_1<\lambda_2<0$, $Y_1\in \dot{H}^1_0(\Omega_{j,i})$, $Y_2\in \dot{H}^1_0(\Omega_{j,i})$, with 
$$ L_jY_1=\lambda_1Y_1,\quad L_j Y_2=\lambda_2Y_2.$$ Then, if $(\alpha_1,\alpha_2)\in \R^2\setminus \{(0,0)\}$, 
$$ \big\langle L_j(\alpha_1 Y_1+\alpha_2Y_2),  \alpha_1 Y_1+\alpha_2Y_2\big\rangle=-\alpha_1^2\lambda_1-\alpha_2^2\lambda_2<0.$$
This contradicts Step 2 since we can choose $(\alpha_1,\alpha_2)\neq (0,0)$ such that 
$$\int_{\Omega_{j,i}} \nabla(\alpha_1 Y_1+\alpha_2Y_2)\cdot \nabla Q_{j}=0.$$ 
\end{proof}
\begin{remark}
	Notice that by definition, $\Omega_{0,0}=\Omega$. Thus Theorem \ref{T:essspec} is proved in the case $j=0$.
\end{remark}

\subsubsection{$\Lambda Q_j$ has $j+1$ zeros on $(1,+\infty)$} We are ready to prove \eqref{ncbs} and complete the proof of Theorem \ref{T:essspec}.\medskip

We start with a simple topological remark
on the number of zeros of $\Lambda Q_j$. 
By \eqref{eq:decay-infty}, \eqref{eq:derivative-infty} and \eqref{ZZZZ}, we have
\begin{equation}
\label{nxbsdc}
\lim_{r\rightarrow\infty}r\Lambda Q_j(r)=-\frac{m-1}{m}r_j^{\frac{1}{m}-1}<0,
\end{equation}
and $\Lambda Q_j(1)$ is positive if $j$ is an even number including $0$ whilst $\Lambda Q_j(1)$ is negative if $j$ is odd.
Notice that $\Lambda Q_j$ can only have simple zeros since it is a nonzero solution of a second order ordinary differential equation. Thus $\Lambda Q_j$ has an odd number of zeros for even $j$ whilst $\Lambda Q_j$ has an even number of zeros for odd $j$.\\

Next, we show that $\Lambda Q_j$ has $j+1$ zeros on $(1,+\infty)$ by an inductive argument with respect to $j$.
Let us start with $j=0$ and show that $\Lambda Q_0$ has precisely one zero on $(1,+\infty)$. Suppose otherwise, by the above topological judgement, $\Lambda Q_0$ has at least three zeros, say $z_{0,1}, z_{0,2}, z_{0,3}\in (1,\infty)$ with $z_{0,1}<z_{0,2}<z_{0,3}$.
Let $\eps>0$ be a small number to be fixed later and consider a perturbated problem
\begin{equation}
\label{eq:J}
\begin{cases}
\frac{d^2}{dr^2}R_j^\eps+\frac{2}{r}\frac{d}{dr}R_j^\eps+(1-\eps)(2m+1)Q_j^{2m}R_j^\eps=0\\
R_j^\eps(1)=\Lambda Q_j(1), \quad \frac{d}{dr}R_j^\eps(1)=\Lambda Q_j'(1)
\end{cases}
\end{equation}
with $j=0$. By standard regular perturbation theory of ordinary differential equations, such 
a solution $R_0^\eps$ exists and is sufficiently close
to $\Lambda Q_0$ uniformly in any compact subset of $[1,\infty)$ by taking $\eps$ small enough when necessary.
Since the zeros $z_{0,1},z_{0,2},z_{0,3}$ of $\Lambda Q_0$ are simple,
$R_0^\eps$ has zeros, say $z_{0,1}^\eps, z_{0,2}^\eps,z_{0,3}^\eps$ in $(1,\infty)$, each being sufficiently close to $z_{0,1},z_{0,2},z_{0,3}$ respectively by taking $\eps$ smaller if necessary.
Let $\mathfrak{R}_{0,1}(r)=R_0^\eps(r)\indic_{(z_{0,1}^\eps,z_{0,2}^\eps)}(r)$
and $\mathfrak{R}_{0,2}(r)=R_0^\eps(r)\indic_{(z_{0,2}^\eps,z_{0,3}^\eps)}(r)$.
Then it is readily to verify that for $k=1,2$
\[\mathfrak{H}_{0}(\mathfrak{R}_{0,k})
=-\eps(2m+1)\int_{z_{0,k}^\eps}^{z_{0,k+1}^\eps}
Q_0^{2m}(r)\big(R_0^\eps(r)\big)^2r^2dr<0.
\]
These two inequalities along with the same argument employed  in Subsection 3.2 based on the Rayleigh-Ritz variational formulae lead to
the existence of at least two distinct negative eigenvalues for $L_0$ on $L^2_{\rm rad}(\Omega)$, which is a contradiction to Proposition \ref{P:A}. Hence $\Lambda Q_0$
has only one zero in $(1,\infty)$.\\

Let $j\geqslant 1$ and assume that $\Lambda Q_k$ has exactly $k+1$ zeros on $(1,\infty)$ for all $k\in\{0,1,\ldots,j-1\}$. Assume furthermore that if we denote the zeros of $\Lambda Q_k$ as a list $z_{k,0}<z_{k,1}<\cdots<z_{k,k}$, then $z_{k,i}\in I_{k,i}$ for all $i\in\{0,1,\ldots,k\}$, where we adopt the notation in Proposition \ref{P:A}. Notice that this hypothesis is true when $j=1$.

We are intended to show that $\Lambda Q_j$ has precisely $j+1$ zeros on $(1,\infty)$. We assume to fix ideas that $j$ is an even number. The other case can be handled by the same argument.
Noting that
\[
\Lambda Q_j(r)=\left(\frac{r_j}{r_{j-1}}\right)^{\frac{1}{m}}\Lambda Q_{j-1}\left(\frac{r_j}{r_{j-1}}r\right),
\]
we see that $\{\frac{r_{j-1}}{r_j}z_{j-1,i}\}_{i\in\{0,1,\ldots,\,j-1\}}$
are zeros of $\Lambda Q_j(r)$ and that $$\frac{r_{j-1}}{r_j}z_{j-1,i}\in I_{j,i+1},\quad \forall \;i\in\{0,1,\ldots,j-1\}.$$
The one-one correspondence $z\mapsto \frac{r_{j-1}}{r_j}z$
from $I_{j-1,i}$ to  $I_{j,i+1}$
and the inductional hypothesis on the distribution of zeros of $\Lambda Q_{j-1}$
imply that if the number of zeros of $\Lambda Q_j$, which is odd hence and at least $j+1$, is greater or equal to $j+3$, then $I_{j,0}$ would have to contain at least three zeros of $\Lambda Q_j$. Now, considering the perturbed problem \eqref{eq:J}, and taking $\eps>0$ small enough, we may argue as we did for the $j=0$ case to yield two distinct negative eigenvalues of $L_j$ as an operator on $L^2_{\rm rad}(\Omega_{j,0})\cap \dot{H}^1_0(\Omega_{j,0})$, which contradicts Proposition \ref{P:A}. The proof is complete.

\subsection{Dispersive estimates}
\label{sub:Strichartz}

We next recall from \cite{DuyckaertsYang23P} Strichartz estimates and local energy decay for the linearized wave equation. 
We fix $k$, and denote by $P_c$ the projection on the continuous spectrum of $L_k$:
\begin{equation}
\label{def_Pc}
P_c=\mathrm{Id}-\sum_{0\le j\le k}\langle\;\cdot\;,{Y}_{j}\rangle {Y}_{j}.
\end{equation} 
Since the $Y_j$ are smooth, exponentially decaying and satisfy Dirichlet boundary conditions, this is a bounded operator on $L^q(\Omega)$, $2\leq q\leq \infty$, and also on $\dot{H}^1_0(\Omega)$. Of course $P_c$ depends on $k$, but we did not indicate this fact in the notations to lighten them. We let $\PPP_c=(P_c,P_c)$, which is a bounded operator on $\HHH$. 

Let $I$ be an interval. We introduce the spaces $\Ssf(I)$ and $\Nsf(I)$ defined by the following norms:\begin{equation}
\label{defS}
\|u\|_{\mathsf{S}(I)}=\|u\|_{L^{2m+1}_t\bigl(I,L^{2(2m+1)}_x(\Omega)\bigr)} +\|u\|_{L^{4}_t\bigl(I,L^{12}_x(\Omega)\bigr)} +\left\| \langle x\rangle^{-3}u \right\|_{L^2(I,L^2(\Omega))},
\end{equation} 
and
\begin{equation}
 \label{defN}
\|f\|_{\Nsf(I)}=\inf_{f=f_1+f_2} \|f_1\|_{L^{1}(I,L^2(\Omega))}+\left\|\langle x\rangle^{2}f_2\right\|_{L^2(I,L^2(\Omega))}.
 \end{equation} 
where $u,f:I\times \Omega\to \R$. We claim:
\begin{proposition}[Mixed Strichartz/weighted energy estimates]
 \label{P:mixed}
Let $u$ be a solution to
	\begin{equation}
	\label{eq:linearized_in}
	\partial_{t}^2 u+L_Q u=P_c f,\quad (u,\partial_tu)_{\restriction t=0}=\mathcal{P}_c (u_0,u_1),
	\end{equation}
	where $(u_0,u_1)\in \HHH$ is radial, $f\in \Nsf(I)$ with $f(t)$ radial for all $t\in I$.
 Then $u\in \Ssf(I)$, $\vec{u}\in C^0(I,\HHH)$ and 
 \begin{equation}
  \label{Mixed}
  \|u\|_{\mathsf{S}(I)}+\|\vec{u}\|_{L^{\infty}(I,\HHH)}\lesssim \|(u_0,u_1)\|_{\HHH}+\|f\|_{\Nsf(I)}.
 \end{equation} 
\end{proposition}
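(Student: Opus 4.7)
The plan is to reduce \eqref{Mixed} to the corresponding estimates for the \emph{free} wave equation on the exterior domain $\Omega$, via a perturbative argument that exploits the spectral projection $\PPP_c$ together with the spectral information from Theorem \ref{T:essspec}. First I would establish the free analog: for $v$ solving $(\partial_t^2 - \Delta_D)v = F$ on $I\times \Omega$ with Dirichlet boundary condition and radial data $(v_0,v_1)\in \HHH$,
\begin{equation*}
\|v\|_{\Ssf(I)} + \|\vec{v}\|_{L^\infty(I,\HHH)} \lesssim \|(v_0,v_1)\|_{\HHH} + \|F\|_{\Nsf(I)}.
\end{equation*}
The Strichartz pieces ($L^{2m+1}_tL^{2(2m+1)}_x$ and $L^4_tL^{12}_x$) follow from known exterior-domain Strichartz theory (Smith--Sogge, Burq, Metcalfe--Tataru); in the radial setting one can in fact reduce to a one-dimensional half-line problem and get them by hand. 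The weighted local energy term $\|\langle x\rangle^{-3}v\|_{L^2_tL^2_x}$ is a standard KSS/Morawetz local energy decay estimate, and its dual form handles the $\langle x\rangle^2 L^2L^2$ inhomogeneity piece of $\Nsf$.

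Next I would recast the perturbed equation as $(\partial_t^2-\Delta)u = Vu + P_c f$ with the short-range potential $V = (2m+1)Q_k^{2m}$. Since $Q_k(r) \sim \ell_k/r$ at infinity and is bounded up to $r=1$, we have $|V(x)|\lesssim \langle x\rangle^{-2m}$ with $2m>4$, so $V$ is very short-range. The core analytic input I would then establish (this is the step I would import from \cite{DuyckaertsYang23P}) is a limiting absorption principle for the perturbed resolvent $(L_k-\lambda\pm i0)^{-1}$ between weighted $L^2$ spaces, uniformly for $\lambda\in \R$. This uses: (a) the absence of embedded eigenvalues on $(0,\infty)$, (b) the absence of a zero energy state (Proposition \ref{P:ker}), which is essential to get uniform control as $\lambda\to 0^+$, and (c) the short-range bound on $V$ for high frequencies. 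Composition with $P_c$ removes the finitely many negative eigenvalues and yields a local energy decay estimate
\begin{equation*}
\|\langle x\rangle^{-3} u\|_{L^2(I,L^2)} \lesssim \|(u_0,u_1)\|_\HHH + \|\langle x\rangle^2 (P_c f)\|_{L^2(I,L^2)} + \|P_c f\|_{L^1(I,L^2)}
\end{equation*}
for solutions of \eqref{eq:linearized_in}.

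Finally I would close the Strichartz bound by bootstrap. Writing Duhamel,
\begin{equation*}
u(t) = S_0(t)\mathcal{P}_c(u_0,u_1) + \int_0^t S_0(t-s)\bigl(Vu(s) + P_c f(s)\bigr)\,ds,
\end{equation*}
where $S_0$ is the free Dirichlet wave propagator, and applying the free estimate of step one with $F = Vu + P_c f$, the only term to absorb is $Vu$, which I place in the weighted piece of $\Nsf$. Since
\begin{equation*}
\bigl\|\langle x\rangle^2 Vu\bigr\|_{L^2(I,L^2)} \lesssim \bigl\|\langle x\rangle^{-(2m-2)} u\bigr\|_{L^2(I,L^2)} \lesssim \bigl\|\langle x\rangle^{-3} u\bigr\|_{L^2(I,L^2)},
\end{equation*}
(using $2m-2>3$ since $m>2$), this term is controlled by the local energy decay estimate from the previous step and thus by the right-hand side of \eqref{Mixed}. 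Combining yields \eqref{Mixed}; continuity in time with values in $\HHH$ then follows from a standard approximation/density argument.

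The main obstacle is the second step: establishing the uniform resolvent bound / local energy decay for $P_c$-projected solutions of the perturbed equation. The delicate point is uniformity as $\lambda \downarrow 0$, where one must rule out both a zero eigenvalue and a zero resonance --- exactly the content of Proposition \ref{P:ker}, which is why the notion of ``zero energy state'' there is tailored to include both. Once this is in place, the rest is a routine bootstrap in the weighted Strichartz framework.
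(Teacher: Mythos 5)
Your overall architecture is sound and close in spirit to the paper's: both arguments ultimately rest on the dispersive estimates of the companion paper \cite{DuyckaertsYang23P}, whose hypotheses (non-trapping exterior of a strictly convex ball, absence of a zero eigenvalue or resonance, i.e.\ Proposition \ref{P:ker}) are exactly what needs to be checked. The difference is in how much you re-derive: the paper simply quotes from \cite{DuyckaertsYang23P} both the Strichartz estimate for the \emph{perturbed} flow with $L^1_tL^2_x$ sources and the weighted-source/local-energy estimate, splits $f=f_1+f_2$ according to \eqref{defN}, and then upgrades to the remaining exponents; you import only the perturbed local energy decay (the limiting-absorption input) and recover the perturbed Strichartz bounds from the \emph{free} exterior estimates by treating $Vu$, $V=(2m+1)Q_k^{2m}=O(\langle x\rangle^{-2m})$, as a weighted source. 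That re-derivation is legitimate and non-circular: since $\|\langle x\rangle^{-3}u\|_{L^2_tL^2_x}$ is controlled independently by the perturbed local energy estimate, no smallness or absorption is needed (so "bootstrap" overstates it --- it is a composition of two estimates), and the invariance of the range of $\PPP_c$ ensures the projected local energy bound applies to $u$ itself. In effect you reprove a piece of what \cite{DuyckaertsYang23P} already packages; the paper's route is shorter, yours is more self-contained on the Strichartz side.

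One step is asserted too quickly. The $L^{2m+1}_t L^{2(2m+1)}_x$ component of $\Ssf$ is \emph{not} an admissible wave-Strichartz norm at $\dot H^1$ regularity: $\frac{1}{2m+1}+\frac{3}{2(2m+1)}=\frac{5}{2(2m+1)}<\frac12$, so this pair corresponds to regularity $s>1$ and does not follow from "known exterior-domain Strichartz theory" for energy data, free or perturbed. Radiality is essential here, and the fix is the one the paper carries out: the energy bound together with the radial Sobolev inequality \eqref{rad_Sob} gives $u\in L^\infty_t(L^6\cap L^\infty)(\Omega)$, and H\"older in time against the admissible $(p,q)=(4,12)$ estimate then yields every pair with $\frac1p+\frac3q\le\frac12$, $p>2$, in particular $(2m+1,2(2m+1))$. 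Your parenthetical reduction to a half-line problem gestures at this but is not carried out; with this interpolation step made explicit, your proof closes (and Remark \ref{R:mixed_free} confirms the free-equation version of \eqref{Mixed} that your first step requires).
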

\begin{proof}
Proposition \ref{P:mixed} follow from \cite{DuyckaertsYang23P}. Note that $\Omega$, $L_Q$ satisfy the assumptions of this paper: the non-trapping assumption is obvious for a ball, and the fact that $0$ is not an eigenfunction or a resonance (at least in a radial setting, see \cite[Remark 1.5]{DuyckaertsYang23P}) was proved in Subsection \ref{sub:nullspace} above. Since the ball is strictly convex, the global Strichartz estimates for the linear wave equation without a potential go back to \cite{SmithSogge95,SmithSogge00}.

We first prove that for any pair $(p,q)$ such that $\frac{1}{p}+\frac{3}{q}=\frac{1}{2}$, $p>2$,
\begin{multline}
 \label{dispersive1}
 \|\vec{u}\|_{L^{\infty}(I,\HHH)}+\|u\|_{L^{p}_t\bigl(I,L^{q}_x(\Omega)\bigr)} +\left\| \langle x\rangle^{-3}u \right\|_{L^2(I,L^2(\Omega))}\\
 \lesssim \|(u_0,u_1)\|_{\HHH}+\|f\|_{\Nsf(I)}.
\end{multline}
By linearity, we can consider separately the following two cases:
\begin{itemize}
 \item $f\in L^1(I,L^2(\Omega))$, then 
 \eqref{dispersive1} follows from Theorem 1.6 and Appendix C in \cite{DuyckaertsYang23P}.
 \item $\langle x\rangle^{2}f\in L^2(I,L^2(\Omega))$ and $(u_0,u_1)=(0,0)$. Then \eqref{dispersive1} follows from Theorem 1.9 and (3.25) in \cite{DuyckaertsYang23P}.
 \end{itemize}
 Note that we can take $(p,q)=(4,12)$ in \eqref{dispersive1}.
 Next, we observe that by the radial Sobolev embedding $\dot{H}^1(\Omega)\subset L^{\infty}(\Omega)\cap L^6(\Omega)$, one has that \eqref{dispersive1} holds also for $p=\infty$, $6\leq q\leq \infty$.
By H\"older's inequality, we deduce that  \eqref{dispersive1} holds for any $(p,q)$ with $\frac{1}{p}+\frac{3}{q}\leq \frac 12$, $p>2$, thus in particular for $(p,q)=(2m+1,2(2m+1))$, concluding the proof. 
 \end{proof}

\begin{remark}
 \label{R:mixed_free}
 The estimate \eqref{Mixed} also holds for solutions of the linear equation $\partial_t^2u-\Delta u=f$ with Dirichlet boundary conditions and initial data in $\HHH$. The proof is the same.
\end{remark}

\section{Construction of the center-stable manifold}
\label{sec:center-manifold}

In this section, we  prove Theorem \ref{T:cs} for a given stationary solution $\overrightarrow{Q_k}$,
by showing that $\mathcal{M}_k^+$ is a submanifold of $\HHH$ of codimension $k+1$. We use the results of Section 2, and the arguments (which we slightly revisit) of the articles \cite{JiLiScXu17},  \cite{JiLiScXu20} which concern the defocusing energy-critical wave equation with a potential.
  
The proof is divided into two parts. We first construct a local center stable manifold close to $\vec{U}(T)$, where $U$ is a solution of \eqref{eq:NLW} with initial data in $\MMM_k^+$, and $T$ is large. We then prove, using a no-return result, that this manifold structure can be extended in a neighborhood of any element of $\MMM_k^+$.

To lighten notations, we shall often drop from now on the subscript $k\geq 0$, which is fixed in all this section. We will denote $Q=Q_k$, the linearized operator as $L_Q$ and its Hamiltonian form (defined below in \eqref{defLLL}) as $\mathcal{L}_Q$. 

\medskip

To construct the local center-stable manifold,
we shall use the Lyapunov-Perron method, as initiated in \cite{NaSch11} for the Klein-Gordon equation (see also the book \cite{NaSch11Bo}).
The local center-stable manifold
is constructed by solving an integral equation in certain functional spaces. The corresponding  solution obtained in this way naturally defines a \emph{reduction map}, 
whose graph realises the desired manifold.

As mentioned in the introduction, the spectral resolution of the linearized operator $\mathcal{L}_Q$ at $\overrightarrow{Q}$   leads to a natural decomposition $\mathcal{H}=\HHH^u\oplus \HHH^c \oplus \HHH^s$ (see \eqref{defHcus}, \eqref{defHcs} below).
Let $h=u-Q$. The integral equation on $h$ to be employed is obtained by writing the equation \eqref{eq:NLW} into its Duhamel form using the linearized operator $\LLL_Q$, and impose a \emph{stability condition} on the $\HHH^u$ component of $h$, so that $u$ remains close to $\overrightarrow{Q}$ in the positive direction of time. We will call \emph{reduced integral equation} the system of equations obtained by combining the Duhamel form of equation \eqref{eq:NLW} (written in term of $\LLL_Q$) and the stability condition.

It turns out that this integral equation can be solved by the fixed point argument,  using the full range  Strichartz estimates for the linearized operator, proved in \cite{DuyckaertsYang23P} and recalled in Subsection \ref{sub:Strichartz} above.  This defines a 
nonlinear transformation from $\HHH^{cs}:=\HHH^c\oplus \HHH^s$ to $\HHH$. 

Let $\vvec{v_0}\in \HHH^{cs}$ and  $\vec{h}(t)=\vec{h}(t,x; \vvec{v_0})$ be the solution of the reduced integral equation with data $\vvec{v_0}$. The reduction map from $\HHH^{cs}$ to $\HHH^u$ can be defined by simply projecting $\vec{h}(0,x; \vvec{v_0})$ to the subspace $\HHH^u$. 
The graph of the reduction map obtained this way is preserved by the flow and hence yields the local invariant manifold.

We note that the manifold constructed that way is ``local'' in a weak sense: the elements of the manifold are close to $Q$ in the space-time norm $\|\cdot\|_{\Ssf}$ but not necessarily in the strong topology of $\HHH$. This fact is crucial in deducing from this local manifold a global one that contains all the elements of $\MMM_k^+$. 

\medskip

To deduce the fact that $\MMM_k$ is a manifold from the local center-stable manifold construction, we need a no-return lemma, stating that any solution that is close to $\MMM_k$ (in the strong topology of $\HHH$) at some time $t_0$ and further away from $\MMM_k$ at a time $t_1>t_0$ cannot get close to $\MMM_k$ again for $t>t_1$. To prove this lemma, we use the strategy of \cite{JiLiScXu20}, based on a ``channel of energy'' argument on the unstable modes, from \cite{DuKeMe16a}.
\subsection{The canonical decomposition of $\mathcal{H}$ and the projected linearized equation} 


To prove Theorem \ref{T:cs}, we decompose every solution of the nonlinear wave equation  \eqref{eq:NLW} around $(Q,0)$ as $u(t)=Q+h(t)$.
Our goal is to solve the linearized equation 
\begin{equation}
\label{eq:linearizedeq}
\partial_{t}^2h+L_Q h=R_Q(h),
\end{equation}
where 
\begin{equation*}
L_Q:=-\Delta-(2m+1)Q^{2m},\quad
R_Q(h):=(Q+h)^{2m+1}-Q^{2m+1}-(2m+1)Q^{2m}h
\end{equation*}
subject to a stability condition to be specified later. 


Note that we have the  pointwise bound:
\begin{equation}
 \label{pppp}
	|R_Q(h)-R_Q(\ell)|\le C_m\bigl(h^{2m}+\ell^{2m}+(|h|+|\ell|)\cdot Q^{2m-1}\bigr)|h-\ell|.	
 \end{equation}
We denote by $S_Q(t)$ the flow  associated to the linearized equation: $S_Q(t)\vvec{v_0}=v(t)$, and $\overrightarrow{S_Q}(t)\vvec{v_0}=(v,\partial_tv)(t)$, where $v(t)$ is the solution to 
\begin{equation}
 \label{LWQ}
(\partial_t^2+L_Q)v=0,\quad \vec{v}_{\restriction t=0}=\vvec{v_0},\quad  v_{\restriction{\partial\Omega}}=0.
 \end{equation}


\subsubsection{The symplectic structure and the invariant subspace decomposition}

Let us define $\omega(\cdot,\cdot)$
to be the symplectic inner product
$$\omega\bigl(\vec{f},\vec{g}\bigr)=\int_{\Omega} f_0g_1-
\int_{\Omega} f_1g_0,\quad \forall\;
\vec{f}=\begin{pmatrix} 
f_0  \\
f_1
\end{pmatrix},\;
\vec{g}=\begin{pmatrix} 
g_0 \\
g_1 
\end{pmatrix}.
$$
That is, $\omega(\cdot,\cdot)=\langle\;\cdot\;| \,\mathcal{J}\,\cdot\,\rangle$, where $\mathcal{J}=\begin{pmatrix} 
0 & 1 \\
-1 & 0
\end{pmatrix}$, 
$\langle\, \vec{f}\,|\,\vec{g}\,\rangle=\langle f_0,g_0\rangle+\langle f_1,g_1\rangle$
and 
$\langle\cdot,\cdot\rangle$ is the real inner product in $L^2(\Omega)$.  
\medskip

We denote a vector $\vec{u}$ in $\HHH$ by 
$\vec{u}=\begin{pmatrix} 
u_0  \\
u_1
\end{pmatrix}$.

We put $L_Q$   into the Hamiltonian form 
\begin{equation}
 \label{defLLL}
\mathcal{L}_Q=\begin{pmatrix} 
0 & 1 \\
-L_Q & 0
\end{pmatrix}=\mathcal{J}\begin{pmatrix} 
L_Q  & 0 \\
0& 1
\end{pmatrix}.
\end{equation}
This defines an unbounded operator $\LLL_Q$  on $\dot{H}^1_0(\Omega)\times L^2(\Omega)$, with domain $D(L_Q)\times H^1_0(\Omega)$. The spectral properties of $L_Q$ may be reformulated as follows:
\begin{claim}
	\label{claim:essspec-hmt-general}
	Let $L_Q$ and $\LLL_Q$ be as above.
	Then $\LLL_Q$ has exactly $2(k+1)$ eigenvalues 
	(which are all simple) $\EEE^+\cup \EEE^-$ with 
	$\mathcal{E}^{\pm}=\{\pm e_{0},\ldots,\pm e_j,\ldots\pm e_{k}\}$, and $-e_0^2<\ldots<-e_k^2$ are the $k+1$ negative eigenvalues of $L_Q$ (where we choose $e_j>0$). The spectrum of $\LLL_Q$ is given by $\mathcal{E}^-\cup \mathrm{i}\,\R \cup\mathcal{E}^+$. The corresponding eigenfunctions are given by $\mathcal{Y}^{\pm}_{j}=(2e_{j})^{-\frac{1}{2}}\bigl(
	1 ,	\pm e_{j} \bigr){Y}_{j}$. The essential spectrum of $\mathcal{L}_Q$ is  $\mathrm{i}\,\R$. 
	
\end{claim}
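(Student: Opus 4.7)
The plan is to reduce every spectral question about $\mathcal{L}_Q$ to the corresponding question about the self-adjoint operator $L_Q$, for which Theorem \ref{T:essspec} already provides all the needed information.

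First I would set up the basic resolvent identity. For $\mu\in\mathbb{C}$, solving $(\mathcal{L}_Q-\mu)(u,v)^T=(f,g)^T$ amounts to $v=f+\mu u$ together with $(L_Q+\mu^2)u=-\mu f-g$. Hence $\mathcal{L}_Q-\mu$ is invertible on $\dot{H}^1_0(\Omega)\times L^2(\Omega)$ if and only if $-\mu^2\notin\sigma(L_Q)$, after the routine check, using the self-adjointness of $L_Q$, that the $(u,v)$ produced by this identity actually lies in $D(L_Q)\times H^1_0(\Omega)$ when the data does. Combined with Theorem \ref{T:essspec}, which gives $\sigma(L_Q)=[0,+\infty)\cup\{-e_0^2,\ldots,-e_k^2\}$, this yields
\[
\sigma(\mathcal{L}_Q)=\{\mu\in\mathbb{C}\,:\,-\mu^2\in\sigma(L_Q)\}=i\mathbb{R}\cup\{\pm e_0,\ldots,\pm e_k\}.
\]

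Next I would identify the eigenpairs. From the same algebra, $\mu$ is an eigenvalue of $\mathcal{L}_Q$ with eigenvector $(u,v)^T$ if and only if $u$ is an eigenvector of $L_Q$ for the eigenvalue $-\mu^2$ and $v=\mu u$. Hence the real eigenvalues of $\mathcal{L}_Q$ correspond exactly to $\mu=\pm e_j$, and a direct calculation checks that $\mathcal{Y}^{\pm}_j=(2e_j)^{-1/2}(Y_j,\pm e_j Y_j)^T$ satisfies $\mathcal{L}_Q \mathcal{Y}^{\pm}_j=\pm e_j\mathcal{Y}^{\pm}_j$, the normalization constant being precisely the one that makes $\omega(\mathcal{Y}^{+}_j,\mathcal{Y}^{-}_j)$ a convenient constant in terms of $\|Y_j\|_{L^2}^2$. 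Each $\pm e_j$ is simple because the corresponding negative eigenvalue of $L_Q$ is simple (Theorem \ref{T:essspec}), yielding exactly $2(k+1)$ real eigenvalues.

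Finally, to separate $\sigma_{\mathrm{ess}}(\mathcal{L}_Q)$ from the discrete eigenvalues, I would invoke Proposition \ref{P:ker} together with the absence of embedded eigenvalues of $L_Q$ in $(0,+\infty)$: for every $\mu\in i\mathbb{R}$ one has $-\mu^2\in[0,+\infty)\subset\sigma(L_Q)$, yet $-\mu^2$ is not an eigenvalue of $L_Q$, so $\mu\in\sigma(\mathcal{L}_Q)$ cannot be an isolated eigenvalue of finite multiplicity, giving $i\mathbb{R}\subset\sigma_{\mathrm{ess}}(\mathcal{L}_Q)$. Conversely the real eigenvalues $\pm e_j$ are isolated in $\sigma(\mathcal{L}_Q)$ and of finite multiplicity, hence lie outside the essential spectrum, and we conclude $\sigma_{\mathrm{ess}}(\mathcal{L}_Q)=i\mathbb{R}$. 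I do not expect a genuine obstacle: the whole claim is a Hamiltonian translation of Theorem \ref{T:essspec}, and the only care needed is the domain issue in the resolvent identity above.
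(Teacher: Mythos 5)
Your proposal is correct and follows essentially the same route as the paper: reduce the resolvent and eigenvalue questions for $\mathcal{L}_Q$ to those for $L_Q$ via the correspondence $\mu\leftrightarrow-\mu^2$ (solving the $2\times 2$ system explicitly), then read everything off from Theorem \ref{T:essspec}, with the eigenfunctions $\mathcal{Y}_j^{\pm}$ obtained from the relation $v=\mu u$. Your extra remarks on the essential spectrum and on simplicity only make explicit what the paper leaves as "follows easily".
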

\begin{proof}
 Let $(v_0,v_1)\in H_0^{1}(\Omega)\times L^2(\Omega)$, $\lambda\in \C$. The equation
 $$ (\LLL_Q-\lambda)(u_0,u_1)=(v_0,v_1),\quad (u_0,u_1)\in D(\LLL_Q)$$
 is equivalent to 
 \begin{equation}
 \label{eigenfunct}
 u_1-\lambda u_0=v_0,\quad -L_Q u_0-\lambda u_1=v_1.  
 \end{equation} 
From this, one deduces easily that $\lambda$ is in the resolvent set of $\LLL_Q$ if and only if $-\lambda^2$ is in the resolvent set of $L_Q$, and that we have 
$$( \LLL_Q-\lambda)^{-1}=-(\lambda^2+L_Q)^{-1} (\LLL_Q+\lambda)=-(\LLL_Q+\lambda)(\lambda^2+L_Q)^{-1}. $$
Also, $\lambda$ is an eigenvalue of $\LLL_Q$ if and only if $-\lambda^2$ is an eigenvalue of $L_Q$. The claim then follows easily from Theorem \ref{T:essspec}. Note that the formula $\mathcal{Y}^{\pm}_{j}=(2e_{j})^{-\frac{1}{2}}\bigl(
	1 ,	\pm e_{j} \bigr){Y}_{j}$ follows directly from \eqref{eigenfunct}.
 \end{proof}

By direct computation, one easily verifies that  $\{\mathcal{Y}^\pm_{j}\}_{ j\in\{0,\ldots k\}}$ is a standard symplectic  basis satisfying 
$$
\omega\bigl(\mathcal{Y}^+_{j},\mathcal{Y}^-_{\ell}\bigr)=\begin{cases}
-1,\;&j=\ell\\
0,\;& j\neq \ell
\end{cases},
\;\quad\omega(\mathcal{Y}^+_{j},\mathcal{Y}^+_{\ell})=\omega(\mathcal{Y}^-_{j},\mathcal{Y}^-_{\ell})=0,\;\forall\,j,\ell.
$$
Thus, 
\begin{align}
\notag
\bigoplus_{j=0}^k{\rm span}\Bigl\{\mathcal{Y}^+_{j}, \mathcal{Y}^-_{j}\Bigr\}&={\rm span}
	\Bigl\{\mathcal{Y}^+_{0},\ldots, \mathcal{Y}^+_{k}\Bigr\}\oplus {\rm span}\Bigl\{ \mathcal{Y}^-_{0},\ldots, \mathcal{Y}^-_{k}\Bigr\}\\
	\label{P+-}&={\rm span} \Bigl\{(Y_{0},0),\ldots, (Y_{k},0),(0,Y_0),\ldots,(0,Y_k)\Bigr\}.
\end{align}
is a symplectic vector space of dimension $2k+2$.
The dual basis of  $\{\mathcal{Y}^\pm_{j}\}_{ j\in\{0,\ldots k\}}$   with respect to $\omega$ is $\{\mp \mathcal{Y}^\mp_{j}\}_{j\in \{0,\ldots,k\}}$.
\medskip

Let $\PPP^\pm_{j}$ be the 
Riesz projection from $\mathcal{H}$ onto the
subspace spanned by $\mathcal{Y}^\pm_{j}$ 
\begin{equation}
\label{eq:Riesz}
\PPP^\pm_{j}:\vec{h}\mapsto \alpha^\pm_{j}\mathcal{Y}^\pm_{j}
\end{equation}
with coefficients
\begin{equation}
\label{eq:al-def}
\alpha^\pm_{j}:=\mp \omega\bigl(\,  \vec{h},\, \mathcal{Y}^\mp_{j}\bigr).
\end{equation}
 Note that this is well-defined for all $\vec{h}\in \HHH$ by the smoothness and exponential decay of ${Y}_{j}$.\medskip

Define  $\PPP^\pm=\sum_{j=0}^{k}\PPP^\pm_{j}$ to be the orthogonal projections (in the sense of $\omega$) on the spaces $\mathrm{span}
\left\{\YYY_0^{\pm},\ldots,\YYY_k^{\pm}\right\}$. One can check that the projection on the orthogonal  of the sum of these spaces is
$\PPP_c={\rm Id}-\sum_{\pm}\PPP^{\pm}=(P_c,P_c)$, where $P_c$ is defined in Subsection \ref{sub:Strichartz}. Here ``orthogonal'' means again in the sense of the symplectic form $\omega$. However, in this case, by \eqref{P+-}, it is also the orthogonal projection in the sense of the $L^2$ inner product.
Note that $\PPP^\pm$ and $\PPP_c$ are continuous projections which commute with $\mathcal{L}_Q$, \emph{i.e.}, $\mathcal{L}_Q\PPP^{\pm}=\PPP^{\pm}\mathcal{L}_Q$ and $\mathcal{L}_Q\PPP_c=\PPP_c\mathcal{L}_Q$. Moreover, every function $ \vec{h}\in\mathcal{H}$ can be written as
\begin{equation}
\label{eq:h-decomp}
\vec{h}=\PPP^+\vec{h}+\PPP^-\vec{h}
+\PPP_c  \vec{h}. 
\end{equation}

For $ \ell=0,1$ we will denote by  $\Pi_\ell$ the projection to the first and second component of an element $\vec{h}=(h_0,h_1)$ of $\HHH$.\\


To study the dynamics in the positive direction of time,
we shall make use of the above functional space decomposition and define the \emph{center}, \emph{stable}, \emph{unstable} and \emph{center-stable} subspaces (respectively $\HHH^c$, $\HHH^s$, $\HHH^u$ and $\HHH^{cs}$) as follows 
%
\begin{gather}
\label{defHcus}
\HHH^c:= \PPP_c \HHH,\quad \HHH^u:=\PPP^+\HHH,\quad \HHH^s:=\PPP^-\HHH,\\
\label{defHcs}
\HHH^{cs}:=\HHH^c \oplus \HHH^s=({\rm Id}-\PPP^+)\mathcal{H}=(\PPP_c+\PPP^-)\HHH.
\end{gather} 
Based on the decomposition 
$$\vec{h}=\sum_{\pm}\sum_{0\le j\le k}\alpha_{j}^\pm \mathcal{Y}^\pm_{j}+\PPP_c\vec{h},$$ 
we may introduce the linearized energy norm $\lVert \cdot\rVert_{\HHH_Q}$
$$
\lVert \vec{h}\rVert_{\HHH_Q}^2=\sum_{\pm}\sum_{0\le j\le k}\bigl(\alpha_{j}^\pm\bigr)^2-\omega\bigl(\PPP_c\vec{h},\, \mathcal{L}_Q\,\PPP_c\vec{h}\bigr).
$$
Note that, if $\vec{h}=(h_0,h_1)$,
$$- \omega(\vec{h},\LLL_Q \vec{h})=\int h_1^2+\int |\nabla h_0|^2-(2m+1)\int Q^{2m}h_0^2.$$

By Lemma 3.1 in \cite{DuyckaertsYang23P}, thanks to the absence of zero energy states, one has, $\langle P_c h_0, L_Q P_ch_0 \rangle\gtrsim\lVert P_c h_0\rVert^2_{\dot{H}^1_0}$ and thus $\|\vec{h}\|_{\HHH_Q}\approx \|\vec{h}\|_{\HHH}$ for every $\vec{h}\in \HHH$.\medskip

If  $\vec{h}\in \HHH^{cs}$, then we have 
$\alpha_{j}^+\equiv 0,\forall\,j$ and 
$$\lVert \vec{h}\rVert_{\HHH_Q}^2=\sum_{0\le j\le k} \bigl(\alpha_{j}^-\bigr)^2-\omega\bigl(\PPP_c\vec{h},\, \mathcal{L}_Q \PPP_c\vec{h}\bigr).$$
Similarly, if $h\in \HHH^u$,
$$\lVert \vec{h}\rVert_{\HHH_Q}^2=\sum_{0\le j\le k} \bigl(\alpha_{j}^+\bigr)^2.$$

\subsubsection{The linearized wave equation projected  into various invariant subspaces}
Let us reformulate the linearized equation \eqref{eq:linearizedeq} into the $\HHH-$valued  form
\begin{equation}
\label{eq:linearized}
\partial_{t} \vec{h}=\mathcal{L}_Q  \vec{h}+ \mathcal{R}_Q(h), \quad \vec{h}(t_0)= \vvec{h_0},
\end{equation}
for $t_0\geqslant 0$ with
\begin{align*}
\vec{h}(t)=\begin{pmatrix} 
h\\
\partial_{t} h
\end{pmatrix},\quad
\vvec{h_0}=\begin{pmatrix} 
h_0\\
h_1
\end{pmatrix}\in\mathcal{H},\quad
\mathcal{R}_Q(h):=\begin{pmatrix} 
0\\
R_Q(h)
\end{pmatrix}.
\end{align*}

The nonlinear functional $\mathcal{R}_Q$ is a smooth map from $\dot{H}^1(\Omega)$ to $\mathcal{H} $ thanks to the radial Sobolev embedding and the fact that $R_Q(h)$ is a polynomial in $Q$, $h$.
\medskip

Using the decomposition \eqref{eq:h-decomp} and applying the Riesz projection operators to both sides of  \eqref{eq:linearized}, we obtain
\begin{claim}
	Let $h_c(t)=P_c h(t)$. Then, equation \eqref{eq:linearized} is equivalent to 
	\begin{equation}
	\label{eq:linearized-comp}
	\begin{cases}
	\frac{d}{dt}\alpha_{j}^\pm(t)=\pm e_{j}\alpha^\pm_{j}(t)\mp \omega\bigl( \mathcal{R}_Q(h),\,\mathcal{Y}^\mp_{j}\bigr),&\quad \alpha_{j}^\pm(t_0)=\mp\omega(\vvec{h_0},\mathcal{Y}_{j}^\mp),\quad 0\le j\le k,\\
	\partial_{t}^2 h_c+L_Q h_c=P_c R_Q(h),&\quad(h_c, \partial_{t} h_c)|_{t=t_0}=(P_c h_0, P_c h_1).
	\end{cases}
	\end{equation}
\end{claim}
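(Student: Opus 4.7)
The plan is to prove the equivalence by applying the projection operators $\PPP^{\pm}_j$ and $\PPP_c$ to the evolution equation \eqref{eq:linearized} and using the fact that they commute with $\LLL_Q$. Since each projection is continuous and commutes with $\LLL_Q$, if $\vec{h}$ solves \eqref{eq:linearized}, then applying $\PPP^{\pm}_j$ to both sides gives
\[
\partial_t \PPP^{\pm}_j \vec{h} = \LLL_Q \PPP^{\pm}_j \vec{h} + \PPP^{\pm}_j \mathcal{R}_Q(h).
\]
Writing $\PPP^{\pm}_j \vec{h} = \alpha^{\pm}_j \YYY^{\pm}_j$, using the eigenrelation $\LLL_Q \YYY^{\pm}_j = \pm e_j \YYY^{\pm}_j$ from Claim \ref{claim:essspec-hmt-general}, and the definition \eqref{eq:Riesz}--\eqref{eq:al-def} of the projection, which gives $\PPP^{\pm}_j \mathcal{R}_Q(h) = \mp\, \omega(\mathcal{R}_Q(h),\YYY^{\mp}_j)\,\YYY^{\pm}_j$, I match the coefficients of $\YYY^{\pm}_j$ to obtain the scalar ODEs
\[
\frac{d}{dt}\alpha^{\pm}_j(t) = \pm e_j\, \alpha^{\pm}_j(t) \mp \omega\bigl(\mathcal{R}_Q(h),\YYY^{\mp}_j\bigr),
\]
with the initial condition coming directly from \eqref{eq:al-def} applied to $\vvec{h_0}$.

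For the continuous component, I apply $\PPP_c = (P_c,P_c)$ to \eqref{eq:linearized}. Since $\vec{h} = (h,\partial_t h)$, one has $\PPP_c \vec{h} = (P_c h, P_c \partial_t h) = (h_c,\partial_t h_c)$ where $h_c := P_c h$ (the continuity of $P_c$ justifies the commutation with $\partial_t$). The identity $\LLL_Q \PPP_c \vec{h} = \PPP_c \LLL_Q \vec{h}$ combined with the Hamiltonian form \eqref{defLLL} then yields the first-order system
\[
\partial_t (h_c,\partial_t h_c) = (\partial_t h_c,\,-L_Q h_c) + (0,\,P_c R_Q(h)),
\]
whose second component is precisely $\partial_t^2 h_c + L_Q h_c = P_c R_Q(h)$, with initial data $(P_c h_0, P_c h_1)$.

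Conversely, suppose the components $(\alpha^{\pm}_j, h_c)$ satisfy \eqref{eq:linearized-comp}. I reconstruct
\[
\vec{h}(t) = \sum_{\pm}\sum_{0\le j\le k} \alpha^{\pm}_j(t)\, \YYY^{\pm}_j + (h_c(t),\partial_t h_c(t)),
\]
and verify by direct differentiation, again using $\LLL_Q \YYY^{\pm}_j = \pm e_j \YYY^{\pm}_j$ together with the decomposition $\mathcal{R}_Q(h) = \PPP^+\mathcal{R}_Q(h) + \PPP^-\mathcal{R}_Q(h) + \PPP_c \mathcal{R}_Q(h)$ and the explicit formulas for the projections, that $\vec{h}$ satisfies \eqref{eq:linearized} with the prescribed initial data.

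There is no serious obstacle: the proof is a bookkeeping exercise based on the spectral decomposition \eqref{eq:h-decomp} and the eigenrelations. The only minor point to keep in mind is that one must verify that $\PPP_c$ indeed commutes with $\partial_t$ on the trajectory, which is immediate from the continuity of $P_c$ on $\dot{H}^1_0(\Omega)$ and $L^2(\Omega)$ noted in Subsection \ref{sub:Strichartz}, and that the pairings $\omega(\mathcal{R}_Q(h),\YYY^{\mp}_j)$ are well defined thanks to the smoothness and exponential decay of $Y_j$ (Lemma \ref{kdlre}).
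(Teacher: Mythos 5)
Your argument is correct and is exactly the route the paper intends: the paper states the claim as the result of applying the Riesz projections $\PPP^{\pm}_j$ and $\PPP_c$ to \eqref{eq:linearized}, using their commutation with $\LLL_Q$, the eigenrelations $\LLL_Q\YYY_j^{\pm}=\pm e_j\YYY_j^{\pm}$, and the decomposition \eqref{eq:h-decomp}, which is what you carry out (together with the routine converse reconstruction). No gaps; your verification that the pairings are well defined and that $P_c$ commutes with $\partial_t$ covers the only minor points worth mentioning.
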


%

%
%
%
Writing the equations of the Claim in the Duhamel form, we obtain the following equivalent system of  integral equations
	\begin{equation}
	\label{eq:key}
	\begin{cases}
	\alpha_{j}^\pm(t)=\,e^{\pm e_{j}(t-t_0)}\alpha^\pm_{j}(t_0)\mp\int_{t_0}^t e^{\pm e_{j}(t-s)}
	\omega\bigl( \mathcal{R}_Q(h(s)),\,\mathcal{Y}^\mp_{j}\bigr)ds,\\
	h_c(t)=S_Q(t-t_0)\PPP_c\vvec{h_0}+\int_{t_0}^t	\frac{\sin \left((t-s)\sqrt{L_Q}\right)} {\sqrt{L_Q}} P_c R_Q(h(s))\;ds,
	\end{cases}
	\end{equation}
where  $\frac{\sin \left((t-s)\sqrt{L_Q}\right)} {\sqrt{L_Q}} P_c$ makes sense since $L_Q$ is a positive self-adjoint operator on  $P_cL^2$.

\subsection{The Lyapunov-Perron method: role of the reduced integral equation}
In this subsection, by using the Lyapunov-Perron method, we reduce the proof of the existence of the local center-stable manifold to solving an integral equation, subject to a stability condition, which will be referred as  \emph{the reduced integral equation}.

\begin{lemma}
	\label{L:SC}
	Let $\vec{h}\in C([t_0,+\infty),\HHH)$ be a solution of \eqref{eq:linearized-comp} such that $\|\vec{h}(t)\|_{\HHH}$ is at most of polynomial growth  as $t\to\infty$. Then,
	\begin{equation}
	\label{eq:stable-cond}
	\omega(\vec{h}(t),\mathcal{Y}_j^-)=-\alpha_j^+(t)=-\int_{t}^{+\infty}e^{e_{j}(t-s)}
	\omega\bigl( \mathcal{R}_Q(h(s)),  \mathcal{Y}^-_{j}\bigr) ds,\quad t\ge t_0.
	\end{equation}
\end{lemma}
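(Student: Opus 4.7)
The plan is to establish the identity via the variation-of-constants formula for the first-order scalar ODE governing $\alpha_j^+$, integrated backward from $+\infty$, and to show the boundary term at infinity vanishes thanks to the polynomial growth hypothesis.

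First, the initial equality $\omega(\vec h(t),\YYY_j^-)=-\alpha_j^+(t)$ is immediate from the definition \eqref{eq:al-def}, which with the $+$ sign reads $\alpha_j^+=-\omega(\vec h,\YYY_j^-)$. So only the integral representation has real content.

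Next, I would use \eqref{eq:linearized-comp} to rewrite the equation for $\alpha_j^+$ as the first-order linear ODE
\begin{equation*}
\frac{d}{dt}\alpha_j^+(t)-e_j\alpha_j^+(t)=-\omega(\RRR_Q(h(t)),\YYY_j^-).
\end{equation*}
Multiplying by the integrating factor $e^{-e_j t}$ and integrating from $t$ to an arbitrary $T>t$ gives
\begin{equation*}
e^{-e_jT}\alpha_j^+(T)-e^{-e_jt}\alpha_j^+(t)=-\int_t^T e^{-e_j s}\,\omega(\RRR_Q(h(s)),\YYY_j^-)\,ds,
\end{equation*}
so that, after multiplying by $e^{e_jt}$,
\begin{equation*}
\alpha_j^+(t)=e^{-e_j(T-t)}\alpha_j^+(T)+\int_t^T e^{e_j(t-s)}\omega(\RRR_Q(h(s)),\YYY_j^-)\,ds.
\end{equation*}
The desired identity will follow once I send $T\to+\infty$, provided (a) the boundary term vanishes and (b) the integrand is in $L^1([t,+\infty))$.

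For (a), since $\YYY_j^-$ has smooth, exponentially decaying components, $|\alpha_j^+(T)|=|\omega(\vec h(T),\YYY_j^-)|\lesssim \|\vec h(T)\|_{\HHH}$, which is polynomial in $T$ by hypothesis; thus $e^{-e_j(T-t)}\alpha_j^+(T)\to 0$. For (b), writing $\omega(\RRR_Q(h),\YYY_j^-)$ as a constant multiple of $\int R_Q(h)Y_j$ and using the pointwise bound $|R_Q(h)|\lesssim Q^{2m-1}h^2+|h|^{2m+1}$ (a direct Taylor estimate), together with the radial Sobolev embedding \eqref{rad_Sob} giving $\|h\|_{L^\infty(\Omega)}\lesssim \|h\|_{\dot H^1_0(\Omega)}$ and the exponential decay of $Y_j$ from Lemma \ref{kdlre}, one gets
\begin{equation*}
|\omega(\RRR_Q(h(s)),\YYY_j^-)|\lesssim \|\vec h(s)\|_{\HHH}^{2}+\|\vec h(s)\|_{\HHH}^{2m+1},
\end{equation*}
which is of polynomial growth in $s$. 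Since the kernel $e^{e_j(t-s)}$ decays exponentially as $s\to+\infty$, the integrand is integrable and the limit passage is justified by dominated convergence, yielding the claimed formula.

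The only mildly delicate step is the pointwise/integral bound on $\omega(\RRR_Q(h),\YYY_j^-)$; everything else is standard ODE manipulation plus the polynomial-growth hypothesis. Note also that no stability or smallness of $\vec h$ is used — the argument works for \emph{any} solution with polynomial growth, which is exactly what later motivates imposing \eqref{eq:stable-cond} as a selection criterion in the Lyapunov–Perron scheme.
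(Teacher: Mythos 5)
Your proof is correct and follows essentially the same route as the paper: the paper works from the Duhamel form \eqref{eq:key} (which is exactly the variation-of-constants identity you re-derive from \eqref{eq:linearized-comp} with an integrating factor), lets the endpoint go to infinity, and uses the polynomial growth of $\|\vec{h}(t)\|_{\HHH}$ both to kill the boundary term $e^{-e_j(T-t)}\alpha_j^+(T)$ and to guarantee convergence of the exponentially weighted integral of $\omega(\RRR_Q(h(s)),\YYY_j^-)$. Your explicit pointwise bound on $\omega(\RRR_Q(h),\YYY_j^-)$ via \eqref{pppp}, radial Sobolev and the exponential decay of $Y_j$ is just a spelled-out version of the paper's one-line assertion that $\langle R_Q(h(t)),Y_j\rangle$ grows at most polynomially.
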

\begin{proof}

	By using \eqref{eq:key}, we have
	$$
	\alpha_{j}^+(t_0)=e^{-e_j (t-t_0)} \alpha_{j}^+(t)+\int_{t_0}^t e^{-e_j (s-t_0)}\omega \bigl( \mathcal{R}_Q(h(s)),\mathcal{Y}_j^-\bigr)ds
	$$
	where $\omega \bigl( \mathcal{R}_Q(h(s)),\mathcal{Y}_j^-\bigr)=-(2e_j)^{-\frac{1}{2}}\langle R_Q(h(s)),Y_j\rangle$. Since $\|\vec{h}(t)\|_{\HHH}$ is at most of polynomial growth, 
	$\langle R_Q(h(t)),Y_j\rangle$ is at most of polynomial growth as $t\to\infty$, and thus the second term on the right side converges as $t\to +\infty$ while the first term tends to zero.
	Thus, we have
	\begin{equation}
	\label{eq:alpha+}
	\alpha^+_{j}(t_0)= \int_{t_0}^{+\infty} e^{-e_j (s-t_0)}\omega \bigl( \mathcal{R}_Q(h(s)),\mathcal{Y}_j^-\bigr)ds,
	\end{equation}
	which yields \eqref{eq:stable-cond}.
    The proof is complete. 
\end{proof}

\begin{remark}
	We will see that the stability condition \eqref{eq:stable-cond} is sufficient for the boundedness of $\|\vec{h}\|_{\HHH}$. 
\end{remark}

To prove the existence of the local manifold, combining \eqref{eq:linearized-comp} with \eqref{eq:stable-cond}, we are reduced to solving the following integral equation system: for $j\in \{0,1,\ldots, k\}$
\begin{equation}
\label{eq:Reduced}
\begin{cases}
\alpha_j^+(t)=\int_{t}^{+\infty}
e^{e_{j}(t-s)}\omega
\bigl( \mathcal{R}_Q(h(s)),  \mathcal{Y}^-_{j}\bigr) ds\\
\alpha_j^-(t)=e^{- e_{j} (t-t_0)}\alpha_j^-(t_0)
+\int_{t_0}^{t}e^{- e_{j}(t-s)}\omega\bigl( \mathcal{R}_Q(h(s)), \mathcal{Y}^+_{j}\bigr) ds\\
h_c(t)=
S_Q(t-t_0)\PPP_c \vvec{h_0}+\int_{t_0}^{t}\frac{\sin \left((t-s)\sqrt{L_Q}\right)}{\sqrt{L_Q}}P_c R_Q(h(s))ds,
\end{cases}
\end{equation} 
This is referred as \emph{the reduced integral equation}.

\subsection{Convergence to the stationary solution in the dispersive norm}
Recall from \eqref{defS} the definition of $\Ssf(I)$.
The main result of this subsection is the following proposition:
\begin{proposition}
\label{P:CVdispersive_norm}
 There exists $\eps_0$ with the following property. Let $\eps\in (0,\eps_0]$, $(u_0,u_1)\in \HHH$ and $u$ be the corresponding solution of \eqref{eq:NLW}. Assume that there exists a solution $v_L$ of the linear wave equation \eqref{eq:LW} such that 
 \begin{gather}
  \label{P10}
  \left\| v_L\right\|_{\Ssf([0,\infty))}+\sup_{t\geq 0} \left( \|v_L(t)\|_{L^{\infty}}+\left\|\la x\ra^{-4}\partial_tv_L(t)\right\|_{L^2} \right)\leq \eps\\
  \label{P11}
  \sup_{t\geq 0}\left\| \vec{u}(t)-(Q,0)-\vec{v}_L(t)\right\|_{\HHH}\leq \eps.
 \end{gather}
 Let $h=u-Q$. Then 
 \begin{gather}
  \label{P12}
  h\in \Ssf([0,\infty)),\quad \|h\|_{\mathsf{S}([0,\infty))} \lesssim \eps,\\
  \label{P13}
  \sup_{T\geq 0}\left(
  \left\| S_Q(\cdot)\PPP_c\left( \vec{h}(T)\right)\right\|_{\Ssf([0,\infty))} +\sum_{j,\pm} \left| \omega\left( \YYY^{\pm}_j,\vec{h}(T)\right)\right|\right)\lesssim \eps.
 \end{gather}
\end{proposition}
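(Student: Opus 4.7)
The plan is to set $h = u-Q$ and $w = h - v_L$. By \eqref{P11}, $\|\vec w(t)\|_\HHH \leq \epsilon$ uniformly in $t\ge 0$, and subtracting $(\partial_t^2-\Delta)v_L = 0$ from the linearized equation $(\partial_t^2 + L_Q)h = R_Q(h)$ shows that $w$ satisfies
\begin{equation*}
(\partial_t^2 + L_Q)w = (2m+1)Q^{2m}v_L + R_Q(h).
\end{equation*}
Since $\|v_L\|_\Ssf \le \epsilon$ is given, \eqref{P12} will follow from a continuity bootstrap on $B(T):=\|w\|_{\Ssf([0,T])}$, starting from $B(0)=0$. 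I would decompose $\vec w = \PPP_c\vec w + \sum_{j,\pm}\beta_j^{\pm}\YYY_j^\pm$ and treat each component separately.

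The key nonlinear estimate is to control the right-hand side in $\Nsf([0,T])$. For the linear-in-$v_L$ forcing, since $m\ge 3$ one has $\|\la x\ra^5 Q^{2m}\|_{L^\infty}<\infty$, so the weighted-$L^2_tL^2_x$ piece of $\Nsf$ gives $\|(2m+1)Q^{2m}v_L\|_{\Nsf}\lesssim \|\la x\ra^{-3}v_L\|_{L^2_tL^2_x}\lesssim \epsilon$. For $R_Q(h)$, the pointwise bound \eqref{pppp} yields $|R_Q(h)|\lesssim |h|^{2m+1}+Q^{2m-1}h^2$; the first piece goes into $L^1_tL^2_x$ with norm $\lesssim (B(T)+\epsilon)^{2m+1}$, while for the second I write $h=w+v_L$ and use the radial Sobolev bound $|w(t,x)|\lesssim |x|^{-1/2}\epsilon$ together with $|v_L|\le\epsilon$, which, again for $m\ge 3$, makes $\|\la x\ra^5 Q^{2m-1}h\|_{L^\infty_{t,x}}\lesssim \epsilon$; hence $\|\la x\ra^2 Q^{2m-1}h^2\|_{L^2_tL^2_x}\lesssim \epsilon(B(T)+\epsilon)$. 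Altogether the source has $\Nsf$ norm $\lesssim \epsilon+\epsilon B(T)+B(T)^{2m+1}$.

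Applying Proposition \ref{P:mixed} to the projected equation $(\partial_t^2+L_Q)(P_cw)=P_c[(2m+1)Q^{2m}v_L+R_Q(h)]$ with initial data $\PPP_c\vec w(0)$ (of $\HHH$-norm $\le \epsilon$) gives the required estimate on $\|P_c w\|_{\Ssf([0,T])}$. For the discrete modes, the uniform bound $|\beta_j^\pm(t)|\lesssim \epsilon$ implies, by the same argument as in Lemma \ref{L:SC}, the Duhamel/stability identity
\begin{equation*}
\beta_j^+(t)=-\int_t^\infty e^{e_j(t-s)}(2e_j)^{-1/2}\int_\Omega\bigl((2m+1)Q^{2m}v_L+R_Q(h)\bigr)(s)\,Y_j\,dx\,ds,
\end{equation*}
and similarly $\beta_j^-$ is given by an analogous integral involving the kernel $e^{-e_j(\cdot)}\indic_{[0,\infty)}$. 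Using the $\Nsf$ decomposition of the source and the smoothness/exponential decay of $Y_j$, the inner integrand in $s$ lies in $L^1_s+L^2_s$; Young's inequality with the $L^1\cap L^2$ kernel together with the a priori $L^\infty_t$ bound from Step 1 places $\beta_j^\pm$ in $L^p_t$ for all $p\in[1,\infty]$, with norm $\lesssim \epsilon+\epsilon B(T)+B(T)^{2m+1}$, which transfers to the $\Ssf$ norm of $\beta_j^\pm Y_j$.

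Combining the pieces yields $B(T)\le C\epsilon+C\epsilon B(T)+CB(T)^{2m+1}$; continuity in $T$ (together with $B(0)=0$) closes the bootstrap for $\epsilon$ small, proving \eqref{P12}. For \eqref{P13}, the pairings $\omega(\YYY_j^\pm,\vec h(T))$ split as $\omega(\YYY_j^\pm,\vec w(T))+\omega(\YYY_j^\pm,\vec v_L(T))$: the first is $\le C\epsilon$ by $\|\vec w\|_\HHH\le\epsilon$, and the second is $\le C\epsilon$ using the smoothness and exponential decay of $Y_j$ combined with $\|v_L(T)\|_{L^\infty}+\|\la x\ra^{-4}\partial_tv_L(T)\|_{L^2}\le \epsilon$. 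For $S_Q(\cdot)\PPP_c\vec h(T)$, the bootstrap gives $\|\PPP_c\vec w(T)\|_\HHH\lesssim \epsilon$ uniformly, so Proposition \ref{P:mixed} (with zero source) controls the $\vec w$-contribution, and for the $v_L$-contribution I would compare $S_Q(\cdot)\vec v_L(T)$ with the time-translate $v_L(T+\cdot)$: their difference solves the linearized equation with source $(2m+1)Q^{2m}v_L(T+\cdot)$ and zero data, hence has $\Ssf$ norm $\lesssim \epsilon$ by Proposition \ref{P:mixed}, while $\|v_L(T+\cdot)\|_{\Ssf([0,\infty))}\le \|v_L\|_\Ssf\le \epsilon$. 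The main obstacle is precisely this coexistence of two linear flows ($S_Q$ and the free one) together with the accurate bookkeeping needed to absorb the linearly-small-in-$v_L$ contribution into the weighted-$L^2$ slot of $\Nsf$, which is what forces the integer restriction $m\ge 3$.
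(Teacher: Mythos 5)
Up to the last step your argument is essentially the paper's proof: the paper also works with $g=h-v_L$ (your $w$), uses the same pointwise bound on the source $(Q+v_L+g)^{2m+1}-Q^{2m+1}-(2m+1)Q^{2m}g$, estimates the continuous part via Proposition \ref{P:mixed}, obtains the backward-in-time (stability) representation of the unstable coefficients exactly as in Lemma \ref{L:SC} (boundedness in $\HHH$ suffices), estimates the resulting convolutions by Young's inequality with the tail over $[T,\infty)$ controlled by the uniform $L^\infty$ bounds, and closes by a continuity bootstrap; your bookkeeping of $Q^{2m-1}h^2$ through the pointwise smallness of $h$ differs only cosmetically from the paper's use of $\la x\ra^2Q^{2m-1}\lesssim\la x\ra^{-3}\in L^3$, and your treatment of $\omega(\YYY_j^\pm,\vec v_L(T))$ is precisely why \eqref{P10} contains the $L^\infty$ and weighted $\partial_t v_L$ terms.

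The genuine gap is in your proof of the first half of \eqref{P13}. For the $v_L$-contribution you compare the \emph{unprojected} flow $S_Q(\cdot)\vec v_L(T)$ with the translate $v_L(T+\cdot)$ and invoke Proposition \ref{P:mixed} to bound their difference. This step fails as written: Proposition \ref{P:mixed} applies only when both the data and the forcing are $\PPP_c$/$P_c$-projected, and here neither is; worse, the difference $d(t)=S_Q(t)\vec v_L(T)-v_L(T+t)$ solves $\partial_t^2d+L_Qd=(2m+1)Q^{2m}v_L(T+\cdot)$ with zero data, so its unstable coefficients are $-\int_0^te^{e_j(t-s)}\omega\bigl((0,(2m+1)Q^{2m}v_L(T+s)),\YYY_j^-\bigr)ds$, which generically grow like $e^{e_jt}$; hence $\|d\|_{\Ssf([0,\infty))}=\infty$ in general and no bound of the claimed form can hold. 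The statement you actually need concerns $S_Q(\cdot)\PPP_c\vec v_L(T)$, and the repair is to carry the projection through: $P_c\bigl[v_L(T+\cdot)\bigr]$ solves the projected linearized equation with data $\PPP_c\vec v_L(T)$ and source $-(2m+1)P_c\bigl[Q^{2m}v_L(T+\cdot)\bigr]$, so Proposition \ref{P:mixed} gives $\bigl\|S_Q(\cdot)\PPP_c\vec v_L(T)\bigr\|_{\Ssf}\lesssim \bigl\|P_cv_L(T+\cdot)\bigr\|_{\Ssf}+\bigl\|\la x\ra^{-3}v_L\bigr\|_{L^2L^2}\lesssim\eps$, where the discrete projections $\la v_L(T+t),Y_j\ra$ are controlled in $L^2_t\cap L^\infty_t$ by the weighted assumptions in \eqref{P10}. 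The paper avoids the splitting altogether: once \eqref{P12} is known, $h$ satisfies the third reduced equation in \eqref{eq:Reduced} on $[T,\infty)$, so $S_Q(\cdot-T)\PPP_c\vec h(T)=P_ch-\int_T^{\cdot}\frac{\sin((\cdot-s)\sqrt{L_Q})}{\sqrt{L_Q}}P_cR_Q(h(s))\,ds$ on $[T,\infty)$, whence $\bigl\|S_Q(\cdot)\PPP_c\vec h(T)\bigr\|_{\Ssf([0,\infty))}\le\|P_ch\|_{\Ssf([T,\infty))}+C\|h\|^2_{\Ssf([T,\infty))}+C\|h\|^{2m+1}_{\Ssf([T,\infty))}\lesssim\eps$, which needs only what was already proved.
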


\begin{corollary}
\label{CR:CVdispersive_norm}
Let $u$ be a solution of \eqref{eq:NLW} defined for $t\in [0,\infty)$ such that 
$$\vec{u}(t) \xrightharpoonup[t\to\infty]{}(Q,0)\text{ in } \HHH.$$
Let $h=u-Q$. Then $h\in \Ssf([0,\infty))$ and 
\begin{equation}
 \label{dis10}
 \lim_{T\to\infty}\left\| S_Q(\cdot)(\mathrm{Id} -\PPP^+)\vec{h}(T)\right\|_{\Ssf([0,\infty)}=0
\end{equation} 
\end{corollary}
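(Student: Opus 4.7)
The plan is to reduce Corollary \ref{CR:CVdispersive_norm} to Proposition \ref{P:CVdispersive_norm} by a time-translation argument, using Theorem \ref{T:DY19} to produce a suitable radiation term. First, Theorem \ref{T:DY19} produces some $Q^\star\in\mathcal{Q}$ and a finite-energy solution $u_L$ of \eqref{eq:LW} such that $\|\vec u(t)-(Q^\star,0)-\vec u_L(t)\|_{\HHH}\to 0$ as $t\to\infty$. Solutions of \eqref{eq:LW} with data in $\HHH$ satisfy $\vec u_L(t)\rightharpoonup 0$ weakly in $\HHH$ as $t\to\infty$ (a standard dispersive property of the linear wave flow on $\Omega$, provable by density from smooth compactly supported data, together with finite speed of propagation and the uniform energy bound), so the hypothesis $\vec u(t)\rightharpoonup(Q,0)$ together with uniqueness of the weak limit forces $Q^\star=Q$.

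Next, I fix $T_0>0$ large and set $\tilde u(t):=u(t+T_0)$, $v_L(t):=u_L(t+T_0)$. Then $\tilde u$ solves \eqref{eq:NLW} on $[0,\infty)$, $v_L$ solves \eqref{eq:LW}, and I need to verify that hypotheses \eqref{P10}--\eqref{P11} of Proposition \ref{P:CVdispersive_norm} hold for this pair with $\eps$ arbitrarily small provided $T_0$ is sufficiently large. The bound \eqref{P11} is immediate from the first step. The Strichartz part $\|v_L\|_{\Ssf([0,\infty))}=\|u_L\|_{\Ssf([T_0,\infty))}$ tends to $0$ by Remark \ref{R:mixed_free} applied to $u_L$ (which gives $u_L\in\Ssf([0,\infty))$) together with absolute continuity of the integral. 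For $\sup_{t\geq T_0}\|u_L(t)\|_{L^\infty}$ and $\sup_{t\geq T_0}\|\la x\ra^{-4}\partial_tu_L(t)\|_{L^2}$, I would approximate $\vec u_L(0)$ in $\HHH$ by smooth radial data with compact support: for the smooth truncation, finite speed of propagation confines $\partial_tu_L(t,\cdot)$ to $\{|x|\geq t-R\}$, on which both quantities are seen to vanish as $t\to\infty$ from the radial representation of the solution, while the truncation error is controlled uniformly in $t$ by the radial Sobolev embedding $\dot H^1_{\mathrm{rad},0}(\Omega)\hookrightarrow L^\infty(\Omega)$ and by energy conservation. This pointwise-in-time dispersive decay of the linear radiation is the main technical step of the argument; it follows a standard approximation scheme, but care is required to extract pointwise rather than merely integrated decay.

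Once the hypotheses are verified, Proposition \ref{P:CVdispersive_norm} applied to $\tilde u$ yields $\tilde h=\tilde u-Q\in\Ssf([0,\infty))$ with $\|\tilde h\|_{\Ssf([0,\infty))}\lesssim\eps$, i.e.\ $\|h\|_{\Ssf([T_0,\infty))}\lesssim\eps$; on the bounded interval $[0,T_0]$, $\vec h$ stays in a bounded subset of $\HHH$, so Sobolev and radial Sobolev embeddings combined with the definition of $\Ssf$ yield $h\in\Ssf([0,T_0])$, hence $h\in\Ssf([0,\infty))$ as required. For the limit \eqref{dis10}, apply \eqref{P13} to $\tilde h$: for every $S\geq 0$,
\[
\|S_Q(\cdot)\PPP_c\vec h(S+T_0)\|_{\Ssf([0,\infty))}+\sum_{j,\pm}\bigl|\omega(\YYY_j^\pm,\vec h(S+T_0))\bigr|\lesssim\eps,
\]
which bounds both $\|S_Q(\cdot)\PPP_c\vec h(T)\|_{\Ssf([0,\infty))}$ and each coefficient $|\alpha_j^\pm(T)|$ by $\eps$ for all $T\geq T_0$. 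Since $e^{t\LLL_Q}\YYY_j^-=e^{-e_jt}\YYY_j^-$, the first component of $S_Q(t)\YYY_j^-$ is $(2e_j)^{-1/2}e^{-e_jt}Y_j$, whose $\Ssf([0,\infty))$ norm is finite because $Y_j$ is smooth, radial and exponentially decaying. Therefore
\[
\|S_Q(\cdot)\PPP^-\vec h(T)\|_{\Ssf([0,\infty))}\lesssim\sum_{j=0}^k|\alpha_j^-(T)|\lesssim\eps,
\]
and adding the two contributions gives $\|S_Q(\cdot)(\mathrm{Id}-\PPP^+)\vec h(T)\|_{\Ssf([0,\infty))}\lesssim\eps$ for every $T\geq T_0$. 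Letting $T_0\to\infty$ (so that $\eps\to 0$) yields \eqref{dis10}, completing the proof.
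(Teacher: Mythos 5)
Your proof is correct and follows essentially the same route as the paper: invoke Theorem \ref{T:DY19} to produce the radiation $v_L$, verify the hypotheses \eqref{P10}--\eqref{P11} by standard linear estimates, and apply Proposition \ref{P:CVdispersive_norm} to $t\mapsto u(t+T_0)$ with $T_0$ large. The extra details you supply (identifying the limiting stationary solution as $Q$ via weak convergence of $\vec{u}_L$ to $0$, and controlling the $\PPP^-$ component through the exponentially decaying modes $S_Q(t)\YYY_j^-$) are exactly the steps the paper treats as immediate.
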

Corollary \ref{CR:CVdispersive_norm} says that $u\in \MMM_k^+$ converges to $(Q,0)$ also in the sense of the space-time dispersive norms that appear in standard well-posedness and perturbation theories for equation \eqref{eq:NLW}. The same result was proved in \cite{JiLiScXu17} for the defocusing critical nonlinear wave equation with a potential.

Let $u$ be as in Corollary \ref{CR:CVdispersive_norm}.
By the main result of \cite{DuyckaertsYang21}, there exists a solution $v_L$ of the linear wave equation \eqref{eq:LW}, such that 
\begin{equation}
 \label{dis11}
 \lim_{t\to\infty}\left\|\vec{h}(t)-\vec{v}_L(t)\right\|_{\HHH}=0.
\end{equation}
By Remark \ref{R:mixed_free}, and standard estimate for the  linear wave equation, 
$$\|v_L\|_{\Ssf([0,\infty))}<\infty\text{ and } \lim_{t\to\infty} \left(\|v_L(t)\|_{L^{\infty}}+\left\|\langle x\rangle^{-4} v_L(t)\right\|_{L^2}\right)=0,$$
and the 
the conclusion \eqref{dis10} of Corollary \ref{CR:CVdispersive_norm} follows immediately from Proposition \ref{P:CVdispersive_norm} applied to $t\mapsto u(t+t_0)$, where $t_0$ is large. 
\begin{proof}[Proof of Proposition \ref{P:CVdispersive_norm}]

We let 
\begin{equation}
 \label{dis12}
 g(t)=h(t)-v_L(t)=u(t)-Q-v_L(t).
\end{equation} 
We see that $g$ satisfies 
\begin{equation}
 \label{dis13}
 \partial_t^2g+L_Qg=\left( Q+v_L+g \right)^{2m+1}-Q^{2m+1}-(2m+1)Q^{2m}g=: \widetilde{R}_Q(g,v_L).
\end{equation} 
We have $\widetilde{R}_Q(g,v_L)=(Q+v_L+g)^{2m+1}-(Q+g)^{2m+1}+R_Q(g)$, where $R_Q$ satisfies \eqref{pppp}. Thus
$$\left|\widetilde{R}_Q(g,v_L)\right|\lesssim (Q+g)^{2m}|v_L|+|v_L|^{2m+1}+Q^{2m-1}g^2+|g|^{2m+1},$$
and, since $g^{2m}|v_L|\lesssim |g|^{2m+1}+|v_L|^{2m+1}$, 
\begin{equation}
 \label{dis20}
 \left|\widetilde{R}_Q(g,v_L)\right|\lesssim Q^{2m}|v_L|+|v_L|^{2m+1}+Q^{2m-1}g^2+|g|^{2m+1}.
\end{equation} 
From the definition \eqref{def_Pc} of $P_c$, we obtain, for a constant $c>0$,
\begin{multline}
 \label{dis21}
 \left|P_c(\widetilde{R}_Q(g,v_L))\right|\lesssim Q^{2m}|v_L|+|v_L|^{2m+1}+Q^{2m-1}g^2+|g|^{2m+1}\\
 +
 \left( \left\| \langle x\rangle^{-3} v_L(t)\right\|_{L^2} +\left\| v_L(t)\right\|_{L^{2(2m+1)}}^{2m+1}\right)e^{-c|x|}\\+\left(\|\langle x\rangle^{-3}g(t)\|^2_{L^2}+\|g(t)\|^{2m+1}_{L^{2(2m+1)}}\right)e^{-c|x|},
\end{multline}

Let $\tau>0$. In view of \eqref{dis21} and the equation \eqref{dis13} satisfied by $g$, Proposition \ref{P:mixed} implies 
\begin{equation}
 \label{dis22}
 \left\|P_cg\right\|_{\Ssf([0,\tau])}\lesssim \|\vec{g}(0)\|_{\HHH}+ \|v_L\|_{\Ssf([0,\tau])} +\|v_{L}\|^{2m+1}_{\Ssf([0,\tau])}+\|g\|^2_{\Ssf([0,\tau])}+\|g\|^{2m+1}_{\Ssf([0,\tau])}.
\end{equation} 
Hence, going back to $h=v_L+g$.
\begin{equation}
 \label{dis30}
 \left\|P_ch\right\|_{\Ssf([0,\tau])}\lesssim \|\vec{g}(0)\|_{\HHH}+ \|v_L\|_{\Ssf([0,\tau])} +\|v_{L}\|^{2m+1}_{\Ssf([0,\tau])}+\|h\|^2_{\Ssf([0,\tau])}+\|h\|^{2m+1}_{\Ssf([0,\tau])}.
 \end{equation} 
 Next, we consider the coefficients $\alpha_j^{\pm}(t)=\mp\omega(\vec{h},\YYY_j^{\mp})$. Since $h$ is bounded, it satisfies the reduced equations \eqref{eq:Reduced}.  By the second equation in \eqref{eq:Reduced}, for all $t\in [0,\tau]$,
 \begin{equation*}
  \left|\alpha_j^{-}(t)\right|\lesssim e^{-e_j t}\left|\alpha_j^{-}(0)\right|+\int_{0}^t e^{-e_j(t-s)} \|h(s)\|^{2}_{L^{12}}ds
 \end{equation*}
(we have discarded the contributions of the higher powers of $h$, using \eqref{P10}, \eqref{P11} and radial Sobolev embedding). The integral in the right-hand side can be viewed as a convolution between $\|h\|^{2}_{L^{12}}\indic_{[0,\tau]}$ and $e^{-e_j \cdot}\indic_{[0,\infty)}$ (which is in $L^1\cap L^{\infty}$). Thus
\begin{equation}
\label{dis31}
\left\| \alpha_{j}^{-}\right\|_{L^2([0,\tau])}+\left\| \alpha_{j}^{-}\right\|_{L^{2m+1}([0,\tau])}\lesssim \|h\|^2_{L^4([0,\tau],L^{12})}+|\alpha_j^-(0)|. 
\end{equation} 
By the first equation in \eqref{eq:Reduced}, 
\begin{equation}
 \label{dis40}
 \alpha_j^+(t)=\underbrace{\int_{t}^{\tau} e^{e_j(t-s)}\omega\left(\mathcal{R}_Q(h(s)),\YYY_j^- \right)ds}_{A_1(t)}+\underbrace{\int_{\tau}^{\infty} e^{e_j(t-s)} \omega\left( \mathcal{R}_Q(h(s)),\YYY_j^- \right)ds}_{A_2(t)}.
\end{equation}
Viewing $A_1$ as a convolution between $e^{e_j\cdot}\indic_{(-\infty,0)}$ and $\omega\left( \mathcal{R}_Q(h),\YYY_j^- \right)\indic_{(0,\tau)}$, we obtain
\begin{equation}
 \label{dis41}
 \left\|A_1\right\|_{L^{2m+1}([0,\tau])\cap L^2([0,\tau])}\lesssim \|h\|^2_{\Ssf([0,\tau])}.
\end{equation} 
On the other hand
$$|A_2(t)|\lesssim e^{e_j(t-\tau)} \int_{\tau}^{\infty} e^{e_j(\tau-s)} \left( \|v_L(s)\|^2_{L^{12}}+\|g(s)\|_{L^{\infty}}^2 \right)ds.$$
Since 
\begin{gather*}
e^{e_j(t-\tau)} \int_{\tau}^{\infty} e^{e_j(\tau-s)}\|g(s)\|_{L^{\infty}}^2ds\lesssim e^{e_j(t-\tau)} \sup_{s \geq 0}\|g(s)\|^2_{L^{\infty}},\\
e^{e_j(t-\tau)} \int_{\tau}^{\infty} e^{e_j(\tau-s)}\|v_L(s)\|_{L^{12}}^2ds\lesssim e^{e_j(t-\tau)}\|v_L\|_{L^4((\tau,\infty),L^{12})}^2.
\end{gather*}

we obtain
\begin{equation}
 \label{dis42}
 \left\|A_2\right\|_{L^{2m+1}([0,\tau])\cap L^{2}([0,\tau])}\lesssim \|v_L\|^{2}_{\mathsf{S}([0,\infty))}+\sup_{s\geq 0}\|g(s)\|^2_{L^{\infty}}.
\end{equation} 
By the assumptions \eqref{P10}, \eqref{P11}, and the radial Sobolev inequality, we see that
\begin{equation}
\label{dis42'}
\sup_{s\geq 0}\|g(s)\|_{L^{\infty}}\lesssim \sup_{s\geq 0}\|\vec{g}(s)\|_{\HHH}\lesssim \eps,\quad |\alpha^-_j(0)|\lesssim \eps
\end{equation}
Combining \eqref{P10}, \eqref{P11}, \eqref{dis30}, \eqref{dis31}, \eqref{dis40}, \eqref{dis41}, \eqref{dis42} and \eqref{dis42'}, we obtain
\begin{equation*}
 \|h\|_{\Ssf([0,\tau])}\lesssim \eps+\|h\|^2_{\Ssf([0,\tau])}+\|h\|^{2m+1}_{\Ssf([0,\tau])}. 
\end{equation*} 
Taking $\eps$ small enough, we obtain, by a straightforward continuity argument,
$$ \forall \tau>0,\quad \|h\|_{\Ssf([0,\tau])}\lesssim \eps,$$
and thus $h\in \Ssf([0,\infty))$ as announced.

Next, we use the last equation in \eqref{eq:Reduced} and see that, for all $T>0$,  
$$\left\|P_c(h)-S_Q(\cdot-T)\PPP_c(\vec{h}(T))\right\|_{\Ssf([T,\infty))}\lesssim \left\|h\right\|^2_{\Ssf([T,\infty))}$$
and thus, 
$$\forall T\geq 0,\quad \left\| S_Q\PPP_c(\vec{h}(T) )\right\|_{\Ssf([0,\infty))}\lesssim \eps.$$
Since  by \eqref{P10}, \eqref{P11},
$$\sup_{t\geq 0} \left|\omega\left(\vec{h}(t),\YYY_j^{\pm}\right)\right|\lesssim \eps,$$
we deduce \eqref{P13}.
\end{proof}

\subsection{Existence of the reduction map }\label{sect:proof-contrac}
In this subsection, we show the existence of 
 the local center-stable manifold by using a fixed point argument. More precisely, we show that there exists
 an open neighborhood $\mathscr{B}_{\delta}$ of the origin in $\HHH^{cs}$, a locally Lipschitz map $\mathscr{G}^+$ defined on $\mathscr{B}_{\delta}$, taking values in $\HHH^u$, such that for every vector $\vvec{h_0}$ belonging to 
 the graph of $\mathscr{G}^+$, there exists a global solution $\vec{h}(t)$ to the reduced integral equation \eqref{eq:Reduced} with initial data equal to $\vvec{h_0}$.
 
 To state the main result of this subsection, let us first introduce some notations.
 For a given vector $\vvec{v_0}\in \HHH^{cs}$, we have
$$
\overrightarrow{S_Q}(t)(\vvec{v_0})=\sum_{j=0}^ke^{-e_{j}t}\PPP^-_{j}\bigl(\vvec{v_0}\bigr)+\overrightarrow{S_Q}(t)\left(\PPP_c\vvec{v_0}\right),
$$
where as before $\PPP_j^-:\vvec{v_0}\mapsto \omega(\vvec{v_0},\mathcal{Y}_j^+)\mathcal{Y}_j^-$ and the action of the linearized wave group on $\PPP_c\HHH$ can be written as
$$
\overrightarrow{S_Q}(t)\circ \PPP_c=
\begin{pmatrix} 
\cos t \sqrt{L_Q} & 
\frac{\sin t\sqrt{L_Q}}{\sqrt{L_Q}}  \\
- \sqrt{L_Q} \,\sin t \sqrt{L_Q}  & \cos t \sqrt{L_Q} 
\end{pmatrix}
\circ \PPP_c.
$$

Recall that by Proposition \ref{P:mixed}, if $\vvec{v_0}\in \PPP_c(\HHH)$, then $S_Q\vvec{v_0}\in \Ssf(\R)$, and 
\begin{equation*}
 \left\|S_Q\vvec{v_0}\right\|_{\Ssf(\R)}\lesssim \|\vvec{v_0}\|_{\HHH}.
\end{equation*} 
As a consequence, if $\vvec{v_0}\in \HHH^{cs}$, then $S_Q\vvec{v_0}\in \mathsf{S}\left((0,+\infty)\right)$ and 
\begin{equation}
\label{def_vertiii}
\vertiii{ \vvec{v_0}}:=\max_{0\le j\le k}|\omega(\vvec{v_0},\mathcal{Y}_j^+)|+\left\|S_Q(\PPP_c \vvec{v_0})\right\|_{\mathsf{S}([0,\infty))} \lesssim \|\overrightarrow{v_0}\|_{\HHH}.
\end{equation}
If $\delta>0$, we define the following open neighborhood of $0$ in $\HHH^{cs}$:
$$ \mathscr{B}_{\delta}:=\left\{\vvec{v_0}\in \HHH^{cs}\text{ s.t. } \vertiii{\vvec{v_0}}<\delta\right\}.$$

Now, we can state our main result.
\begin{proposition}
	\label{P:good}
	There exists a constant $M>0$ with the following property. 
	For $\delta>0$ sufficiently small, for all $\vvec{v_0}\in \mathscr{B}_{\delta}$, there exists a unique global solution $\vec{h}\in C^0\left( [0,+\infty),\HHH \right)$ of the reduced equations \eqref{eq:Reduced} such that 
\begin{equation}
 \label{initial_data}
 ({\rm Id}-\PPP^+)(\vec{h})(0)=\vvec{v_0}\,,
\end{equation}
and 
\begin{equation}
 \label{integrabilityh}
 h\in \Ssf([0,\infty)),\quad \|h\|_{\Ssf([0,\infty))}\leq M\delta.
\end{equation} 
Furthermore
\begin{equation}
 \label{scattering_to_Q}
 \sup_{t\geq 0} \left\| \vvec{S_Q}(t)\left(\vvec{v_0}\right)-\vec{h}(t)\right\|_{\HHH}\lesssim \vertiii{\vvec{v_0}}^2.
\end{equation}
The map $\Phi:\vvec{v_0}\to \vec{h}$ is $C^{\infty}$ from $\mathscr{B}_{\delta}$  to $C^0([0,\infty),\HHH)$. It is Lipschitz for the norm $\vertiii{\cdot}$, in the sense that:
$$ \sup_{t\geq 0}\left\|\Phi(\vvec{v_0})(t)-\Phi(\vvec{w_0})(t)\right\|_{\HHH}\lesssim \vertiii{\vvec{v_0}-\vvec{w_0}}.$$
\end{proposition}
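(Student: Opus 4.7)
\textbf{Proof plan for Proposition \ref{P:good}.} The plan is to reformulate \eqref{eq:Reduced} as a Banach fixed point problem for a scalar function $h:[0,\infty)\times\Omega\to\R$ on the complete metric space
$$ X_\delta=\bigl\{h\in \Ssf([0,\infty))\,:\,\|h\|_{\Ssf([0,\infty))}\le M\delta\bigr\}, $$
where $M$ is a large constant to be fixed later. Given $\vvec{v_0}\in \mathscr{B}_\delta$, I set $\beta_j=\omega(\vvec{v_0},\YYY_j^+)$ and, for each $h\in X_\delta$, let $\Psi(h)$ be the scalar function obtained from \eqref{eq:Reduced} by defining $\tilde\alpha_j^+(t)$ as the improper integral over $[t,\infty)$ (which encodes the stability condition \eqref{eq:stable-cond} and carries no initial datum), $\tilde\alpha_j^-(t)$ as the Duhamel integral with initial value $\beta_j$, and $\tilde h_c(t)$ as $S_Q(t)\PPP_c\vvec{v_0}$ plus a Duhamel contribution of $P_cR_Q(h)$, and then assembling $\Psi(h)(t,x)=\tilde h_c(t,x)+\sum_j (2e_j)^{-1/2}(\tilde\alpha_j^+(t)+\tilde\alpha_j^-(t))Y_j(x)$. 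A fixed point of $\Psi$ is the desired solution; by construction the value at $t=0$ of the $\HHH^s\oplus\HHH^c$-component coincides with $\vvec{v_0}$, yielding \eqref{initial_data}.

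The core of the argument is the nonlinear estimate
$$ \bigl\|R_Q(h)-R_Q(\ell)\bigr\|_{\Nsf([0,\infty))}\lesssim \bigl(\|h\|_\Ssf^{2m}+\|\ell\|_\Ssf^{2m}+\|h\|_\Ssf+\|\ell\|_\Ssf\bigr)\|h-\ell\|_\Ssf, $$
which follows from the pointwise bound \eqref{pppp}: the top-order piece $(|h|^{2m}+|\ell|^{2m})|h-\ell|$ is placed in the $L^1_tL^2_x$ part of $\Nsf$ via H\"older in $L^{2m+1}_tL^{2(2m+1)}_x$, while the subcritical piece $Q^{2m-1}(|h|+|\ell|)|h-\ell|$ is placed in the $\|\la x\ra^2\cdot\|_{L^2_tL^2_x}$ part, using the decay $\la x\ra^2 Q^{2m-1}\lesssim \la x\ra^{-(2m-3)}\lesssim \la x\ra^{-3}$ (since $m\ge 3$) together with the weighted contribution $\|\la x\ra^{-3}\cdot\|_{L^2_tL^2_x}$ built into $\Ssf$, modulo one $L^\infty_x$ factor coming from radial Sobolev. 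Feeding this into Proposition \ref{P:mixed} controls $\tilde h_c$ in $\Ssf([0,\infty))$, while the scalar integrals $\tilde\alpha_j^\pm$ are controlled in $L^\infty_t\cap L^{2m+1}_t\cap L^2_t$ by viewing them as convolutions with $L^1$ kernels $e^{\mp e_j \cdot}\indic_{\R_\mp}$, exactly as in the proof of Proposition \ref{P:CVdispersive_norm}; multiplication by the smooth, exponentially decaying $Y_j$ then restores arbitrary spatial norms. Putting everything together yields constants $C>0$, independent of $M$ and $\delta$, such that
$$ \|\Psi(h)\|_\Ssf\le C\vertiii{\vvec{v_0}}+C\bigl((M\delta)^2+(M\delta)^{2m+1}\bigr), $$
$$ \|\Psi(h)-\Psi(\ell)\|_\Ssf\le C\bigl(M\delta+(M\delta)^{2m}\bigr)\|h-\ell\|_\Ssf. $$
Choosing $M=2C$ and then $\delta$ small enough makes $\Psi$ a $\tfrac12$-contraction of $X_\delta$ into itself, producing the unique fixed point $h$ with $\|h\|_\Ssf\le M\delta$, which is \eqref{integrabilityh}.

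The remaining claims follow with little extra work. The bound \eqref{scattering_to_Q} comes from observing that $\vec h(t)-\overrightarrow{S_Q}(t)\vvec{v_0}$ is the Duhamel contribution of $\mathcal R_Q(h)$, whose $L^\infty_t\HHH$-norm is bounded by $\|R_Q(h)\|_\Nsf\lesssim \|h\|_\Ssf^2+\|h\|_\Ssf^{2m+1}\lesssim \vertiii{\vvec{v_0}}^2$ via Proposition \ref{P:mixed}. Smoothness of $\Phi:\vvec{v_0}\mapsto \vec h$ is a standard consequence of smooth parameter dependence in the contraction mapping theorem, applied with $\vvec{v_0}$ entering $\Psi$ linearly through $\beta_j$ and $\PPP_c\vvec{v_0}$ and polynomially through the nonlinearity; the Lipschitz estimate in the $\vertiii{\cdot}$-norm is a direct by-product of the contraction inequality applied to two distinct choices of initial data. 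The main obstacle I anticipate is the careful bookkeeping of the subcritical term $Q^{2m-1}h^2$: unlike the top-order piece $|h|^{2m+1}$ which lands cleanly in $L^1_tL^2_x$, it cannot be placed there and must instead be absorbed into the weighted part of $\Nsf$ via the $\la x\ra^{-3}$ weight, which is precisely why Proposition \ref{P:mixed} was set up with this weighted norm.
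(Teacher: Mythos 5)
Your proposal is correct and follows essentially the same route as the paper: a Lyapunov--Perron fixed point for the map assembled from the three components of \eqref{eq:Reduced} in the ball $\{\|h\|_{\Ssf([0,\infty))}\le M\delta\}$, closed via Proposition \ref{P:mixed} by splitting $R_Q(h)$ into $|h|^{2m+1}\in L^1_tL^2_x$ and $Q^{2m-1}h^2$ in the weighted $L^2_tL^2_x$ part of $\Nsf$, with the $\alpha_j^{\pm}$ handled as convolutions with $e^{\mp e_j\cdot}\indic_{\R_{\mp}}$ and smoothness obtained from the parametric contraction lemma. Two small bookkeeping points: the subcritical term should be closed as in your displayed estimate via $\la x\ra^{2}Q^{2m-1}\lesssim\la x\ra^{-3}\in L^3_x$ and two $L^4_tL^{12}_x$ factors, not via ``one $L^\infty_x$ factor from radial Sobolev'' (the norm $\sup_t\|h(t)\|_{L^\infty_x}$ is not part of the $\Ssf$-ball you iterate in); and for \eqref{scattering_to_Q} you first need the intermediate bound $\|h\|_{\Ssf}\lesssim\vertiii{\vvec{v_0}}$, which follows by bootstrapping your stability estimate for the fixed point, before bounding the Duhamel contribution quadratically.
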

 In the statement of Proposition \ref{P:good} as in all the article, we use Fr\'echet derivatives to define the notion of $C^{\infty}$ functions between Banach spaces.
\begin{proof}
For $\vvec{v_0}\in \mathscr{B}_{\delta}$ and $\vec{h}\in L^\infty\left((0,\infty),\HHH\right)$ such that the following integrals make sense, we define
\begin{align*}
\Psi^c(h,\vvec{v_0})(t)&=S_Q(t) \PPP_c\left(\vvec{v_0}\right)+\int_{0}^{t}\frac{\sin (t-s)\sqrt{L_Q}}{\sqrt{L_Q}}P_c(R_Q(h(s)))ds,\\
\intertext{ and, for all $j=0,1,\ldots,k$,}
\Psi^{+,j}\big(h,\vvec{v_0}\big)(t)&=\int_{t}^{+\infty}
e^{e_{j}(t-s)}
\omega\bigl(\RRR_Q(h(s)),  \mathcal{Y}_j^-\bigr) ds\\
\Psi^{-,j}\big(h,\vvec{v_0}\big)(t)&=\omega\left(\vvec{v_0},\YYY_j^+\right)e^{-e_jt}+\int_{0}^{t}
e^{-e_{j}(t-s)}
\omega\bigl(\RRR_Q(h(s)),  \mathcal{Y}^+_{j}\bigr) ds.
\end{align*}
We also define
$$
\Psi(h,\vvec{v_0})(t)=\sum_{j=0}^k\Bigl(\Psi^{+,j}\big(h,\vvec{v_0}\big)(t)\;\mathcal{Y}^+_{j}+\Psi^{-,j}\big(h,\vvec{v_0}\big)(t)\;\mathcal{Y}^-_{j}\Bigr)+
\Psi^c(h,\vvec{v_0})(t).
$$
Let
$$\mathsf{D}=\mathsf{D}_{M\delta}=\left\{ h\in \mathsf{S}([0,\infty));\; \|h\|_{\mathsf{S}([0,\infty))}\leq M\delta\right\}.$$
We will prove that if $\delta$ is small enough, $\Psi$ is a contraction on $\mathsf{D}$. This will follow from the estimate \eqref{pppp} on $R_Q$ and the mixed Strichartz/weighted energy estimates of Proposition \ref{P:mixed}.

\medskip

\noindent\emph{Stability of $\mathsf{D}$.}
Let $h\in \mathsf{D}$.
Using that by \eqref{pppp}, $R_Q(h)\lesssim Q^{2m-1}h^2+|h|^{2m+1}$, we obtain, 
by Proposition \ref{P:mixed}, 
\begin{multline*}
\left\| \Psi^c(h,\vvec{v_0})\right\|_{\Ssf([0,\infty))} \lesssim \left\|S_Q(\cdot)\PPP_c\vvec{v_0}\right\|_{\Ssf([0,\infty))}
\\
+\left\| h^{2m+1}\right\|_{L^1([0,\infty),L^2(\Omega))}+\left\|\langle x\rangle^2Q^{2m-1}h^2\right\|_{L^2([0,\infty)\times \Omega)}.
\end{multline*}
Since $\langle x\rangle^2 Q^{2m-1} \lesssim \langle x\rangle^{-3}\in L^3(\Omega)$, we have
$$
\left\|\langle x\rangle^2Q^{2m-1}h^2\right\|_{L^2([0,\infty)\times \Omega)}\lesssim \left\| h^2\right\|_{L^2([0,\infty),L^6)}=\left\| h\right\|_{L^4([0,\infty),L^{12})}^2$$
and thus
\begin{equation}
\label{FP20}
\left\| \Psi^c(h,\vvec{v_0})\right\|_{\Ssf([0,\infty))} 
\leq \left\|S_Q\PPP_c\vvec{v_0}\right\|_{\Ssf([0,\infty))}+C((M\delta)^2+(M\delta)^{2m+1}).
\end{equation}
On the other hand, using the exponential decay of $Y_j$, we obtain
$$ \left|\Psi^{+,j}(h,\vvec{v_0})\right|\lesssim \int_{t}^{\infty}e^{e_j(t-s)} \left(\|h(s)\|^{2}_{L^{12}}+\|h(s)\|^{2m+1}_{L^{2(2m+1)}}\right)ds.$$
By Young's inequality for convolution in the time variable, we obtain
\begin{equation}
 \label{FP30} 
 \left|\Psi^{+,j}(h,\vvec{v_0})\right|_{(L^2\cap L^{\infty})} \lesssim (M\delta)^2+(M\delta)^{2m+1}.
\end{equation} 
By a similar argument,
\begin{multline}
 \label{FP31} 
 \left|\Psi^{-,j}(h,\vvec{v_0})\right|_{(L^2\cap L^{\infty})} \lesssim \left|(\omega(\vvec{v_0},\YYY_j^+)\right|+(M\delta)^2+(M\delta)^{2m+1}\\ \lesssim \delta +(M\delta)^2+(M\delta)^{2m+1}.
\end{multline} 
Summing up, we obtain that there exist constants $C_0$, $C_1$ and $C_2$ such that
\begin{equation}
\label{Stability}
\left\|\Psi\left(h,\vvec{v_0}\right)\right\|_{\Ssf([0,\infty)}\leq C_0 \delta +C_1(M\delta)^2+C_2(M\delta)^{2m+1}. 
\end{equation} 
Taking $M=2C_0$, we obtain that $\Psi$ maps $\mathsf{D}$ into itself for small $\delta$. 

\medskip

\noindent\emph{Contraction property.}
The proof of the fact that $\Psi$ is a contraction on $\mathsf{D}$ is similar.  For example, by \eqref{pppp}, if $h,\ell \in \mathsf{D}$,
\begin{multline*}
\left\|\Psi^{c}\left(h,\vvec{v_0}\right)-\Psi^c\left(\ell,\vvec{v_0}\right)\right\|_{\Ssf([0,\infty)}\\
\lesssim \left\| |h-\ell| \left(h^{2m}+\ell^{2m}\right)\right\|_{L^1([0,\infty),L^2)}+\left\| \langle x\rangle^2 Q^{2m-1}(|h|+|\ell|) |h-\ell|\right\|_{L^2([0,\infty)\times \Omega)}
\\
\lesssim_{M}\delta^{2m} \|h-\ell\|_{L^{2m+1}([0,\infty),L^{2(2m+1)})}+\delta\left\|h-\ell\right\|_{L^{4}([0,\infty), L^{12})}\lesssim_M \delta \|h-\ell\|_{\Ssf([0,\infty)}.
\end{multline*}
The bounds of the other terms go along the same lines. This proves that $\Psi$ is a contraction, and thus that for any $\vvec{v_0}\in \mathscr{B}_{\delta}$, the reduced equations \eqref{eq:Reduced} have a unique solution $h$ that satisfies \eqref{initial_data} and \eqref{integrabilityh}. Also, by the above computations, we obtain that the fixed point $h=\Psi(h,\vvec{v_0})$ satisfies
$$ \left\|h-S_Q(\cdot)(\vvec{v_0})\right\|_{\Ssf([0,\infty)}+  \sup_{t\geq 0} \left\|\vec{h}(t)-\vvec{S_Q}(t)(\vvec{v_0})\right\|_{\HHH} \lesssim \vertiii{v_0}^2, $$
which implies \eqref{scattering_to_Q}.

\medskip
 
\noindent\emph{Smoothness}. We are left with proving that the map 
$\vvec{v_0}\mapsto \vec{h}$ defined above is a $C^{\infty}$ map from $\mathscr{B}_{\delta}$ to $C^{0}([0,\infty),\HHH)$.

We first prove that $\vvec{v_0}\mapsto h$ is $C^{\infty}$ from $\mathscr{B}_{\delta}$ to $\Ssf([0,\infty))$. 
This follows from Lemma \ref{L:smooth} in the appendix, once we have checked the following two properties:
\begin{itemize}
 \item $\left\| \Psi(h,\vvec{v_0})-\Psi(h,\vvec{w_0})\right\|_{\Ssf([0,\infty)}\lesssim \left\|\vvec{v_0}-\vvec{w_0}\right\|_{\HHH}$
 \item $\Psi$ is a $C^{\infty}$ map from $\mathsf{D}_o\times\mathscr{B}_{\delta}$ to $\mathsf{D}$, 
\end{itemize}
where $\mathsf{D}_o$ is an open neighborhood of $\mathsf{D}$, say $\mathsf{D}_{o}=\left\{ h\in \mathsf{S}([0,\infty));\; \|h\|_{\mathsf{S}([0,\infty))}< \frac{3}{2}M\delta\right\}$. 
Since 
$\Psi(h,\vvec{v_0})-\Psi(h,\vvec{w_0})=\sum_{j=0}^k \omega(\vvec{v_0}-\vvec{w_0},\mathcal{Y}_j^+)e^{-e_j t}\mathcal{Y}_j^-+S_Q(t)(\PPP_c(\vvec{v_0}-\vvec{w_0})),$
the first point is a direct consequence of Proposition \ref{P:mixed}. 

To get the second one, we observe that, by the above argument,  $\Psi$ maps $\mathsf{D}_o\times \mathscr{B}_{\delta}$ to $\mathsf{D}$. We write 
\begin{equation}
 \label{exp_Psi}
\Psi(h,\vvec{v_0})=S_Q(t)\PPP_c(\vvec{v_0})+ \sum_{j=0}^k \omega\left(\vvec{v_0},\YYY_j^+\right)e^{-e_jt}\mathcal{Y}^-_j+\Psi_L(R_Q(h)) ,
\end{equation} 
where $\Psi_L$ is the following linear operator: 
\begin{multline*}
\Psi_L(R)=\sum_{j=0}^k\Bigl(
\mathcal{Y}^+_{j}\int_{t}^{+\infty}
e^{e_{j}(t-s)}
\omega\bigl(\RRR(s),  \mathcal{Y}_j^-\bigr) ds\;\\
+\mathcal{Y}^-_{j}\int_{0}^{t}
e^{-e_{j}(t-s)}
\omega\bigl(\RRR(s),  \mathcal{Y}^+_{j}\bigr) ds \Bigr)+    \int_{0}^{t}\frac{\sin (t-s)\sqrt{L_Q}}{\sqrt{L_Q}}P_c(R(s))ds
\end{multline*}
(with the notation $\RRR(s)=
\left(\begin{smallmatrix} 0\\ R(s) \end{smallmatrix}\right)$.
By Proposition \ref{P:mixed}, $\vvec{v_0}\mapsto S_Q(t)\PPP_c(\vvec{v_0})$ is a bounded linear operator from $\HHH$ to $\mathsf{S}([0,\infty))$. This is thus a $C^{\infty}$ map between these two spaces.

One can also check, using Proposition \ref{P:mixed} again, that $\Psi_L$ is a bounded linear operator (thus a $C^{\infty}$ map) from 
$$E_2:=L^1\left([0,\infty),L^2(\Omega)\right)+L^2\left([0,\infty),L^2(\Omega,\langle x\rangle^{4}dx)\right).$$
to $\mathsf{S}([0,\infty))$.

It thus remains to check that 
$h\mapsto R_Q(h)$
is a $C^{\infty}$ map from $$E_1:=L^4\left([0,\infty),L^{12}(\Omega)\right)\cap L^{2m+1}\left([0,\infty),L^{2(2m+1)}(\Omega)\right)$$ to 
$E_2$. Since $R_Q(h)$ is a polynomial in $h$ and maps continuously $E_1$ to $E_2$, the result follows easily. 

To conclude the proof that 
$\vvec{v_0}\mapsto (h,\partial_th)$ is a
$C^{\infty}$ map from $\mathscr{B}_{\delta}$ to $C^0([0,\infty),\HHH)$, we observe that 
$h=\Psi(h,\vvec{v_0})$. We can easily check (with similar arguments as above) that $(\Psi,\partial_t\circ \Psi)$ is $C^{\infty}$ from $\Ssf([0,\infty))\times\mathscr{B}_{\delta}$ to $C^0([0,\infty),\HHH)$, which, combined with the fact that $\vvec{v_0}\mapsto h$ is $C^{\infty}$ from $\mathscr{B}_{\delta}$ to $\Ssf([0,\infty))$ yields the desired conclusion.
\end{proof}

\begin{corollary}
 \label{CR:local-manifold}
 Let 
 \begin{multline*}
  \mathcal{N}_{\delta}=  \mathcal{N}_{\delta,k}=
  \\
  \left\{(u_0,u_1)\in \MMM_k^+,\; \vertiii{({\rm Id}-\PPP^+)\Big[(u_0,u_1)-(Q,0)\Big]}<\delta,\; \left\|u-Q\right\|_{\Ssf([0,\infty))}<M\delta\right\},
 \end{multline*}
 where as before $Q=Q_k$, $u$ is the solution of \eqref{eq:NLW} with initial data $(u_0,u_1)$, and $M$ is as in Proposition \ref{P:good}.
Then for $\delta$ small, $\mathcal{N}_{\delta}$ is a $C^{\infty}$ submanifold of $\HHH$ of codimension $k+1$.
\end{corollary}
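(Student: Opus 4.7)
The plan is to realize $\mathcal{N}_{\delta}$ as (an open subset of) the graph of a $C^\infty$ map from an open subset of $\HHH^{cs}$ into $\HHH^{u}$, and then invoke the standard fact that such a graph is a $C^\infty$ submanifold of $\HHH$ of codimension $\dim\HHH^{u}=k+1$. To this end I would define the \emph{reduction map}
$$\mathscr{G}^{+}:\mathscr{B}_{\delta}\longrightarrow\HHH^{u},\qquad \mathscr{G}^{+}(\vvec{v_0}):=\PPP^{+}\,\Phi(\vvec{v_0})(0),$$
where $\Phi$ is the solution map supplied by Proposition \ref{P:good}. Since $\Phi$ is $C^\infty$ into $C^{0}([0,\infty),\HHH)$ and $\vec{h}\mapsto\PPP^{+}\vec{h}(0)$ is a bounded linear map onto $\HHH^{u}$, the composition $\mathscr{G}^{+}$ is $C^\infty$. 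Consequently the map
$$\Xi:\mathscr{B}_{\delta}\longrightarrow\HHH,\qquad \Xi(\vvec{v_0}):=(Q,0)+\vvec{v_0}+\mathscr{G}^{+}(\vvec{v_0})$$
is a $C^\infty$ embedding: it admits the explicit continuous left inverse $(u_0,u_1)\mapsto(\mathrm{Id}-\PPP^{+})[(u_0,u_1)-(Q,0)]$, and its derivative at any point is the inclusion $\HHH^{cs}\hookrightarrow\HHH^{cs}\oplus\HHH^{u}=\HHH$ perturbed by $d\mathscr{G}^{+}$. Its image is therefore a $C^\infty$ submanifold of $\HHH$ of codimension $k+1$.

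It remains to identify $\mathcal{N}_{\delta}$ with $\Xi(\tilde{\mathscr{B}})$ for some open set $\tilde{\mathscr{B}}\subset\mathscr{B}_{\delta}$; the natural choice is $\tilde{\mathscr{B}}:=\{\vvec{v_0}\in\mathscr{B}_{\delta}:\|\Phi(\vvec{v_0})\|_{\Ssf([0,\infty))}<M\delta\}$, which is open by continuity of $\Phi$. For the inclusion $\mathcal{N}_{\delta}\subset\Xi(\tilde{\mathscr{B}})$, take $(u_0,u_1)\in\mathcal{N}_{\delta}$, set $h:=u-Q$ and $\vvec{v_0}:=(\mathrm{Id}-\PPP^{+})\vec{h}(0)$; then $\vvec{v_0}\in\mathscr{B}_{\delta}$ by hypothesis. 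Since $(u_0,u_1)\in\MMM_{k}^{+}$, Corollary \ref{CR:CVdispersive_norm} yields $h\in\Ssf([0,\infty))$, and the hypothesis on $\|u-Q\|_{\Ssf}$ gives $\|h\|_{\Ssf}<M\delta$. Boundedness of $\vec{h}(t)$ in $\HHH$ (a consequence of scattering to $Q$) lets me apply Lemma \ref{L:SC} to recover the stability identity \eqref{eq:stable-cond}; the Duhamel formulas for the $\alpha_{j}^{-}$ and $P_{c}h$ components are automatic. Together, these identities are precisely the reduced equations \eqref{eq:Reduced} with datum $\vvec{v_0}$, so the uniqueness clause in Proposition \ref{P:good} forces $\vec{h}=\Phi(\vvec{v_0})$, hence $(u_0,u_1)=\Xi(\vvec{v_0})$ with $\vvec{v_0}\in\tilde{\mathscr{B}}$. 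Conversely, for $\vvec{v_0}\in\tilde{\mathscr{B}}$, the function $h=\Phi(\vvec{v_0})$ solves \eqref{eq:linearizedeq} by construction, so $u=Q+h$ solves \eqref{eq:NLW}; the finiteness of $\|h\|_{\Ssf}$ combined with the last equation in \eqref{eq:Reduced} and Proposition \ref{P:mixed} produces a linear solution $u_{L}$ with $\vec{h}(t)-\vec{u}_{L}(t)\to 0$ in $\HHH$, so that $(u_0,u_1)\in\MMM_{k}^{+}$ and the remaining inequalities defining $\mathcal{N}_{\delta}$ hold.

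The only delicate step is the uniqueness argument in the direct inclusion above: one must verify that, for every $(u_0,u_1)\in\mathcal{N}_{\delta}$, the unstable coefficients $\alpha_{j}^{+}$ of the associated $\vec{h}(t)$ obey the integral-from-infinity formula \eqref{eq:stable-cond} rather than merely the initial-time Duhamel identity. This is exactly where Corollary \ref{CR:CVdispersive_norm} (upgrading scattering to the $\Ssf$-control needed to land in the fixed-point class $\mathsf{D}$) and Lemma \ref{L:SC} (converting boundedness of $\vec{h}(t)$ into the stability condition) intervene in tandem. Once these are in place, the remainder is a formal application of the graph-of-a-smooth-map construction.
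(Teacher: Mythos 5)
Your proposal is correct and follows essentially the same route as the paper's proof: you realize $\mathcal{N}_{\delta}$ as the graph over (an open subset of) $\mathscr{B}_{\delta}$ of the reduction map $\vvec{v_0}\mapsto \PPP^{+}\Phi(\vvec{v_0})(0)$, using the uniqueness clause of Proposition \ref{P:good} (via Lemma \ref{L:SC} and the $\Ssf$-bound) for the inclusion $\mathcal{N}_{\delta}\subset$ graph and the existence clause for the converse, and then conclude by the graph criterion of Appendix \ref{A:submanifold}. The only deviations — restricting to $\tilde{\mathscr{B}}$ to keep the strict inequality $\|u-Q\|_{\Ssf}<M\delta$ and the (redundant) citation of Corollary \ref{CR:CVdispersive_norm} — are harmless and do not change the argument.
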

\begin{proof}
We will write $\mathcal{N}_{\delta}$  as the graph of an appropriate function from $\BBB_{\delta}$ to $\HHH^u$, and use Lemma \ref{L:graph} to conclude.
 Let $\widetilde{\NNN}_{\delta}=\left\{ (Q,0)+\Phi(\vvec{v_0})(0),\; \vvec{v_0}\in \mathscr{B}_{\delta}\right\}$, where $\Phi$ is the map defined in Proposition \ref{P:good}.
 We first prove that $\mathcal{N}_{\delta}=\widetilde{\NNN}_{\delta}$. 
 
 First, let $(u_0,u_1)\in \NNN_{\delta}$ and $h=u-Q$. Since $(u_0,u_1)\in \MMM_k^+$, $h$ is solution of the reduced integral equations \eqref{eq:Reduced}. Furthermore, by the definition of $\mathscr{B}_{\delta}$, $\|h\|_{\mathsf{S}((0,\infty))}<M\delta$. By the uniqueness part of Proposition \ref{P:good}, $h=\Phi(\vvec{v_0})$, where $\vvec{v_0}=(Id-\PPP^+)(\vvec{h}(0))=(Id-\PPP^+)(u_0-Q,u_1)\in \mathscr{B}_{\delta}$ by the definition of $\NNN_{\delta}$. Thus $(u_0,u_1)=(Q,0)+\vvec{h}(0)=(Q,0)+\Phi(\vvec{v_0})(0)$, which proves $(u_0,u_1)\in \widetilde{\NNN}_{\delta}$.

 On the other hand, if $(u_0,u_1)\in \widetilde{\mathcal{\NNN}}_{\delta}$, then by Proposition \ref{P:good}, $\vvec{u}(t)=(Q,0)+\vvec{h}(t)$, with $\|h\|_{\mathsf{S}((0,\infty))}<M\delta$ and $\vertiii{(Id-\PPP^+)\left( \vec{h}(0) \right)}<\delta$, which shows that $(u_0,u_1)\in \NNN_{\delta}$, concluding the proof that 
 $$\NNN_{\delta}=\left\{ (Q,0)+\Phi(\vvec{v_0})(0),\; \vvec{v_0}\in \mathscr{B}_{\delta}\right\}.$$
 Since
 $$ (Id-\PPP^+)\Phi(\vvec{v_0})(0)=\vvec{v_0},$$
 we can rewrite this as 
 $$\mathcal{N}_{\delta}=\left\{ (Q,0)+\vvec{v_0}+\vartheta(\vvec{v_0}),\; \vvec{v_0}\in \mathscr{B}_{\delta}\right\},$$
 where
 $$\vartheta:\vvec{v_0}\mapsto \PPP^+\left(\Phi(\vvec{v_0})(0)\right).$$
 Since $\mathscr{B}_{\delta}$ is an open subset of $\HHH^{cs}$, and $\vartheta$
 is a $C^{\infty}$ map from $\HHH^{cs}$ to $\HHH^u$, by Lemma \ref{L:graph} the result follows.  
\end{proof}
\begin{remark}
\label{R:tangent_space}
 By \eqref{scattering_to_Q}, $\vartheta'(0)=0$, which proves that the tangent space of $\NNN_{\delta}$ at $(Q,0)$ is exactly $\HHH^{cs}$. 
 In particular, $\HHH^u$ is transversal to $\NNN_{\delta}$ at $u$. This shows that if $(u_0,u_1)\in \MMM_k^+$ is close to $(Q,0)$, and $\vvec{h_0}\in \HHH^u\setminus \{(0,0)\}$ is small, then $(u_0,u_1)+\vvec{h_0}\notin \MMM_k^+$.
\end{remark}
\begin{remark}
 The manifold $\NNN_{\delta}$ is locally invariant by the flow of \eqref{eq:NLW}: if $(u_0,u_1)\in \NNN_{\delta}$, then $\vec{u}(t)\in \NNN_{\delta}$ for small $t\in \R$. Indeed (fixing a small $t$), the fact that $\vec{u}(t)$ is in $\MMM_k^+$ follows immediately from the definition of $\MMM_k^+$, and the bound
 $\left\|u-Q\right\|_{\Ssf([t,\infty))}<M\delta$ is a direct consequence of the same bound for $t=0$ and the fact that
 $u-Q\in \Ssf(t,+\infty)$ for small negative $t$. Finally, the bound
 $$\vertiii{({\rm Id}-\PPP^+)\Big[\vec{u}(t)-(Q,0)\Big]}<\delta$$
  follows from the corresponding bound at $t=0$ and the continuity of the flow of \eqref{eq:NLW}. 
\end{remark}

\subsection{The global manifold}
\label{sub:global}
We will again drop the index $k$ for the sake of brevity, writing $\mathcal{M}=\mathcal{M}_j^+$. Note that by Theorem \ref{T:DY19}, we have:
$$\mathcal{M}=\left\{(u_0,u_1),\; \vec{u}(t)\xrightharpoonup[t\to\infty]{} (Q,0)\right\},$$
where $Q=Q_k$ (recall that the integer $k\geq 0$ is fixed in all this section), and the weak convergence is in $\HHH$. 

In this subsection, we conclude the proof of Theorem \ref{T:cs}, proving that $\mathcal{M}$ is a submanifold of $\HHH$ of codimension $k+1$. We fix $(U_0,U_1)\in \mathcal{M}$ and let $U$ be the corresponding solution of \eqref{eq:NLW}. We let $V_L$ be the solution of the linear wave equation \eqref{eq:LW} such that 
\begin{equation}
 \label{P30}
 \lim_{t\to\infty} \left\|\vvec{U}(t)-\vvec{V_L}(t)-(Q,0)\right\|_{\HHH}=0. 
\end{equation} 
We fix a small $\delta>0$, so that the conclusion of Proposition \ref{P:good} holds with this $\delta$, and let $t_0\geq 0$ such that
\begin{gather}
 \label{P31} \forall t\geq t_0,\quad \left\|\vvec{U}(t)-\vvec{V_L}(t)-(Q,0)\right\|_{\HHH}<\delta^6\\
 \label{P32} \left\|V_L\right\|_{\Ssf([t_0,\infty))} +\sup_{t\geq t_0}\left(\left\|V_L(t)\right\|_{L^{\infty}}+\left\| \langle x\rangle^{-4}\partial_tV_L(t)\right\|_{L^2}\right)<\delta^6\\
 \label{P33} \forall t\geq t_0,\quad \int_{\Omega}\indic_{\{|x|<\delta^{-3}+t-t_0\}} \left|\nabla_{t,x}V_L(t,x)\right|^2dx<\delta^6.
\end{gather}
To see that, at fixed $\delta$, there exists $t_0$ such that \eqref{P33} holds, recall that
\begin{equation}
\label{asymptotic_VL}
\lim_{A\to\infty} \lim_{t\to+\infty} \int_{\Omega} \indic_{\{|x|<t-A\}} |\nabla_{t,x}V_L(t,x)|^2\,dx=0. 
\end{equation} 
(this follows directly from the asymptotics of $V_L(t)$ as $t\to\infty$, see \cite[Lemma 2.3]{DuyckaertsYang21}).
Note that $t_0$ depends on $\delta$. However (as will be seen in the arguments below), the smallness condition on $\delta$ is independent on $t_0$.

By \eqref{P31}, \eqref{P32} and Proposition \ref{P:CVdispersive_norm}, denoting $H=U-Q$, we have 
\begin{gather}
 \label{P34}
 \forall T\geq t_0,\quad \left\|S_Q(\cdot)\PPP_c\vvec{H}(T)\right\|_{\mathsf{S}([0,\infty)} +\sum_{j,\pm} \left|\omega\left( \YYY^{\pm}_j,H(T) \right)\right|\lesssim \delta^{6}\\
\label{P35}
 \left\|H\right\|_{\mathsf{S}([t_0,\infty)}\lesssim \delta^{6}.
 \end{gather}
 The following no-return result plays a crucial role in the construction of the global center-stable manifold:
\begin{proposition}
 \label{P:P40}
 Let $U$, $\delta$ and $t_0$ be as above. Let $u$ be a solution of \eqref{eq:NLW} such that $t_0<T_+(u)$ and
 \begin{equation}
  \label{P40} 
  \left\|\vvec{u}(t_0)-\vvec{U}(t_0)\right\|_{\HHH}<\delta^{6}.
 \end{equation} 
 If 
 \begin{equation}
  \label{P41}
  \sup_{t\in [t_0,T_+(u))} \left\|\vvec{u}(t)-\vvec{U}(t)\right\|_{\HHH}\leq \delta^2,
 \end{equation} 
 then $\vec{u}(t_0)\in \mathcal{N}_{\delta}$ (and in particular $(u_0,u_1)\in \mathcal{M}$). If on the contrary
 \begin{equation}
  \label{P42}
  \sup_{t\in [t_0,T_+(u))} \left\|\vvec{u}(t)-\vvec{U}(t)\right\|_{\HHH}>\delta^2,
 \end{equation} 
 then $(u_0,u_1)\notin \mathcal{M}$. More precisely, $T_+(u)<\infty$, or $T_+(u)=+\infty$ and $u$ scatters to one of the stationary solutions $\pm Q_j$, $j\in \{0,\ldots, k-1\}$ or to $0$. 
\end{proposition}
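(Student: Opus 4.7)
The proof splits according to the two alternatives \eqref{P41} and \eqref{P42}. In the first case, I would show that $(u_0,u_1)\in \mathcal{N}_\delta$ (and hence in $\MMM_k^+$) by a perturbation argument; in the second, I would rule out each admissible value of the scattering limit of $u$ by contradiction, using a channel-of-energy estimate together with the local manifold built in Proposition \ref{P:good}.

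\textbf{Case 1 (alternative \eqref{P41}).} Set $w=u-U$. By \eqref{P40}, $\|\vec{w}(t_0)\|_{\HHH}<\delta^6$, and by \eqref{P41}, $\sup_{t_0\le t<T_+(u)}\|\vec{w}(t)\|_{\HHH}\le \delta^2$. The difference solves
\begin{equation*}
(\partial_t^2-\Delta)w=(U+w)^{2m+1}-U^{2m+1},\quad w_{\restriction \partial\Omega}=0.
\end{equation*}
Using the smallness $\|H\|_{\Ssf([t_0,\infty))}\lesssim \delta^6$ from \eqref{P35}, the uniform boundedness of $Q$, and the mixed Strichartz/weighted-energy estimates of Proposition \ref{P:mixed}, I would apply a long-time perturbation argument for \eqref{eq:NLW} to conclude $T_+(u)=+\infty$ and $\|w\|_{\Ssf([t_0,\infty))}\lesssim \delta^6$. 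Setting $h=u-Q=H+w$, this gives $\|h\|_{\Ssf([t_0,\infty))}\lesssim \delta^6 < M\delta$; combining \eqref{P34} with the trivial bound $\vertiii{\vec{w}(t_0)}\lesssim \|\vec{w}(t_0)\|_{\HHH}\lesssim \delta^6$ (from \eqref{def_vertiii}) yields
\begin{equation*}
\vertiii{(\mathrm{Id}-\PPP^+)\vec{h}(t_0)}\le \vertiii{(\mathrm{Id}-\PPP^+)\vec{H}(t_0)}+\vertiii{\vec{w}(t_0)}\lesssim \delta^6<\delta.
\end{equation*}
By Corollary \ref{CR:local-manifold}, these three bounds place $\vec{u}(t_0)$ in $\mathcal{N}_\delta$, so $(u_0,u_1)\in\MMM_k^+$.

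\textbf{Case 2 (alternative \eqref{P42}).} Let $t_1=\inf\{t>t_0:\|\vec{u}(t)-\vec{U}(t)\|_{\HHH}>\delta^2\}$, so that $\|\vec{u}(t_1)-\vec{U}(t_1)\|_{\HHH}=\delta^2$. If $T_+(u)<+\infty$ the conclusion is immediate; otherwise, by Theorem \ref{T:DY19}, $u$ scatters to some $\tilde Q\in\mathcal{Q}$ with radiation $\tilde V_L$, and it remains to exclude $\tilde Q=\pm Q_j$ with $j\geq k$. For the cases $\tilde Q=\pm Q_j$ with $j\geq k$ and $(\tilde Q,j)\neq (Q_k,k)$, I would use a channel-of-energy argument in the spirit of \cite{DuKeMe16a,JiLiScXu20}. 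Finite speed of propagation for $w=u-U$ (which vanishes on $\partial\Omega$) gives
\begin{equation*}
\int_{|x|>R+t-t_0}|\nabla_{t,x}w(t)|^2\,dx\le \|\vec{w}(t_0)\|_{\HHH}^2\lesssim \delta^{12},\quad t\ge t_0,\ R>0,
\end{equation*}
while the asymptotics of $u$ and $U$ yield $\vec{w}(t)\to (\tilde Q-Q,0)+(\tilde V_L-V_L)(t)$ in $\HHH$. The nonvanishing tail $r(\tilde Q-Q)\to \ell_{\tilde Q}-\ell_k\neq 0$ (Proposition \ref{P:DKM-stationary}), combined with the exterior channel-of-energy estimates from \cite{DuyckaertsYang23P} and the localization property \eqref{P33} of $V_L$, would furnish a strictly positive lower bound on the same quantity, producing a contradiction. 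For the borderline case $\tilde Q=Q_k$, the tail argument is ineffective; instead I would apply Corollary \ref{CR:CVdispersive_norm} to $u$ at a large time $T$ to place $\vec{u}(T)$ in $\mathcal{N}_{\delta'}$ (Corollary \ref{CR:local-manifold}) with $\delta'$ as small as desired, and then use the Lipschitz dependence on initial data in Proposition \ref{P:good}, together with a backward perturbation argument on $[t_0,T]$, to propagate closeness of $\vec{u}$ and $\vec{U}$ throughout $[t_0,T]$, contradicting $\|\vec{u}(t_1)-\vec{U}(t_1)\|_{\HHH}=\delta^2$.

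\textbf{Main obstacle.} The principal difficulty is Case 2: converting the nonvanishing tail $\ell_{\tilde Q}\neq \ell_k$ and the asymptotic description of $u-U$ into a clean lower bound on the exterior energy of $w$ in the receding window $|x|>R+t-t_0$, uniformly in $t$, in the presence both of the potential $Q^{2m}$ (which decays only as $r^{-2m}$) and of the two radiation fields. Matching the scales between the window $|x|>\delta^{-3}+(t-t_0)$ given by \eqref{P33} and the channel-of-energy window, and carrying out the backward closeness propagation in the subcase $\tilde Q=Q_k$, are the two most delicate technical points.
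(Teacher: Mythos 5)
Your Case 2 is where the real problems lie, and the gaps are genuine. First, the channel-of-energy argument based on the tail difference $\ell_{\tilde Q}-\ell_k\neq 0$ cannot produce the contradiction you want: in the receding region $\{|x|>R+t-t_0\}$ the static parts contribute nothing asymptotically, since $\nabla(\tilde Q-Q_k)=O(r^{-2})$ gives $\int_{|x|>R+t}|\nabla(\tilde Q-Q_k)|^2dx=O\bigl((R+t)^{-1}\bigr)\to 0$; equivalently, $(c/r,0)$ is exactly the degenerate direction of the radial exterior-energy estimates in dimension $3$, so a nonzero difference of the $1/r$ tails yields no positive lower bound on the exterior energy of $w$ (your finite-speed bound is an upper bound on $w=u-U$, while the tails only appear in the weak limit of $\vec{w}(t)$, which escapes the receding window). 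Second, your treatment of the borderline case $\tilde Q=Q_k$ by backward propagation on $[t_0,T]$ fails: backward in time the stable directions of $\LLL_Q$ grow exponentially, so closeness at a large time $T$ cannot be propagated back over an interval of unbounded length, and Proposition \ref{P:good} provides no backward estimate — yet this case is the heart of the no-return statement. The paper's mechanism is different and handles all $\pm Q_j$, $j\geq k$, at once: at the first exit time $t_1$ one shows (\eqref{P51}) that the deviation $\vec g(t_1)$, of size $\delta^2$, lies in $\HHH^u$ up to $O(\delta^4)$; by Claim \ref{claim:growth_unstable}, the $L_Q$-evolution of unstable data carries exterior energy $\gtrsim e^{-2e_0R}\|\cdot\|^2_{\HHH}$ beyond $|x|\geq R+t$, so the choice $R=|\log\delta|/(2e_0)$ yields an extra $\delta^5$ of asymptotic exterior energy (\eqref{P89}); combined with the Step 2 bound this contradicts the identity \eqref{P70}, whose right-hand side is $E(u_0,u_1)-E(Q_j,0)\leq E(u_0,u_1)-E(Q_k,0)$ for $j\geq k$. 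This exit-time decomposition plus the eigenfunction-decay channel estimate is the key idea missing from your plan.

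In Case 1, your claimed bound $\|w\|_{\Ssf([t_0,\infty))}\lesssim\delta^6$ from a long-time perturbation argument is not justified (and is stronger than needed): the difference equation contains the potential term $(2m+1)U^{2m}w\approx(2m+1)Q_k^{2m}w$, which is not globally small in time for free Strichartz estimates, and if you linearize with $L_Q$ the propagator has exponentially growing modes, so Proposition \ref{P:mixed} only controls the $P_c$-part; the unstable coefficients are controlled only through the a priori bound \eqref{P41} together with Lemma \ref{L:SC}, and the natural output is $O(\delta^2)$ (the unstable component may indeed grow from $\delta^6$ to size $\sim\delta^2$ under \eqref{P41}). The repair is what the paper does: apply Proposition \ref{P:CVdispersive_norm} directly to $u$ with the same radiation field $V_L$ of $U$, whose hypotheses hold with $\eps\sim\delta^2$ by \eqref{P31}, \eqref{P32} and \eqref{P41}; the resulting $O(\delta^2)$ bounds on $\|h\|_{\Ssf}$ and on $\vertiii{(\mathrm{Id}-\PPP^+)\vec h(t_0)}$ are below the thresholds $M\delta$ and $\delta$, and, via Lemma \ref{L:SC} and the uniqueness in Proposition \ref{P:good}, place $\vec u(t_0)$ in $\NNN_{\delta}$.
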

\begin{proof}
 We first assume that $u$ satisfies \eqref{P41}. This implies that $u$ is bounded in $\HHH$ on $[t_0,T_+(u))$, and thus that $T_+(u)=\infty$. By \eqref{P41} and \eqref{P31},
 \begin{equation}
  \label{P43}
  \sup_{t\geq t_0} \left\|\vvec{u}(t)-(Q,0)-\vvec{V_L}(t)\right\|_{\HHH}\lesssim \delta^2.
 \end{equation} 
 Combining with \eqref{P32}, we see that Proposition \ref{P:CVdispersive_norm} implies, denoting $h=u-Q$,
 \begin{gather*}
  h\in \Ssf([t_0,\infty)),\quad \left\| h\right\|_{\Ssf([t_0,\infty))} \lesssim \delta^2\\
  \left\|S_Q(\cdot)\PPP_c\left(\vec{h}(t_0)\right)\right\|_{\Ssf([0,\infty))} +\sum_{j,\pm}\left|\omega\left( \YYY^j_{\pm},\vvec{h}(t_0) \right)\right|\lesssim \delta^2.
 \end{gather*} 
 Assuming that $\delta$ is small enough, we deduce that $u(t_0)\in \mathcal{N}_{\delta}$.
 
 Next, we assume \eqref{P42} and prove that $(u_0,u_1)\notin \mathcal{M}$. The proof is divided into $3$ steps. 
 
 \smallskip
 
 \noindent\emph{Step 1.} We let $t_1> t_0$ such that
 \begin{equation}
  \label{P50}
  \left\|\vvec{u}(t_1)-\vvec{U}(t_1)\right\|_{\HHH}=\delta^2,\quad \forall t\in [t_0,t_1),\; \left\|\vvec{u}(t)-\vvec{U}(t)\right\|_{\HHH}<\delta^2.
 \end{equation} 
 We let $g(t)=u(t)-Q-V_L(t)$. Note that by \eqref{P31}, \eqref{P40} and \eqref{P50},
 \begin{equation}
  \label{P52}
  \|\vvec{g}(t_0)\|_{\HHH}\lesssim \delta^6,\; \|\vvec{g}(t_1)\|_{\HHH}=\delta^2+\mathcal{O}(\delta^6),\quad \forall t\in [t_0,t_1],\; \|\vvec{g}(t)\|_{\HHH}\lesssim \delta^2.
 \end{equation} 
 We claim
 \begin{equation}
  \label{P51}
  \left\|\PPP^+\vec{g}(t_1)\right\|_{\HHH}=\delta^2+\OOO(\delta^4),\quad
 \left\|({\rm Id}-\PPP^+)\vec{g}(t_1)\right\|_{\HHH}\lesssim \delta^4.
 \end{equation} 
Indeed, $g$ satisfies the equation 
\begin{equation}
 \label{P53}
 \partial_t^2g+L_Qg=\widetilde{R}_Q(g,V_L),
\end{equation} 
(see \eqref{dis13}), where $\widetilde{R}_Q$ satisfies \eqref{dis20}, which implies, 
 for $t_0<t<t_1$,
\begin{multline}
 \label{P53a}
 \left\|\widetilde{R}_Q(g,V_L)\right\|_{\Nsf([t_0,t])} \lesssim \|V_L\|_{\Ssf([t_0,t])}+\|V_L\|_{\Ssf([t_0,t])}^{2m+1}+\|g\|_{\Ssf([t_0,t])}^2+\|g\|_{\Ssf([t_0,t])}^{2m+1}\\
 \lesssim \delta^6+\|g\|^2_{\Ssf([t_0,t])}+\|g\|^{2m+1}_{\Ssf([t_0,t])},
\end{multline}
where we have used \eqref{P32} to obtain the last inequality.
Combining with Proposition \ref{P:mixed}, \eqref{P52}, and the equation \eqref{P53}, we obtain
\begin{equation}
 \label{P55} 
 \forall t\in [t_0,t_1],\quad \left\|\PPP_c\vec{g}(t)\right\|_{\HHH} +\left\|P_cg\right\|_{\Ssf([t_0,t])}\lesssim \delta^6+\left\|g\right\|^2_{\Ssf([t_0,t])}+\|g\|^{2m+1}_{\Ssf([t_0,t])}.
\end{equation} 
Moreover, projecting the equation \eqref{P53}, we obtain, for $j\in \{0,\ldots,k\}$, $t\in (t_0,t_1]$,
\begin{multline}
 \label{P56}
 \left| \omega\left(\vec{g}(t),\YYY^+_j\right)-e^{(t_0-t)e_j}\omega\left(\vec{g}(t_0),\YYY^+_j\right)\right|\\
 \lesssim \int_{t_0}^{t} e^{(s-t)e_j} \left|\omega\left(\YYY^+_j,\widetilde{\RRR}_Q(g,V_L)(s)\right)\right|ds=:A_3(t).
\end{multline}
By \eqref{dis20}, \eqref{P32}, \eqref{P52} and the radial Sobolev inequality, recalling that $\widetilde{\mathcal{R}}_Q(g,V_L)=\begin{pmatrix}
	0\\
	\widetilde{R}_Q(g,V_L)
	\end{pmatrix}$, we have
\begin{equation}
 \label{P53b}
 \forall t\in [t_0,t_1], \; \forall r>1,\quad \quad
\left|\widetilde{R}_Q(g,V_L)(t)\right|\lesssim \delta^4.
\end{equation}
By \eqref{P52}, \eqref{P56} and \eqref{P53b},
\begin{equation}
 \label{P56'}
 \left| \omega\left(\vec{g}(t),\YYY^+_j\right)\right|\lesssim \delta^4.
\end{equation}
As a consequence of \eqref{P56} we also have that for all $\tau\in [t_0,t_1]$,
 \begin{multline}
 \label{P57}
 \left\| \omega\left(\vec{g},\YYY^+_j\right)\right\|_{L^2\cap L^{2m+1}([t_0,\tau])} \lesssim
 \left\| e^{(t_0-\cdot)e_j}\omega\left(\vec{g}(t_0),\YYY^+_j\right)\right\|_{L^2\cap L^{2m+1}([t_0,\tau])}
 \\+\|A_3\|_{L^2\cap L^{2m+1}([t_0,\tau])}.
 \end{multline}
We have
$$ \left\| e^{(t_0-\cdot)e_j}\omega\left(\vec{g}(t_0),\YYY^+_j\right)\right\|_{L^2\cap L^{2m+1}([t_0,\tau])}\lesssim |\omega(\vec{g}(t_0))|\lesssim \delta^6.$$
Furthermore, interpreting $A_3(t)$ as a convolution, as in the proof of \eqref{dis41}, and using \eqref{dis20} and $\|V_L\|_{\Ssf([t_0,t_1])}\lesssim \delta^6$ by \eqref{P32}, we obtain
$$ \|A_3\|_{L^2\cap L^{2m+1}([t_0,\tau])}\lesssim \delta^6+\|g\|^{2}_{\Ssf([t_0,\tau])}+\|g\|^{2m+1}_{\Ssf([t_0,\tau])}.$$
Thus by \eqref{P57}
\begin{equation}
\label{P58}
\forall \tau\in [t_0,t_1],\quad
 \left\| \omega\left(\vec{g},\YYY^+_j\right)\right\|_{L^2\cap L^{2m+1}([t_0,\tau])} \lesssim \delta^6+\|g\|^{2}_{\Ssf([t_0,\tau])}+\|g\|^{2m+1}_{\Ssf([t_0,\tau])}.
\end{equation}
To estimate $\omega\left(\vec{g},\YYY^-_j\right)$, we fix $\tau\in (t_0,t_1]$. Projecting the equation \eqref{P53}, we obtain, for $j\in \{0,\ldots,k\}$, $t\in (t_0,\tau]$,
\begin{multline}
 \label{P59}
 \left| \omega\left(\vec{g}(t),\YYY^-_j\right)-e^{(t-\tau)e_j}\omega\left(\vec{g}(\tau),\YYY^-_j\right)\right|\\
 \lesssim \int_{t}^{\tau} e^{(t-s)e_j} \left|\omega\left(\YYY^-_j,\widetilde{\RRR}_Q(g,V_L)(s)\right)\right|ds=:A_4(t).
\end{multline}
With the same argument as above, using also the bound of $g(t)$ in \eqref{P52}, we see that
\begin{equation}
\label{P58'}
\forall \tau\in [t_0,t_1],\quad
 \left\| \omega\left(\vec{g},\YYY^-_j\right)\right\|_{L^2\cap L^{2m+1}([t_0,\tau])} \lesssim \delta^2+\|g\|^{2}_{\Ssf([t_0,\tau])}+\|g\|^{2m+1}_{\Ssf([t_0,\tau])}.
\end{equation}
Combining \eqref{P55}, \eqref{P58} and \eqref{P58'}, we deduce
\begin{equation*}
 \forall \tau\in [t_0,t_1],\quad \left\|g\right\|_{\Ssf([t_0,\tau])}\lesssim \delta^2+\left\|g\right\|^2_{\Ssf([t_0,\tau])}+\|g\|^{2m+1}_{\Ssf([t_0,\tau])},
\end{equation*}
and thus by a straightforward continuity argument $\left\|g\right\|_{\Ssf([t_0,t_1])}\lesssim \delta^2$. In view of \eqref{P55}, we see that
$$\forall t\in[t_0,t_1],\quad \left\|\mathcal{P}_c\vec{g}(t)\right\|_{\HHH}+\left\|\mathcal{P}_cg\right\|_{\Ssf([t_0,t])}\lesssim \delta^4.$$
Combining with \eqref{P56'}, we obtain
\begin{equation*}
 \left\|({\rm Id}-\PPP^+)\vvec{g}(t_1)\right\|_{\HHH}\lesssim \delta^4.
\end{equation*} 
Using the equality $\|\vvec{g}(t_1)\|_{\HHH}=\delta^2+\OOO(\delta^6)$ of \eqref{P52} we obtain \eqref{P51}.

Next, we recall that it follows from the main result of \cite{DuyckaertsYang21}, recalled in Theorem \ref{T:DY19} that if $(v_0,v_1)\in \pm \MMM_j^{+}$ and $v$ is the corresponding solution of \eqref{eq:NLW}, then
\begin{equation}
 \label{P70}
 \lim_{A\to\infty} \lim_{t\to\infty} \frac 12\int_{|x|\geq t-A} |\nabla_{t,x}v(t,x)|^2dx=E(v_0,v_1)-E(Q_j,0),
\end{equation} 
where by definition $|\nabla_{t,x}v|^2=(\partial_tv)^2+|\nabla_x v|^2$. Indeed, \eqref{P70} is a direct consequence of \eqref{asymptotic_u}, \eqref{eq:energy}, and the property \eqref{asymptotic_VL} satisfied by any solution of the linear wave equation on $\Omega$ with Dirichlet boundary conditions.

Recall that the negative eigenvalues of $L_Q$ are denoted by $-e_0^2<-e_1^2<\ldots <-e_k^2$ and that by radiality, each of these eigenvalues has multiplicity 
$1$. Assuming that $T_+(u)=\infty$ (if not, we are done), we will prove in the two next steps that there exists a constant $C>0$ such that
\begin{equation}
 \label{P70'} 
 \lim_{t\to\infty} \frac{1}{2}\int_{|x|\geq t-t_1+\frac{1}{2e_0}|\log \delta|} |\nabla_{t,x}u(t,x)|^2\geq E(u_0,u_1)-E(Q,0)+\frac{1}{C}\delta^5,
\end{equation} 
(where as before $Q=Q_k$)
which shows, since $E(Q_k,0)<E(Q_j,0)$ for $j>k$, that $(u_0,u_1) \in \pm\MMM_j^{\pm}$ for some $j<k$ or that $(u_0,u_1)$ scatters to $0$. 

\smallskip

\noindent\emph{Step 2.} In this step we prove that there exists $t_2\geq t_1$ such that 
\begin{equation}
 \label{P71}
 \forall t\geq t_2,\quad \frac{1}{2}\int_{|x|\geq t-t_0+\delta^{-3}} |\nabla_{t,x}u(t,x)|^2dx\geq E(u_0,u_1)-E(Q,0)-C\delta^6.
\end{equation} 
By the assumption \eqref{P40} on $u$, we have $\left\|\vvec{u}(t_0)-\vvec{U}(t_0)\right\|_{\HHH}<\delta^6$, so that
\begin{equation}
 \label{P72}
 \left|E(u_0,u_1)-E(U_0,U_1)\right|\lesssim \delta^6,
\end{equation} 
(where the implicit constant depends only on $U$). 
Furthermore, for $t\geq t_0$, $x\in \Omega$,
$$|\partial_t^2(u-U)-\Delta(u-U)|\lesssim |u-U|^{2m+1}+|u-U|U^{2m}.$$
Let $\Gamma_{\delta}=\left\{(t,x),\; t\geq t_0,\; |x|\geq t-t_0+\delta^{-3}\right\}$. We have
\begin{multline*}
 \left\|\indic_{\Gamma_{\delta}}U\right\|_{L^{2m+1}(\R,L^{2(2m+1)}(\Omega))} \\
 \lesssim \left\|\indic_{\Gamma_{\delta}}Q\right\|_{L^{2m+1}(\R,L^{2(2m+1)}(\Omega))}+\left\|\indic_{\Gamma_{\delta}}V_L\right\|_{L^{2m+1}(\R,L^{2(2m+1)}(\Omega))}+\delta^6\lesssim \delta^{5/2},
\end{multline*}
where we have used \eqref{P31}, \eqref{P32}, and the fact that $Q$ is of order $1/r$ at infinity. Combining with \eqref{P40}, finite speed of propagation, the dispersive estimates of Proposition \ref{P:mixed} (see Remark \ref{R:mixed_free}), and a standard bootstrap argument using the  Gronwall type inequality of \cite[Lemma 8.1]{FaXiCa11}, we obtain
\begin{equation}
\label{P80}
 \forall t\geq t_0,\quad \sqrt{\int_{|x|\geq t-t_0+\delta^{-3}} |\nabla_{t,x}(u(t,x)-U(t,x))|^2}\lesssim \delta^6.
\end{equation} 

By \eqref{P31}, for $t\geq t_0$,
$$ \frac{1}{2}\int_{|x|\geq t-t_0+\delta^{-3}} |\nabla_{t,x}U(t,x)|^2dx\geq \frac{1}{2}\int_{|x|\geq t-t_0+\delta^{-3}} |\nabla_{t,x}(Q+V_L)(t,x)|^2dx-C\delta^6.$$
By dominated convergence, then \eqref{P33}, we deduce\begin{multline*}
\liminf_{t\to\infty} \frac{1}{2}\int_{|x|\geq t-t_0+\delta^{-3}} |\nabla_{t,x}U(t,x)|^2dx
\\
\geq \liminf_{t\to\infty} \frac{1}{2}\int_{|x|\geq t-t_0+\delta^{-3}} |\nabla_{t,x}V_L(t,x)|^2dx-C\delta^6\\
\geq \liminf_{t\to\infty} \frac{1}{2}\int_{\Omega} |\nabla_{t,x}V_L(t,x)|^2dx-C\delta^6
=E(U_0,U_1)-E(Q,0)-C\delta^{6}.
\end{multline*}
Combining with \eqref{P80}, we obtain \eqref{P71}. 

\medskip

\noindent\emph{Step 3.} We prove that there exists a constant $C>0$ (that might depend on $U$ but is independent of $u$) such that for large $t$,
\begin{equation}
 \label{P89}
 \int_{t-t_1+|\log\delta|/2e_0\leq |x|\leq t-t_1+10|\log\delta|/e_k}|\partial_tu(t)|^2+\left|\nabla(u(t)-Q)\right|^2dx\geq \frac{1}{C}\delta^5.
\end{equation} 
Note that since $10|\log \delta|/e_j\leq \delta^{-3}$,  for small $\delta>0$, and $t_1>t_0$, the inequalities \eqref{P71} and \eqref{P89} imply \eqref{P70'} for large $t$, since $\lim_{t\to\infty} \int_{|x|>t-t_1+|\log \delta|/2e_0} |\nabla Q|^2dx=0$, concluding the proof. 
As in Step 1, we denote by $g(t)=u(t)-Q-V_L(t)$. Then $g$ satisfies
\begin{equation}
 \label{P88}
 \partial_t^2g-\Delta g=\left( g+Q+V_L \right)^{2m+1}-Q^{2m+1}:=\overline{R}_Q(V_L,g).
\end{equation} 
Using finite speed of propagation, we consider this equation on the set 
$$ \widetilde{\Gamma}_{\delta}=\left\{(t,r),\; t\geq t_1,\; r\geq \frac{|\log \delta|}{2e_0}+t-t_1\right\}.$$
Since 
$$ \left|\overline{R}_Q(V_L,g)\right|\lesssim |g|^{2m+1}+|V_L|^{2m+1}+Q^{2m}(|g|+|V_L|),$$
we have 
\begin{multline*}
\forall t\geq t_1,\quad \left\|\indic_{\widetilde{\Gamma}_{\delta}} \overline{R}_Q(V_L,g)\right\|_{\Nsf\left( [t_1,t] \right)} \lesssim\left\|\indic_{\widetilde{\Gamma}_{\delta}}g\right\|^{2m+1}_{\Ssf([t_1,t])}+\left\|\indic_{\widetilde{\Gamma}_{\delta}}V_L\right\|^{2m+1}_{\Ssf([t_1,t])}\\
+
\left\|Q\indic_{\widetilde{\Gamma}_{\delta}}\right\|^{2m}_{\Ssf([t_1,t])} \left( \left\|\indic_{\widetilde{\Gamma}_{\delta}}g\right\|_{\Ssf([t_1,t])} + \left\|\indic_{\widetilde{\Gamma}_{\delta}}V_L\right\|_{\Ssf([t_1,t])} \right)
.
\end{multline*} 
Using the mixed Strichartz/weighted energy estimates of Proposition \ref{P:mixed} (see Remark \ref{R:mixed_free}), we deduce, since $\|g(t_1)\|_{\HHH}\approx \delta^2$ by \eqref{P52}, and $\|V_L\|_{\Ssf([t_1,\infty)}\lesssim \delta^6$ by \eqref{P32},
\begin{equation}
 \label{P91}
 \forall t\geq t_1,\quad \left\| \indic_{\widetilde{\Gamma}_{\delta}} g\right\|_{\Ssf([t_1,t])} \lesssim \delta^2+o_{\delta}(1)\left\| \indic_{\widetilde{\Gamma}_{\delta}} g\right\|_{\Ssf([t_1,t])}+\left\| \indic_{\widetilde{\Gamma}_{\delta}} g\right\|^{2m+1}_{\Ssf([t_1,t])}.
\end{equation} 
By a straightforward continuity argument,
\begin{equation}
 \label{P100}
 \left\| \indic_{\widetilde{\Gamma}_{\delta}} g\right\|_{\Ssf([t_1,\infty))}\lesssim \delta^2.
\end{equation} 
Let $g_L$ be the solution of 
\begin{equation}
 \label{P101}
 \partial_t^2g_L+L_Qg_L=0,\quad \vec{g}_L(t_1)=\vec{g}(t_1).
\end{equation} 
Then
$$ (\partial_t^2-\Delta)(g-g_L)=(2m+1)Q^{2m}(g-g_L)+\widetilde{R}_Q(g,V_L),$$
where $\widetilde{R}_Q(g,V_L)$ is defined as in \eqref{dis13}. By \eqref{dis20} we have, for $t\geq t_1$,
\begin{multline*}
\left\|\widetilde{R}_Q(g,V_L)\indic_{\widetilde{\Gamma}_{\delta}}\right\|_{\mathsf{N}(t_1,t)}\lesssim \left\|V_L\right\|_{\mathsf{S}([t_1,t])}\\
+\left\|V_L\right\|_{\mathsf{S}([t_1,t])}^{2m+1} +\left\|g\indic_{\widetilde{\Gamma}_{\delta}}\right\|^2_{\mathsf{S}([t_1,t])}+\left\|g\indic_{\widetilde{\Gamma}_{\delta}}\right\|^{2m+1}_{\mathsf{S}([t_1,t])}
\lesssim \delta^4, 
\end{multline*}
where we have used \eqref{P32} and \eqref{P100}.
Also
$$ \left\|Q^{2m}(g-g_L)\indic_{\widetilde{\Gamma}_{\delta}}\right\|_{\mathsf{N}([t_1,t])} \lesssim o_{\delta}(1)\left\|g-g_L\right\|_{\mathsf{S}([t_1,t])}.$$
Hence
\begin{equation}
 \label{P102}
 \left\|\indic_{\widetilde{\Gamma}_{\delta}} (g-g_L)\right\|_{\Ssf([t_1,\infty))} +\left(\sup_{t\geq t_1} \int_{|x|\geq \frac{|\log \delta|}{2e_0}+t-t_1}|\nabla_{t,x}(g-g_L)(t,x)|^2dx\right)^{\frac 12}\lesssim \delta^4.
\end{equation} 
Furthermore, since by \eqref{P51}
$$\left\|(\PPP^-+\PPP_c)\vec{g}_L(t_1)\right\|_{\HHH} \lesssim \delta^4,$$
we have
\begin{equation}
 \label{P103} \sup_{t\geq t_1} \left\|(\PPP^-+\PPP_c)\vec{g}_L(t)\right\|_{\HHH}\lesssim \delta^4.
\end{equation} 
Next, we use the following claim, whose proof is postponed to the end of this subsection.
\begin{claim}
 \label{claim:growth_unstable}
 There exists $C>0$, $R_0>0$ (depending only on $k$) such that for all $\vvec{h_0}\in \PPP^+\HHH=\HHH^u$, if $h_L(t)=S_Q(t)\vvec{h_0}$, 
 \begin{multline*}
R\geq R_0\Longrightarrow \forall t\geq 0,\quad \frac{1}{C} e^{-2e_0R}\left\|\vvec{h_0}\right\|^2_{\HHH}\leq \int_{|x|>R+|t|}|\nabla_{t,x}h_L(t,x)|^2dx\\
 \leq Ce^{-2e_k R}\left\|\vec{h_0}\right\|^2_{\HHH}.
 \end{multline*}
\end{claim}
By the claim and \eqref{P51}, denoting $g_L^+(t)=S_Q(t-t_1)\PPP^+\vvec{g}(t_1)$, so that $\PPP^+\vvec{g_L}=(g_L^+,\partial_tg_L^+)$,
\begin{equation}
 \label{P104}
 \int_{|x|\geq \frac{|\log \delta|}{2e_0}+(t-t_1)} \left|\nabla_{t,x}g_L^+(t)\right|^2dx\gtrsim e^{-\frac{|\log \delta| e_0}{e_0}}\left\|\PPP^+\vvec{g}(t_1)\right\|^2_{\HHH}\gtrsim \delta^5,
\end{equation}
and
\begin{equation}
 \label{P105}
 \int_{|x|\geq \frac{10|\log \delta|}{e_k}+(t-t_1)} \left|\nabla_{t,x}g_L^+(t)\right|^2dx\lesssim e^{-10\frac{|\log \delta| e_k|}{e_k}}\left\|\PPP^+\vvec{g}(t_1)\right\|^2_{\HHH}\lesssim \delta^{14},
\end{equation}
Combining \eqref{P33}, \eqref{P102}, \eqref{P103}, \eqref{P104} and \eqref{P105}, we obtain the desired conclusion \eqref{P89}. We are left with proving the claim.
\end{proof}
\begin{proof}[Proof of Claim \ref{claim:growth_unstable}]
 We write
 $$ \vvec{h_0}=\sum_{j=0}^k \alpha_j\YYY_j^+,\quad \alpha_j=\omega\left(\YYY^-_j,\vvec{h_0}  \right).$$
 so that 
 $$\vvec{h}(t)=\sum_{j=0}^k \alpha_je^{e_jt}\YYY_j^+,\quad \sum_{j=0}^k \left( \alpha_j \right)^2\approx \left\|\vvec{h_0}\right\|^2_{\HHH}.$$
 Recall that $\YYY_j^+=(Y_j,e_jY_j)$, and that, by Lemma \ref{kdlre}, there exists $c_j \in \R\setminus \{0\}$ such that 
 $$Y_j(r)=\left(\frac{c_j}{r}+\OOO(r^{-3/2})\right)e^{-e_jr},\quad \partial_r Y_j(r)=\left(\frac{-c_je_j}{r}+\OOO(r^{-3/2})\right)e^{-e_jr},\quad r\to\infty.$$
 Thus, assuming that $R$ is large enough,
 \begin{multline*}
\int_{|x|>R+t}\left|\nabla_{t,x}h_L(t,x)\right|^2dx
 \approx \int_{R+t}^{\infty} \bigg|\sum_{j=0}^k c_je^{-e_jr+e_jt}\alpha_j\bigg|^2dr\\
=\int_R^{\infty} \bigg|\sum_{j=0}^k \alpha_jc_j e^{-e_jr}\bigg|^2dr=e^{-2e_0R} \int_{0}^{\infty}\bigg|\sum_{j=0}^k\alpha_j c_j e^{(e_0-e_j)R}e^{-e_jr}\bigg|^2dr.
 \end{multline*}
 Using that all norms of a finite-dimensional vector space are equivalent, and the fact that the functions $e^{-e_0r},\ldots,e^{-e_jr}$ are linearly independent, we obtain that there exists a constant  $C>0$ (depending only on $k$) such that 
 $$ \forall \left(\beta_j\right)_{j}\in \R^{k+1},\quad \int_0^{\infty} \left|\sum_{j=0}^k \beta_je^{-e_jr}\right|^2dr\geq \frac{1}{C}\sum_{j=0}^k\beta_j^2.$$
 Hence
 $$\int_{R+t}^{\infty}\left|\nabla_{t,x}h_L(t,x)\right|^2dx\geq \frac{1}{C}e^{-2e_0 R}\sum_{j=0}^{k} \alpha_j^2e^{2(e_0-e_j)R}\geq \frac{1}{C}e^{-2e_0R} \sum_{j=0}^k \alpha_j^2,$$
 which yields the claimed lower bound. The proof of the upper bound is similar and we omit it.
\end{proof}
We are now ready to conclude the proof of Theorem \ref{T:cs}. Recall that we have taken $(U_0,U_1)\in \MMM_k^+$, that we have denoted by $U$ the corresponding solution of \eqref{eq:NLW}, and that $t_0$ has been chosen so that \eqref{P31}, \eqref{P32} and \eqref{P33} hold. 

By Proposition \ref{P:CVdispersive_norm}, $\vertiii{(Id-\PPP^+)\left(\vvec{U}(t_0)-(Q,0)  \right)}\lesssim \delta^6$ and $\|U-Q\|_{\mathsf{S}([t_0,\infty))}\lesssim\delta^6$. Thus $\vvec{U}(t_0)\in \NNN_{\delta}$, which is a smooth submanifold of $\HHH$ of codimension $k+1$. Consider the map $\Phi: (u_0,u_1)\mapsto \vec{u}(t_0)$, where $u$ is the solution of \eqref{eq:NLW} with initial data $(u_0,u_1)$. By Proposition \ref{P:smooth_flow}, $\Phi$ is a smooth, Lipschitz map from a neighborhood $\omega$ if $(U_0,U_1)$ to $\Phi(\omega)$, and $\Phi'$ is bounded on $\omega$. Since \eqref{eq:NLW} is time reversible, $\Phi$ is a smooth diffeomorphism from $\omega$ to $\Phi(\omega)$. Taking a smaller neighborhood $\omega$ of $(U_0,U_1)$ if necessary, we see that $\Phi^{-1}\left( \NNN_{\delta}\cap \Phi(\omega) \right)$ is a $(k+1)$-codimensional submanifold of $\HHH$, which contains $(U_0,U_1)$ and is included in $\MMM_k^+\cap \omega$. 

It remains to prove that there exists a neighborhood $\tilde{\omega}\subset \omega$ of $(U_0,U_1)$ in $\HHH$ such that $\tilde{\omega}\cap \Phi^{-1}\left( \NNN_{\delta}\cap \Phi(\omega) \right)=\Phi^{-1}\left( \NNN_{\delta}\cap \Phi(\tilde{\omega}) \right)=\MMM_k^+\cap \tilde{\omega}$. We argue by contradiction, assuming that there exists a sequence $\Big((U_0^n,U_1^n)\Big)_n$ of elements of $\HHH$ that converges to $(U_0,U_1)$ as $n$ goes to infinity, and such that $(U_0^n,U_1^n)\in \MMM_k^+$ and $\Phi(U_0^n,U_1^n)\notin \NNN_{\delta} \cap \Phi(\omega)$. Since $\vvec{U}(t_0)=\Phi(U_0,U_1)$ satisfies \eqref{P31}, \eqref{P32} and \eqref{P33} the no-return property of Proposition \ref{P:P40} implies that for large $n$, $\Phi(U_0^n,U_1^n)\notin \MMM_k^+$, and thus that $(U_0^n,U_1^n)\notin \MMM_k^+$, a contradiction. The proof is complete. \qed

We conclude this section by the proof of Proposition \ref{P:intersection}. We first note that for small $\delta$,
\begin{equation}
\label{equal_manifold}
\MMM^+_k\cap B(Q_k,\delta^6)=\NNN_{\delta}\cap B(Q_k,\delta^6),
\end{equation}
where $B((Q_k,0),\delta^6)$ is the open ball of $\HHH$ centered at $(Q,0)$
 with radius $\delta^6$. Indeed, by definition of $\NNN_{\delta}$, $\NNN_{\delta}\subset \MMM^+$. On the other hand, if $(u_0,u_1)\in \MMM^+\cap B(Q_k,\delta^6)$, then by the no-return result (Proposition \ref{P:P40}), with $U=Q$ and $t_0=0$ (and thus $V_L=0$), we have $(u_0,u_1)\in \NNN_{\delta}$, and thus \eqref{equal_manifold} holds.

By Remark \ref{R:tangent_space}, this implies that the tangent space of $\MMM^+_k$ is $\HHH^{cs}$. Also, considering a solution $u$ of \eqref{eq:NLW} with initial data $(u_0,u_1)$, and denoting by $\sigma(u_0,u_1)=(u_0,-u_1)$, we see that the solution of \eqref{eq:NLW} with initial data $\sigma(u_0,u_1)$ is $u^{\sigma}(t)=u(-t)$. This proves that $\MMM^-_k=\sigma(\MMM_k^+)$, and that the tangent space to $\MMM^-_k$ at $(Q_k,0)$ is exactly $\sigma(\HHH^{cs})=\HHH^{cu}$.

\section{Instability}
\label{sec:instability}

\subsection{Positivity of the linear and nonlinear flows}
In this subsection we prove positivity properties for the radial linear and nonlinear wave equations outside the ball. These properties are essentially the same than the one for the linear wave equation on $\R^3$. In all the section, $\Omega=\R^3\setminus B$, and we consider only radial solutions.
\begin{proposition}
 \label{P:positive}
 Let $f\in L^1\left((0,T),L^2(\Omega)\right)$, $(u_0,u_1)\in \HHH$ and $\vec{u}\in C^0([0,T],\HHH)$ the solution of 
 \begin{equation}
 \left\{
  \begin{aligned}
   (\partial_t^2-\Delta_D)u&=f,\quad (t,x)\in [0,T]\times \Omega\\
   \vec{u}_{\restriction t=0}&=(u_0,u_1),\quad y_{\restriction \partial \Omega}=0.
  \end{aligned}\right.
 \end{equation}
 Assume
 \begin{gather}
 \label{P_hyp1}
  f(t,r)\geq 0 \text{ for a.a. }t\in (0,T),\; r>1\\
  \label{P_hyp2}
  u_0(r)\geq 0\text{ for all }r>1\\
  \label{P_hyp3}
  ru_1+\partial_r(ru_0)\geq 0\text{ for a.a. } r>1.
 \end{gather}
Then
\begin{gather}
 \label{P_conclu1}
 u(t,r)\geq 0, \quad \forall t\in [0,T],\; \forall r>1\\
 \label{P_conclu2}
 (\partial_t+\partial_r)(ru)\geq 0,\;\forall t\in [0,T],\;\text{ for a.a. }r>1, 
\end{gather} 
\end{proposition}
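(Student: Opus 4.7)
\medskip

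\noindent\textbf{Proof strategy.} The plan is to reduce to the $1$-dimensional wave equation via the classical radial substitution $\psi(t,r):=r\,u(t,r)$. Because $u$ is radial and satisfies Dirichlet conditions at $r=1$, $\psi$ solves
\begin{equation*}
(\partial_t^2-\partial_r^2)\psi=r f\quad\text{on }(0,T)\times(1,\infty),\qquad \psi(t,1)=0,
\end{equation*}
with initial data $\psi(0,\cdot)=ru_0$, $\partial_t\psi(0,\cdot)=ru_1$. Under this change of variable the three hypotheses become $rf\ge 0$, $\psi(0,\cdot)\ge 0$, and $\partial_r\psi(0,\cdot)+\partial_t\psi(0,\cdot)\ge 0$, while the two conclusions read exactly $\psi\ge 0$ and $(\partial_t+\partial_r)\psi\ge 0$. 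To deal with the boundary condition, I would extend everything by odd reflection about $r=1$: set $\widetilde g(s,\sigma)=(r f)(s,\sigma)$ for $\sigma>1$, $\widetilde g(s,\sigma)=-(r f)(s,2-\sigma)$ for $\sigma<1$, and similarly for the data, so that the Dirichlet problem on $(1,\infty)$ becomes a Cauchy problem on all of $\R$ for $(\partial_t^2-\partial_r^2)\widetilde\psi=\widetilde g$. By a density argument, I may assume initial data and source are smooth; the general case then follows from $C^0([0,T],\HHH)$ continuity of the flow and of the trace of $(\partial_t+\partial_r)(ru)$ in the appropriate weak sense.

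\medskip

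\noindent\textbf{The homogeneous part.} Writing the homogeneous solution as $\psi^{\mathrm{hom}}(t,r)=F(r+t)+G(r-t)$, the relations $F(r)+G(r)=\psi_0(r)$ and $F'(r)-G'(r)=\psi_1(r)$ for $r\ge 1$ give $F'(r)=\tfrac12\bigl(\psi_0'(r)+\psi_1(r)\bigr)\ge 0$ by hypothesis \eqref{P_hyp3}, so $F$ is non-decreasing on $[1,\infty)$; the Dirichlet condition forces $G(\beta)=-F(2-\beta)$ for $\beta\le 1$. I would then split into two cases for $r>1$, $t\ge 0$:
\begin{itemize}
\item if $r-t\ge 1$, then $\psi^{\mathrm{hom}}(t,r)=\psi_0(r-t)+\int_{r-t}^{r+t}F'(s)\,ds\ge 0$;
\item if $r-t<1$, then necessarily $2-r+t>1$, so $\psi^{\mathrm{hom}}(t,r)=F(r+t)-F(2-r+t)=\int_{2-r+t}^{r+t}F'(s)\,ds\ge 0$.
\end{itemize}
Finally, applying $\partial_t+\partial_r$ to $F(r+t)+G(r-t)$ kills the $G$-term and leaves $2F'(r+t)\ge 0$.

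\medskip

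\noindent\textbf{The Duhamel part.} By d'Alembert, the source contribution is
\begin{equation*}
\psi^{\mathrm{src}}(t,r)=\tfrac12\int_0^t\!\!\int_{r-(t-s)}^{r+(t-s)}\widetilde g(s,\sigma)\,d\sigma\,ds.
\end{equation*}
For $r>1$ and $0\le s\le t$, the key observation is that odd symmetry of $\widetilde g$ about $\sigma=1$ causes the part of the integral over $\sigma<1$ to cancel an equal portion of the positive part, leaving the identity
\begin{equation*}
\int_{r-(t-s)}^{r+(t-s)}\widetilde g(s,\sigma)\,d\sigma=\int_{\max(1,\,2-r+(t-s))}^{r+(t-s)}(r f)(s,\sigma)\,d\sigma\ge 0,
\end{equation*}
so $\psi^{\mathrm{src}}\ge 0$, completing \eqref{P_conclu1}. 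For \eqref{P_conclu2}, a direct differentiation gives $(\partial_t+\partial_r)\psi^{\mathrm{src}}(t,r)=\int_0^t\widetilde g(s,r+(t-s))\,ds$, and since $r+(t-s)\ge r>1$ we have $\widetilde g=rf\ge 0$ on the relevant ray, yielding the desired nonnegativity.

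\medskip

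\noindent\textbf{Main obstacle.} The mechanism is classical and causes no real conceptual difficulty; what requires care is the book-keeping around the case distinction $r-t\gtrless 1$ (equivalently, whether the backward light cone from $(t,r)$ has reached the obstacle) and the bit of symbolic manipulation showing that the odd reflection turns the Dirichlet condition at $r=1$ into a clean cancellation of the negative contribution from $\sigma<1$. One should also note that for initial data in $\HHH$ and source in $L^1_tL^2_x$, the d'Alembert representation and the conclusions should be interpreted almost everywhere (and the positivity of $(\partial_t+\partial_r)\psi$ holds distributionally), with the smooth case handled first and then extended by approximation using the linear well-posedness in $\HHH$.
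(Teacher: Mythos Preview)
Your argument is correct and rests on the same underlying idea as the paper's proof: write $\psi=ru$, use that $\psi$ solves the one-dimensional wave equation on $(1,\infty)$ with Dirichlet condition at $r=1$, and exploit the characteristic structure. There is a harmless slip in your displayed identity for the Duhamel integral: when $r-(t-s)\ge 1$ the lower limit of integration should be $r-(t-s)$, not $\max(1,2-r+(t-s))=1$, but since $rf\ge 0$ the conclusion $\psi^{\mathrm{src}}\ge 0$ is unaffected.

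The paper's execution is considerably more economical. Rather than passing through the odd reflection and the explicit d'Alembert representation with its case analysis, it simply factors the operator as $(\partial_t-\partial_r)(\partial_t+\partial_r)(ru)=rf\ge 0$. Integrating once along the incoming characteristics $\{r+t=\mathrm{const}\}$ (which, traced backward in time from any $(t,r)$ with $r>1$, always land on $\{t=0\}$ without touching the boundary) gives \eqref{P_conclu2} directly from \eqref{P_hyp3}. Then integrating $(\partial_t+\partial_r)(ru)\ge 0$ along the outgoing characteristics $\{r-t=\mathrm{const}\}$, using $u_0\ge 0$ when the characteristic reaches $\{t=0\}$ and $u(t,1)=0$ when it reaches the boundary, yields \eqref{P_conclu1}. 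Your odd reflection and case split $r-t\gtrless 1$ are precisely what this second integration encodes, so the two proofs are the same in content; the paper just avoids writing down the d'Alembert formula and thereby sidesteps both the reflection bookkeeping and the approximation argument.
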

\begin{proof}
%
Since 
$$(\partial_t-\partial_r)(\partial_t+\partial_r)(ru)=rf\geq 0\text{ a.e. },$$
(here and in the sequel a.e. means for almost all $t\in (0,T)$, $r>1$) we obtain, using \eqref{P_hyp3}
$$(\partial_t+\partial_r)(ru)\geq 0\text{ a.e.}$$
Using that $u_0(r)\geq 0$ for all $r>1$ and that $u(t,1)=0$ for all $t\in [0,T]$, we deduce $u(t,r)\geq 0$ for all $t\in [0,T]$ and all $r>1$. This concludes the proof.
\end{proof}
\begin{corollary}
\label{CR:positive}
 Let $u$, $v$ be two solutions of \eqref{eq:NLW} with initial data $(u_0,u_1)$, $(v_0,v_1)$ respectively. Let $T>0$ such that $T<T_+(u)$ and $T<T_+(v)$. Assume
 \begin{gather}
 \label{C_positive1}
  u_0(r)-v_0(r)\geq 0\text{ for all }r>1\\
 \label{C_positive2}
 r(u_1-v_1)+\partial_r(ru_0-rv_0)\geq 0\text{ for a.a. }r>1.
 \end{gather}
 Then for all $t\in [0,T]$, 
\begin{gather*}
  u(t,r)-v(t,r)\geq 0\text{ for all }r>1\\
  (\partial_t+\partial_r)(r u-rv)\geq 0\text{ for a.a. }r>1.
 \end{gather*}
\end{corollary}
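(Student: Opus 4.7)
The plan is to reduce the corollary to Proposition \ref{P:positive} applied to the difference $w = u - v$, with the main difficulty being a bootstrap/circularity issue in verifying the sign of the source term. Set $w = u - v$; then $w$ solves the linear wave equation
\begin{equation*}
(\partial_t^2 - \Delta_D)w = u^{2m+1} - v^{2m+1}, \quad w_{\restriction \partial\Omega} = 0,
\end{equation*}
with initial data $(u_0 - v_0, u_1 - v_1)$ that satisfies \eqref{P_hyp2} and \eqref{P_hyp3} exactly by the hypotheses \eqref{C_positive1}, \eqref{C_positive2}. The desired conclusions are precisely \eqref{P_conclu1} and \eqref{P_conclu2} for $w$. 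The only obstruction to a direct application of Proposition \ref{P:positive} is verifying \eqref{P_hyp1}, i.e. $u^{2m+1} - v^{2m+1} \geq 0$. Since $x \mapsto x^{2m+1}$ is strictly increasing this is equivalent to $w \geq 0$, which is what we are trying to prove.

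To unwind the circularity, I would rewrite the source as $u^{2m+1} - v^{2m+1} = V w$, where
\begin{equation*}
V(t, r) = (2m+1)\int_0^1 (v + s w)^{2m}\, ds \geq 0.
\end{equation*}
By the radial Sobolev embedding $\dot H^1_0(\Omega) \hookrightarrow L^\infty(\Omega)$ (using that $r \geq 1$) and continuity of the NLW flow on $[0,T]$, both $u$ and $v$ are bounded on $[0,T]\times \Omega$, so $V \in L^\infty([0,T]\times\Omega)$. Then I would view $w$ as the solution of the linear wave equation $(\partial_t^2 - \Delta_D)w = V w$ and solve it by Picard iteration: define $w^{(0)}(t) = S_0(t)(u_0-v_0,u_1-v_1)$ (the free linear flow) and
\begin{equation*}
w^{(n+1)}(t) = S_0(t)(u_0-v_0,u_1-v_1) + \int_0^t \frac{\sin((t-s)\sqrt{-\Delta_D})}{\sqrt{-\Delta_D}} \bigl(V w^{(n)}\bigr)(s)\, ds.
\end{equation*}
On a sufficiently small time interval $[0, T_*]$ (depending only on $\|V\|_{L^\infty}$), this iteration converges in $C([0,T_*], \HHH)$ to $w$ by standard contraction arguments. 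By induction using Proposition \ref{P:positive}, each $w^{(n)}$ satisfies $w^{(n)} \geq 0$ and $(\partial_t + \partial_r)(rw^{(n)}) \geq 0$: the base case is immediate since the source vanishes, and at step $n+1$ the hypothesis \eqref{P_hyp1} holds because $V w^{(n)} \geq 0$ by the induction hypothesis on $w^{(n)}$ and the positivity of $V$.

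Passing to the limit is the most delicate step. Convergence in $\HHH$ combined with radial Sobolev yields uniform convergence of $w^{(n)}$ on compact subsets of $[0,T_*]\times\Omega$, preserving $w \geq 0$ everywhere. The quantity $(\partial_t + \partial_r)(rw^{(n)})$ converges in $L^2_{\mathrm{loc}}([0,T_*]\times(1,\infty))$, hence a.e. along a subsequence, preserving $(\partial_t + \partial_r)(rw) \geq 0$ almost everywhere. To cover the full interval $[0,T]$, I would iterate the same construction on consecutive subintervals $[kT_*, (k+1)T_*]$: at each step the previously obtained sign information on $w$ and $(\partial_t+\partial_r)(rw)$ at the new starting time supplies the initial data hypotheses \eqref{P_hyp2}, \eqref{P_hyp3} for the next stage. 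The main technical obstacle is ensuring that both positivity properties survive the limiting procedure simultaneously, which is handled by the $\HHH$-convergence plus the pointwise a.e. extraction for the weaker quantity $(\partial_t + \partial_r)(rw)$.
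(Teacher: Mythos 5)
Your proposal is correct, and it implements the comparison by a somewhat different decomposition than the paper. The paper runs the \emph{nonlinear} Picard iteration for $u$ and $v$ separately ($U_{n+1}$, $V_{n+1}$ solving the free wave equation with sources $U_n^{2m+1}$, $V_n^{2m+1}$ and data $(u_0,u_1)$, $(v_0,v_1)$), propagates the two sign conditions for the difference $U_n-V_n$ inductively via Proposition \ref{P:positive} — positivity of the source $U_n^{2m+1}-V_n^{2m+1}$ coming from the induction hypothesis $U_n\ge V_n$ and the monotonicity of $x\mapsto x^{2m+1}$ — and then passes to the limit using Strichartz-based convergence of $\vvec{U_n},\vvec{V_n}$ in $C^0([0,T],\HHH)$, with a time-stepping argument just as in your last paragraph. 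You instead freeze the nonnegative potential $V=(2m+1)\int_0^1\bigl((1-s)v+su\bigr)^{2m}ds$ built from the two \emph{given} solutions and iterate the linear equation $(\partial_t^2-\Delta_D)w=Vw$ for the difference, the sign of the source at each step coming from $V\ge 0$ (even power) and the inductive sign of $w^{(n)}$; this buys a convergence step that needs only the energy estimate for the linear wave equation rather than the nonlinear local theory, at the mild cost of checking that $w=u-v$ is the unique fixed point of your affine contraction (which it is, since $u^{2m+1}-v^{2m+1}=Vw$). Two details should be tightened, though neither is a gap: to close the contraction in $C^0([0,T_*],\HHH)$ and to have $Vw^{(n)}\in L^1((0,T_*),L^2)$ as required by Proposition \ref{P:positive}, the bound $\|V\|_{L^\infty}$ alone does not suffice, because $\HHH$ does not control the $L^2$ norm of the first component; one should use $\sup_t\|V(t)\|_{L^3(\Omega)}<\infty$ (immediate from the radial bound $|u|+|v|\lesssim r^{-1/2}$, so that $V\lesssim r^{-m}$ with $m>2$) together with $\|w\|_{L^6}\lesssim\|w\|_{\dot H^1_0}$. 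Second, to restart on $[T_*,2T_*]$ you need $(\partial_t+\partial_r)(rw)(T_*)\ge 0$ a.e.\ in $r$ at the exact time $T_*$, not merely a.e.\ in $(t,r)$; this follows from the continuity of $t\mapsto r\partial_t w+\partial_r(rw)$ with values in $L^2_{\loc}$ (or by picking $T_*$ among the good times), and the same remark upgrades your space-time a.e.\ conclusion to the statement "for all $t$, for a.a.\ $r$" claimed in the corollary.
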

\begin{proof}
It is sufficient to prove the result for small $T>0$. An iteration procedure yields the conclusion for general $T$.

If $T$ is small enough, the local well-posedness theory proves that $u$ is the limit, in $L^{2m+1}([0,T],L^{2(2m+1)})$ of the sequence $U_n$ defined by $U_0(t,r)=0$ and
\begin{equation}
 \left\{ 
 \begin{aligned}
 \partial_t^2U_{n+1}-\Delta_D U_{n+1}&=U_n^{2m+1}\\
 \vec{U}_{n+1}&=(u_0,u_1).
\end{aligned}
 \right.
 \end{equation}
Using Strichartz estimates, one sees that $\vvec{u}$ is the limit of $\vvec{U_n}$ in $C^0([0,T],\HHH)$. 
Defining similarly the sequence $V_n$ for the solution $v$, we deduce by induction, from Proposition \ref{P:positive} that 
$$ U_n(t,r)\geq V_n(t,r)\text{ and } (\partial_t+\partial_r)(rU_n-rV_n)\geq 0$$
for almost all $(t,r)\in [0,T]\times (1,\infty)$. Passing to the limit we obtain the desired inequalities.
\end{proof}
\begin{corollary}
\label{CR:Y0}
 Let $u$ be a solution of \eqref{eq:NLW} with initial data $(u_0,u_1)$. Let $k\geq 0$, $-e_0^2<\ldots <-e_{k}^2$, ($e_j>0$) be the $k+1$ eigenvalues of the linearized operator $L_k=-\Delta  -(2m+1)Q_k^{2m}$. Let $Y_0,\ldots Y_k$ be the corresponding eigenvectors, which we choose so that $\|Y_j\|_{L^2}=1$ for all $j$, and there exists $R>1$ (depending only on $k$) such that 
 $$\forall j\in \{0,\ldots, k\},\quad \forall r\geq R,\quad Y_j(r)>0.$$
 Denote by $\mathcal{Y}_j^+=(Y_j,e_jY_j)$. Let $(\omega_0,\ldots,\omega_k)\in \R^{k+1}$ such that
 \begin{equation}
 \label{assump_positivity}
 \forall j\in \{1,\ldots,k\}, \quad \omega_j\geq 0,\quad \text{and}\quad  C_k\sum_{j=1}^k \omega_j< \omega_0,
 \end{equation} 
 for some large constant $C_k$.
 Let
 $(\phi_0,\phi_1)=\sum_{j=0}^k \omega_j \YYY_j^+$, and 
 $v$ be the solution of \eqref{eq:NLW} with initial data $(u_0,u_1)+(\phi_0,\phi_1)$. Then
 $$0\leq t<\min\left(T_{\max}(u),T_{\max}(v)\right)\Longrightarrow v(t,r)\geq u(t,r).$$
\end{corollary}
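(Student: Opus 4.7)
The plan is to invoke Corollary \ref{CR:positive} applied to the pair $(v,u)$ (in that order) on any interval $[0,T]$ with $T<\min(T_{\max}(u),T_{\max}(v))$. With $\phi_0:=v_0-u_0=\sum_{j=0}^{k}\omega_jY_j$ and $\phi_1:=v_1-u_1=\sum_{j=0}^{k}\omega_je_jY_j$, the hypotheses of that corollary translate to the two pointwise inequalities
\[\phi_0(r)\geq 0,\qquad r\phi_1(r)+\partial_r\bigl(r\phi_0(r)\bigr)=\sum_{j=0}^{k}\omega_jZ_j(r)\geq 0,\quad r>1,\]
where I write $\psi_j:=rY_j$ and $Z_j:=(\partial_r+e_j)\psi_j=re_jY_j+\partial_r(rY_j)$. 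I will deduce both inequalities from:
\begin{itemize}
\item[(a)] $Y_0>0$ and $Z_0>0$ on $(1,+\infty)$;
\item[(b)] $M_k:=\max_{1\leq j\leq k}\max\bigl(\|Y_j/Y_0\|_{L^\infty((1,\infty))},\,\|Z_j/Z_0\|_{L^\infty((1,\infty))}\bigr)<\infty$.
\end{itemize}
Granted (a) and (b), taking $C_k>M_k$ together with $\omega_0>C_k\sum_{j=1}^{k}\omega_j$ yields $\phi_0\geq(\omega_0-M_k\sum_j\omega_j)Y_0\geq 0$ and analogously $\sum_j\omega_jZ_j\geq(\omega_0-M_k\sum_j\omega_j)Z_0\geq 0$, so Corollary \ref{CR:positive} produces $v\geq u$.

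Item (a) for $Y_0$ is the standard ground-state positivity: since $-e_0^2$ is the lowest (and simple) eigenvalue of $L_k$ on radial Dirichlet functions, $\psi_0=rY_0$ does not vanish in $(1,\infty)$, and the normalisation $Y_0(r)>0$ for $r\geq R$ forces $Y_0>0$ on $(1,\infty)$, with $\psi_0'(1)>0$ (nonzero by ODE uniqueness). For $Z_0$, I exploit the factorisation inherent in \eqref{EV}: from $\psi_0''-e_0^2\psi_0=-(2m+1)Q_k^{2m}\psi_0$ one reads $(\partial_r-e_0)Z_0=-(2m+1)Q_k^{2m}\psi_0\leq 0$, i.e. $\tfrac{d}{dr}(e^{-e_0r}Z_0)\leq 0$. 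Lemma \ref{kdlre} forces $e^{-e_0r}Z_0(r)\to 0$ as $r\to\infty$, so integrating from $r$ to $\infty$ gives the strictly positive representation
\[Z_0(r)=e^{e_0r}\int_r^{\infty}(2m+1)Q_k^{2m}(s)\,\psi_0(s)\,e^{-e_0s}\,ds>0,\qquad r\geq 1.\]

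For item (b), the ratios $\psi_j/\psi_0$ and $Z_j/Z_0$ are continuous on $(1,\infty)$ by (a), so I only need to control them at the two endpoints. Near $r=1$, $\psi_j(1)=\psi_0(1)=0$ but $\psi_j'(1),\psi_0'(1)\neq 0$, so $\psi_j/\psi_0\to\psi_j'(1)/\psi_0'(1)$ by l'H\^opital and $Z_j(1)/Z_0(1)=\psi_j'(1)/\psi_0'(1)$ directly. At infinity, Lemma \ref{kdlre} combined with $c_j,c_0>0$ (forced by $Y_0,Y_j>0$ for $r\geq R$) gives $\psi_j/\psi_0=(c_j/c_0)e^{-(e_j-e_0)r}\bigl(1+o(1)\bigr)\to 0$ since $e_j>e_0$; and the analogous integral representation $Z_j(r)=e^{e_jr}\int_r^\infty(2m+1)Q_k^{2m}(s)\psi_j(s)e^{-e_js}\,ds$ (valid for all $r\geq 1$) together with $Q_k(s)\sim\ell_k/s$ and Laplace's method yields $Z_j(r)\sim\alpha_jr^{-2m}e^{-e_jr}$ as $r\to\infty$ with $\alpha_j>0$; the corresponding asymptotic $Z_0(r)\sim\alpha_0r^{-2m}e^{-e_0r}$ then gives $Z_j/Z_0=O(e^{-(e_j-e_0)r})\to 0$.

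The main technical point is the factorisation-plus-integration argument that gives the positivity of $Z_0$ in (a) and, via the same representation, the decay of $Z_j/Z_0$ at infinity in (b); everything else is either continuity on compact subsets of $(1,\infty)$ or the direct invocation of the comparison principle.
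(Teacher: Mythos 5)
Your proposal is correct and follows essentially the paper's proof: both reduce the statement to Corollary \ref{CR:positive} and obtain the key positivity $(\partial_r+e_0)(rY_0)>0$ from the same first-order factorization $(\partial_r-e_0)(\partial_r+e_0)(rY_0)=-r(2m+1)Q_k^{2m}Y_0$ together with the vanishing of $e^{-e_0r}(\partial_r+e_0)(rY_0)$ at infinity. The only (harmless) deviation is in the region $r\geq R$: you prove global bounds on $|Y_j|/Y_0$ and $|(\partial_r+e_j)(rY_j)|/\bigl((\partial_r+e_0)(rY_0)\bigr)$ via the asymptotics of Lemma \ref{kdlre} and of $Q_k$, whereas the paper avoids any asymptotic analysis there by observing that for $r>R$ each term $\omega_jY_j$ and $\omega_j(\partial_r+e_j)(rY_j)$ is individually nonnegative, so that the ground-state domination with the constant $C_k$ is only needed on the compact interval $[1,R]$.
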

\begin{remark}
 \label{R:Y0}
 Letting $\omega_1=\ldots=\omega_k=0$ in the corollary, we see that if $u^{\alpha}$ is the solution of \eqref{eq:NLW} with initial data $(u_0,u_1)+\alpha \YYY_0^+$, then 
 $$\forall t\geq 0,\quad u^{\alpha}(t)\begin{cases} 
 \geq u(t) &\text{ if }\alpha>0\\
 \leq u(t) &\text{ if }\alpha<0,
                \end{cases}
$$ 
where we have applied the corollary to the solution $-(u_0,u_1)-\alpha\YYY_0^+$ to obtain the second line.
\end{remark}

\begin{proof}
 Since $-e_0^2$ is the first eigenvalue of $L_k$, we have $Y_0(r)>0$ for $r>1$. We claim 
\begin{equation}
 \label{Sufficient_to_check}
\forall r\geq 1,\quad 
 (\partial_r+e_0)(rY_0)> 0.
\end{equation} 
 Indeed, we have 
 $$(\partial_r-e_0)(\partial_r+e_0)(rY_0)=-r(2m+1)Q^{2m}Y_0<0,\quad {\rm a.e.}$$
which we can rewrite
$$ \frac{d}{dr}\Big(e^{-e_0r}(\partial_r+e_0) (rY_0)\Big)<0, \quad {\rm a.e}.$$
Since 
$$ \lim_{r\to\infty}e^{-e_0r}(\partial_r+e_0)(rY_0)=0,$$
we deduce \eqref{Sufficient_to_check}. 
By the same proof, using that $Y_j(r)>0$ for $r>R$, we obtain
\begin{equation}
 \label{positivity_RYj}
 \forall r> R,\quad (\partial_r+e_j)(rY_j(r))>0.
\end{equation}
To use Corollary \ref{CR:positive}, we need to show that 
\begin{gather}
\label{cond.positivity1}
\forall r\geq 1,\quad \phi_0=\sum_{j=0}^k \omega_jY_j\geq 0.
\\
\label{cond.positivity2}
\forall r\geq 1,\quad \partial_r (r\phi_0)+r\phi_1=\sum_{j=0}^k \omega_j(\partial_r+e_j)(rY_j)\geq 0.
\end{gather}
By the choice of $R$, \eqref{Sufficient_to_check} and \eqref{positivity_RYj}, we see that \eqref{cond.positivity1} and \eqref{cond.positivity2} hold for $r>R$.

Using that $Y_0(r)>0$, for $r>1$, and that $Y_0'(1)>0$, we obtain that there exists $C>0$ such that for all $r\in [1,R]$, $CY_0(r)\geq \sum_{j=1}^k \left|Y_j(r)\right|$. Combining with \eqref{assump_positivity}, we obtain that \eqref{cond.positivity1} holds for $1\leq r\leq R$. 

Similarly, using \eqref{Sufficient_to_check}, we see (taking a larger $C>0$ if necessary), that
$$ C(\partial_r+e_0)(rY_0)\geq \sum_{j=1}^k |(\partial_r +e_j)(rY_j)|.$$
for $1\leq r\leq R$. Using assumption \eqref{assump_positivity} again, we obtain that \eqref{cond.positivity2} holds also for $1\leq r\leq R$, which concludes the proof.
 \end{proof}
\subsection{Inequalities between stationary solutions}
\begin{lemma}
\label{L:InegStat}
 Let $k,j$ be integers with $0\leq j\leq k$. Then
 \begin{align}
  \label{InegStat2}
  \exists a_1>1, \quad Q_k(a_1)&>-Q_j(a_1).\\
\intertext{If furthermore $(j,k)\neq (0,0)$,}
 \label{InegStat4}
 \exists a_2>1, \quad -Q_k(a_2)&>Q_j(a_2).\\
  \intertext{Finally, if $0\leq j<k$,}
 \label{InegStat1}
  \exists a_3>1, \quad Q_k(a_3)&>Q_j(a_3)\\
  \label{InegStat3}
  \exists a_4>1, \quad -Q_k(a_4)&>-Q_j(a_4).
 \end{align}
\end{lemma}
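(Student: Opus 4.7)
My plan is to locate each point $a_i$ using either the asymptotic behaviour at infinity or the local behaviour near a zero of $Q_k$, $Q_j$. By Proposition \ref{P:DKM-stationary} and the definition $Q_k(r)=r_k^{1/m}Z(r_kr)$, one has $rQ_k(r)\to\ell_k=r_k^{1/m-1}>0$ at infinity; since $r_k\to 0$ and $1/m-1<0$, the sequence $(\ell_k)_k$ is strictly increasing. For \eqref{InegStat2} and \eqref{InegStat1} I would take $a_1,a_3$ large, using $a_1(Q_k+Q_j)(a_1)\to\ell_k+\ell_j>0$ and, when $j<k$, $a_3(Q_k-Q_j)(a_3)\to\ell_k-\ell_j>0$. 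For \eqref{InegStat3} I would take $a_4=r_0/r_k-\eps$ with $\eps>0$ small: just before the largest zero $r_0/r_k$ of $Q_k$ one has $Q_k(a_4)<0$; and since $r_j>r_k$ gives $r_0/r_j<r_0/r_k$, for $\eps$ small enough $a_4>r_0/r_j$, hence $Q_j(a_4)>0$, yielding $-Q_k(a_4)>0>-Q_j(a_4)$.

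For \eqref{InegStat4} the diagonal case $j=k\geq 1$ is immediate since $Q_k$ changes sign on $(1,\infty)$, so there is $a_2$ with $Q_k(a_2)<0$, giving $-Q_k(a_2)>0>Q_k(a_2)=Q_j(a_2)$. For the off-diagonal case $0\leq j<k$, I plan a contradiction argument. Set $w:=Q_k+Q_j$ and suppose $w\geq 0$ on $(1,\infty)$. On the set $\{Q_k<0\}$ one has $Q_j\geq |Q_k|$, hence $Q_j^{2m+1}\geq |Q_k|^{2m+1}$; elsewhere the sign is automatic. Therefore $-\Delta w=Q_k^{2m+1}+Q_j^{2m+1}\geq 0$, so $w$ is superharmonic, nonnegative, vanishes on $\partial\Omega$, and satisfies $rw\to\ell_k+\ell_j>0$ at infinity. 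The strong maximum principle gives $w>0$ in the interior, and Hopf's boundary point lemma at $r=1$ forces $Q_k'(1)+Q_j'(1)>0$. Since $Q_k'(1)=(-1)^kr_k^{1+1/m}|Z'(r_k)|$, this is already a contradiction when $j$ and $k$ are both odd.

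The main obstacle is the off-diagonal case of \eqref{InegStat4} when $j,k$ are not both odd (the most delicate instance being $j=0$ with $k\geq 2$ even, where $Q_0>0$ everywhere and both $Q_0'(1),Q_k'(1)>0$). Here the Hopf contradiction does not close, and I plan to exhibit $a_2$ directly at the negative minimum $r_\ast=s_\ast/r_k$ of $Q_k$ in $(r_1/r_k,r_0/r_k)$, where $s_\ast\in(r_1,r_0)$ is the minimum of $Z$. One has $|Q_k(r_\ast)|=r_k^{1/m}|Z(s_\ast)|$ and $Q_j(r_\ast)=r_j^{1/m}Z(r_js_\ast/r_k)$, the latter controlled from above by the decay $Z(s)\sim 1/s$ at infinity since $r_js_\ast/r_k>r_0$ for $j<k$. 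Combined with the monotonicity of the Hamiltonian $H(r)=\frac{1}{2}(u')^2+u^{2m+2}/((2m+2)r^{2m})$ associated to $u=rZ$ (which forces $|r_iZ'(r_i)|$ to be strictly increasing in $i$ and yields a lower bound on $|u(s_\ast)|$ in terms of $|r_0Z'(r_0)|$), this comparison should give $|Q_k(r_\ast)|>Q_j(r_\ast)$ at least when $k-j$ is large; the finite list of remaining small-$k$ cases would require quantitative ODE estimates specific to the profile $Z$.
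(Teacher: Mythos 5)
Your arguments for \eqref{InegStat2}, \eqref{InegStat1}, \eqref{InegStat3} and for the diagonal case $j=k\geq1$ of \eqref{InegStat4} are correct (and essentially the same as the paper's, with the harmless variant $a_4=r_0/r_k-\eps$ instead of $a_4=r_0/r_k$). The genuine gap is the off-diagonal case $0\leq j<k$ of \eqref{InegStat4}, which you yourself acknowledge you cannot close. Your superharmonicity/Hopf route assumes $Q_k+Q_j\geq 0$ everywhere, concludes via Hopf that $Q_k'(1)+Q_j'(1)>0$, and this is only a contradiction when both $j,k$ are odd (so that both derivatives at $1$ are negative); for, say, $j=0$, $k=2$ both derivatives are positive and there is no contradiction. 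Your fallback via Hamiltonian monotonicity and comparison at the first negative minimum of $Q_k$ is explicitly left unfinished ("at least when $k-j$ is large; the finite list of remaining small-$k$ cases would require quantitative ODE estimates"). That is not a proof.

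The paper closes this case with a different, uniform argument that you should be aware of: a Sturm--Liouville/Wronskian comparison on the first negativity interval of $Q_k$. Take consecutive zeros $a<b$ of $Q_k$ with $Q_k<0$ on $(a,b)$, suppose for contradiction that $-Q_k\leq Q_j$ on $[a,b]$, and set $w=(rQ_k)'\,rQ_j-(rQ_j)'\,rQ_k$. One computes $w(a)\leq 0$, $w(b)\geq 0$, while $w'=-r^2Q_kQ_j(Q_k^{2m}-Q_j^{2m})\leq 0$ on $(a,b)$ (with strict inequality off a finite set), since there $Q_kQ_j\leq 0$ and $|Q_k|\leq Q_j$ gives $Q_k^{2m}-Q_j^{2m}\leq 0$. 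These three facts are incompatible. This avoids any sign-splitting by parity and any quantitative estimates on $Z$, and works for all $0\leq j<k$ at once. I would recommend replacing your superharmonicity/Hamiltonian attempt by this Wronskian argument.
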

\begin{proof}
Recall that for $\ell>0$, $Q_{\ell}(r)=r_{\ell}^{\frac{1}{m}}Z(r_{\ell} r)$, where $Z$ is a radial solution of $-\Delta Z = Z^{2m+1}$ on $\R^3\setminus \{0\}$, and $(r_j)_{j\geq 0}$ is the decreasing sequence of zeros of $Z$, that satisfies $\displaystyle\lim_{j\to+\infty}r_j=0$ (see Proposition \ref{P:DKM-stationary}).

As a consequence
$$\lim_{r\to\infty} rQ_{\ell}(r)= r_{\ell}^{\frac{1}{m}-1}.$$
The properties \eqref{InegStat2} (by sign consideration) and \eqref{InegStat1} (since $r_k<r_{j})$ follow immediately, taking $a_1$ and $a_3$ large enough.

Next, we observe that $-Q_k(r_0/r_k)=-r_{k}^{\frac{1}{m}}Z(r_0)=0$, and that 
$$-Q_j(r_0/r_k)=-r_j^{\frac{1}{m}}Z(r_0 r_j/r_k)<0,$$
where we have used that $r_0 r_j/r_k>r_0$, and that $Z$ is positive on $(r_0,\infty)$. Thus \eqref{InegStat3} follows with $a_4=r_0/r_k$.

If $j=k>0$, then \eqref{InegStat4} follows since $Q_j=Q_k$ changes sign on $(1,\infty)$. 

If $0\leq j<k$, since $k\geq 1$, there exists $a,b$ with $1\leq a<b$, $Q_k(a)=Q_k(b)=0$ and for all $r$ in $(a,b)$, $Q_k(r)<0$. We prove by  Liouville theory that there exists $a_2\in (a,b)$ such that \eqref{InegStat4} holds. Indeed, if not,
\begin{equation}
\label{sign_absurd}
\forall r\in [a,b], \quad 0\leq -Q_k(r)\leq Q_j(r),
\end{equation} 
and the inequalities in \eqref{sign_absurd} are strict except for a finite set of $r$ by standard ordinary differential equations theory. Consider the Wronskian
$$ w=\frac{d}{dr} \left( rQ_k\right)\,rQ_j-\frac{d}{dr}\left( rQ_j \right)\,rQ_k.$$
Then $w(a)=a^2\frac{d}{dr}\left( Q_k \right)(a) Q_j(a)\leq 0$, $w(b)=b^2\frac{d}{dr}\left( Q_k \right)(b) Q_j(b)\geq 0$ and
$$w'(r)=-r^2 Q_kQ_j\left( Q_k^{2m}-Q_j^{2m} \right)\leq 0,$$
with a strict inequality except in a finite set of points. 
This is a contradiction.
\end{proof}

\subsection{Instability of the ground state}
In this subsection, we prove Theorem \ref{T:M0} and Proposition \ref{P:positive_blowup}. We start by two preliminary lemmas. As above, we denote by $Y_0$ a positive eigenfunction associated to the (unique) eigenvalue $-e_0^2$ of the linearized operator $L_0=-\Delta-(2m+1)Q_0^{2m}$, and $\YYY_0^+=(Y_0,e_0Y_0)$ (to lighten notation, we will forget about the normalization constants in the definitions of the eigenmodes $\YYY_j^{\pm}$, since they are irrelevant in this section). 
\begin{lemma}
 \label{L:Y0}
 Let $\alpha\in \R$ and $u^{\alpha}$ be the solution of \eqref{eq:NLW} with initial data 
 $$(Q_0,0)+\alpha \YYY_0^+.$$
 Assume $|\alpha|$ is small. Then if $\alpha>0$, $u^{\alpha}$ blows up in finite positive time (with a positive blow-up). If $\alpha<0$ and $|\alpha|$ is small, $u^{\alpha}$ scatters to $0$ in the future. 
\end{lemma}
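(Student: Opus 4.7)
The plan is to split on the sign of $\alpha$ and exploit in each case the one-sided pointwise bound provided by Remark~\ref{R:Y0}. For $\alpha>0$ the comparison yields $u^\alpha \geq Q_0 \geq 0$ on $[0, T_+(u^\alpha))\times\Omega$, and I would reduce to an ODE blow-up as follows: setting $h := u^\alpha - Q_0 \geq 0$, one has
\begin{equation*}
\partial_t^2 h + L_0 h = R_{Q_0}(h) := (Q_0+h)^{2m+1} - Q_0^{2m+1} - (2m+1)Q_0^{2m}h \geq h^{2m+1}
\end{equation*}
(last inequality by binomial expansion, since $Q_0, h \geq 0$). Testing against the positive ground eigenfunction $Y_0$ of $L_0$ (in $L^1(\Omega)$ by the exponential decay of Lemma~\ref{kdlre}) and setting $g(t) := \int_\Omega h(t)Y_0\,dx$, the relation $L_0 Y_0 = -e_0^2 Y_0$, integration by parts, and Jensen's inequality for the probability measure $Y_0\,dx/\|Y_0\|_{L^1}$ yield
\begin{equation*}
\ddot g(t) \geq e_0^2\, g(t) + c\,g(t)^{2m+1}, \qquad g(0), \dot g(0) > 0.
\end{equation*}
A standard ODE argument (multiply by $\dot g$ and integrate) then produces finite-time blow-up of $g$, and since $g \leq \|Y_0\|_{L^1} \|h\|_{L^\infty}$ this triggers the criterion \eqref{BupCriterion}. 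Positivity of the blow-up follows from $u^\alpha \geq Q_0 \geq 0$.

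For $\alpha<0$ small, Theorem~\ref{T:cs} and Remark~\ref{R:tangent_space} give that $\MMM_0^+$ is a codimension-$1$ submanifold of $\HHH$ near $(Q_0, 0)$ with tangent space $\HHH^{cs}$, transverse to $\mathrm{span}\{\YYY_0^+\} \subset \HHH^u$; hence $(Q_0, 0) + \alpha\YYY_0^+ \notin \MMM_0^+$, so $u^\alpha$ does not scatter to $Q_0$. I would then invoke the no-return Proposition~\ref{P:P40} with $U \equiv Q_0$, $t_0 = 0$, $V_L \equiv 0$ (hypotheses \eqref{P31}--\eqref{P33} are trivially satisfied, and \eqref{P40} holds for $|\alpha|$ small): since the initial data is not in $\NNN_{\delta} \subset \MMM_0^+$, the alternative \eqref{P42} must occur, and the conclusion is that $u^\alpha$ either blows up in finite positive time, or scatters to some $\pm Q_j$ with $j < k = 0$ (vacuous), or scatters to $0$. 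The upper bound $u^\alpha \leq Q_0$ from Remark~\ref{R:Y0} rules out positive blow-up.

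The main obstacle will be excluding \emph{negative} blow-up for $\alpha<0$. I expect to achieve this by bounding $\|u^\alpha(t)\|_{L^\infty}$ uniformly on $[0, T_+(u^\alpha))$: the positive part is controlled by $\|Q_0\|_{L^\infty}$ thanks to $u^\alpha \leq Q_0$, and to control the negative part one starts from the first exit time $t_\alpha^* \sim \tfrac{1}{e_0}\log(\delta/|\alpha|)$ at which $\vec u^\alpha$ leaves a $\delta$-neighborhood of $(Q_0, 0)$ and runs a bootstrap using the energy identity $E(u^\alpha, \partial_t u^\alpha) = E(Q_0, 0) + O(\alpha^3)$, the radial Sobolev embedding \eqref{rad_Sob}, and the mixed Strichartz/weighted energy estimates of Proposition~\ref{P:mixed} (via Remark~\ref{R:mixed_free}), in the spirit of Step~3 of the proof of Proposition~\ref{P:P40}. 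Once a uniform $L^\infty$ bound is in hand, \eqref{BupCriterion} forbids blow-up, and the classification of the previous paragraph leaves only scattering to $0$, completing the proof.
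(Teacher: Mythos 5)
Your argument for $\alpha>0$ is correct, but it takes a genuinely different route from the paper's. You test the equation for $h=u^\alpha-Q_0\geq 0$ against $Y_0$ and apply Jensen's inequality to derive the differential inequality $\ddot g\geq e_0^2 g+c\,g^{2m+1}$ with $g(0)=\alpha\|Y_0\|_{L^2}^2>0$ and $\dot g(0)=\alpha e_0\|Y_0\|_{L^2}^2>0$, then run a standard ODE blow-up argument. The paper instead combines the energy identity $E(\vec u^\alpha(0))=E(Q_0,0)+O(\alpha^3)$ with the classification provided by Theorem \ref{T:DY19} (and Proposition \ref{P:intersection}) to get $\vec u^\alpha(0)\in(-\MMM_0^+)\cup\SSS^+\cup\FFF^+$, then rules out the first two alternatives by the comparison $u^\alpha(t)\geq Q_0>0$ from Remark \ref{R:Y0}. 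Both work; yours is arguably more elementary since it avoids the resolution theorem for this half, though it still rests on the comparison principle.

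The $\alpha<0$ case has a genuine gap. You correctly reduce the problem to excluding negative blow-up, but the bootstrap you sketch cannot produce a uniform $L^\infty$ bound. The energy identity $E(u^\alpha)=E(Q_0)+O(\alpha^3)$ is \emph{not} coercive (the focusing term has the wrong sign), so it does not control $\|u^\alpha\|_{\dot H^1}$, let alone $\|u^\alpha\|_{L^\infty}$; and invoking the global-in-time Strichartz/local-energy estimates of Proposition \ref{P:mixed} on $[0,T_+(u^\alpha))$ would presuppose exactly the a priori boundedness you are trying to establish. Step 3 of the proof of Proposition \ref{P:P40} is about energy flux escaping to spatial infinity through the light cone, not pointwise bounds, so the proposed analogy does not transfer.

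What actually closes the gap in the paper is a \emph{second} application of the positivity of the flow. Using $(\partial_r+e_0)(rY_0)>0$ (see \eqref{Sufficient_to_check}), the exponential decay of $Y_0$ versus the algebraic decay $\partial_r(rQ_0)\sim c\,r^{1-2m}$, and $Q_0'(1),Y_0'(1)>0$, one finds $\alpha_0>0$ with $Q_0\geq\alpha_0 Y_0$ and $\partial_r(rQ_0)\geq\alpha_0(\partial_r+e_0)(rY_0)$ on $(1,\infty)$. For $-\alpha_0\leq\alpha\leq 0$, the data $(Q_0+\alpha Y_0,\,\alpha e_0 Y_0)$ then satisfies the hypotheses of Corollary \ref{CR:positive} with $v\equiv 0$ as comparison solution, giving $u^\alpha(t)\geq 0$ for all $t\geq 0$. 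Together with $u^\alpha\leq Q_0$ this yields $0\leq u^\alpha\leq Q_0$, a uniform $L^\infty$ bound that rules out blow-up via \eqref{BupCriterion}, and then your classification via Proposition \ref{P:P40} (or the paper's via Theorem \ref{T:DY19}) forces scattering to $0$. You should replace the proposed bootstrap with this positivity argument.
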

\begin{proof}
For small $\alpha$, we have
$$E(\vec{u}^{\alpha}(0))=E(Q_0,0)+\OOO(\alpha^3),$$
where $\vvec{u_\alpha}=(u^\alpha,\partial_{t} u^\alpha)$.
By Theorem \ref{T:DY19}, using that $E(Q_0,0)<E(Q_k,0)$ for all $k\geq 1$, we obtain that for small $\alpha$
$$ \vec{u}^{\alpha}(0)\in \MMM_0^+\cup (-\MMM_0^+) \cup \SSS^+\cup \FFF^+.$$
 By Proposition \ref{P:intersection} the tangent space of $\MMM_0^+$ at $(Q_0,0)$ is the center stable space $\HHH^{cs}=\HHH^c\oplus \HHH^s$. Since $\YYY_0^+$ belongs to the unstable space $\HHH^u$, we obtain that for small $\alpha$, $\vec{u}^{\alpha}(0)\notin \MMM_0^+$. Thus
 \begin{equation}
  \label{alternative}
  \vec{u}^{\alpha}(0)\in (-\MMM_0^+) \cup \SSS^+\cup \FFF^+. 
 \end{equation} 
 
 \medskip
 
 \noindent\emph{Case 1: $\alpha>0$.}
 By Remark \ref{R:Y0}, for small positive $\alpha$, $u^{\alpha}(t)\geq Q_0$ for all $t\geq 0$ in the domain of definition of $u^{\alpha}$. By \eqref{alternative}, we deduce that $u^{\alpha}$ blows up in finite positive times, with a positive blow-up. 
 
 By Remark \ref{R:Y0},
 $$ \alpha>\beta\Longrightarrow \forall t\geq 0,\quad u^{\alpha}(t)\geq u^{\beta}(t).$$
 Thus for any $\alpha>0$ (without smallness assumption) $u^{\alpha}$ blows up in finite positive times. 
 
 \medskip
 \noindent\emph{Case 2: $\alpha<0$.}
 By Remark \ref{R:Y0}, for all $t\geq 0$ in the domain of definition of $u^{\alpha}$,
 \begin{equation}
 \label{bound_above}
  u^{\alpha}(t)\leq Q_0
 \end{equation} 
 Since $Q_0(r)>0$ for $r>1$, $Q_0'(1)>0$ and $Q_0(r)$ is of order $1/r$ as $r\to\infty$, we see that there exists a constant $\alpha_0>0$ such that $Q_0\geq \alpha_0 Y_0$. Also, since $\lim_{r\to\infty} \partial_r(rQ_0)=0$, and $\partial_r^2(rQ_0)=-rQ_0^{2m+1}\sim -\frac{c}{r^{2m}}$ for some constant $c>0$ as $r\to\infty$, we have
 $$\forall r\geq 1, \quad \partial_r(rQ_0)>0, \quad \partial_r(rQ_0)\sim \frac{c}{(2m-1)r^{2m-1}}, \; r\to\infty.$$
 As a consequence, using the exponential decay of $Y_0$, and taking $\alpha_0>0$ smaller if necessary, we obtain
 $$\forall r\geq 1, \quad \partial_r(rQ_0)\geq \alpha_0\left|\partial_r(rY_0)+e_0(rY_0)\right|.$$
 Thus
 $$ -\alpha_0\leq \alpha\leq 0\Longrightarrow u^{\alpha}(0,r)\geq 0,\quad \partial_r(ru^{\alpha}(0,r))+r\partial_t u^{\alpha}(0,r)\geq 0.$$
 By Corollary \ref{CR:positive}, if $-\alpha_0\leq \alpha \leq 0$, $u^{\alpha}(t)\geq 0$ for all $t\geq 0$ in the domain of definition of $u^{\alpha}$.  Combining with \eqref{bound_above}, we see that $u^{\alpha}$ cannot blow up in finite positive time or scatter to $-Q_0$ as $t\to\infty$. Thus by \eqref{alternative}, $u^{\alpha}$ scatters to $0$ in the future.
 \end{proof}
\begin{lemma}
\label{L:scatt}
 Fix a small $\alpha_0>0$. Let for $\delta>0$, 
 $$\widetilde{B}_{\delta}=\left\{(u_0,u_1)\in \HHH,\; \left\|S_L(t)\left((u_0,u_1)-(Q_0,0)+\alpha_0\YYY_0^+\right)\right\|_{\mathsf{S}(0,\infty)}<\delta\right\},$$
 where $S_L(t)(v_0,v_1)=\cos(t\sqrt{-\Delta_D})v_0+\frac{\sin t\sqrt{-\Delta_D}}{\sqrt{-\Delta_D}}v_1$. Then if $\delta$ is small enough, any solution of \eqref{eq:NLW} with initial data in $\widetilde{B}_{\delta}$ scatters to $0$.
\end{lemma}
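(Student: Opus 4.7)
The plan is a standard long-time perturbation argument around the reference solution $v := u^{(-\alpha_0)}$ constructed in Lemma \ref{L:Y0}. Since $v$ has initial data $(Q_0,0)-\alpha_0\YYY_0^+$ and scatters to $0$ in the future, standard Strichartz estimates for the free wave equation outside the ball (Remark \ref{R:mixed_free}) give $M := \|v\|_{\mathsf{S}([0,\infty))} < \infty$.

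Given $(u_0,u_1)\in \widetilde{B}_\delta$, I set $u = v+w$; then $w$ satisfies
\begin{equation*}
(\partial_t^2-\Delta)w = (v+w)^{2m+1}-v^{2m+1}, \qquad \vec{w}(0) = \vec{h}_0,
\end{equation*}
with $\vec{h}_0 := (u_0,u_1)-(Q_0,0)+\alpha_0\YYY_0^+$ satisfying $\|S_L(\cdot)\vec{h}_0\|_{\mathsf{S}([0,\infty))} < \delta$ by assumption. The pointwise bound $|(v+w)^{2m+1}-v^{2m+1}| \lesssim (|v|+|w|)^{2m}|w|$ together with H\"older in the $L^{2m+1}_tL^{2(2m+1)}_x$ component of $\mathsf{S}$ gives, for any interval $I$,
\begin{equation*}
\|(v+w)^{2m+1}-v^{2m+1}\|_{L^1_tL^2_x(I\times\Omega)} \lesssim \bigl(\|v\|_{\mathsf{S}(I)}+\|w\|_{\mathsf{S}(I)}\bigr)^{2m}\|w\|_{\mathsf{S}(I)}.
\end{equation*}

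I then partition $[0,\infty) = \bigcup_{k=0}^{N-1} I_k$, with $I_k=[t_k,t_{k+1}]$, so that $\|v\|_{\mathsf{S}(I_k)} \leq \eta$ on each interval, for some small $\eta$ depending only on the Strichartz constant $C_0$ of Remark \ref{R:mixed_free}. Applying Remark \ref{R:mixed_free} to the equation for $w$ on $I_k$ yields
\begin{equation*}
\|w\|_{\mathsf{S}(I_k)} \leq C_0 a_k + C_0\bigl(\eta+\|w\|_{\mathsf{S}(I_k)}\bigr)^{2m}\|w\|_{\mathsf{S}(I_k)},
\end{equation*}
with $a_k := \|S_L(\cdot-t_k)\vec{w}(t_k)\|_{\mathsf{S}([t_k,\infty))}$. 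A continuity bootstrap shows that if $C_0\eta^{2m} < 1/2$ and $a_k$ is small, then $\|w\|_{\mathsf{S}(I_k)} \leq 4C_0 a_k$. The Duhamel identity applied at $t_{k+1}$ combined with the same Strichartz estimate then gives
\begin{equation*}
a_{k+1} \leq a_k + C\bigl(\eta+\|w\|_{\mathsf{S}(I_k)}\bigr)^{2m}\|w\|_{\mathsf{S}(I_k)} \leq (1+C'\eta^{2m})a_k + C'a_k^{2m+1}.
\end{equation*}

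Iterating yields $a_k \leq a_0\exp(2C'N\eta^{2m}) + (\text{lower order in }a_0)$; choosing $\delta$ sufficiently small in terms of $N$, $\eta$ (and hence of $\alpha_0$ through $M$) gives $a_k \leq 2\delta$ and $\|w\|_{\mathsf{S}(I_k)}\leq 8C_0\delta$ for every $k\in\{0,\ldots,N-1\}$, hence $\|w\|_{\mathsf{S}([0,\infty))} < \infty$. Consequently $\|u\|_{\mathsf{S}([0,\infty))} < \infty$, and a standard Duhamel argument produces a solution $u_L$ of the linear equation \eqref{eq:LW} with $\vec{u}(t)-\vec{u}_L(t) \to 0$ in $\HHH$, i.e., $u$ scatters to $0$ in the sense of the paper. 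The main technical point is the quantitative bookkeeping of the smallness of $\delta$ across the $N$ bootstrap steps (where both $N$ and $\eta$ are fixed once $\alpha_0$ is chosen); all other ingredients are routine.
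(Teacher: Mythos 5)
Your proof is correct and is exactly the ``standard perturbation theory'' that the paper invokes without spelling out: you linearize around the reference solution $v = u^{(-\alpha_0)}$ from Lemma \ref{L:Y0}, which scatters to $0$ and hence has finite $\mathsf{S}$-norm, partition $[0,\infty)$ into finitely many intervals on which $v$ has small $\mathsf{S}$-norm, and propagate the smallness of the linear evolution of the initial perturbation by a bootstrap on each interval. This matches the paper's (one-line) argument; the only minor point left implicit in your write-up is that the uniform bound on $\|w\|_{\mathsf{S}}$ also forces $T_+(u)=+\infty$ before one can conclude scattering, but that is part of the same routine machinery.
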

\begin{proof}
 This follows immediately from Lemma \ref{L:Y0} and the standard perturbation theory for equation \eqref{eq:NLW}.
\end{proof}
\begin{proof}[Proof of Theorem \ref{T:M0}]
We fix a small $\delta>0$, and define $\mathcal{N}_{\delta,0}$ as in Corollary \ref{CR:local-manifold} (for $k=0$).
It follows from the proof of Theorem \ref{T:cs} that if $(u_0,u_1)\in \MMM_0^+$, then for large $T$, $\vec{u}(T)\in \NNN_{\delta,0}$ and that there is a neighborhood $\omega$ of $\vec{u}(T)$ such that $\MMM_0^+\cap \omega=\NNN_{\delta,0}\cap \omega$.

 We first construct the set $\UUU_S$ (in all the proof, we will omit the superscript $+$ to lighten notations) as follows. We let, fixing a small constant $\delta>0$
 \begin{equation*}
\widetilde{\UUU}_S=\bigg\{(u_0,u_1)+\alpha \YYY_0^+;\; (u_0,u_1)\in \NNN_{\delta,0},\; \alpha \in (-\delta,0)\bigg\}.
 \end{equation*}
 This is obviously an open subset of $\HHH$. Furthermore, taking $\delta$ smaller if necessary, we see by Remark \ref{R:tangent_space} that $\widetilde{\UUU}_S\cap \MMM_0^+=\emptyset$. We define
 $$\UUU_S=\left\{(u_0,u_1)\in \HHH\text{ s.t. } \exists t\in [0,T_+(u)),\; \vec{u}(t)\in \widetilde{\UUU}_S\right\},$$
 where $u$ is the solution of \eqref{eq:NLW} with initial data $(u_0,u_1)$. By the well-posedness theory for \eqref{eq:NLW} and since $\widetilde{\UUU}_S$ is open in $\HHH$, $\UUU_S$ is also open in $\HHH$. 
 
 We next prove that $\MMM_0^+$ is in the closure of $\UUU_S$ in $\HHH$. Let $(u_0,u_1)\in \MMM_0^+$, and $u$ be the corresponding solution. Then by Proposition \ref{P:CVdispersive_norm}, Corollary \ref{CR:CVdispersive_norm} and
 the definition of $\NNN_{\delta,0}$, we see that for large $T>0$, $\vec{u}(T)$ satisfies
 $$ \left\|S_Q(t)(\vec{u}(T)-(Q_0,0))\right\|_{\mathsf{S}(0,\infty)}<\delta,\quad \vec{u}(T)\in \NNN_{\delta,0}.$$
 For $n\geq 0$, denote by $(w_{0,n},w_{1,n})=\vec{u}(T)-\frac{1}{2^n}\YYY_0^+$. Clearly, $(w_{0,n},w_{1,n})\in \widetilde{\UUU}_S$ for large $n$. Also, $(w_{0,n},w_{1,n})$ converges to $\vec{u}(T)$ in $\HHH$ as $n\to\infty$. 
 Denote by $u_n$ the solution of \eqref{eq:NLW} such that $\vec{u}_n(T)=(w_{0,n},w_{1,n})$.  
 The well-posedness theory for \eqref{eq:NLW} implies that for large $n$, $t=0$ is in the domain of existence of $\vec{u}_n$, and 
 $$ \lim_{n\to\infty}\left\|\vec{u}_n(0)-(u_0,u_1)\right\|_{\HHH}=0.$$
 Since $\vec{u}_n(0)\in \UUU_S$, this proves that the closure of $\UUU_S$ contains $\MMM_0^+$.
 
 We next prove that any solution with initial data in $\UUU_S$ scatters to $0$ in the future. It is of course sufficient to prove this property for solutions with initial data in $\widetilde{\UUU}_S$. Let $(U_0,U_1)\in \widetilde{\UUU}_S$, and $U$ be the corresponding solution. 
 Then $(U_0,U_1)=(u_0,u_1)+\alpha\YYY_0^+$, where $(u_0,u_1)\in \NNN_{\delta,0}$, and $-\delta<\alpha<0$. Let $u$ be the solution of \eqref{eq:NLW} with initial data $(u_0,u_1)$ and $v$ be the solution of \eqref{eq:NLW} with initial data $(u_0,u_1)-\alpha_0\YYY_0^+$, where $\alpha_0$ is given by Lemma \ref{L:scatt}. By the definition of $\NNN_{\delta,0}$, we have
 $$\|u-Q_0\|_{\Ssf([0,\infty))}<M\delta.$$
 Using that
 $$|\partial_t^2(u-Q_0)-\Delta (u-Q_0)|\lesssim |u-Q_0|\big(|u-Q_0|^{2m}+Q_0^{2m}\big),$$
 we obtain, by
 $$\|S_L(t)((u_0,u_1)-(Q_0,0))\|_{\mathsf{S}([0,\infty))}\leq C\delta.$$

According to Lemma \ref{L:scatt} (taking a smaller $\delta$ if necessary), $v$ scatters to $0$. By Lemma \ref{CR:Y0}, taking $\delta>0$ small enough to insure $|\alpha|<\alpha_0$,
 $$\forall t\in [0,T_+(U)), \quad v(t)\leq U(t)\leq u(t).$$
 Since $u$ and $v$ are global, this implies that $U$ is global, and that for all $r>1$,
 $$0\leq \lim_{t\to\infty} U(t,r)\leq Q_0(r).$$
 Thus $U$ cannot scatter to $-Q_0$ or to an excited state $Q_k$ or $-Q_k$ ($k\geq 1$), since all these stationary solutions take negative values. We have already seen that $\widetilde{U}_S\cap \MMM_0^+=\emptyset$, hence $U$ does not scatter to $Q_0$. Thus the only possibility is that $U$ scatters to $0$.
 
The construction of $\UUU_F$ is similar. We define, for a small $\delta>0$:
 \begin{equation*}
\widetilde{\UUU}_F=\bigg\{(u_0,u_1)+\alpha \YYY_0^+,\; (u_0,u_1)\in \NNN_{\delta,0},\; \alpha \in (0,\delta)\bigg\},  
 \end{equation*}
and
 $$\UUU_F=\left\{(v_0,v_1)\in \HHH\text{ s.t. } \exists t\in [0,T_+(v)),\; \vec{v}(t)\in \widetilde{\UUU}_S\right\},$$
 where $v$ is the solution of \eqref{eq:NLW} with initial data $(v_0,v_1)$. By Remark \ref{R:tangent_space}, $\widetilde{\UUU}_F\cap \MMM_0^+=\emptyset$, and thus $\UUU_F\cap \MMM_0^+=\emptyset$. Using  Corollary \ref{CR:Y0}, we see that if $U$ is a solution with initial data $(u_0,u_1)+\alpha \YYY_0^+\in \widetilde{\UUU}_F$, then for all $t\geq 0$ in the domain of definition of $U$, $U(t)\geq u(t)$, where $u$ is the solution with initial data $(u_0,u_1)$, which scatters to $Q_0$. We prove that $U$ blows up in finite time by contradiction: if $T_+(U)=+\infty$,
 \begin{equation}
 \label{asymp_Q0}
 \liminf_{t\to\infty} U(t)\geq Q_0.  
 \end{equation} 
 By Theorem \ref{T:DY19}, there exists $S\in \mathcal{Q}$ such that $U$ scatters to $S$. By \eqref{asymp_Q0}, $S\geq Q_0$. By  Lemma \ref{L:InegStat}, we must have
 $S=Q_0$, which is excluded.
 Thus $U$ blows up in finite time (with a positive blow-up since $U\geq Q_0$.

 With a proof that is analog than the one above, we can prove that $\MMM_0^+$ is in the closure of $\UUU_F$.

 Finally, letting $\UUU=\UUU_S\cup \MMM_0^+\cup \UUU_F$, we see that it satisfies the conclusion of Theorem \ref{T:M0}. Indeed, to see that $\UUU$ is an open set, observe that
 $$\UUU=\left\{(v_0,v_1)\in \HHH\text{ s.t. } \exists t\in [0,T_+(v)),\; \vec{v}(t)\in \widetilde{\UUU}\right\},$$
 where
 \begin{equation*}
\widetilde{\UUU}=\bigg\{(u_0,u_1)+\alpha \YYY_0^+,\; (u_0,u_1)\in \NNN_{\delta,0},\; -\delta<\alpha<\delta\bigg\}.
 \end{equation*}

\end{proof}

\subsection{Instability of excited states by blow-up }
In this subsection we prove Theorems \ref{T:blow-up} and \ref{T:blow-up'}. We fix $k\geq 1$, and use the notations of Sections 3. In particular, we denote $\YYY_j^{\pm}$, $j=0,\ldots,k$ the eigenfunction of the linearized operator $\LLL_{Q_k}$, and $\{\pm e_0,\ldots,\pm e_k\}$ the corresponding eigenfunctions. As before, we omit to indicate the dependence in $k$ of $(\YYY_j^{\pm})_j$ and $(e_j)_j$ to lighten notations. 
\begin{proof}[Proof of Theorem \ref{T:blow-up}]
 We recall from Subsection \ref{sect:proof-contrac} the definition of $\mathscr{B}_{\delta}$ and from Corollary \ref{CR:local-manifold} the definition of the local center-stable manifold $\NNN_{\delta,k}$. 

 We fix $(U_0,U_1)\in \MMM_{k}^+$, $\delta>0$ small. We let $U$ be the corresponding solution, and $t_0$ such that $\vvec{U}(t_0)\in  \NNN_{\delta,k}$ (such a $t_0$ exists by Proposition \ref{P:CVdispersive_norm}.
  Using the smoothness of the flow (see Proposition \ref{P:smooth_flow} in the appendix), it is sufficient to construct a neighborhood $\UUU$ of $\vvec{U}(t_0)$ that satisfies the conclusion of Theorem \ref{T:blow-up}. 

 We define, for a small $\delta>0$, 
 $$\FFF_{\delta}=\left\{(Q_k,0)+\vvec{v_0} +\vartheta\left(\vvec{v_0}\right)+\eps \YYY_0^+;\; \vvec{v_0}\in \mathscr{B}_{\delta},\; \eps\in (-\delta,\delta)\right\},$$
 where $\vartheta$ is defined in the proof of Corollary \ref{CR:local-manifold}. Of course $\FFF_{\delta}$ and $\mathscr{B}_{\delta}$ depend on $k$. As in Section \ref{sec:center-manifold} we do not indicate this dependence to lighten notation. 
 
 Since $\YYY_0^+\in \HHH^{u}$, we have that for small $\delta$, $\FFF_{\delta}$ 
 is a submanifold of $\HHH$ of codimension $k$ that contains the local center stable manifold $\NNN_{\delta,k}$.  Its tangent space at $(Q_k,0)$ is $\HHH^{cs}\oplus \mathrm{span}\{\YYY_0^+\}$. We let $(u_0,u_1) \in  \FFF_{\delta}$, $u$ be the corresponding solution, and assume that $(u_0,u_1)\notin \MMM_k^+$.
 We will prove by contradiction that $u$ blows up in finite time. We assume that $T_+(u)=+\infty$. Then 
 $$(u_0,u_1)\in \bigcup_{\substack{j\geq 0}}\MMM_j^{+}\cup \left( -\MMM_j^+ \right) \cup \mathcal{S}^+.$$
 By Proposition \ref{P:P40}, 
 $$(u_0,u_1)\in \bigcup_{\substack{0\leq j\leq k-1}}\MMM_j^{+}\cup \left( -\MMM_j^+ \right)\cup \mathcal{S}^+.$$
 As a consequence,
 \begin{equation}
  \label{limit}
 \exists j\in \{0,\ldots,k-1\},\; \exists \iota\in \{\pm 1\}, \; \forall r>1,\quad \lim_{t\to\infty}u(t,r)=\iota Q_j(r)
 \end{equation} 
 or 
 \begin{equation} 
  \label{limit'}
\forall r>1,\quad \lim_{t\to\infty}u(t,r)=0.
 \end{equation} 
 We have $(u_0,u_1)=(w_0,w_1)+\eps \YYY_0^+$, where $(w_0,w_1)=(Q_k,0)+\vvec{v_0}+\vartheta(\vvec{v_0})$ is in $\MMM_k^+$, and $-\delta<\eps<\delta$. If $\eps>0$, we have by Corollary \ref{CR:Y0} that $u(t)\geq w(t)$ for all $t$ (where $w(t)$ is the solution of \eqref{eq:NLW} with initial data $(w_0,w_1)$). Since $(w_0,w_1)\in \MMM_k^+$, we deduce 
 $$\forall r>1,\quad \lim_{t\to\infty} u(t,r)\geq Q_k(r).$$
 This contradicts \eqref{limit'}.
 Moreover \eqref{limit} would imply $\iota Q_j\geq Q_k$, contradicting Lemma \ref{L:InegStat}. This leads to a condtradiction in the case $\eps>0$. The case $\eps<0$ can be treated the same way, using that in this case, $u(t)\leq w(t)$ for all $t$.
 \end{proof}
\begin{proof}[Sketch of proof of Theorem \ref{T:blow-up'}]
 The proof is almost the same to the proof of Theorem \ref{T:blow-up}, replacing $\FFF_{\delta}$ by one of the two connected open sets:
 $$\left\{(Q_k,0)+\vvec{v_0}+\vartheta(\vvec{v_0})+\sum_{j=0}^J \omega_j \YYY_j^+;\; \vvec{v_0}\in \mathscr{B}_{\delta},\; 
 \forall j\in \{1,\ldots,J\},\; 0<C\omega_j<\omega_0<\eps
 \right\}$$
 or
 $$\left\{(Q_k,0)+\vvec{v_0}+\vartheta(\vvec{v_0})+\sum_{j=0}^J \omega_j \YYY_j^+;\; \vvec{v_0}\in \mathscr{B}_{\delta},\; 
 \forall j\in \{1,\ldots,J\},\; -\eps<\omega_0<C\omega_j<0
 \right\}.$$
 The fact that these two sets are open are straightforward. Indeed, if $\vvec{u_0}\in \HHH$, close to $\NNN_{\delta,k}$, the expansion
 $$ \vvec{u_0}=(Q_k,0)+\vvec{v_0}+\vartheta(\vvec{v_0})+\sum_{j=0}^{J} \omega_j\YYY_j^+,$$
 with $\vvec{v_0}\in \HHH^{cs}$ is unique, and the maps $\vvec{u_0}\mapsto \omega_j$ are continuous for the topology of $\HHH$. This follows from the fact that we have necessarily $\vvec{v_0}=({\rm Id}-\PPP^+)\left(\vvec{u_0}-(Q_k,0)\right)$. The openness of the two sets above follow immediately.
\end{proof}

\subsection{Behaviour for negative times}
In this subsection, we give a sketch of proof of Theorem \ref{T:negative_time}. We fix $k\geq 0$ and use the same notations than in the preceding subsection.

\subsubsection{Solution in $\MMM_k^+$ blowing-up in the past}
Let $\delta> 0$ be a small parameter. We let 
$$\vvec{v_0}=\delta \YYY_0^-+\delta^{4/3}\YYY_k^-,$$
so that $\vvec{v_0}\in \HHH^s$ and $\vertiii{\vvec{v_0}}\approx \delta$ (where the norm $\vertiii{\cdot}$ is defined in \eqref{def_vertiii}). By Proposition \ref{P:good}, there exists a solution $u=Q_k+h$ of \eqref{eq:NLW}, with $\vec{u}(0)\in \MMM_k^+$, such that
$$ (\Id-\PPP^+)\left( \vec{h}(0) \right)=\vvec{v_0},\quad \left\|\PPP^+\vec{h}(0)\right\|_{\HHH}\lesssim \delta^2.$$
By Corollary \ref{CR:local-manifold} and its proof (applied to $t\mapsto u(-t)$), we see that $\vec{u}(0)\notin \MMM_k^-$.
We claim that if $\delta$ is small enough, denoting $\vec{h}(0)=(h_0,h_1)$, 
\begin{equation}
 \label{positivity_h}
 \forall r>1,\quad 
 h_0(r)\geq 0,\quad \partial_r(rh_0)-rh_1\geq 0.
 \end{equation} 
Indeed, we first note, denoting by $(v_0,v_1)=\vvec{v_0}=\delta \YYY_0^-+\delta^{4/3}\YYY_k^-$ that 
$$ v_0=\delta Y_0+\delta^{4/3}Y_k\geq \frac{\delta^{4/3}}{Cr} e^{-e_k r}.$$
(this is true for $r>R$, $R$ large since $rY_k(r)\geq  \frac{1}{C} e^{-e_k r}$, and for $1<r\leq R$ using that $Y_0$ is strictly positive on $(1,R)$, that $Y'_0(1)>0$, and that $\delta$ is small).

Since $\|v_0-h_0\|_{\dot{H}^1}\lesssim \delta^2$, and $v_0-h_0$ is a linear combination of the eigenfunctions $Y_j$, $j=0,\ldots,k$, we also obtain that
$$|(v_0-h_0)(r)|\lesssim \frac{1}{r}\delta^2 e^{-e_k r},\quad r>1,$$
which proves the first inequality in \eqref{positivity_h}. The proof of the second inequality is similar, using that 
$ v_1=-\delta e_0 Y_0-\delta^{4/3}e_kY_k$, together with \eqref{Sufficient_to_check} and \eqref{positivity_RYj}.

By \eqref{positivity_h} and Corollary \ref{CR:positive} applied to $t \mapsto u(-t)$, we obtain 
$$ \forall t\in (T_-(u),0],\quad u(t)\geq Q_k.$$
Since $\vec{u}(0)\notin \MMM_k^-$, we deduce that $T_-(u)$ is finite by the same argument as in the proof of Theorem \ref{T:blow-up}. Note the blow-up is positive.

If $k\geq 1$, the same argument with $\vec{v}_0=-\delta\YYY_0^--\delta^{4/3}\YYY_k^-$ yields a solution with negative blow-up, with initial data arbitrarily close to $Q_k$.
\subsubsection{Solutions blowing-up in both time directions}

We fix again $k\geq 0$ and consider the solutions $u^{\alpha}$, $v^{\alpha}$ with initial data
respectively 
$$(u_0^{\alpha},u_1^{\alpha})=(Q_k,0)+\alpha (Q_0,0)$$
and
$$(v_0^{\alpha},v_1^{\alpha})=(Q_k,0)+\alpha (0,Q_0).$$
We have $Q_0(r)>0$ for all $r>1$. Also $\partial_r^2(rQ_0)=-rQ_0^{2m+1}<0$, which proves, since $\lim_{r\to\infty}\partial_r(rQ_0(r))=0$, that 
$\partial_r(rQ_0(r))>0$ for all $r>1$.  By Corollary \ref{CR:positive}, we obtain that for $\alpha >0$
$$ \forall t\geq 0, \quad u^{\alpha}(t)\geq Q_k,\quad v^{\alpha}(t)\geq Q_k$$
and 
$$\forall t\leq  0, \quad u^{\alpha}(t)\geq Q_k,\quad v^{\alpha}(t)\leq Q_k.$$
Since $Q_0>0$, we see that $\int Q_0 Y_0\neq 0$. This implies $\{(Q_0,0),(0,Q_0)\}\cap \left(\HHH^{cs}\cup \HHH^{cu}\right)=\emptyset$ (where $\HHH^{cu}=\HHH^c\oplus \HHH^{u}$). Thus $(u_0^{\alpha},u_1^{\alpha})\notin \MMM_k^+\cup \MMM_k^-$ for small $\alpha\neq  0$. Using Proposition \ref{P:positive} and the same reasoning as in the proof of Theorem \ref{T:blow-up}, we obtain that $u^{\alpha}$ and, for $k\geq 1$, $v^{\alpha}$ blow up in both times directions, and that the blow-up is positive in both time directions for $u^{\alpha}$, and (if $k\geq 1$), positive in the future and negative in the past for $v^{\alpha}$. 

If $k\geq 1$ and $\alpha<0$, we obtain that $u^{\alpha}$ blows up with a negative blow-up in both time directions, and that $v^{\alpha}$ blows up with a negative blow-up in the future and a positive blow-up in the past.
\appendix


\section{Smoothness of the flow}
In this appendix, we prove
\begin{proposition}
\label{P:smooth_flow}
Let $(\tu_0,\tu_1)\in \HHH$, $T>0$ and assume that $T$ is in the maximal interval of existence of the solution $\tu$ of \eqref{eq:NLW} with initial data $(\tu_0,\tu_1)$. Then the map $(u_0,u_1)\to \vec{u}$ (where $u$ is the solution of \eqref{eq:NLW} with initial data $(u_0,u_1)$) is $C^{\infty}$ from a neighborhood $\mathcal{U}$ of $(\tu_0,\tu_1)$ in $\HHH$ to $C^{0}([0,T],\HHH)$. In particular the map $(u_0,u_1)\mapsto \vec{u}(T)$ is $C^{\infty}$ from $\mathcal{U}$ to $\HHH$. Moreover the differential of this map is uniformly bounded on $\UUU$.
\end{proposition}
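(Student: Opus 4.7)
The plan is to apply the Banach space implicit function theorem to the Duhamel formulation of \eqref{eq:NLW}, after subdividing $[0,T]$ into intervals on which the Strichartz $\Ssf$-norm of $\tu$ is small. First, local well-posedness together with the blow-up criterion \eqref{BupCriterion} and the assumption $T<T_+(\tu)$ yield $\tu\in \Ssf([0,T])$; choose a partition $0=t_0<t_1<\cdots<t_N=T$ with each $\|\tu\|_{\Ssf([t_i,t_{i+1}])}$ small enough to run a contraction.

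On each subinterval $I_i=[t_i,t_{i+1}]$, consider the map $G_i\colon \Ssf(I_i)\times \HHH\to \Ssf(I_i)$ given by
\begin{equation*}
G_i(u,(v_0,v_1))(t)=u(t)-\cos\bigl((t-t_i)\sqrt{-\Delta_D}\bigr)v_0-\frac{\sin\bigl((t-t_i)\sqrt{-\Delta_D}\bigr)}{\sqrt{-\Delta_D}}v_1-\int_{t_i}^{t}\frac{\sin\bigl((t-s)\sqrt{-\Delta_D}\bigr)}{\sqrt{-\Delta_D}}u(s)^{2m+1}\,ds.
\end{equation*}
H\"older gives $\|u^{2m+1}\|_{L^1_tL^2_x(I_i\times\Omega)}\leq \|u\|_{\Ssf(I_i)}^{2m+1}$, so the nonlinear term belongs to $\Nsf(I_i)$ and the free inhomogeneous Strichartz estimates (Remark \ref{R:mixed_free}) make $G_i$ well-defined. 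Because the nonlinearity is a homogeneous polynomial in $u$ of degree $2m+1$ and the dependence on $(v_0,v_1)$ is affine, $G_i$ is real-analytic, in particular $C^\infty$, and by construction $G_i(\tu,\vvec{\tu}(t_i))=0$. The partial derivative $\partial_u G_i$ at this point equals $\mathrm{Id}-K_i$ with
\begin{equation*}
K_ih(t)=(2m+1)\int_{t_i}^{t}\frac{\sin\bigl((t-s)\sqrt{-\Delta_D}\bigr)}{\sqrt{-\Delta_D}}\tu(s)^{2m}h(s)\,ds,
\end{equation*}
and again by Remark \ref{R:mixed_free} and H\"older, $\|K_i\|_{\Ssf(I_i)\to\Ssf(I_i)}\lesssim \|\tu\|_{\Ssf(I_i)}^{2m}<1$ by our choice of partition, so $\partial_u G_i$ is invertible.

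The Banach implicit function theorem then produces, for each $i$, a neighborhood $\mathcal V_i\subset\HHH$ of $\vvec{\tu}(t_i)$ and a $C^\infty$ map $\Psi_i\colon\mathcal V_i\to \Ssf(I_i)$ returning the unique nearby solution. Composing each $\Psi_i$ with the (Duhamel-based, hence $C^\infty$) evaluation $\Ssf(I_i)\times \HHH\ni(u,(v_0,v_1))\mapsto \vec u(t_{i+1})\in\HHH$, and iterating for $i=0,\ldots,N-1$, gives a $C^\infty$ map $\UUU\to C^0([0,T],\HHH)$ on a sufficiently small neighborhood $\UUU$ of $(\tu_0,\tu_1)$; the local trajectories concatenate into a continuous one by uniqueness. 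For the uniform bound on the differential I differentiate Duhamel's equation: the Fr\'echet derivative at $(u_0,u_1)\in\UUU$ sends $(\dot u_0,\dot u_1)$ to $\vec h(T)$ where $h$ solves the linearization $\partial_t^2 h-\Delta h=(2m+1)u^{2m}h$ with initial data $(\dot u_0,\dot u_1)$; iterating Strichartz/energy estimates over the $N$ subintervals (on each of which $\|u\|_{\Ssf(I_i)}$ stays small by continuity of the flow in $\Ssf$) bounds $\|\vec h\|_{L^\infty([0,T],\HHH)}\lesssim \|(\dot u_0,\dot u_1)\|_{\HHH}$, with implicit constant depending only on $\sup_{\UUU}\|u\|_{\Ssf([0,T])}$, which is finite after possibly shrinking $\UUU$.

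The main obstacle is bookkeeping rather than analytic: smoothness of the polynomial nonlinearity is automatic, and the key quantitative input is entirely contained in the contraction estimate for $K_i$ and the free-wave Strichartz bounds of Remark \ref{R:mixed_free}. The choice of partition $\{t_i\}$ depending on $\tu$ (not on $(u_0,u_1)$) is the reason $\UUU$ must be taken small: we need $\|u\|_{\Ssf(I_i)}$ to remain below the contraction threshold for every $(u_0,u_1)\in\UUU$, which follows from Lipschitz dependence of $u$ on initial data in the $\Ssf$ norm, a by-product of the same fixed-point estimate.
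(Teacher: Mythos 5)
Your proof is correct and takes essentially the same route as the paper: the paper likewise writes the equation in Duhamel form and gets smoothness of the data-to-solution map from a parametrized contraction argument (its Lemma \ref{L:smooth}, applied in $C^0([0,T],\HHH)$ with $T$ small) followed by iteration, and that lemma's bound \eqref{bound_phi'} plays the role of your linearized-equation estimate for the uniform bound on the differential. Your choice of the implicit function theorem in the Strichartz space $\Ssf(I_i)$, with the partition dictated by smallness of $\|\tu\|_{\Ssf(I_i)}$ rather than smallness of the time step, is only a minor variant of the same argument.
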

We will use the following result concerning contraction mapping with parameters and which is also needed in the proof of the smoothness of the center-stable manifold. 
\begin{lemma}
\label{L:smooth}
 Let $X,Y$ be Banach spaces, $U$ an open subset of $X$, $V_o$ an open subset of $Y$ and $V$ a closed subset of $V_o$. Let $\Phi: U\times V_o\to V$ be a $C^{\infty}$ map, which is Lipschitz on $U\times V$ and an uniform contraction with respect to the second variable:
 \begin{multline}
 \label{Lipschitz}
\exists C>0,\; \exists \lambda\in (0,1),\quad \forall (x_1,x_2)\in U,\; \forall (y_1,y_2)\in V,\\ 
\left\| \Phi(x_1,y_1)-\Phi(x_2,y_2)\right\|
\leq C\|x_1-x_2\|+\lambda \|y_1-y_2\|.  
 \end{multline}
For $x\in U$, we define $\varphi(x)$ as the unique element of $V$ such that $\Phi(x,\varphi(x))=\varphi(x)$. Then 
$\varphi$ is $C^{\infty}$ on $U$. Furthermore, there exists a constant $C_1>0$ (depending only on the constants $C$ and $\lambda$ in \eqref{Lipschitz}), such that 
\begin{equation}
\label{bound_phi'}
\forall x\in U,\quad \|\varphi'(x) \|\leq C_1.
\end{equation} 
\end{lemma}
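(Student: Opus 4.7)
\smallskip

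The plan is to treat this as a parameter-dependent implicit function theorem where the crucial input is precisely the uniform contraction assumption. First I would show directly that $\varphi$ is Lipschitz on $U$: for $x_1,x_2 \in U$, writing $\varphi(x_i) = \Phi(x_i,\varphi(x_i))$ and using \eqref{Lipschitz}, one gets
\[
\|\varphi(x_1)-\varphi(x_2)\| \leq C\|x_1-x_2\| + \lambda \|\varphi(x_1)-\varphi(x_2)\|,
\]
hence $\|\varphi(x_1)-\varphi(x_2)\| \leq \frac{C}{1-\lambda}\|x_1-x_2\|$. This already forces continuity of $\varphi$.

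Next I would apply the implicit function theorem to the map $F(x,y) := y - \Phi(x,y)$, which is $C^\infty$ on $U \times V_o$ and vanishes at $(x,\varphi(x))$. The Lipschitz bound \eqref{Lipschitz} implies $\|\partial_y \Phi(x,y)\|_{\mathcal{L}(Y)} \leq \lambda < 1$ for every $(x,y) \in U \times V_o$ (after noting that $V$ has nonempty interior in $Y$ at the points of interest, or by a density/approximation argument using that $\Phi$ extends to a neighborhood and the Lipschitz bound propagates to the Fréchet derivative). Consequently $\partial_y F(x,y) = \mathrm{Id}_Y - \partial_y \Phi(x,y)$ is invertible with
\[
\bigl\|(\partial_y F)^{-1}\bigr\|_{\mathcal{L}(Y)} \leq \frac{1}{1-\lambda}
\]
by the Neumann series. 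The implicit function theorem then yields that $\varphi$ is $C^1$ on $U$ with
\[
\varphi'(x) = \bigl(\mathrm{Id}_Y - \partial_y \Phi(x,\varphi(x))\bigr)^{-1}\, \partial_x \Phi(x,\varphi(x)).
\]
Combining with $\|\partial_x \Phi\| \leq C$ (again from \eqref{Lipschitz}) gives the desired uniform bound \eqref{bound_phi'} with $C_1 = C/(1-\lambda)$.

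Finally, to upgrade from $C^1$ to $C^\infty$, I would bootstrap on the explicit formula for $\varphi'$. Since $\Phi$ is $C^\infty$, the maps $x \mapsto \partial_x \Phi(x,\varphi(x))$ and $x \mapsto \partial_y \Phi(x,\varphi(x))$ are compositions of $C^\infty$ maps with the $C^1$ map $x \mapsto (x,\varphi(x))$, hence are $C^1$; inversion in $\mathcal{L}(Y)$ is $C^\infty$ on the open set of invertible operators, so $\varphi'$ is $C^1$ and $\varphi$ is $C^2$. A straightforward induction on $k$ then shows $\varphi \in C^k$ for every $k$. The only mildly delicate point is the first step of propagating the operator-norm bound $\|\partial_y\Phi\| \leq \lambda$ from the finite-difference inequality \eqref{Lipschitz} to the Fréchet derivative; this is standard for $C^1$ maps between Banach spaces, since the operator norm of the derivative equals the infinitesimal Lipschitz constant, so no extra work is needed beyond observing that the hypothesis \eqref{Lipschitz} applies to pairs $(x,y),(x,y+th)$ with $t\to 0$.
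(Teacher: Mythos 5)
Your proposal is correct and follows essentially the same line as the paper's own argument: establish Lipschitz continuity of $\varphi$ from \eqref{Lipschitz}, observe that $\|D_2\Phi\|\le\lambda<1$ so that $\mathrm{Id}_Y-D_2\Phi$ is invertible with Neumann bound $1/(1-\lambda)$, derive the formula $\varphi'(a)=(\mathrm{Id}_Y-D_2\Phi(a,\varphi(a)))^{-1}D_1\Phi(a,\varphi(a))$, and bootstrap to $C^\infty$. The only cosmetic difference is that you invoke the implicit function theorem (applied to $F(x,y)=y-\Phi(x,y)$, together with the already-proved continuity of $\varphi$ to identify the IFT branch with the fixed point) to obtain differentiability, whereas the paper derives it directly from a second-order Taylor expansion of $\Phi$ at $(a,\varphi(a))$ using the Lipschitz bound on $\varphi$ to control the remainder; the content is the same.
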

\begin{proof}[Sketch of proof of the Lemma]
 This is classical. Let $a\in U$. Using \eqref{Lipschitz}, one easily proves that $\varphi$ is Lipschitz. The expansion
 \begin{multline*}
 \varphi(x)-\varphi(a)=\Phi(x,\varphi(x))-\Phi(a,\varphi(a))\\
 =D_1\Phi(a,\varphi(a))(x-a)+D_2\Phi(a,\varphi(a))(\varphi(x)-\varphi(a))+\underbrace{\mathcal{O}\left(\|x-a\|^2+\|\varphi(x)-\varphi(a)\|^2\right)}_{\mathcal{O}\left(\|x-a\|^2\right)},
 \end{multline*}
 where $x\in U$, implies that $\varphi$ is differentiable at $a$, with derivative
 \begin{equation}
 \label{phi'a}
  \varphi'(a)=\left(\Id_Y-D_2\Phi(a,\varphi(a))\right)^{-1}\circ D_1\Phi(a,\varphi(a)).
 \end{equation} 
Note that this is well defined, since by the assumption \eqref{Lipschitz}, the operator norm of $D_2\Phi(a,\varphi(a))$ is at most $\lambda$.
Using \eqref{phi'a}, we see that $\varphi$ is indeed $C^1$. Using again \eqref{phi'a}, we obtain that $a\mapsto \varphi'(a)$ is $C^1$, i.e. that $\varphi$ is $C^2$. By an easy induction we deduce that $\varphi$ is $C^{\infty}$. The bound \eqref{bound_phi'} follows from \eqref{phi'a} and the fact that $\Phi$ is Lipschitz.
\end{proof}
\begin{proof}[Proof of Proposition \ref{P:smooth_flow}]
We can assume that $T$ is small. An iteration argument together with the well-posedness theory for \eqref{eq:NLW} will give the general case.
We can write Duhamel's formula as 
$$\vec{u}=\Phi\Big((u_0,u_1),\vec{u}\Big),$$
where
\begin{equation*}
\Phi\left((u_0,u_1),\vec{u}\right)(t)
 =\begin{pmatrix}
S_L(t)(u_0,u_1)+\int_0^t S_L(t-s)\left(0, u(s)^{2m+1}\right)\\
\partial_t S_L(t)(u_0,u_1)+\int_0^t S_L(t-s)\left(u(s)^{2m+1},0\right)
\end{pmatrix}
\end{equation*}
and $S_L$ is the flow of the linear wave equation outside an obstacle with Dirichlet boundary conditions. Using the standard properties of $S_L$ and the fact that $u\mapsto u^{2m+1}$ is a $C^{\infty}$ function on $\dot{H}^1_0$, one obtain that $\Phi$ satisfies the assumptions of Lemma \ref{L:smooth} with $X=\HHH$, $Y=C^0([0,T],\HHH)$ for small $T$. This concludes the proof.
\end{proof}
\section{Finite codimensional submanifold}
\label{A:submanifold}
In this appendix, we gather a few facts about submanifolds in infinite dimensions that are needed in the article. All these facts are very classical (see e.g. \cite{Lang95Bo}), however we have not found in the litterature the exact statements that are needed. In all this appendix, we fix a Banach space $X$ and an integer $p\geq 1$.
\begin{definition}
\label{D:submanifold}
 Let $M\subset X$. We say that $M$ is a \emph{smooth submanifold} of $X$ of codimension $p$ if for all $\mathbf{u}\in M$, there exist two Banach spaces $\mathcal{Y}$ and $\mathcal{Z}$, with $\dim \ZZZ=p$, an open neighborhood $U$ of $\mathbf{u}$ in $X$, a smooth ($C^{\infty}$) diffeomorphism $\varphi$ from $U$ to $\VVV\times \WWW$, where $\VVV$ and $\WWW\ni 0$ are open subsets of respectively $\YYY$ and $\ZZZ$ such that $\varphi(M\cap U)=\VVV\times \{0\}$. The \emph{tangent space} $T_{\mathbf{u}}M$ of $M$ at $\mathbf{u}$ is
 $\varphi'(\mathbf{u})^{-1}(\YYY\times \{0\})$.
\end{definition}
In the sequel, all submanifolds and diffeomorphisms will be implicitely smooth.

It follows immediately from the definition that if $M$ is a submanifold of codimension $p$ of $X$ and $V$ is an open subset of $X$, then $V\cap M$ (if not empty) is a submanifold of codimension $p$ of $X$. If furthermore $\psi$ is a diffeomorphism from $V$ to an open subset $\psi(V)$ of $X$, then $\psi(V\cap M)$ is a submanifold of $X$ of codimension $p$.

Obviously, $T_{\mathbf{u}}M$ is a subspace of codimension $p$ of $X$. It is independent of the choice of the diffeomorphism $\varphi$. Indeed, it is exactly the vector space of all $\{\psi'(0)\}$, where $\psi$ goes over all $C^{\infty}$ functions $\R\mapsto X$ such that $\psi(0)=\mathbf{u}$ and $\psi(\R)\subset M$.

One can define a submanifold as the graph of a function:
\begin{lemma}
 \label{L:graph}
 Let $M$ be a subset of $X$. Then $M$ is a submanifold of codimension $p$ of $X$ if and only if for all $\mathbf{u}\in M$,  there exists an open neighborhood $U$ of $\mathbf{u}$ in $X$, closed subspaces $Y$ and $Z$ of $X$ such that $X=Y \oplus Z$ with $\dim Z=p$, an open neighborhood $V$ of $0$ in $Y$ and a $C^{\infty}$ map $\Theta:V\to Z$ such that $\Theta(0)=0$, $\Theta'(0)=0$ and
\begin{equation}
\label{graph}
U\cap M=\{\mathbf{u}+\mathbf{h}+\Theta(\mathbf{h}),\; \mathbf{h}\in V\}.
\end{equation}
The tangent space of $M$ at $\mathbf{u}$ is $Y$. Furthermore, in the preceding statement, one can choose any subspace $Z$ of $X$ such that $X=Y\oplus Z$.
\end{lemma}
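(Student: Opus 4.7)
\section*{Proof proposal for Lemma B.2}

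The plan is to establish the two implications separately, treating the graph-to-chart direction as essentially trivial and the chart-to-graph direction via the inverse function theorem.

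For the easy direction, assume $M$ is given near $\mathbf{u}$ by a graph as in \eqref{graph}. Write $P_Y, P_Z$ for the projections associated with the decomposition $X = Y\oplus Z$, and define
$$\varphi:U\to Y\times Z,\qquad \varphi(\mathbf{x})=\bigl(P_Y(\mathbf{x}-\mathbf{u}),\;P_Z(\mathbf{x}-\mathbf{u})-\Theta(P_Y(\mathbf{x}-\mathbf{u}))\bigr).$$
This is smooth with smooth inverse $(\mathbf{h},\mathbf{z})\mapsto\mathbf{u}+\mathbf{h}+\Theta(\mathbf{h})+\mathbf{z}$, hence a diffeomorphism of $U$ (possibly shrunk) onto an open product $V\times W\subset Y\times Z$. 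By construction $\varphi(M\cap U)=V\times\{0\}$, so $M$ is a codimension-$p$ submanifold in the sense of Definition \ref{D:submanifold}. Moreover, since $\Theta'(0)=0$, $\varphi'(\mathbf{u})$ is the identity in the decomposition $X=Y\oplus Z$, so $\varphi'(\mathbf{u})^{-1}(Y\times\{0\})=Y$, which gives $T_{\mathbf{u}}M=Y$.

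For the substantive direction, assume $M$ is a submanifold, so there exists a diffeomorphism $\varphi:U\to\mathcal{V}\times\mathcal{W}$ with $\varphi(M\cap U)=\mathcal{V}\times\{0\}$, and set $Y_0:=T_{\mathbf{u}}M=\varphi'(\mathbf{u})^{-1}(\mathcal{Y}\times\{0\})$. The first observation is that $Y_0$ is closed of codimension $p$ in $X$; pick any closed complement $Z$ with $\dim Z=p$ (this freedom is exactly the ``furthermore'' clause). Let $P_Y,P_Z$ be the induced projections. Consider the smooth map
$$F:\mathcal{V}\longrightarrow Y_0,\qquad F(\mathbf{v})=P_Y\bigl(\varphi^{-1}(\mathbf{v},0)-\mathbf{u}\bigr).$$
Its differential at $0$ is $P_Y\circ(\varphi^{-1})'(0,0)|_{\mathcal{Y}\times\{0\}}$. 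But $(\varphi^{-1})'(0,0)$ maps $\mathcal{Y}\times\{0\}$ isomorphically onto $Y_0$, and $P_Y$ restricts to the identity on $Y_0$, so $F'(0)$ is a Banach-space isomorphism $\mathcal{Y}\to Y_0$. The inverse function theorem yields an open neighborhood $V\subset Y_0$ of $0$ and a smooth local inverse $\mathbf{h}\mapsto\mathbf{v}(\mathbf{h})$ with $F(\mathbf{v}(\mathbf{h}))=\mathbf{h}$.

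Define $\Theta:V\to Z$ by $\Theta(\mathbf{h}):=P_Z(\varphi^{-1}(\mathbf{v}(\mathbf{h}),0)-\mathbf{u})$, so that
$$\mathbf{u}+\mathbf{h}+\Theta(\mathbf{h})=\mathbf{u}+P_Y\bigl(\varphi^{-1}(\mathbf{v}(\mathbf{h}),0)-\mathbf{u}\bigr)+P_Z\bigl(\varphi^{-1}(\mathbf{v}(\mathbf{h}),0)-\mathbf{u}\bigr)=\varphi^{-1}(\mathbf{v}(\mathbf{h}),0).$$
Thus $\{\mathbf{u}+\mathbf{h}+\Theta(\mathbf{h}):\mathbf{h}\in V\}=\varphi^{-1}(\mathbf{v}(V)\times\{0\})$. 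Shrinking to $U':=\varphi^{-1}(\mathbf{v}(V)\times\mathcal{W})$ (open, since $\mathbf{v}(V)$ is open in $\mathcal{V}$ by the inverse function theorem) gives the desired equality $U'\cap M=\{\mathbf{u}+\mathbf{h}+\Theta(\mathbf{h}):\mathbf{h}\in V\}$. Clearly $\Theta(0)=P_Z(\mathbf{u}-\mathbf{u})=0$. The key verification $\Theta'(0)=0$ follows from the identity $\Theta'(0)=P_Z\circ(\varphi^{-1})'(0,0)|_{\mathcal{Y}\times\{0\}}\circ\mathbf{v}'(0)$: the image of $(\varphi^{-1})'(0,0)$ on $\mathcal{Y}\times\{0\}$ is precisely $Y_0=T_{\mathbf{u}}M$, and $P_Z$ vanishes on $Y_0$ by the choice of $Z$ as a complement.

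The main (but modest) obstacle is arranging the inverse function theorem argument so that the codomain chosen for $F$ is genuinely $Y_0$ rather than all of $X$; this requires the preliminary identification of $T_{\mathbf{u}}M$ with $Y_0$ via $\varphi'(\mathbf{u})^{-1}$ and the observation that $P_Y$ restricts to the identity on $Y_0$. Once this isomorphism of differentials is in hand, everything else is a bookkeeping exercise with projections.
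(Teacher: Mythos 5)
Your proof is correct, and on the direction that the paper actually proves it coincides with the paper's argument: the chart $\varphi(\mathbf{x})=\bigl(P_Y(\mathbf{x}-\mathbf{u}),\,P_Z(\mathbf{x}-\mathbf{u})-\Theta(P_Y(\mathbf{x}-\mathbf{u}))\bigr)$ is exactly the map $(\Pi_1,\Pi_2-\Theta\circ\Pi_1)$ used there, together with the same identification of the tangent space from $\varphi'(\mathbf{u})$. Where you genuinely go beyond the paper is the converse implication (chart $\Rightarrow$ graph): the paper explicitly leaves this to the reader, remarking only that it relies on the implicit function theorem and is not needed elsewhere in the article, whereas you carry it out via the inverse function theorem applied to $F(\mathbf{v})=P_Y\bigl(\varphi^{-1}(\mathbf{v},0)-\mathbf{u}\bigr)$, using that $(\varphi^{-1})'$ at the base point maps $\mathcal{Y}\times\{0\}$ isomorphically onto $T_{\mathbf{u}}M$ and that $P_Y$ is the identity there; this also cleanly yields the ``furthermore'' clause, since the complement $Z$ is arbitrary in your construction, and the vanishing $\Theta'(0)=0$ because $P_Z$ kills $T_{\mathbf{u}}M$. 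The only blemish is notational: Definition \ref{D:submanifold} does not normalize $\varphi(\mathbf{u})$ to be $(0,0)$ (only $0\in\mathcal{W}$ is required), so your derivatives should be taken at $\varphi(\mathbf{u})=(v_0,0)$ rather than at $(0,0)$, or you should first compose $\varphi$ with the translation by $-v_0$ in the $\mathcal{Y}$-factor, which preserves the product structure; this is cosmetic and does not affect the argument.
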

\begin{proof}
We will just prove that if for all $\ubf\in M$, there exists $U$, $Y$, $Z$, $V$ and $\Theta$ as in the statement of the Lemma, then $M$ is a submanifold of $X$ of codimension $p$. The proof of the converse statement, relying on the implicit mapping theorem, is left to the reader since it is not needed in the paper.

We fix $\ubf$, and construct a diffeomorphism $\varphi$ as in Definition \ref{D:submanifold}.

Denote by $\Pi_1\in \mathcal{L}(X,Y)$ the projection along $Z$ onto $Y$ and $\Pi_2=1-\Pi_1\in \mathcal{L}(X,Z)$ the projection along $Y$ onto $Z$. The map $\varphi$ defined on $\{\mathbf{u}-y-z,\; y\in V, \;z\in Z\}$ by
$$\varphi(\mathbf{v})=\Big(\Pi_1(\mathbf{v}-\mathbf{u}),\Pi_2(\mathbf{v}-\mathbf{u})-\Theta(\Pi_1(\mathbf{v}-\mathbf{u})\Big))$$
is a local diffeormorphism from a neighborhood $U'$ of $\mathbf{u}$ to an open neighborhood $\varphi(U')$ of $0$ in $Y\times Z$.  Of course (taking a smaller $U'$) it is always possible to take $\varphi(U')$ of the form $U_1\times U_2$, where $U_1$ and $U_2$ are open neighborhood of $0$ in $Y$ and $Z$ respectively. The fact that
$$ \varphi(U'\cap \MMM)=U_1\times \{0\}$$
follows from \eqref{graph}. The assertion on the tangent space follows from the formula $\varphi'(\mathbf{u})=(\Pi_1,\Pi_2)$.

\end{proof}
\begin{corollary}
 \label{Cor:graph}
 Assume $X=Y\oplus Z$, where $Y$, $Z$ are closed subspaces of $X$ with $\dim Z=p$. Let $V$ be an open neighborhood of $0$ in $Y$. Let $\Theta:V \to Z$ be a $C^{\infty}$ function such that $\Theta(0)=0$, $\Theta'(0)=0$. Let $\ubf\in X$. Let 
 $$M=\left\{\ubf+y+\Theta(y),\; y\in V\right\}.$$
 Then $M$ is a submanifold of $X$ of codimension $p$. 
\end{corollary}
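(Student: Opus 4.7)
The plan is to apply Lemma \ref{L:graph}, which reduces the task to producing, at each point $\vbf_0 \in M$, a local graph presentation with vanishing first derivative. At the base point $\vbf_0 = \ubf$ (corresponding to $y=0$), the triple $(Y,Z,\Theta)$ supplied by the statement is exactly the required data, since by hypothesis $\Theta(0)=0$ and $\Theta'(0)=0$. The substance lies in extending this representation to an arbitrary $\vbf_0 = \ubf + y_0 + \Theta(y_0) \in M$ with $y_0 \neq 0$, where $\Theta'(y_0)$ need not vanish.

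At such a $\vbf_0$ I would introduce the candidate tangent subspace $Y'_0 := \{h + \Theta'(y_0)h : h \in Y\}$, i.e. the graph of the bounded linear map $\Theta'(y_0):Y\to Z$. The decomposition $X = Y'_0 \oplus Z$ follows from $X = Y \oplus Z$ together with the observation that $\Theta'(y_0)$ takes values in $Z$, and the linear map $\Psi : Y \to Y'_0$ defined by $\Psi(h) = h + \Theta'(y_0) h$ is a topological isomorphism (its inverse is the restriction to $Y'_0$ of the projection $X\to Y$ along $Z$). Using $\Psi$ I would reparametrize by defining
\[
\tTheta(y') := \Theta\bigl(y_0 + \Psi^{-1}(y')\bigr) - \Theta(y_0) - \Theta'(y_0)\, \Psi^{-1}(y'),
\]
on a neighborhood $V''$ of $0$ in $Y'_0$ small enough that $y_0 + \Psi^{-1}(V'') \subset V$.

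The verification of the hypotheses of Lemma \ref{L:graph} at $\vbf_0$ then reduces to three elementary checks: $\tTheta(0) = 0$ is immediate; $\tTheta'(0) = 0$ follows at once from the definition of the Fréchet derivative of $\Theta$ at $y_0$, combined with the continuity of $\Psi^{-1}$; and for a point $\vbf = \ubf + y + \Theta(y) \in M$ close to $\vbf_0$, writing $h = y-y_0$ and $y' = \Psi(h)$ yields
\[
\vbf - \vbf_0 = h + \bigl(\Theta(y_0+h) - \Theta(y_0)\bigr) = \Psi(h) + \tTheta(\Psi(h)) = y' + \tTheta(y'),
\]
which, since $\Psi$ is an isomorphism, gives the graph identification of $M$ near $\vbf_0$ and concludes via Lemma \ref{L:graph}.

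The only conceptual point, and where I expect readers (and myself) to slow down, is the realization that although $\Theta'(y_0)$ generally fails to vanish, this failure is \emph{linear} and can be absorbed into the change of parametrization $\Psi$, restoring the normalization $\tTheta'(0) = 0$ required by Lemma \ref{L:graph}; equivalently, the tangent space at $\vbf_0$ is read off as the graph of $\Theta'(y_0)$ rather than as $Y$ itself. Once this is observed, the remainder of the proof is bookkeeping.
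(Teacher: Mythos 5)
Your proof is correct and follows essentially the same route as the paper: your subspace $Y'_0$, isomorphism $\Psi$, and reparametrized map $\tTheta$ are exactly the paper's $\widetilde{Y}$, $L:h\mapsto h+\Theta'(\ty)h$, and $\rho\circ L^{-1}$, after which both arguments conclude by the criterion of Lemma \ref{L:graph}.
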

\begin{proof}[Sketch of proof]
Let $\tubf\in M$. Then $\tubf=\ubf+\ty+\Theta(\ty)$ for some $\ty\in V$. 

Consider $\widetilde{Y}=\{h+\Theta'(\ty)h,\; h\in Y\}$. This is a subspace of $X$ of codimension $p$. Indeed, one has $X=\widetilde{Y}\oplus Z$. Also, the map $L: h\mapsto h+\Theta'(\ty)h$ is a $C^{\infty}$ isomorphism from $Y$ to $\tY$. One proves that
\begin{equation*}
M=\left\{\tubf+h+\Theta'(\ty)h+\rho(h),\; h\in \widetilde{V}\right\}
=\left\{\tubf+k+\rho( L^{-1}(k)),\; k\in L(\widetilde{V})\right\},
\end{equation*}
where $\widetilde{V}=\{y-\ty,\; y\in V\}$, $\rho(h)=\Theta(\tilde{y}+h)-\Theta(\tilde{y})-\Theta'(\ty)h$. 
This proves that $M$ satisfies the criterion of Lemma \ref{L:graph}, and thus that $M$ is a submanifold of codimension $p$ of $X$.
\end{proof}

%

 \begin{proposition}
 \label{P:transverse_manifold}
 Let $M_1$, $M_2$ be two submanifolds of $X$, of codimension respectively $p_1$ and $p_2$ in $X$. Let $\ubf \in M_1\cap M_2$ and $Y=T_{\ubf}M_1\cap T_{\ubf}M_2$. Assume that for $j\in \{1,2\}$, there exists a subspace $Z_j$ of $T_{\ubf}M_j$ such that $\dim Z_j=p_j$ and
$$X=Y\oplus Z_1\oplus Z_2, \quad T_{\ubf}M_1=Y\oplus Z_1, \quad T_{\ubf}M_2=Y \oplus Z_2.$$
Assume also that there exists a neighborhood $U$ of $0$ in $X$ such that, for $j\in \{1,2\}$, 
$$ U\cap M_j=\left\{\ubf+h+\Theta_j(h),\; h\in V_j\right\},$$
where $\Theta_j$ is a $C^{\infty}$ map from a neighborhood $V_j$ of $0$ in $T_{\ubf}M_j$ to $Z_{3-j}$ such that $\Theta_j(0)=0$, $\Theta_j'(0)=0$. Then there is a neighborhood $U'$ of $\ubf$ in $X$ such that $U'\cap M_1\cap M_2$ is a submanifold of $X$ of finite codimension $p_1+p_2$. The tangent space of this submanifold at $\ubf$ is $Y$.
\end{proposition}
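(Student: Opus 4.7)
The plan is to realize $M_1\cap M_2$ near $\mathbf{u}$ as the graph of a smooth map from a neighborhood of $0$ in $Y$ into $Z_1\oplus Z_2$, and then invoke Corollary \ref{Cor:graph}. First I would parametrize $M_1\cap M_2$ near $\mathbf{u}$. A point $\mathbf{v}\in U\cap M_1\cap M_2$ can be written as $\mathbf{v}=\mathbf{u}+h_1+\Theta_1(h_1)=\mathbf{u}+h_2+\Theta_2(h_2)$ with $h_j\in V_j\subset T_{\mathbf{u}}M_j=Y\oplus Z_j$. Decomposing $h_j=y_j+z_j$ with $y_j\in Y$ and $z_j\in Z_j$, and using the direct sum $X=Y\oplus Z_1\oplus Z_2$ together with $\Theta_1(h_1)\in Z_2$ and $\Theta_2(h_2)\in Z_1$, the identity $h_1+\Theta_1(h_1)=h_2+\Theta_2(h_2)$ is equivalent to the system
\begin{equation*}
y_1=y_2=:y,\qquad z_1=\Theta_2(y+z_2),\qquad z_2=\Theta_1(y+z_1).
\end{equation*}

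Next I would solve this coupled system by the implicit function theorem. Define
\begin{equation*}
F:Y\times Z_1\times Z_2\longrightarrow Z_1\times Z_2,\qquad F(y,z_1,z_2)=\bigl(z_1-\Theta_2(y+z_2),\,z_2-\Theta_1(y+z_1)\bigr).
\end{equation*}
Then $F(0,0,0)=0$, $F$ is smooth, and because $\Theta_1'(0)=0$, $\Theta_2'(0)=0$, the partial differential $\partial_{(z_1,z_2)}F(0,0,0)$ equals the identity of $Z_1\times Z_2$, which is an isomorphism since $\dim(Z_1\times Z_2)=p_1+p_2<\infty$. The implicit function theorem thus yields an open neighborhood $W\subset Y$ of $0$ and a smooth map $\zeta=(\zeta_1,\zeta_2):W\to Z_1\times Z_2$ such that, in a neighborhood of the origin, $F(y,z_1,z_2)=0$ holds if and only if $(z_1,z_2)=\zeta(y)$. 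Differentiating $F(y,\zeta(y))=0$ at $y=0$ gives $\zeta'(0)=0$, and obviously $\zeta(0)=0$.

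Finally, setting $\xi(y)=\zeta_1(y)+\zeta_2(y)\in Z_1\oplus Z_2$ and shrinking the neighborhood of $\mathbf{u}$ if necessary, I can write
\begin{equation*}
U'\cap M_1\cap M_2=\bigl\{\mathbf{u}+y+\xi(y):\;y\in W\bigr\},
\end{equation*}
with $\xi\in C^\infty(W,Z_1\oplus Z_2)$, $\xi(0)=0$, $\xi'(0)=0$. Since $X=Y\oplus(Z_1\oplus Z_2)$ with $\dim(Z_1\oplus Z_2)=p_1+p_2$, Corollary \ref{Cor:graph} applies and shows that $U'\cap M_1\cap M_2$ is a submanifold of $X$ of codimension $p_1+p_2$ with tangent space $Y$ at $\mathbf{u}$.

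The main subtlety, rather than a real obstacle, is bookkeeping the direct sum decomposition so that the Jacobian of $F$ with respect to $(z_1,z_2)$ is exactly the identity at the origin; this is precisely where the transversality hypothesis $X=Y\oplus Z_1\oplus Z_2$ and the flatness $\Theta_j'(0)=0$ are used. The finite-dimensionality of $Z_1$ and $Z_2$ ensures that the standard Banach-space implicit function theorem yields the smooth solution $\zeta$ without any extra analytic work.
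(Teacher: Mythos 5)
Your proof is correct and follows essentially the same route as the paper: the same decomposition of a point of $M_1\cap M_2$ along $X=Y\oplus Z_1\oplus Z_2$, leading to the system $y_1=y_2=y$, $z_1=\Theta_2(y+z_2)$, $z_2=\Theta_1(y+z_1)$, the same graph representation over a neighborhood of $0$ in $Y$, and the same appeal to Corollary \ref{Cor:graph} to conclude. The only methodological difference is how the system is solved: you apply the implicit function theorem jointly in $(z_1,z_2)$, using that $\Theta_1'(0)=\Theta_2'(0)=0$ makes the partial Jacobian equal to the identity of $Z_1\times Z_2$, whereas the paper substitutes $z_1=\Theta_2(y+z_2)$ into the other equation and solves the single fixed-point equation $z_2=\Theta_1(y+\Theta_2(y+z_2))$ by its contraction-with-parameters Lemma \ref{L:smooth}. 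The two tools are interchangeable here (Lemma \ref{L:smooth} is essentially the uniform contraction principle underlying the implicit function theorem, and neither argument really uses $\dim Z_j<\infty$ beyond the invertibility of the identity); your version avoids writing an explicit contraction estimate, while the paper's reuses a lemma it needs anyway elsewhere and in addition records $\rho'(0)=0$ directly from the fixed-point identity. As in the paper, the identification of $U'\cap M_1\cap M_2$ with the graph of $\xi$ requires checking both inclusions with a choice of $U'$ adapted to the implicit-function-theorem neighborhoods and ensuring $y+\zeta_j(y)\in V_j$; your ``shrinking if necessary'' covers exactly this bookkeeping.
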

\begin{remark}
 The intersection of $M_1$ and $M_2$ is said to be \emph{transverse} at $\ubf$ if and only if $T_{\ubf}M_1+T_{\ubf}M_2=X$. One can prove that if $M_1$ and $M_2$ are two finite codimensional submanifolds of $X$ with transverse intersection at $\ubf$, then the assumptions of Proposition \ref{P:transverse_manifold} are satisfied. In other words, Proposition \ref{P:transverse_manifold} says that the transverse intersection of two submanifolds of finite codimension $p_1$ and $p_2$ is locally a submanifold of codimension $p_1+p_2$.
 We omit the proof of this fact, which is not used in the article.
\end{remark}
\begin{proof}
Let $\vbf\in U$. We write $\ubf-\vbf=y+z_1+z_2$, where $y\in Y$, $z_1\in Z_1$, $z_2\in Z_2$. If we assume furthermore $\vbf\in M_1\cap M_2$, we have 
$$ y+z_1+z_2=h_1+\Theta_1(h_1)=h_2+\Theta_2(h_2),$$
for some $(h_1,h_2)\in V_1\times V_2$. Thus $z_2=\Theta_1(h_1)$, $z_1=\Theta_2(h_2)$, which yields
$$ z_2=\Theta_1(y+z_1)=\Theta_1(y+\Theta_2(y+z_2)).$$
If $Z$ is a Banach space and $r>0$, we denote by $B_Z(r)$ the ball $\{z\in Z,\;\|z\|<r\}$. For a small $\eps>0$, we consider the $C^{\infty}$ map $\Phi: B_Y(\eps)\times \overline{B_{Z_2}}(\eps)\to B_{Z_2}(2\eps)$ defined by $\Phi(y,z_2)=\Theta_1(y+\Theta_2(y+z_2))$. Using that for $j\in \{1,2\}$, we have $\Theta_j(0)=0$, $\Theta_j'(0)=0$, one can check that if $\eps>0$ is small enough, $\Phi$ satisfies the assumptions of Lemma \ref{L:smooth} (fixed point with parameter). As a consequence, there exists a map $\rho: B_Y(\eps)\to B_{Z_2}(2\eps)$ such that for all $y\in B_Y(\eps)$, $\rho(y)$ is the only $a\in \overline{B_{Z_2}(\eps)}$ such that $\Phi(y,a)=a$. Using the equality
\begin{equation}
 \label{defrho}
 \rho(y)=\Theta_1(y+\Theta_2(y+\rho(y)))
\end{equation} 
and the fact that $\Theta_j(0)=0$, $\Theta_j'(0)=0$, we see that $\rho(0)=0$, $\rho'(0)=0$.

Let $\tU$ be the open subset of $X$: $\tubf+B_{Y}(\eps)+B_{Z_1}(\eps)+B_{Z_2}(\eps)$. Going back to the computations above, we see that if $\vbf\in \tU\cap M_1\cap M_2$, then (with the same decomposition $\vbf=\ubf+y+z_1+z_2$ as above),
$$z_2=\rho(y),\quad z_1=\Theta_2(y+\rho(y)),$$
and thus
$$ \vbf=\ubf+y+\Theta(y),\; y\in B_Y(\eps),$$
where by definition
$$\Theta(y)=\rho(y)+\Theta_2(y+\rho(y)).$$
Note that $\Theta$ is a $C^{\infty}$ map from $B_{Y}(\eps)$ to $Z_1\oplus Z_2$ such that $\Theta(0)=0$, $\Theta'(0)=0$. We have proved
$$ \tU\cap M_1\cap M_2\subset \left\{\ubf+y+\Theta(y),\; y\in B_Y(\eps)\right\}.$$
To prove the opposite inclusion, we consider $y\in B_Y(\eps)$ and let $\vbf=\ubf+y+\Theta(y)$. By the definition of $\Theta$, we have
$$\vbf=\ubf+y+\rho(y)+\Theta_2(y+\rho(y))=\ubf+h_2+\Theta_2(h_2),$$
where $h_2=y+\rho(y)$ is an element of $V_2$ (if $\eps$ is chosen small enough). Thus $\vbf\in M_2$. Furthermore, using that $\rho(y)=\Phi(y,\rho(y))$, we obtain $\rho(y)=\Theta_1(y+\Theta_2(y+\rho(y)))$. Thus $\vbf -\ubf=h_1+\Theta_1(h_1)$, where $h_1=y+\Theta_2(y+\rho(y))$ is an element of $V_1$ (again, if $\eps$ is small enough). This shows that $\vbf \in M_1$. As a conclusion
$$ \tU\cap M_1\cap M_2= \left\{\ubf+y+\Theta(y),\; y\in B_Y(\eps)\right\}.$$
This shows by Corollary \ref{Cor:graph} that $M_1\cap M_2\cap \tU$ is a submanifold of $X$ of codimension $p_1+p_2$, with tangent space $T_{\ubf}M_1\cap T_{\ubf}M_2$ at $\ubf$.
\end{proof}


\begin{thebibliography}{10}

\bibitem{Agmon85}
{\sc Agmon, S.}
\newblock Bounds on exponential decay of eigenfunctions of {S}chr\"{o}dinger
  operator.
\newblock In {\em Schr\"{o}dinger operators ({C}omo, 1984)}, vol.~1159 of {\em
  Lecture Notes in Math}. Springer, Berlin, 1985, pp.~1--38.

\bibitem{Ball73}
{\sc BALL, J.}
\newblock Saddle point analysis for an ordinary differential equation in a
  banach space, and an application to dynamic buckling of a beam.
\newblock In {\em Nonlinear Elasticity}, R.~Dickey, Ed. Academic Press, 1973,
  pp.~93--160.

\bibitem{Bargmann52}
{\sc Bargmann, V.}
\newblock On the number of bound states in a central field of force.
\newblock {\em Proc. Natl. Acad. Sci. USA 38\/} (1952), 961--966.

\bibitem{BatesJones89}
{\sc Bates, P.~W., and Jones, C. K. R.~T.}
\newblock {\em Invariant Manifolds for Semilinear Partial Differential
  Equations}.
\newblock Vieweg+Teubner Verlag, Wiesbaden, 1989, pp.~1--38.

\bibitem{BizonMaliborski20}
{\sc Bizo{\'{n}}, P., and Maliborski, M.}
\newblock Dynamics at the threshold for blowup for supercritical wave equations
  outside a ball.
\newblock {\em Nonlinearity 33}, 7 (may 2020), 3195--3205.

\bibitem{ChGuNaTs07}
{\sc Chang, S.-M., Gustafson, S., Nakanishi, K., and Tsai, T.-P.}
\newblock Spectra of linearized operators for {NLS} solitary waves.
\newblock {\em SIAM J. Math. Anal. 39}, 4 (2007/08), 1070--1111.

\bibitem{ChowLu88}
{\sc Chow, S.-N., and Lu, K.}
\newblock Invariant manifolds for flows in banach spaces.
\newblock {\em J. Differ. Equations 74}, 2 (1988), 285--317.

\bibitem{CohenLiSchlag21P}
{\sc Cohen, A., Li, Z., and Schlag, W.}
\newblock Uniqueness of excited states to $-{\Delta} u+ u+u^3= 0$ in three
  dimensions.
\newblock Preprint arXiv:2101.08356, 2021.

\bibitem{DAncona19P}
{\sc D'Ancona, P.}
\newblock On the supercritical defocusing {NLW} outside a ball.
\newblock Preprint arXiv 1912.13216, to appear in Analysis and Mathematical
  Physics, 2010.

\bibitem{Davies95BO}
{\sc Davies, E.~B.}
\newblock {\em Spectral theory and differential operators}, vol.~42 of {\em
  Cambridge Studies in Advanced Mathematics}.
\newblock Cambridge University Press, Cambridge, 1995.

\bibitem{DuKeMe12c}
{\sc Duyckaerts, T., Kenig, C., and Merle, F.}
\newblock Scattering for radial, bounded solutions of focusing supercritical
  wave equations.
\newblock {\em Int. Math. Res. Not. IMRN\/} (2012).

\bibitem{DuKeMe13}
{\sc Duyckaerts, T., Kenig, C., and Merle, F.}
\newblock Classification of radial solutions of the focusing, energy-critical
  wave equation.
\newblock {\em Camb. J. Math. 1}, 1 (2013), 75--144.

\bibitem{DuKeMe16a}
{\sc Duyckaerts, T., Kenig, C., and Merle, F.}
\newblock Solutions of the focusing nonradial critical wave equation with the
  compactness property.
\newblock {\em Ann. Sc. Norm. Super. Pisa Cl. Sci. (5) 15\/} (2016), 731--808.

\bibitem{DuMe08}
{\sc Duyckaerts, T., and Merle, F.}
\newblock Dynamics of threshold solutions for energy-critical wave equation.
\newblock {\em Int. Math. Res. Pap. IMRP\/} (2008), Art ID rpn002, 67.

\bibitem{DuyckaertsRoy17}
{\sc Duyckaerts, T., and Roy, T.}
\newblock Blow-up of the critical {S}obolev norm for nonscattering radial
  solutions of supercritical wave equations on $\mathbb{R}^{3}$, 2017.

\bibitem{DuyckaertsYang18}
{\sc Duyckaerts, T., and Yang, J.}
\newblock Blow-up of a critical {S}obolev norm for energy-subcritical and
  energy-supercritical wave equations.
\newblock {\em Anal. PDE 11}, 4 (2018), 983--1028.

\bibitem{DuyckaertsYang23P}
{\sc Duyckaerts, T., and Yang, J.}
\newblock Strichartz estimates for wave and schr\"odinger equations with a
  potential in non-trapping exterior domains.
\newblock Preprint, 2023.

\bibitem{DuyckaertsYang21}
{\sc Duyckaerts, T., and Yang, J.~U.}
\newblock Scattering to a stationary solution for the superquintic radial wave
  equation outside an obstacle.
\newblock {\em Ann. Inst. Fourier (Grenoble) 71}, 5 (2021), 1845--1884.

\bibitem{FaXiCa11}
{\sc Fang, D., Xie, J., and Cazenave, T.}
\newblock Scattering for the focusing energy-subcritical nonlinear
  {S}chr\"odinger equation.
\newblock {\em Sci. China Math. 54}, 10 (2011), 2037--2062.

\bibitem{Gallay93}
{\sc Gallay, T.}
\newblock {A center-stable manifold theorem for differential equations in
  Banach spaces}.
\newblock {\em Comm. Math. Phys. 152}, 2 (1993), 249 -- 268.

\bibitem{HislopSigal96BO}
{\sc Hislop, P.~D., and Sigal, I.~M.}
\newblock {\em Introduction to spectral theory: With applications to
  Schr{\"o}dinger operators}, vol.~113.
\newblock Springer Science \& Business Media, 1996.

\bibitem{IoJe03}
{\sc Ionescu, A.~D., and Jerison, D.}
\newblock On the absence of positive eigenvalues of {S}chr\"{o}dinger operators
  with rough potentials.
\newblock {\em Geom. Funct. Anal. 13}, 5 (2003), 1029--1081.

\bibitem{JiLiScXu17}
{\sc Jia, H., Liu, B., Schlag, W., and Xu, G.}
\newblock Generic and non-generic behavior of solutions to defocusing energy
  critical wave equation with potential in the radial case.
\newblock {\em Int. Math. Res. Not. IMRN}, 19 (2017), 5977--6035.

\bibitem{JiLiScXu20}
{\sc Jia, H., Liu, B., Schlag, W., and Xu, G.}
\newblock Global center stable manifold for the defocusing energy critical wave
  equation with potential.
\newblock {\em Amer. J. Math. 142}, 5 (2020), 1497--1557.

\bibitem{John79}
{\sc John, F.}
\newblock Blow-up of solutions of nonlinear wave equations in three space
  dimensions.
\newblock {\em Proc. Nat. Acad. Sci. U.S.A. 76}, 4 (1979), 1559--1560.

\bibitem{KrNaSc15}
{\sc Krieger, J., Nakanishi, K., and Schlag, W.}
\newblock Center-stable manifold of the ground state in the energy space for
  the critical wave equation.
\newblock {\em Math. Ann. 361}, 1-2 (2015), 1--50.

\bibitem{Lang95Bo}
{\sc Lang, S.}
\newblock {\em Differential and {Riemannian} manifolds.}, 3rd ed.; {Repr}. of
  the orig. 1972~ed., vol.~160 of {\em Grad. Texts Math.}
\newblock Heidelberg: Springer-Verlag, 1995.

\bibitem{MeZa07}
{\sc Merle, F., and Zaag, H.}
\newblock Existence and universality of the blow-up profile for the semilinear
  wave equation in one space dimension.
\newblock {\em J. Funct. Anal. 253}, 1 (2007), 43--121.

\bibitem{MerleZaag11}
{\sc Merle, F., and Zaag, H.}
\newblock Blow-up behavior outside the origin for a semilinear wave equation in
  the radial case.
\newblock {\em Bull. Sci. Math. 135}, 4 (2011), 353--373.

\bibitem{Meshkov89}
{\sc Meshkov, V.~Z.}
\newblock Weighted differential inequalities and their application for
  estimates of the decrease at infinity of the solutions of second-order
  elliptic equations.
\newblock {\em Trudy Mat. Inst. Steklov. 190\/} (1989), 139--158.
\newblock Translated in Proc. Steklov Inst. Math. {{\bf{1}}992}, no. 1,
  145--166, Theory of functions (Russian) (Amberd, 1987).

\bibitem{NaSch11}
{\sc Nakanishi, K., and Schlag, W.}
\newblock Global dynamics above the ground state energy for the focusing
  nonlinear {K}lein-{G}ordon equation.
\newblock {\em J. Differential Equations 250}, 5 (2011), 2299--2333.

\bibitem{NaSch11Bo}
{\sc Nakanishi, K., and Schlag, W.}
\newblock {\em Invariant manifolds and dispersive {H}amiltonian evolution
  equations}.
\newblock Zurich Lectures in Advanced Mathematics. European Mathematical
  Society (EMS), Z\"{u}rich, 2011.

\bibitem{NaSch12}
{\sc Nakanishi, K., and Schlag, W.}
\newblock Global dynamics above the ground state energy for the cubic {NLS}
  equation in 3{D}.
\newblock {\em Calc. Var. Partial Differential Equations 44}, 1-2 (2012),
  1--45.

\bibitem{NiNussbaum85}
{\sc Ni, W.-M., and Nussbaum, R.~D.}
\newblock Uniqueness and nonuniqueness for positive radial solutions of
  ${\Delta} u+ f (u, r)= 0$.
\newblock {\em Comm. Pure Appl. Math. 38}, 1 (1985), 67--108.

\bibitem{ReSi.T4}
{\sc Reed, M., and Simon, B.}
\newblock {\em Methods of modern mathematical physics. {IV}. {A}nalysis of
  operators}.
\newblock Academic Press [Harcourt Brace Jovanovich Publishers], New York,
  1978.

\bibitem{SmithSogge95}
{\sc Smith, H.~F., and Sogge, C.~D.}
\newblock On the critical semilinear wave equation outside convex obstacles.
\newblock {\em J. Amer. Math. Soc. 8}, 4 (1995), 879--916.

\bibitem{SmithSogge00}
{\sc Smith, H.~F., and Sogge, C.~D.}
\newblock Global {S}trichartz estimates for nontrapping perturbations of the
  {L}aplacian: Estimates for nontrapping perturbations.
\newblock {\em Comm. Partial Differential Equations 25}, 11-12 (2000),
  2171--2183.

\bibitem{SofferWeinstein04}
{\sc {Soffer}, A., and {Weinstein}, M.~I.}
\newblock {Selection of the ground state for nonlinear Schr\"odinger
  equations}.
\newblock {\em {Rev. Math. Phys.} 16}, 8 (2004), 977--1071.

\bibitem{TsaiYau02b}
{\sc {Tsai}, T.-P., and {Yau}, H.-T.}
\newblock {Relaxation of excited states in nonlinear Schr\"odinger equations}.
\newblock {\em {Int. Math. Res. Not.} 2002}, 31 (2002), 1629--1673.

\bibitem{TsaiYau02a}
{\sc {Tsai}, T.-P., and {Yau}, H.-T.}
\newblock {Stable directions for excited states of nonlinear Schr\"odinger
  equations}.
\newblock {\em {Commun. Partial Differ. Equations} 27}, 11-12 (2002),
  2363--2402.

\bibitem{IoosVDB92}
{\sc Vanderbauwhede, A., and Iooss, G.}
\newblock Center manifold theory in infinite dimensions.
\newblock In {\em Dynamics reported}. Springer, 1992, pp.~125--163.

\bibitem{Weinstein82}
{\sc Weinstein, M.~I.}
\newblock Nonlinear {S}chr\"odinger equations and sharp interpolation
  estimates.
\newblock {\em Comm. Math. Phys. 87}, 4 (1982/83), 567--576.

\bibitem{Weinstein85}
{\sc Weinstein, M.~I.}
\newblock Modulational stability of ground states of nonlinear {S}chr\"odinger
  equations.
\newblock {\em SIAM J. Math. Anal. 16}, 3 (1985), 472--491.

\end{thebibliography}
\end{document}